\documentclass[a4paper,oneside]{amsart}
\usepackage[a4paper]{geometry}
\usepackage[T1]{fontenc}
\usepackage{amsmath,amsthm,enumitem,amsfonts,comment,amssymb,graphicx,tikz-cd,todonotes,thmtools,hyperref}
\hypersetup{
	colorlinks,
	linkcolor={red!35!black},
	citecolor={red!35!black},
	urlcolor={red!35!black}}
\usepackage{cleveref}
\usepackage[initials, nobysame,shortalphabetic]{amsrefs}
\renewcommand{\parenthesize}[1]{} 
\DeclareUnicodeCharacter{03C3}{\ensuremath{\sigma}}
\DeclareUnicodeCharacter{03B1}{\ensuremath{\alpha}}
\DeclareUnicodeCharacter{2212}{\ensuremath{-}}
\DeclareUnicodeCharacter{0327}{\c}
\DeclareUnicodeCharacter{0119}{\k{e}}
\newtheorem{maintheorem}{Theorem}
 
\newtheorem{theorem}{Theorem}[section]

\newtheorem{proposition}[theorem]{Proposition}

\newtheorem{question}[theorem]{Question}

\theoremstyle{definition}
\newtheorem{remark}[theorem]{Remark}
\newcommand{\N}{\mathbb{N}}
\newcommand{\R}{\mathbb{R}}
\newcommand{\C}{\mathbb{C}}
\newcommand{\Z}{\mathbb{Z}}
\newcommand{\Q}{\mathbb{Q}}
\newcommand{\T}{\mathbb{T}}
\newcommand{\Sup}{\operatorname{sup}_0}
\newcommand{\ent}{\underline{\operatorname{ent}}}
\newcommand{\aut}{\operatorname{Aut}}
\newcommand{\id}{\operatorname{Id}}

\newcommand{\length}{\operatorname{length}}
\title{Isomorphisms and slow entropy of deterministic $[T,T^{-1}]$ systems}
\author{Nicanor Carrasco-Vargas}
\thanks{Faculty of Mathematics and Computer Science, Jagiellonian University, Kraków, Poland}
\date{}

\begin{document}
\begin{abstract}
We study skew product measure-preserving systems driven by an irrational rotation and the step function with values $1$ and $-1$ on each half of the circle. These systems can be seen as a deterministic version of Kalikow's $[T,T^{-1}]$ system, and are particular instances of Rokhlin cocycle extensions. 

Under the assumption of ergodicity, we show that these systems obey an interesting rigidity property for isomorphisms. That is, we show that two skew products are isomorphic if and only if the rotations in the base are isomorphic, and the fiber transformations are flip isomorphic. Under mild extra assumptions, we are able to characterize all isomorphisms.  

We prove that by choosing the rotation angle suitably, these systems can realize arbitrarily low measure-theoretic complexity in the sense of lower slow entropy. We apply this result to obtain new examples of systems that fail the variational principle for slow entropy. These examples can have a set of invariant measures as rich as desired (any metrizable Choquet simplex, up to affine homeomorphism). 
\end{abstract}

\maketitle
\section{Introduction}\label{sec:introduction}
\begin{figure}[h]
	\centering
	\begin{tikzpicture}[x=5cm,y=0.6cm]
		\def\c{0.5} 
		
		\draw[gray] (0,0) -- (1,0);
		
		\draw[very thick] (0,0) -- (0.5-\c,0);
		\draw[very thick] (0.5-\c,1) -- (0.5,1);
		\draw[very thick] (0.5,0) -- (1-\c,0);
		\draw[very thick] (1-\c,-1) -- (1,-1);
	\end{tikzpicture}
	\caption{The step function   $\tau\colon\T\to\Z$.}
	\label{fig:tau}
\end{figure}
We study the following measure-preserving transformations, which are a particular class of Rokhlin cocycle extensions (see \cite{lemanczyk_ergodicity_2001,lemanczyk_lifting_2012b}). Let $\T=[0,1)$ be the additive group of reals modulo 1, and consider the step function $\tau$ taking values $1$ and $-1$ on two different halves of $\T$:
\[\tau(\theta)=\mathbf 1_{[0,1/2)}(\theta)-\mathbf 1_{[1/2,1)}(\theta)\] 
Let $ (\T,\mathcal B_{\T},m_{\T},R_{\alpha})$ be an irrational rotation, and let $(X,\mathcal B_X,\mu,T)$ be an invertible measure-preserving system, where $(X,\mathcal B_X,\mu)$ is a standard Borel probability space. We consider the automorphism   $R_{\alpha}\rtimes_\tau T$ of the product space  $(\T\times X,\mathcal B_{\T}\otimes \mathcal B_X,m_{\T}\times\mu)$ given by 
\begin{equation}\label{eq:skew-product-definition}
(\theta,x)\mapsto (R_{\alpha}(\theta),T^{\tau(\theta)}(x))
\end{equation}
One can regard this transformation as a deterministic version of the well-studied $[T,T^{-1}]$ system, in which one applies $T$ or its inverse $T^{-1}$ independently and with equal probabilities; the base system is a Bernoulli shift. $[T,T^{-1}]$ transformations have played an important role by providing examples of Kolmogorov  transformations that are not Bernoulli  \cite{kalikow_1_1982}; see also \cite{katok_smooth_1981,rudolph_asymptotically_1988,denhollander_mixing_1997, kanigowski_nonequivalence_2018, dolgopyat_flexibility_2022}. Let us mention that distinguishing two of these systems up to isomorphism is difficult. A highly nontrivial theorem states that if  $[S,S^{-1}]$ and $[T,T^{-1}]$ are isomorphic, then $S$ and $T$ need to have the same entropy \cite{heicklen_entropy_2000,ball_entropy_2003,austin_scenery_2014}. While the $[T,T^{-1}]$ system ``remembers'' the entropy of $T$, it also ``forgets'' other dynamical characteristics. For instance, $[T,T^{-1}]$ can be isomorphic to $[S,S^{-1}]$ with $T$ an irrational rotation and $S$ a zero entropy mixing transformation   \cite{burton_mixingt_1983,leuridan_when_2024}.

Our main result concerns the isomorphism problem for the skew products defined in \Cref{eq:skew-product-definition}, under the assumption of ergodicity. We show that two such skew products are isomorphic exactly when the base rotations are isomorphic, and the fiber transformations are flip isomorphic. Recall that two systems are \emph{flip isomorphic} if one of them is isomorphic to the other, or to the inverse of the other. 
\begin{maintheorem}[\Cref{thm:isomorphisms-section}]\label{thm:isomorphisms}
    Let $\alpha,\beta\in \R\smallsetminus \Q$, and let $(Y,\mathcal B_Y,\nu,S)$ and $(X,\mathcal B_X,\mu,T)$ be ergodic invertible measure-preserving systems with nonatomic measures. Then $R_{\alpha}\rtimes_\tau S$ is isomorphic to $R_{\beta}\rtimes_\tau T$ if and only if $S$ is flip isomorphic to $T$ and $\beta=\pm \alpha\mod 1$. 
\end{maintheorem}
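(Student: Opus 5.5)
The easy direction is by explicit conjugacies. If $\phi\colon Y\to X$ is an isomorphism with $\phi S=T\phi$, then $\id\times\phi$ conjugates $R_\alpha\rtimes_\tau S$ to $R_\alpha\rtimes_\tau T$. If instead $\phi S=T^{-1}\phi$, I will use the half-turn $\theta\mapsto\theta+1/2$. It commutes with $R_\alpha$ and satisfies $\tau(\theta+1/2)=-\tau(\theta)$ off a null set, so $(\theta,x)\mapsto(\theta+1/2,\phi(x))$ conjugates $R_\alpha\rtimes_\tau S$ to $R_\alpha\rtimes_\tau T$. For $\beta=-\alpha$, the reflection $\theta\mapsto -\theta$ conjugates $R_\alpha$ to $R_{-\alpha}$. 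It sends $\tau$ to $\tau(-\theta)$, which equals $-\tau(\theta)$ up to a null set. The map $(\theta,x)\mapsto(-\theta,x)$ then conjugates $R_\alpha\rtimes_\tau S$ to $R_{-\alpha}\rtimes_{\tau} S^{-1}$, which reduces this case to the previous ones.

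For the hard direction, let $\Phi\colon R_\alpha\rtimes_\tau S\to R_\beta\rtimes_\tau T$ be an isomorphism. The first step is to show that $\Phi$ maps the base factor $\mathcal B_{\T}\otimes\{\emptyset,Y\}$ onto the corresponding base factor, so that $\Phi$ has the form $(\theta,y)\mapsto(\psi(\theta),\Phi_\theta(y))$. Since the skew product is ergodic, the factor map to the base is determined intrinsically if I can identify the base with a canonical factor. I would try the Kronecker factor, i.e.\ the eigenvalues. Concretely, I will show that every $L^2$ eigenfunction of $R_\alpha\rtimes_\tau S$ depends only on $\theta$. To do this, expand an eigenfunction $F(\theta,y)$ via the spectral measure of $S$. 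Writing the equation $F(\theta+\alpha,T^{\tau(\theta)}y)=\lambda F(\theta,y)$ fiberwise gives a cocycle equation on $\T$ with values in the unitary group of $L^2(Y)$ generated by $U_S$. For a nontrivial spectral component $z\neq1$ of $S$, this becomes $z^{\tau}$ being a multiplicative coboundary up to $\lambda$. I expect to rule this out by showing that $e^{2\pi i t\tau}$ is not cohomologous to a constant for the step function $\tau$ unless $t\in\Z$. The tool is a standard Denjoy–Koksma / essential values argument: along denominators $q_n$ of $\alpha$, the Birkhoff sums $\tau^{(q_n)}$ take values in $\{0,\pm2\}$ with positive proportions. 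This uses ergodicity of the skew product, which for $R_\alpha\rtimes_\tau S$ with nonatomic $S$ should follow from $\tau$ being non-coboundary with essential values generating $2\Z$, combined with the fact that $S^2$ is ergodic.

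This is the main obstacle. The trouble is that the Kronecker factor may be larger than the base if $S$ has eigenvalues, e.g.\ eigenvalue $-1$. Then the argument must use instead a relative property: the base is the maximal factor over which the extension is compact, or the base is characterized by rigidity or centralizer properties of the cocycle.

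Given the base is preserved, $\psi$ is an isomorphism of rotations, so $\beta=\pm\alpha$ and $\psi(\theta)=\pm\theta+c$. Composing with the reflection conjugacy above, I may assume $\psi(\theta)=\theta+c$. The fiber maps then satisfy $\Phi_{\theta+\alpha}\circ S^{\tau(\theta)}=T^{\tau(\theta+c)}\circ\Phi_\theta$. Iterating along $q_n$ and using rigidity $R_\alpha^{q_n}\to\id$, I get that $\Phi_\theta S^{\tau^{(q_n)}(\theta)}\approx T^{\tau^{(q_n)}(\theta+c)}\Phi_\theta$ for most $\theta$. Comparing the sets where these Birkhoff sums equal $\pm2$ forces $c\in\{0,1/2\}$. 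It then yields $\Phi_\theta S^2=T^{\pm2}\Phi_\theta$, and with more care $\Phi_\theta S=T^{\pm1}\Phi_\theta$ for a.e.\ $\theta$. Fixing a good $\theta$ gives the flip isomorphism between $S$ and $T$.
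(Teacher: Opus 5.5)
Your easy direction is correct, but the hard direction has a genuine gap exactly where you flag the ``main obstacle'', and a structural characterization of the base cannot close it. First, your lemma is false as stated: $e^{2\pi i t\tau}$ is cohomologous to a constant also for $t=1/2$, since $e^{\pi i\tau}\equiv-1$. So when $S$ has eigenvalue $-1$, the eigenvalue group is $\{\pm e^{2\pi ik\alpha}\}$ and the Kronecker factor is $R_\alpha\times R_1$. Eigenvalues then only give $\beta\in\{\pm\alpha,\pm\alpha+1/2\}$.

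For $\beta=\alpha+1/2$, the universal property of the Kronecker factor forces the first coordinate of any isomorphism to be $\theta+\iota_S(y)/2+c$, where $\iota_S\colon Y\to\Z/2\Z$ is the factor onto $R_1$. This depends on $y$, so the base is \emph{not} preserved. Nonatomicity is what rules this out: $(\theta,x)\mapsto(\theta+x/2,x)$ is an isomorphism from $R_\alpha\rtimes_\tau R_1=R_\alpha\times R_1$ to $R_{\alpha+1/2}\rtimes_\tau R_1$. Any proof must therefore use $S^2\neq\id$ essentially, and your step ``given the base is preserved, $\beta=\pm\alpha$'' is circular.

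The paper (\Cref{thm:different-angles}) works directly with the twisted first coordinate. Using $\tau\circ R_{1/2}=-\tau$, it absorbs the $y$-dependent exponent of $T$ into involutions $W_k$. Each $W_k$ acts as $T^{k}$ on one of the two sets cut out by $\iota_T$ and as $T^{-k}$ on the other. This yields $(\Phi_\theta^{-1}\Phi_{\theta+n\alpha}S)^2=\id$ whenever $n$ is odd and $\tau(n,\theta)=1$. Letting $n$ run along odd $q_n$ with $\Phi_{\theta+q_n\alpha}\to\Phi_\theta$ gives $S^2=\id$, a contradiction. For $\beta=\alpha$ the missing fact is easier: every factor map $R_\alpha\times R_1\to R_\alpha$ is $(\theta,x)\mapsto\theta+c$, so the first coordinate is $\theta+c$ (\Cref{prop:isomorphism-nice-first-coordinate}).

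The fiber step is also wrong as sketched. The constant $c$ is not forced into $\{0,1/2\}$: composing $\Phi$ with $(R_\alpha\rtimes_\tau T)^k$ shifts $c$ by $k\alpha$. More importantly, even return times only give $\Phi_\theta S^2\Phi_\theta^{-1}=T^{\pm2}$. This does not determine $\Phi_\theta S\Phi_\theta^{-1}$, since square roots of $T^2$ are not unique, and $S^2\cong T^2$ does not imply flip isomorphism. Moreover, the convergent denominators need not contain infinitely many even ones; for $[0;1,2,2,\dots]$ they are all odd.

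The ``more care'' is the actual argument (\Cref{prop:ell-existence}), in four steps:
\begin{enumerate}
\item Take odd $q_n$ with $|\alpha-p_n/q_n|\le 1/(2q_n^2)$. Then $\tau(q_n,\cdot)\in\{\pm1,\pm3\}$, and by Aaronson--Keane $\tau(q_n,\theta)=1$ infinitely often for a.e.\ $\theta$.
\item Pass to a subsequence with $\Phi_{\theta+q_n\alpha}\to\Phi_\theta$ in the weak topology.
\item Pigeonhole the value $\ell$ of $\tau(q_n,\theta+c)$; ergodicity of $R_\alpha$ makes $\ell$ a.e.\ constant. This gives $\Phi_\theta S=T^\ell\Phi_\theta$, and symmetrically $\Phi_\theta S^\kappa=T\Phi_\theta$.
\item Aperiodicity of $S$ then forces $\ell\kappa=1$, so $\ell=\pm1$.
\end{enumerate}
Two minor points: $\mathcal E(\tau)=\Z$, not $2\Z$; and ergodicity of the skew product does not need $S^2$ ergodic, which fails precisely in the problematic case.
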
 
Thus there is an important contrast between the $[T,T^{-1}]$ system and its deterministic version, and the possibilities for isomorphisms are much more rigid in the deterministic case. We also remark that the analogous statement is false for direct products: it is possible to have an isomorphism between $R_{\alpha}\times S$ and $R_{\alpha}\times T$, while $S$ and $T$ are not isomorphic or flip isomorphic \cite{gerber_nonclassifiability_2025}.

In some cases we are able to characterize all possible isomorphisms between two ergodic skew products   $R_{\alpha}\rtimes_{\tau} S$ and $R_{\alpha}\rtimes_{\tau} T$ with the same rotation in the base. Let $\ell\in\{-1,1\}$ and $c\in \T$, and consider the step function $\zeta\colon\T\to\Z$ given by
\[\zeta(\theta)=\tau(\theta+c)-\ell\tau(\theta)\]
Depending on the values of $\ell$ and $c$, it is possible that $\zeta$ is a coboundary, meaning that $
\zeta=f\circ R_{\alpha}-f$ almost everywhere for some measurable $f\colon\T\to\Z$. If this is the case, then we can define an isomorphism from $R_{\alpha}\rtimes_{\tau} S$ to $R_{\alpha}\rtimes_{\tau} T$ by 
\begin{equation}\label{eq:isomorphisms-coboundary}
(\theta,x)\mapsto (\theta+c,T^{f(\theta)}(\upsilon(x))),
\end{equation}
where $\upsilon$ is any isomorphism from $S$ to $T^{\ell}$ (\Cref{prop:easy-isomorphisms-cohomology-section}). Under the assumption that $\alpha$ has bounded type, or that $S$ is not a rigid transformation, we are able to prove that these are the only isomorphisms (\Cref{thm:isomorphisms-characterization}). 


Our main tool to prove \Cref{thm:isomorphisms} and related results is a fixed point method involving the odd denominators of convergents in the continued fraction expansion of $\alpha$. The same method was previously used to prove that $\tau$ defines an ergodic cocycle. This is equivalent to the statement that the cylinder flow or group extension defined by $\tau$ and $R_{\alpha}$ is ergodic, see \cite[Theorem 1.2]{aaronson_visitors_1982} and \cite[page 5]{conze_ergodicite_1976}. We adapt some of these ideas to equations in $\aut(X,\mathcal B_X,\mu)$  parametrized by elements in $\T$. 

Our next results concern the slow entropy of the same skew products. \textit{Slow entropy} is a family of entropy-type invariants introduced by Katok and Thouvenot \cite{katok_slow_1997} with the purpose of studying low complexity dynamical systems, and it can be applied both to measure-preserving and topological dynamical systems.  The measure-theoretic (resp. topological) slow entropy of a transformation is obtained by comparing a family of rate functions, called a \textit{scale}, with the number of Hamming pseudoballs (resp. Bowen balls) that are needed to cover most of the space (resp. the whole space). Slow entropy has two versions: lower slow entropy, which has a liminf in its definition, and upper slow entropy, which has a limsup. In this work, slow entropy always refers to \textit{lower} slow entropy, which we denote by the underlined term $\ent$. 

The next theorem shows that the systems that we study can have arbitrarily low measure-theoretic complexity in the sense of slow entropy. We highlight that the next statement is  uniform over $T$. 
\begin{maintheorem}[\Cref{thm:zero-slow-entropy-section}]\label{thm:zero-slow-entropy}
For every scale $\mathbf a$ for slow entropy we can find an irrational $\alpha\in\R\smallsetminus\Q$ with the following property. For every choice of an invertible measure-preserving system  $(X,\mathcal B_X,\mu,T)$ we have
	\[
	\ent_{m_{\T}\times\mu}^{\mathbf a}(R_{\alpha}\rtimes_\tau T)=0
	\]
\end{maintheorem}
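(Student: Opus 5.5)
We need lower slow entropy zero for every scale, uniformly in $T$. Lower slow entropy uses a liminf, so it suffices to find infinitely many times $n$ at which a small number of Hamming balls covers most of the space. The natural times are the odd denominators $q_k$ of convergents of $\alpha$. By Denjoy–Koksma type control, the cocycle sum $\tau^{(q_k)}(\theta)=\sum_{j<q_k}\tau(\theta+j\alpha)$ is bounded. In fact, for odd $q_k$ with $\|q_k\alpha\|$ tiny, $\tau^{(q_k)}(\theta)\in\{\pm1\}$ except on a set of measure $O(q_k\|q_k\alpha\|)$. More strongly, for all $j\le n$ the Birkhoff sums $\tau^{(j)}(\theta)$ are determined by $\theta$ up to a bounded family of patterns.

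\textbf{Key observation.} For the skew product, the $n$-orbit of $(\theta,x)$ is $(\theta+j\alpha, T^{\tau^{(j)}(\theta)}x)$. The orbit is therefore determined by $\theta$ together with the finitely many points $T^{i}x$ for $|i|\le \max_{j\le n}|\tau^{(j)}(\theta)|$. The fiber excursions of the cocycle over a time $n$ are bounded by $M_n:=\max_{j\le n}\sup_\theta|\tau^{(j)}(\theta)|$. By Denjoy–Koksma, $M_n\le C\log n$ for bounded type, but for general $\alpha$ we only use $n$ along a sparse sequence.

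\textbf{Plan.}
\begin{enumerate}[label=(\roman*)]
\item Fix a finite partition $P$ of $\T\times X$ and $\varepsilon>0$. Approximate $P$ by a product partition $Q\times P'$, with $Q$ a partition of $\T$ into $K$ intervals and $P'$ a finite partition of $X$.
\item For times $n$ with $M_n$ small relative to $n$, the $P$-name of length $n$ of $(\theta,x)$ is determined by two pieces of data. The first is the $Q$-name of $\theta$ under $R_\alpha$, which has at most $Kn$ possibilities: a rotation has linear complexity, so $O(n)$ intervals of $\theta$ give $O(n)$ names. The second is the $P'$-name of the finite window $(T^ix)_{|i|\le M_n}$, with at most $|P'|^{2M_n+1}$ possibilities.
\item This covers the whole space (even exactly, not only in Hamming distance) with at most $Kn\cdot|P'|^{2M_n+1}$ balls.
\item Choose $\alpha$ with very rapidly growing partial quotients, arranged so that along a sequence $n_k$ we get $M_{n_k}\le g(n_k)$ for a prescribed slowly growing $g$.
\end{enumerate}

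\textbf{Choosing $\alpha$.} Given the scale $\mathbf a$ with $a_n(t)\to\infty$ for each $t>0$, choose $\alpha$ inductively. At stage $k$, the previous denominators fix a bound $M$ on the excursions up to time $q_{k}$. We then take $a_{k+1}$ so huge that at a time $n_k\approx q_{k+1}$ (or a multiple of odd $q_k$ within one big block) the count $Kn_k|P'|^{2M+1}$ is below $a_{n_k}(t_k)$ with $t_k\to0$. Partitions must be handled diagonally over a countable generating family of $P$'s, but $M$ does not depend on $T$ or $P$, only on $\alpha$. The dependence on $K$ and $|P'|$ is handled by the diagonal choice: at stage $k$ we require the bound for all partitions among the first $k$. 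This gives uniformity in $T$.

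\textbf{Main obstacle.} The hard step is the bound on excursions within a long block. For $n$ up to $a_{k+1}q_k$, write $n=mq_k+r$. Then $\tau^{(n)}=\sum_i\tau^{(q_k)}(\theta+iq_k\alpha)+\tau^{(r)}$. Since $q_k$ is odd, $\tau^{(q_k)}(\theta)$ is $\pm1$. Its sign is constant along the slow drift $\theta+iq_k\alpha$ except near $O(1)$ discontinuity points, so these sums can grow linearly in $m$ and are not bounded.

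The fix is to not insist on small $M_n$ for all $\theta$. Instead, look at times $n=q_k$ only, where $M_{q_k}$ is bounded by the previous structure, $\le \sum_{j\le k} a_j$-type quantities. Then choose $a_{k+1}$ so large that $q_{k+1}$ dwarfs $|P'|^{2M_{q_{k+1}}}$. This requires $M_{q_{k+1}}$ to depend only on $a_1,\dots,a_k$ and linearly on $a_{k+1}$.

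Since linear dependence on $a_{k+1}$ is fatal, I expect to refine as follows. Allow excluding an $\varepsilon$-measure set of $\theta$ and use Hamming (not exact) closeness. The walk $\tau^{(j)}(\theta)$ for $j\le q_{k+1}$ is, off a small set, a drift in a bounded number of directions. Along a drift the fiber point moves as $T^{\pm j}x$, whose name is the $T$-name of $x$. This yields $\varepsilon$-Hamming coverage by names parametrized by $(\theta\text{-interval},\ x\text{-window of size } M')$ with $M'$ bounded in terms of earlier data. Handling this drift is where I expect the real work to be.
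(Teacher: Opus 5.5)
Your proposal has a genuine gap, and it is the one you anticipate: the suggested fix does not work. The whole argument needs times $n$ at which, for most $\theta$, the walk $j\mapsto\tau(j,\theta)$, $j<n$, visits a set of fiber positions whose size is bounded by data fixed \emph{before} the free parameter that makes $a_n(t)$ large. Repeating blocks of odd length cannot give this. Since $\tau(q_k,\cdot)$ is odd, hence nonzero, and has constant sign along the slow drift $\theta+iq_k\alpha$, the walk moves by $\pm1$ per block, so its range grows with the number $m$ of blocks.

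Passing to Hamming balls and discarding a small set of $\theta$ does not help. Along a drift of length $m$, the fiber coordinate reads $\eta$-names of $x$ at $\sim m$ distinct positions with roughly uniform weights. Your bound must be uniform in $T$, so it must cover a Bernoulli $T$. For such $T$, the Hamming covering number at time $mq_k$ is then at least of order $e^{cm}$ with $c>0$, which is exactly the growth that must be avoided.

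The missing idea is parity. Take $q$ even and $|\alpha-p/q|<1/(s^2q^2)$. Then $\{\theta+i/q\}_{i<q}$ is invariant under $+1/2$. So for $\theta$ away from the discontinuities, the orbit segment of length $q$ has exactly $q/2$ points in each half of $\T$, and $\tau(q,\theta)=0$. Moreover $\theta,\theta+q\alpha,\dots,\theta+sq\alpha$ lie in one atom of $\bigvee_{i<q}R_\alpha^{-i}\xi$. Hence, on $q$ intervals $J$ of total measure at least $1-4/s$, the walk up to time $sq$ is its first $q$ steps repeated $s$ times (\Cref{prop:tau-qn-is-mostly-zero-for-even-iterates-iterated}). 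The range is therefore at most $q$ independently of $s$, and this yields $S^H_{\xi\times\eta}\le q|\eta|^q$ at time $sq$ for every $T$ (\Cref{prop:evil-upper-bound}). The paper realizes this with $\alpha=[0;1,1,s_1^2,s_1^2,s_2^2,s_2^2,\dots]$ and $s_k$ even, so that $q_{2k}$ is even. It then chooses $s_k$ with $\log a_{s_kq_{2k}}(1/k)>kq_{2k}$.

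There is a second, independent problem in step (iii). Counting exact $Q$-names of the rotation contributes a factor $Kn$, and $n=n_k$ is the huge time. This factor alone makes $Kn_k|P'|^{2M+1}<a_{n_k}(t_k)$ impossible for any scale with $a_n(t)=o(n)$, for instance the polynomial scale with $t<1$. In the paper the corresponding factor is $q$, the number of intervals $J$. This works because all $\theta\in J$ have identical $\xi$-names up to time $sq$, again a consequence of the periodicity.

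Arbitrary partitions of $\T$ are then absorbed in two steps. First, $R_\alpha$ has bounded Hamming covering numbers, which combined with \Cref{prop:partition-vanishes} lets one pass from $\xi\times\eta$ to $(\xi\vee\xi')\times\eta$ and hence to $\xi'\times\eta$. Second, the generator theorem for product partitions finishes the argument. No diagonalization over partitions is needed: for fixed $\eta$, the term $kq_{2k}$ eventually dominates $q_{2k}\log|\eta|+\log q_{2k}$.
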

Given a topological dynamical system and a fixed scale for slow entropy, the measure-theoretic slow entropy with respect to any invariant measure is bounded by the topological one. The variational principle may or may not hold, and nontrivial examples are known in both cases \cite{kanigowski_slow_2019, kanigowski_slow_2018, ahn_entropy_2010,dou_entropy_2022,cheng_slow_2025,banerjee_slow_2023,banerjee_slow_2023a}. We apply \Cref{thm:zero-slow-entropy} to obtain a new family of non-examples for  the variational principle for slow entropy.  

Let us now define the continuous counterpart of the transformation in \Cref{eq:skew-product-definition}. Let  $Y_{\alpha}\subset\{-1,1\}^{\Z}$ be the smallest subshift on symbols $1$ and $-1$ containing the sequence $(\tau(R^n_{\alpha}(0)))_{n\in\Z}$, and let $S_{\alpha}$ denote the shift transformation on $Y_{\alpha}$. Then $(Y_{\alpha},S_{\alpha})$ is a uniquely ergodic topological dynamical system, and we denote its unique invariant measure by $\hat m$. As measure-preserving systems, $(Y_{\alpha},\mathcal B_{Y_{\alpha}},\hat m,S_{\alpha})$ and $(\T,\mathcal B_{\T},m_{\T},R_{\alpha})$ are isomorphic. 

Consider the continuous map
\[\hat\tau\colon Y_{\alpha}\to\Z, \ y\mapsto y(0)\]
For an arbitrary invertible topological dynamical system $(X,T)$, we define the transformation  $S_{\alpha}\rtimes_{\hat \tau} T$ acting on the product space $Y_{\alpha}\times X$ by 
\begin{equation}\label{eq:skew-product-topological}
(y,x)\mapsto (S_{\alpha}(y),T^{\hat \tau(y)}(x))
\end{equation}
Thus $(Y_{\alpha}\times X,   S_{\alpha}\rtimes_{\hat \tau} T)$ is a topological dynamical system. The following known fact will be important for us. 
Here $\mathcal M(\cdot)$ stands for the set of invariant Borel probability measures of a transformation. 
\begin{theorem}[Corollary 13 in \cite{lemanczyk_ergodicity_2001}]\label{thm:product-measures}
    Let $\alpha\in\R\smallsetminus\Q$ and let $(X,T)$ be an invertible topological dynamical system. Then
    \[
\mathcal M(S_{\alpha}\rtimes_{\hat \tau} T)=\{\hat m\times\mu : \mu\in\mathcal M(T)\}
    \]
    In particular, $\mathcal M(T)$  is affinely homeomorphic to $\mathcal M (S_{\alpha}\rtimes_{\hat \tau} T)$ via the map $\mu\mapsto \hat m\times \mu$. 
\end{theorem}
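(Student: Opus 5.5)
The plan is to prove both inclusions and then read off the affine homeomorphism. For the easy inclusion, take $\mu\in\mathcal M(T)$ and a bounded Borel function $F$ on $Y_{\alpha}\times X$. Split $Y_{\alpha}$ into the two clopen cylinders $[1]$ and $[-1]$, on which $\hat\tau$ equals $1$ and $-1$ respectively. On each of them the map acts as $S_{\alpha}\times T^{\pm1}$, and both $T$ and $T^{-1}$ preserve $\mu$, so we get
\[
\int F\circ(S_{\alpha}\rtimes_{\hat\tau}T)\,d(\hat m\times\mu)=\sum_{\epsilon=\pm1}\int_{[\epsilon]}\int_X F(S_{\alpha}y,T^{\epsilon}x)\,d\mu\,d\hat m=\int F\,d(\hat m\times\mu),
\]
where the last step uses the $S_{\alpha}$-invariance of $\hat m$. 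Hence $\hat m\times\mu$ is invariant.

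For the reverse inclusion, let $\lambda$ be an invariant measure of the skew product. Its marginal on $Y_{\alpha}$ is $S_{\alpha}$-invariant, so by unique ergodicity it equals $\hat m$. Disintegrate $\lambda=\int\lambda_y\,d\hat m(y)$. Invariance gives the equivariance relation
\[
\lambda_{S_{\alpha}y}=(T^{\hat\tau(y)})_*\lambda_y \quad \text{for } \hat m\text{-a.e. } y.
\]
Transport this to the circle through the measure isomorphism $(Y_{\alpha},\hat m)\cong(\T,m_{\T})$. We obtain a measurable map $\theta\mapsto\lambda_\theta$ into the compact metrizable space $\mathcal P(X)$ of Borel probability measures, on which $\Z$ acts continuously through $T_*$. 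It satisfies $\lambda_{\theta+\alpha}=T_*^{\tau(\theta)}\lambda_\theta$.

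The goal is to show that $\theta\mapsto\lambda_\theta$ is a.e. constant, equal to some $\mu$. Then $T_*\mu=\mu$, because $\tau$ takes the value $1$ on a set of positive measure, and so $\lambda=\hat m\times\mu$.

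The key step, and the main obstacle, is this fixed-point argument. I would follow the Conze/Aaronson method mentioned in the introduction, using odd denominators $q_n$ of convergents of $\alpha$.

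1. For such $q_n$, the Denjoy--Koksma inequality and the structure of $\tau$ show that the Birkhoff sum $\tau^{(q_n)}$ equals $\pm1$ on a set whose measure is bounded below.
2. At the same time $\|q_n\alpha\|\to0$. Since $\theta\mapsto\lambda_\theta$ is measurable into a compact metric space, Lusin's theorem gives $\lambda_{\theta+q_n\alpha}\approx\lambda_\theta$ in the weak$^*$ topology for most $\theta$.
3. Combining the two with the iterated relation $\lambda_{\theta+q_n\alpha}=T_*^{\tau^{(q_n)}(\theta)}\lambda_\theta$, we get $T_*\lambda_\theta=\lambda_\theta$, or $T_*^{-1}\lambda_\theta=\lambda_\theta$, which is the same thing, on a set of positive measure.
4. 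The set $\{\theta:T_*\lambda_\theta=\lambda_\theta\}$ is invariant under $R_{\alpha}$ because $T_*$ commutes with $T_*^{\pm1}$. By ergodicity of the rotation it therefore has full measure, so $\theta\mapsto\lambda_\theta$ is itself $R_{\alpha}$-invariant and hence a.e. constant.

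Making steps 1--3 precise with weak$^*$ approximations, passing to subsequences of $n$ as needed, is where the real work lies.

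Finally, the map $\mu\mapsto\hat m\times\mu$ is affine and injective, because the $X$-marginal recovers $\mu$. It is continuous in the weak$^*$ topology, as one checks on products of continuous functions, which span a dense subspace of $C(Y_{\alpha}\times X)$ by Stone--Weierstrass. Being a continuous bijection between compact Hausdorff spaces, it is a homeomorphism.
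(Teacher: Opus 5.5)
Your proof is correct, but it does not follow the paper, which gives no argument for this statement and imports it from Lemańczyk--Lesigne. The cited route presumably rests, as with the paper's use of their Corollary 5, on ergodicity of the cocycle, $\mathcal E(\tau)=\Z$ (Aaronson--Keane, Conze), together with unique ergodicity of $(Y_{\alpha},S_{\alpha})$ and continuity of $\hat\tau$. Along that route your Steps 1--4 become a soft argument:
\begin{itemize}
\item From $\lambda_{\theta+\alpha}=T_*^{\tau(\theta)}\lambda_\theta$, the map $(\theta,k)\mapsto T_*^{-k}\lambda_\theta$ is invariant under the cylinder map $(\theta,k)\mapsto(\theta+\alpha,k+\tau(\theta))$ on $\T\times\Z$.
\item That map is ergodic precisely because $\tau$ is an ergodic cocycle, so the invariant map is a.e.\ constant.
\item This yields $\lambda_\theta=\mu$ and $T_*\mu=\mu$ at once.
\end{itemize}
That version works for any continuous ergodic cocycle over a uniquely ergodic base and any continuous action on the fiber, but it uses ergodicity of $\tau$ as a black box. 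Yours is tailored to $\tau$ and self-contained. It reproves exactly the piece of that ergodicity you need, using the same odd-denominator fixed-point method the paper uses in \Cref{thm:eigenvalues} and \Cref{prop:ell-existence}, now in $\mathcal P(X)$ instead of $L^2$ or $\aut(X,\mathcal B_X,\mu)$.

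Step 1 needs a precise justification. Denjoy--Koksma alone only gives $\tau(q_n,\cdot)\in\{-3,-1,1,3\}$. Mean zero does not bound the measure of $\{\tau(q_n,\cdot)=\pm1\}$ from below, since values $3$ and $-3$ on halves of the circle are a priori consistent with it. There are two ways to close this.
\begin{itemize}
\item Quote \Cref{prop:infinitely-many-1s}. It still applies after you pass to the subsequence of Step 2, and it then gives $T_*\lambda_\theta=\lambda_\theta$ for a.e.\ $\theta$ directly. You then no longer need the invariance of $\{\theta : T_*\lambda_\theta=\lambda_\theta\}$ in Step 4.
\item Argue directly. The up-jumps of $\tau(q_n,\cdot)$ at $-i\alpha$ lie near the grid $\{j/q_n\}$, and the down-jumps at $1/2-i\alpha$ lie near the interlaced grid $\{(2j+1)/(2q_n)\}$. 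All absolute and relative displacements are at most $(q_n-1)/(2q_n^2)<1/(2q_n)$, so the jumps alternate. Hence $\tau(q_n,\cdot)$ takes two consecutive odd values, and mean zero forces $\tau(q_n,\cdot)=\pm1$ everywhere off the jump points.
\end{itemize}
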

It follows from this result that when we endow $S_{\alpha}\rtimes_{\hat\tau} T$ with the invariant measure $\hat m\times\mu$, for $\mu\in\mathcal M(T)$, we obtain a measure-preserving system isomorphic to $R_{\alpha}\rtimes_{\tau} T$ endowed with $m_{\T}\times\mu$. 

In the next result we consider slow entropy with respect to the so-called \textit{polynomial} scale $\{n^t\}_{n\in\N,t>0}$. This scale is frequently considered in the context of slow entropy (see for instance \cite{kanigowski_slow_2019,banerjee_slow_2023,banerjee_slow_2023a}), and the corresponding topological invariant also goes under the name of \textit{polynomial entropy} \cite{roth_rigidity_2024,marco_polynomial_2013}. Applying \Cref{thm:zero-slow-entropy} to this scale, we obtain the following.
\begin{maintheorem}[\Cref{thm:variational-0-section}]\label{thm:variational-0}
    Consider the slow entropy scale $\mathbf a = \{n^t\}_{n\in\N,t>0}$. There exists  $\alpha\in\R\smallsetminus\Q$ such that for every invertible topological system $(X,T)$ we have 
    \[\ent^{\mathbf a}_{top}(S_{\alpha}\rtimes_{\hat \tau} T)\geq 1 \text{ and }  \ \ 
\sup_{\nu\in\mathcal  M(S_{\alpha}\rtimes_{\hat \tau} T)} 
\ent_{\nu}^{\mathbf a}(S_{\alpha}\rtimes_{\hat \tau} T)=0\]
\end{maintheorem}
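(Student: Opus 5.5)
Take $\alpha$ to be the irrational given by \Cref{thm:zero-slow-entropy} for the polynomial scale $\mathbf a=\{n^t\}_{n\in\N,t>0}$. The measure-theoretic half is then essentially immediate. By \Cref{thm:product-measures}, every $\nu\in\mathcal M(S_{\alpha}\rtimes_{\hat\tau}T)$ is of the form $\hat m\times\mu$ with $\mu\in\mathcal M(T)$. As observed after that theorem, $(Y_\alpha\times X,\hat m\times\mu,S_{\alpha}\rtimes_{\hat\tau}T)$ is measure-theoretically isomorphic to $R_\alpha\rtimes_\tau T$ on $(\T\times X,m_\T\times\mu)$. This is because the isomorphism between $(Y_\alpha,\hat m,S_\alpha)$ and $(\T,m_\T,R_\alpha)$ intertwines $\hat\tau$ with $\tau$ almost everywhere. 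Measure-theoretic slow entropy is an isomorphism invariant, so \Cref{thm:zero-slow-entropy}, applied to the measure-preserving system $(X,\mathcal B_X,\mu,T)$, gives $\ent^{\mathbf a}_\nu=0$ for every $\nu$. Hence the supremum is $0$. If $\mathcal M(T)=\emptyset$ the supremum is vacuous, but compactness rules this out.

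For the topological bound $\ent^{\mathbf a}_{top}\ge 1$, I would reduce to the base subshift. The projection $Y_\alpha\times X\to Y_\alpha$ is a topological factor map onto $(Y_\alpha,S_\alpha)$. Topological slow entropy is monotone under factors, since Bowen balls in the extension map into Bowen balls in the factor, up to the usual uniform-continuity adjustment of $\varepsilon$. So it suffices to show $\ent^{\mathbf a}_{top}(S_\alpha)\ge 1$, i.e. that the minimal number of $(n,\varepsilon)$-Bowen balls covering $Y_\alpha$ grows at least like $n^{1-o(1)}$.

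For a symbolic system with $\varepsilon$ small, the number of $(n,\varepsilon)$-Bowen balls is comparable to the number of words of length about $n$. This is the complexity function $p(n)$ of $Y_\alpha$. The coding sequence $\tau(R_\alpha^k\theta)$ is determined by the two-interval partition $\{[0,1/2),[1/2,1)\}$. Length-$n$ words correspond to the atoms of the partition of $\T$ cut by the points $-k\alpha$ and $1/2-k\alpha$ for $0\le k<n$. These $2n$ points are pairwise distinct unless $1/2\in\Z\alpha+\Z$, which is impossible since $\alpha$ is irrational. So $p(n)$ grows linearly, roughly like $2n$, and in any case $p(n)\ge n+1$ because $Y_\alpha$ is aperiodic and infinite (Morse--Hedlund). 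Each $(n,\varepsilon)$-Bowen ball with $\varepsilon$ below the cylinder scale is contained in a cylinder of length about $n$. The covering number is therefore at least $p(n)\gtrsim n$, so $\liminf \log N(n,\varepsilon)/\log n\ge 1$, giving $\ent^{\mathbf a}_{top}\ge 1$.

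The main obstacle is bookkeeping with the precise definition of topological slow entropy, which the paper has not yet stated: one must check that Bowen balls in the chosen metric on $\{-1,1\}^\Z$ are contained in cylinders, possibly two-sided with a fixed margin, and that the factor-monotonicity argument is compatible with taking $\liminf$ in $n$ followed by $\varepsilon\to0$. I expect both to be routine once the definition is fixed. The real content of the result is \Cref{thm:zero-slow-entropy}, which we are allowed to assume.
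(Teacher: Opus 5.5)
Your proposal is correct and follows the paper's proof step by step. You take $\alpha$ from \Cref{thm:zero-slow-entropy} for the polynomial scale, and you get the measure-theoretic half from \Cref{thm:product-measures}, the isomorphism between $\hat m\times\mu$ and $m_{\T}\times\mu$, and isomorphism invariance. You get the topological bound from monotonicity under the factor map onto $(Y_\alpha,S_\alpha)$ together with the linear lower bound on $|\mathcal L_n(Y_\alpha)|$. The paper uses Morse--Hedlund for that bound, and your direct count of the $2n$ cut points is an equally valid substitute matching \Cref{prop:cardinality-language-subshift}. Your ``bookkeeping'' concern about reducing Bowen-ball covering numbers to $|\mathcal L_n|$ is handled in the paper by \Cref{prop:topological-slow-entropy-subshifts}.
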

This family of non-examples of the variational principle for slow entropy can have a set of invariant measures as rich as desired (any metrizable Choquet simplex, up to affine homeomorphism). By \Cref{thm:product-measures}, it suffices to vary $T$.

Let us mention that the skew product  $S_{\alpha}\rtimes_{\hat\tau} T$ admits no ergodic invariant measure that makes it a Kronecker system (a rotation in a compact abelian group), provided $(X,T)$ has no fixed point and no periodic orbit of length two (\Cref{prop:discrete-spectrum-measures}). In this sense, our non-examples are different from previously known ones, which have the property that all their ergodic invariant measures yield Kronecker systems.  In fact, previous non-examples are constructed using the characterization of ergodic systems with vanishing slow entropy at all scales, which are precisely the Kronecker systems. See \cite[Proposition 3]{ferenczi_measuretheoretic_1997}, \cite[\S 4.5]{kanigowski_survey_2024}, \cite[\S 5]{cheng_slow_2025}, or \cite{lott_relative_2023}.

Given $\alpha\in\R\smallsetminus\Q$, we shall also be interested in the slow entropy scale $\mathbf b=\{b_n(t)\}_{n\in\N,t>0}$  defined by 
\[
b_n(t)=\sum_{w\in\mathcal L_n(Y_{\alpha})}e^{r_n(w)t}
\]
Here $\mathcal L_n(Y_{\alpha})$ is the set of words of length $n$ in $Y_{\alpha}$. For $w\in\mathcal L_n(Y_{\alpha})$,  $r_n(w)$ denotes the number of places visited by a ``walker'' in $\Z$ driven by $w$ (see \Cref{eq:def-r-n-symbolic}). This scale was introduced in  \cite{carrascovargas_topological_2025} to capture the contribution of $(X,T)$ to $(Y_{\alpha}\times X, S_{\alpha}\rtimes_{\hat \tau} T)$ in the sense of topological complexity. That is, it is proved in \cite{carrascovargas_topological_2025} that for every $\alpha\in\R\smallsetminus\Q$ and for every choice of $(X,T)$ we have 
\[\ent_{top}^{\mathbf b}(S_{\alpha}\rtimes_{\hat \tau} T)=h_{top}(T)\]
A question left in \cite{carrascovargas_topological_2025} was whether the same scale could capture the contribution of $(X,T)$ to  $(Y_{\alpha}\times X, S_{\alpha}\rtimes_{\hat \tau} T)$ in the sense of measure-theoretic complexity. Using \Cref{thm:zero-slow-entropy} we are able to show that, at least for some irrationals, this is not the case. In fact, we have the opposite situation, and the same scale  provides non-examples of the variational principle whenever $h_{top}(T)>0$.  
\begin{maintheorem}[\Cref{thm:variational-section}]\label{thm:variational}
There exists $\alpha\in\R\smallsetminus\Q$ with the following property. For every invertible topological system $(X,T)$ we have 
\begin{align*}
\sup_{\nu\in\mathcal  M(S_{\alpha}\rtimes_{\hat \tau} T)} 
\ent_{\nu}^{\mathbf b}(S_{\alpha}\rtimes_{\hat \tau} T)=0
\end{align*}
\end{maintheorem}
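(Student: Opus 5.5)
The plan is to apply \Cref{thm:zero-slow-entropy} to the scale $\mathbf b$, with one difficulty: $\mathbf b$ depends on $\alpha$, whereas \Cref{thm:zero-slow-entropy} produces $\alpha$ from a scale given in advance. So the first step is to bound $\mathbf b$ from above by a scale that does not depend on $\alpha$. Every $w\in\mathcal L_n(Y_\alpha)$ has $r_n(w)\le n+1$, and $\#\mathcal L_n(Y_\alpha)\le C n$ for all $\alpha$, since $Y_\alpha$ is a coding of a rotation by a partition into two intervals and so has linear complexity (at most $2n$ words of length $n$). Hence
\[
b_n(t)\le 2n\,e^{(n+1)t}=:a_n(t).
\]
The family $\mathbf a=\{a_n(t)\}$ is increasing in $n$ and $t$ and tends to infinity, so it is a legitimate scale independent of $\alpha$.

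Next I will check monotonicity of slow entropy with respect to domination of scales. Lower slow entropy is the supremum of those $t$ for which $\liminf_n S(\epsilon,n)/a_n(t)>0$, where $S(\epsilon,n)$ is the minimal number of Hamming balls needed to cover. If $b_n(t)\le a_n(t)$ for all $n$ and $t$, then $\liminf_n S/b_n(t)\ge \liminf_n S/a_n(t)$. This gives $\ent^{\mathbf b}\ge \ent^{\mathbf a}$, which points the wrong way. The comparison needs the opposite inequality: I need $a_n(t)\le b_n(t')$ for some $t'$ that tends to $0$ as $t$ tends to $0$.

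A lower bound on $b_n$ is available and is uniform enough. Since $r_n(w)\ge 1$, we get $b_n(t)\ge e^{t}\#\mathcal L_n\ge n$, and this is too weak. I will instead choose the comparison scale differently. Take $\mathbf a$ to be an arbitrary \emph{very slow} scale, for example $a_n(t)=\log\log n\cdot t$, or more precisely any scale that satisfies $a_n(t)\le n\le b_n(t)$ for every $t>0$ and every $\alpha$. Then for every $t>0$ we have $S(\epsilon,n)/b_n(t)\le S(\epsilon,n)/a_n(t)$, and the key fact is that \Cref{thm:zero-slow-entropy} (as proved) gives, for the chosen $\alpha$, $\liminf_n S(\epsilon,n)/a_n(t)=0$ for all $t>0$. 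Indeed, vanishing lower slow entropy for $\mathbf a$ means exactly that for each $t>0$ and small $\epsilon$ this liminf is $0$. Therefore the liminf against $b_n(t)$ is also $0$ for every $t$, and so $\ent^{\mathbf b}_{m_\T\times\mu}(R_\alpha\rtimes_\tau T)=0$ for every invariant $\mu$ of $T$. One point to confirm is the normalization of $\ent$ as $\sup\{t:\liminf>0\}$, with value $0$ when the set is empty. Another is that the domination $a_n(t)\le b_n(t)$ must hold for all $n$, or at least along a tail; this can be arranged by choosing $a_n(t)=\min(n, g_n(t))$ for any scale $g$, so the needed properties of a scale are preserved.

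Finally I transfer the result to the topological system. By \Cref{thm:product-measures}, every $\nu\in\mathcal M(S_\alpha\rtimes_{\hat\tau}T)$ has the form $\hat m\times\mu$, and the resulting system is measure-isomorphic to $R_\alpha\rtimes_\tau T$ with $m_\T\times\mu$. Slow entropy is an isomorphism invariant, so $\ent^{\mathbf b}_\nu=0$ for all such $\nu$, and the supremum is $0$. This also covers non-ergodic $\mu$, since \Cref{thm:zero-slow-entropy} places no ergodicity assumption on $T$. The main obstacle I expect is the scale-comparison step: one must make sure that the $\alpha$ supplied by \Cref{thm:zero-slow-entropy} does not need to depend on $\mathbf b$, which is why the comparison scale has to be dominated by $\inf_\alpha b_n(t)\ge n$ uniformly in $\alpha$.
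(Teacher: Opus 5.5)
Your final argument is correct and is essentially the paper's proof. Like the paper, you bound $\mathbf b$ from below by an $\alpha$-independent scale, apply \Cref{thm:zero-slow-entropy} to that scale, note that a pointwise larger scale gives smaller lower slow entropy, and transfer the conclusion via \Cref{thm:product-measures} and the measure isomorphism. The paper simply takes $a_n(t)=2ne^{t}\le b_n(t)$. So your remark that the bound $b_n(t)\ge e^{t}\#\mathcal L_n(Y_\alpha)$ is ``too weak'' is mistaken: it is exactly what is needed. Also, your example $t\log\log n$ fails $a_n(t)\le n$ for large $t$, though your $\min(n,g_n(t))$ construction repairs this.
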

Unlike \Cref{thm:variational-0}, we cannot obtain \Cref{thm:variational} by a straightforward application of \Cref{thm:zero-slow-entropy}. This is because the scale $\mathbf{b}$ depends on $\alpha$. But this difficulty admits an elementary solution; see the proof of \Cref{thm:variational-section}.

As we have seen, in the family of skew products $R_{\alpha}\rtimes_{\tau} T$ or $S_{\alpha}\rtimes_{\hat \tau} T$, it is interesting to vary the rotation angle $\alpha\in\R\smallsetminus\Q$, the fiber transformation $(X,\mathcal B_X,\mu,T)$ or $(X,T)$, and a slow entropy scale, in  different orders. Theorems \ref{thm:variational-0} and \ref{thm:variational} are particular instances of this. We finish this introduction by pointing out some other interesting consequences. 
\begin{remark} If we  apply \Cref{thm:zero-slow-entropy} to an arbitrary slow entropy scale, and then apply \Cref{thm:isomorphisms}, we obtain a proof that for every slow entropy scale, there are uncountably many non-isomorphic ergodic systems whose slow entropy is zero with respect to that scale. The existence of arbitrarily slow systems in this sense was already known, see  \cite[Corollary 4.10.3]{kanigowski_survey_2024} and \cite[Theorem 2.15]{banerjee_slow_2023}. 
\end{remark}

\begin{remark}
\Cref{thm:zero-slow-entropy} cannot be improved to the existence of a single irrational that satisfies the same conclusion for all slow entropy scales. Indeed, let $(X,\mathcal B,\mu,T)$ be an ergodic measure-preserving system which is neither the one-point system nor the two-point system. \Cref{thm:zero-slow-entropy} shows that if we fix a slow entropy scale $\mathbf a$ and we make $\alpha\in\R\smallsetminus \Q$ vary, then some choice of $\alpha$ satisfies $\ent^{\mathbf a}_{m_{\T}\times\mu}(R_{\alpha}\rtimes_{\tau} T)=0$. However, for any choice of $\alpha$ we have some scale $\mathbf c$ with $\ent^{\mathbf c}_{m_{\T}\times\mu}(R_{\alpha}\rtimes_{\tau} T)>0$. This follows from Ferenczi's Theorem (see \cite[Proposition 3]{ferenczi_measuretheoretic_1997}, \cite[\S 4.5]{kanigowski_survey_2024}, \cite[\S 5]{cheng_slow_2025}, or \cite{lott_relative_2023}), and the fact that  $(\T\times X,\mathcal B_{\T}\otimes \mathcal B_X,m_{\T}\times\mu,R_{\alpha}\rtimes_{\tau} T)$ is not Kronecker, which we prove in \Cref{prop:kronecker-characterization}. 
\end{remark}
\begin{remark}
	In \cite{dou_entropy_2022} the authors study the same class of skew products with the goal of realizing values of entropy dimension. They prove that for fixed $T$ with positive entropy, all entropy dimensions between 0 and 1 are realized in the family of systems $\{R_{\alpha}\rtimes_{\tau} T : \alpha\in\R\smallsetminus\Q\}$. This statement and  \Cref{thm:zero-slow-entropy} complement each other, showing that the same family realizes a rich class of sub-exponential complexities. We also remark that by \Cref{thm:isomorphisms}, their main result \cite[Theorem 5.2]{dou_entropy_2022} actually shows the existence of uncountably many non-isomorphic systems with a prescribed value of entropy dimension. 
\end{remark}
\subsection{Manuscript organization} Preliminaries are in \Cref{sec:preliminaries}.
\Cref{thm:isomorphisms} is proved in Sections \ref{sec:cocycles} - \ref{sec:proof-of-isomorphism-theorem}, and \Cref{thm:zero-slow-entropy} and its applications are proved in Sections \ref{sec:ranges} - \ref{sec:variational}.

The proof of \Cref{thm:isomorphisms} is organized as follows. In \Cref{sec:cocycles} we review terminology and relevant results about cocycles over irrational rotations. Next, in \Cref{sec:easy-isomorphisms} we prove the easy direction of our results about isomorphisms. In \Cref{sec:kronecker} we determine the eigenvalues of the Koopman operator of $R_{\alpha}\rtimes_{\tau} T$.  Then the nontrivial direction of \Cref{thm:isomorphisms} is proved in two parts. We show the following:
\begin{itemize}
    \item An isomorphism between $R_{\alpha}\rtimes_{\tau} S$ and $R_{\alpha}\rtimes_{\tau} T$ implies that $S$ and $T$ are flip isomorphic (\Cref{sec:isomorphisms}). 
    \item An isomorphism between $R_{\alpha}\rtimes_{\tau} S$ and $R_{\beta}\rtimes_{\tau} T$ implies $\alpha=\pm\beta\mod 1$ (\Cref{sec:different-angles}). 
\end{itemize}
In \Cref{sec:proof-of-isomorphism-theorem} we obtain  \Cref{thm:isomorphisms} and related results. 

Then we turn to the proof of \Cref{thm:zero-slow-entropy} and its applications. The main construction is done in  \Cref{sec:ranges}, where we study ranges of the walks driven by the ergodic sums of $\tau$. We prove upper bounds for them associated with convergents of $\alpha$ with an even denominator. In \Cref{sec:slow-entropy} we use these results to obtain \Cref{thm:zero-slow-entropy}. Finally, in \Cref{sec:variational}, we turn to topological dynamical systems, and obtain  \Cref{thm:variational-0} and \Cref{thm:variational} about non-examples of the variational principle for slow entropy.
\subsection{Acknowledgements}
I thank A. Kanigowski for introducing me to this problem and for guidance and discussions during this research project. I also thank M. Lemańczyk for comments on a previous version of \Cref{thm:isomorphisms} which led to an important strengthening, and for posing to me the problem of characterizing all isomorphisms  (\Cref{thm:isomorphisms-characterization}). 

This work was supported by a grant from the Priority Research Area SciMat under the Strategic Programme Excellence Initiative at Jagiellonian University, the Simons Foundation grant (award no. SFI-MPS-T-Institutes-00010825), and State Treasury funds as part of a task commissioned by the Minister of Science and Higher Education under the project “Organization of the Simons Semesters at the Banach Center - New Energies in 2026-2028” (agreement no. MNiSW/2025/DAP/491).
\subsection{AI statement} The prose of this text is human-generated, and  ChatGPT was used to find typos and small mistakes. All theorems are human-generated, with the exception of the proof of item (2) in \Cref{prop:easy-automorphisms}. This argument was pointed out to me by Kimi K2.5, simplifying a previous attempt. 
\section{Preliminaries}\label{sec:preliminaries}
In what follows $(X,\mathcal B_X,\mu)$ always denotes a standard Borel probability space. By this we mean that $\mathcal B_X$ is the Borel sigma algebra of a Polish topology on $X$, and $\mu$ is a Borel probability measure on $X$. We often take $\mu$ nonatomic, but not always. 

By an automorphism $T$ of $(X,\mathcal B_X,\mu)$ we mean a bi-measurable invertible map which preserves the measure. We also say that $T$ is an invertible measure-preserving transformation on $(X,\mathcal B_X,\mu)$, and we refer to $(X,\mathcal B_X,\mu,T)$ as an invertible measure-preserving system.

Given two measurable functions $f$ and $g$ from $(X,\mathcal B_X,\mu)$ to a measurable space $(Y,\mathcal C)$, we commit the usual notational abuse of writing $f=g$ whenever there exists a full measure subset of $X$ where $f$ and $g$ agree. 

A factor map from $(X,\mathcal B_X,\mu,T)$ onto another measure-preserving system $(Y,\mathcal C,\nu,S)$ is a surjective measurable map $\pi\colon X\to Y$ such that $\pi_*\mu=\nu$ and  $\pi\circ T = S\circ \pi$. If, in addition, $\pi$ is an isomorphism of the underlying probability spaces, then we also say it is an isomorphism of measure-preserving systems. We will often omit the probability spaces from the notation, if no ambiguity arises. Thus we may say that $T$ factors onto $S$, $T$ is isomorphic to $S$, etc.

Given $x\in \R$ we denote by $||x||\in[0,1/2]$ the distance to the nearest integer. We endow $\T=[0,1)$ with the distance $d_{\T}(\theta,\theta')=||\theta-\theta'||$, the Borel sigma algebra $\mathcal B_{\T}$, the Lebesgue or Haar measure $m_{\T}$, and the group operation of addition modulo 1. In most cases there is no ambiguity with respect to addition of real numbers, so we denote both operations by $+$ (the same applies to the notation $-$ for subtraction).

Given $a\in \T$ or $a\in\R$, we always denote by $R_{a}$ the transformation  $\theta\mapsto \theta+a$, which is an automorphism of $(\T,\mathcal B_{\T},m_{\T})$.

The following standard result will be useful to us. 
\begin{proposition}\label{prop:subsequences}
Let $\mathcal P$ be a Polish topological space and let $f\colon \T\to \mathcal P$ be a Borel measurable function. Let $(s_n)_{n\in\N}$ be a sequence of elements in $\T$ converging to zero in $d_{\T}$. Then there exists a subsequence $(s_{n_k})_{k\in\N}$ such that for almost every $\theta\in\T$ we have
\[
\lim_{k\to\infty} f(\theta+s_{n_k})=f(\theta)
\]
\end{proposition}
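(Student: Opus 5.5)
The approach is to reduce the statement to continuity of translation in $L^1$, after replacing $\mathcal P$ by a bounded metric. Fix a compatible complete metric $d$ on $\mathcal P$ and set $\rho=\min(d,1)$. This is a bounded compatible metric, so convergence in $\rho$ is the same as convergence in the Polish topology. The key claim is that
\[
\phi(s)=\int_{\T}\rho\bigl(f(\theta+s),f(\theta)\bigr)\,dm_{\T}(\theta)\longrightarrow 0 \quad\text{as } s\to 0 .
\]
Granting the claim, we have $\phi(s_n)\to 0$, so we can choose $n_k$ with $\phi(s_{n_k})<2^{-k}$. Then
\[
\sum_k \rho\bigl(f(\theta+s_{n_k}),f(\theta)\bigr)
\]
has finite integral by monotone convergence, hence is finite for almost every $\theta$. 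In particular its terms tend to $0$ almost everywhere, which is the desired conclusion.

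To prove the claim, I would approximate $f$ by a function taking finitely many values. Since $\mathcal P$ is separable, fix a dense sequence $(p_j)$. For $\varepsilon>0$, define $g(\theta)=p_{j(\theta)}$, where $j(\theta)$ is the least $j$ with $\rho(f(\theta),p_j)<\varepsilon$. This $g$ is Borel and countably valued, and $\rho(f,g)<\varepsilon$ everywhere. Next truncate: choose $N$ so that $m_{\T}\{j(\theta)>N\}<\varepsilon$, and redefine $g$ to equal $p_1$ on that set. This gives a Borel $g$ with finitely many values $p_1,\dots,p_N$ and
\[
\int\rho(f,g)\,dm_{\T}<2\varepsilon ,
\]
using $\rho\le 1$ on the exceptional set. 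By the triangle inequality and translation invariance of $m_{\T}$,
\[
\phi(s)\le 4\varepsilon+\int\rho\bigl(g(\theta+s),g(\theta)\bigr)\,dm_{\T} .
\]
The last integral is at most $\sum_{j\le N} m_{\T}\bigl((A_j-s)\,\triangle\, A_j\bigr)$, where $A_j=g^{-1}(p_j)$. Each term is $\|\mathbf 1_{A_j}\circ R_s-\mathbf 1_{A_j}\|_{L^1}$, which tends to $0$ as $s\to0$ by continuity of translation in $L^1(\T)$. That continuity follows from approximating $\mathbf 1_{A_j}$ in $L^1$ by continuous functions, which are uniformly continuous on $\T$. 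Hence $\limsup_{s\to0}\phi(s)\le 4\varepsilon$, and since $\varepsilon$ was arbitrary, the claim follows.

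The only step that needs care is the reduction to finitely valued functions. This is where separability of $\mathcal P$ and the boundedness of $\rho$ enter. Completeness of the metric is not needed: any compatible separable metric would do, because the conclusion is purely topological.
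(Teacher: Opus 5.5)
Your proof is correct. It shares the paper's outer structure: first show that $f(\cdot+s_n)\to f$ in an averaged sense, then extract an almost everywhere convergent subsequence by a summability (Borel--Cantelli) argument. The middle step, however, is done differently.

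\begin{itemize}
\item The paper applies Lusin's theorem to get a compact set $K$ of measure close to $1$ on which $f$ is uniformly continuous. It then bounds the set where $f(\theta+s_n)$ and $f(\theta)$ are far apart by $(\T\smallsetminus K)\cup(\T\smallsetminus(K-s_n))$. This gives convergence in measure for the $\mathcal P$-valued maps directly.
\item You instead truncate the metric to $\rho$ and approximate $f$ in $L^1$ by a finitely valued Borel function. This reduces everything to continuity of translation in $L^1(\T)$ for finitely many indicator functions.
\end{itemize}

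Your route avoids Lusin's theorem for maps into a Polish space. It uses only separability of $\mathcal P$ and the scalar fact that $C(\T)$ is dense in $L^1(\T)$, so it is somewhat more elementary. The paper's route is shorter given the citation, and it needs no truncation of the metric.

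The two intermediate statements are equivalent. For the bounded metric $\rho$, the condition $\int\rho(f(\theta+s),f(\theta))\,dm_{\T}\to 0$ is the same as convergence in measure. Neither argument really uses completeness, since Lusin's theorem in the form the paper cites only needs a second countable target.

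The one point you pass over is the measurability of $\theta\mapsto\rho(f(\theta+s),f(\theta))$. It holds because separability gives $\mathcal B(\mathcal P\times\mathcal P)=\mathcal B(\mathcal P)\otimes\mathcal B(\mathcal P)$. The paper leaves the analogous point implicit as well.
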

\begin{proof}
Let $d_{\mathcal P}$ be a compatible metric for the topology on $\mathcal P$. Furthermore, let us write $f_n(\theta)=f(\theta+s_n)$. We claim that the sequence $(f_n)_{n\in\N}$ converges in measure or in probability to $f$. By this we mean that for every $\epsilon>0$ we have
\begin{equation}\label{eq:convergence-in-measure}
\lim_{n\to\infty} m_{\T}(\{\theta\in \T : d_{\mathcal P}(f(\theta),f_n(\theta))>\epsilon\})=0
\end{equation}
Let $\epsilon>0$. We take $\rho>0$ and prove that for all $n$ large enough the measure of $\{\theta\in \T : d_{\mathcal P}(f(\theta),f_n(\theta))>\epsilon\}$ is at most $2\rho$.

By Lusin's Theorem (in the form of Theorem 17.12 in \cite{kechris_classical_1995}), there exists a compact set $K\subset \T$ with $m_{\T}(K)>1-\rho$ and such that the restriction of $f$ to $K$ is continuous. Since $K$ is compact, it follows that $f$ restricted to $K$ is uniformly continuous. Thanks to uniform continuity, we can pick $\delta$ such that for $\theta,\theta'\in K$
\[d_{\T}(\theta,\theta')<\delta\Rightarrow d_{\mathcal P}(f(\theta),f(\theta'))<\epsilon.\] Since $s_n\to 0$ in $\T$, we can  pick $N$ such that for all $n\geq N$ we have $d_{\T}(s_n,0)<\delta$. Since $d_{\T}$ is invariant by translations, it follows that $d_{\T}(\theta,\theta+s_n)<\delta$ for all $\theta\in \T$. By uniform continuity of $f$, provided that $\theta$ and $\theta+s_n$ belong to $K$, it follows that $d_{\mathcal P}(f(\theta),f(\theta+s_n))<\epsilon$, which is the same as $d_{\mathcal P}(f(\theta),f_n(\theta))<\epsilon$. 

Every element in $\T$ belongs to at least one of $K\cap (K-s_n)$, $\T\smallsetminus K$, and $\T\smallsetminus (K-s_n)$. Hence we can write 
\begin{align*}
    \{\theta\in \T : d_{\mathcal P}(f(\theta),f_n(\theta))>\epsilon\}\subset \{\theta\in K\cap& (K-s_n) : d_{\mathcal P}(f(\theta),f_n(\theta))>\epsilon\}\\ &\cup (\T\smallsetminus K)\cup (\T\smallsetminus (K-s_n))
\end{align*}
For $n\geq N$ every $\theta\in K\cap (K-s_n)$ satisfies $d_{\mathcal P}(f(\theta),f_n(\theta))<\epsilon$, and hence the first set on the right hand side is empty. Since $m_{\T}$ is translation invariant, both $\T\smallsetminus K$ and $\T\smallsetminus (K-s_n)$ have measure at most $\rho$.  Thus $ \{\theta\in \T : d_{\mathcal P}(f(\theta),f_n(\theta))>\epsilon\}$ has measure at most $2\rho$ as claimed. This proves \Cref{eq:convergence-in-measure}.

It is a standard fact that  convergence in probability implies that along a subsequence, we have pointwise convergence almost everywhere. Indeed, by  \Cref{eq:convergence-in-measure}, for each $k\in\N$ we can pick $n_k$ large enough so that $\{\theta\in\T : d_{\mathcal P}(f(\theta),f_{n_k}(\theta))>2^{-k}\}$ has measure at most $2^{-k}$. Since $\sum 2^{-k}<\infty$, by the Borel-Cantelli Lemma there is a full measure set of $\theta\in \T$ which belong to finitely many  $\{\theta\in\T : d_{\mathcal P}(f(\theta),f_{n_k}(\theta))>2^{-k}\}$. Equivalently,   $d_{\mathcal P}(f(\theta),f_{n_k}(\theta))\leq 2^{-k}$ for all but finitely many $k$. For these $\theta$ we have $d_{\mathcal P}(f(\theta),f_{n_k}(\theta))\to 0$ as $k\to\infty$. 
\end{proof}
\section{Cocycles over irrational rotations}\label{sec:cocycles}
In this section we will review definitions and general results for $\Z$-valued cocycles over irrational rotations. We refer the reader to \cite{schmidt_cocycles_1977} and \cite{conze_remarks_2013}. We will also state and prove results for the specific cocycles that we will encounter.  

Let $\alpha\in\R\smallsetminus\Q$. A measurable cocycle for $R_{\alpha}$ with values in $\Z$ is a measurable function $\gamma\colon\Z\times\T\to\Z$ satisfying the cocycle equation
\[
\gamma(n+k,\theta)=\gamma(n,\theta)+
\gamma(k,R^n_{\alpha}(\theta))
\]
A measurable map $\gamma'\colon\T\to\Z$ generates a cocycle by considering its ergodic sums
\[
\gamma(n,\theta)=\begin{cases}
    \sum_{i=0}^{n-1}\gamma'(R_{\alpha}^i(\theta)) \ \ &n\geq 1\\
    0 \ \ &n=0\\
    -\sum_{i=n}^{-1}\gamma'(R_{\alpha}^i(\theta)) \ \ &n\leq -1
\end{cases}
\]
Conversely, any cocycle $\gamma$ with values in $\Z$ is generated in the sense above by the function $\gamma'\colon\T\to\Z$, $\gamma'(\theta)=\gamma(1,\theta)$. We commit the notational abuse of denoting both objects by the same symbol, provided no ambiguity arises.

The cocycle $\gamma$ is said to be a coboundary whenever there is a measurable function $f\colon\T\to\Z$ such that $\gamma=f\circ R_{\alpha}-f$ almost everywhere. 

We say that $k\in\Z$ is an essential value for $\gamma$ if for every $B\subset\T$ with positive measure, there exists $n\in\Z$ such that 
\begin{equation}\label{eq:essential-value}
m_{\T}(B\cap R_{\alpha}^{-n}(B)\cap \{\theta\in \T : \gamma(n,\theta)=k\})>0
\end{equation}
Similarly, $\infty$ is an essential value for $\gamma$ if for every $k\in\N$ and $B\subset\T$ with positive measure, there exists $n\in\Z$ such that 
\begin{equation}\label{eq:essential-value-infty}
m_{\T}(B\cap R_{\alpha}^{-n}(B)\cap \{\theta\in \T : |\gamma(n,\theta)|>k\})>0
\end{equation}
The set of essential values of $\gamma$ in $\Z\cup\{\infty\}$ will be denoted $\overline{\mathcal E}(\gamma)$, and the set of its essential values in $\Z$ will be denoted by $\mathcal E(\gamma)$. Then $\mathcal E(\gamma)$ is a subgroup of $\Z$, and by a theorem of Schmidt, $\gamma$ is a coboundary if and only if $\overline{\mathcal E}(\gamma)$ equals the trivial group $\{0\}$ (\cite[\S 3]{schmidt_cocycles_1977}). We say that $\gamma$ is ergodic if $\mathcal E(\gamma)=\Z$.

As mentioned in the introduction, we will be specifically interested in the cocycle $\tau\colon\Z\times\T\to\Z$ generated by 
\[
\tau\colon\T\to\Z, \ \tau(\theta)=\mathbf 1_{[0,1/2)}(\theta)-\mathbf{1}_{[1/2,1)}(\theta)
\]
The following two results, observed by Aaronson and Keane in \cite[\S 1]{aaronson_visitors_1982}, will be particularly useful to us.
\begin{proposition}\label{prop:nice-sequence-of-odd-numbers}
    Let $\alpha\in\R\smallsetminus\Q$. There exists an increasing sequence $(q_n)_{n\in\N}$ of odd integers such that for all $n$ we have
\begin{equation}\label{eq:nice-sequence-of-odd-numbers}
    |\alpha-p_n/q_n|\leq 1/(2q_n^2)
\end{equation}
for some integer $p_n$ coprime with $q_n$. 
\end{proposition}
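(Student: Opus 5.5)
We will use the continued fraction expansion $\alpha=[a_0;a_1,a_2,\dots]$ with convergents $p_k/q_k$. Recall the classical facts: $q_{k+1}=a_{k+1}q_k+q_{k-1}$ and $p_kq_{k-1}-p_{k-1}q_k=\pm1$, so consecutive denominators are coprime, and each $p_k$ is coprime with $q_k$. We will also use Legendre's characterization in the direction that every convergent satisfies $|\alpha-p_k/q_k|<1/(q_kq_{k+1})$. The sequence of odd integers will be extracted from the denominators of the convergents.

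\emph{Step 1: infinitely many $q_k$ are odd.} Since $\gcd(q_k,q_{k+1})=1$, two consecutive denominators cannot both be even. Hence among any two consecutive indices at least one $q_k$ is odd, and there are infinitely many odd denominators.

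\emph{Step 2: the sharper bound.} From $|\alpha-p_k/q_k|<1/(q_kq_{k+1})$ we get the required inequality \eqref{eq:nice-sequence-of-odd-numbers} whenever $q_{k+1}\ge 2q_k$. This holds if $a_{k+1}\ge2$, or if $a_{k+1}=1$ and $q_{k-1}\ge q_k$, which is impossible for $k\ge1$. So the easy case is an odd $q_k$ with $a_{k+1}\ge 2$. The delicate case is $a_{k+1}=1$.

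For that case we invoke the classical theorem (Vahlen/Legendre) that among any two consecutive convergents, at least one satisfies $|\alpha-p/q|<1/(2q^2)$. This follows from the identity
\[
\left|\alpha-\frac{p_k}{q_k}\right|+\left|\alpha-\frac{p_{k+1}}{q_{k+1}}\right|=\frac{1}{q_kq_{k+1}},
\]
valid because $\alpha$ lies between consecutive convergents, together with $\frac1{q_kq_{k+1}}<\frac1{2q_k^2}+\frac1{2q_{k+1}^2}$ (AM–GM, strict since $q_k\ne q_{k+1}$ for $k\ge1$). This alone does not settle parity, because the good convergent might be the even one.

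\emph{Step 3: combining parity and approximation quality (main obstacle).} If $q_k$ is even, then $q_{k-1}$ and $q_{k+1}$ are both odd. Applying Vahlen's theorem to the pairs $(k-1,k)$ and $(k,k+1)$, it could happen that only $q_k$ is good in both pairs, so we need a finer argument. I would use the three-term bound: if $q_k$ is even and $q_{k-1}$ is bad, then $q_k$ is good with some slack. The approach I would try first is to treat $p_{k\pm1}/q_{k\pm1}$ directly. Writing $\alpha$ in terms of the complete quotient, $q_{k-1}$ satisfies the $1/(2q^2)$ bound iff $\alpha_{k}+q_{k-2}/q_{k-1}>2$, where $\alpha_k$ is the $k$-th complete quotient. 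If it fails, then $a_k=1$. One then shows the next odd denominator (either $q_{k+1}$ or an intermediate fraction $(p_{k+1}-p_k)/(q_{k+1}-q_k)$, which is odd with coprime numerator since the determinant is $\pm1$) satisfies the bound.

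As a fallback, if convergents alone do not close the argument, the intermediate fractions $(p_{k-1}+jp_k)/(q_{k-1}+jq_k)$ supply extra candidates with alternating parity when $q_k$ is odd. Coprimality is automatic for these. It then remains to check the inequality for the one nearest $\alpha$. I expect this parity-versus-quality bookkeeping, rather than any of the classical inputs, to be the only real difficulty. Finally, pass to a strictly increasing subsequence of the resulting infinitely many odd $q$'s.
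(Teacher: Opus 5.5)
The paper gives no proof of this statement; it quotes it from Aaronson and Keane \cite[\S 1]{aaronson_visitors_1982}, so your proposal has to stand on its own. As written, it does not. Step~3, which you rightly call the heart of the matter, is only a plan (``one then shows\dots''). Nothing in Steps~1--2 yet produces infinitely many odd $q_k$ satisfying the bound.

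Your fallback cannot work either. By Legendre's theorem, a reduced fraction $p/q$ with $|\alpha-p/q|<1/(2q^2)$ is a convergent. For irrational $\alpha$, equality in \eqref{eq:nice-sequence-of-odd-numbers} is impossible. Hence no intermediate fraction that is not itself a convergent can ever satisfy the inequality. Your candidate $(p_{k+1}-p_k)/(q_{k+1}-q_k)$ is either $p_{k-1}/q_{k-1}$ (when $a_{k+1}=1$) or such a non-convergent intermediate fraction. So the problem is genuinely about convergents, and what is missing is an argument that infinitely many \emph{odd} convergent denominators are good.

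That argument is short and uses only ingredients you already have. The key is to centre it on an odd $q_k$ rather than an even one.
\begin{itemize}
\item Fix $k\ge 2$ with $q_k$ odd, and suppose $p_k/q_k$ fails the bound.
\item Vahlen's theorem applied to the pairs $(k-1,k)$ and $(k,k+1)$ shows that both $p_{k-1}/q_{k-1}$ and $p_{k+1}/q_{k+1}$ satisfy it.
\item If either $q_{k-1}$ or $q_{k+1}$ is odd, you have found a good odd denominator.
\item Otherwise both are even, so $a_{k+1}q_k=q_{k+1}-q_{k-1}$ is even. Since $q_k$ is odd, $a_{k+1}$ is even, hence $a_{k+1}\ge 2$.
\item Your own ``easy case'' from Step~2 then gives $q_{k+1}>2q_k$, so $p_k/q_k$ does satisfy the bound, a contradiction.
\end{itemize}
Thus every odd $q_k$ with $k\ge 2$ has a good odd denominator among $q_{k-1},q_k,q_{k+1}$. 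Step~1 gives infinitely many odd $q_k$, and $(q_k)_{k\ge 1}$ is strictly increasing. This yields the required increasing sequence, with $\gcd(p_k,q_k)=1$ automatic.
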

\begin{proposition}\label{prop:infinitely-many-1s}
    Let $(q_n)_{n\in\N}$ be an arbitrary sequence as in  \Cref{prop:nice-sequence-of-odd-numbers}. Then for almost every $\theta\in\T$ we have $\tau(q_{n},\theta)=1$ for infinitely many values of $n$, and also $\tau(q_{n},\theta)=-1$ for infinitely many values of $n$.
\end{proposition}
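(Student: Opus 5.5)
The plan is to prove three things in turn: that $\tau(q,\theta)\in\{-1,1\}$ for every $\theta$, so that $\{\theta:\tau(q_n,\theta)=1\}$ has measure exactly $1/2$; that the $\limsup$ of these sets is $R_\alpha$-invariant modulo null sets; and then to conclude by ergodicity of $R_\alpha$. Throughout, fix one term $q=q_n$ with its $p=p_n$, and write $\delta=\alpha-p/q$, so $|\delta|\le 1/(2q^2)$.

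\textbf{Step 1: $\tau(q,\theta)\in\{-1,1\}$ for every $\theta$.} Write the orbit points as
\[
\theta+i\alpha=\bigl(\theta+ip/q\bigr)+i\delta,\qquad 0\le i\le q-1 .
\]
Since $\gcd(p,q)=1$, the unperturbed points $\theta+ip/q$ run exactly once through the coset $\theta+\frac1q\Z$. The perturbations $i\delta$ all have the sign of $\delta$ and absolute value at most $(q-1)/(2q^2)<1/(2q)$.

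First consider the unperturbed coset. Let $u=\{q\theta\}$. Because $q$ is odd, $1/2\notin\frac1q\Z$, so the coset has $(q+1)/2$ points in $[0,1/2)$ if $u<1/2$ and $(q-1)/2$ points otherwise. Hence its $\tau$-sum is $+1$ or $-1$.

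Next consider the perturbation. Each point moves in the same direction by less than $1/(2q)$, which is less than the grid spacing. Therefore the value of $\tau$ can change only for the unique grid point lying within $1/(2q)$ on the appropriate side of the discontinuity $0$, or the one near $1/2$. Modulo $1/q$, these two discontinuities sit at offsets $0$ and $1/(2q)$. A one-sided window of length $<1/(2q)$ cannot contain grid points near both, so at most one crossing occurs. Each crossing changes the sum by $\pm2$, and its direction is forced: for instance, with $\delta>0$ and $u<1/2$, only the crossing of $1/2$ is possible, and it sends $+1\mapsto-1$. Checking the four cases (sign of $\delta$, and $u<1/2$ or $u>1/2$) gives $\tau(q,\theta)\in\{\pm1\}$ in all of them. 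This case check is the step I expect to require the most care.

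\textbf{Step 2: the sets $A_n=\{\theta:\tau(q_n,\theta)=1\}$ have measure $1/2$.} By $R_\alpha$-invariance of $m_\T$,
\[
\int\tau(q_n,\theta)\,dm_\T(\theta)=q_n\int\tau\,dm_\T=0 .
\]
Combined with Step 1 this gives $m_\T(A_n)=1/2$, and likewise $m_\T(\{\tau(q_n,\cdot)=-1\})=1/2$.

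\textbf{Step 3: invariance of the $\limsup$ set, and conclusion.} Pass to a subsequence along which $\sum_n 1/q_n<\infty$. This is harmless: the subsequence still satisfies \Cref{prop:nice-sequence-of-odd-numbers}, and "infinitely often along a subsequence" implies "infinitely often" along the original sequence. By the cocycle identity,
\[
\tau(q,\theta+\alpha)-\tau(q,\theta)=\tau(\theta+q\alpha)-\tau(\theta),
\]
which vanishes unless $\theta$ lies within $\|q\alpha\|\le1/(2q)$ of $0$ or $1/2$. That exceptional set has measure at most $2/q_n$. By Borel–Cantelli, for a.e. $\theta$ we have $\tau(q_n,\theta+\alpha)=\tau(q_n,\theta)$ for all large $n$.

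Hence $L=\limsup_n A_n$ satisfies $R_\alpha^{-1}L=L$ modulo null sets. Since $R_\alpha$ is ergodic, $m_\T(L)\in\{0,1\}$. On the other hand,
\[
m_\T(L)\ge\limsup_n m_\T(A_n)=1/2,
\]
so $m_\T(L)=1$. The identical argument applies to the sets where $\tau(q_n,\cdot)=-1$, which completes the proof.
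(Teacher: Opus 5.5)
Your proof is correct, and it takes a different route from the paper, which does not argue this directly. The paper takes ``$\tau(q_n,\theta)=1$ infinitely often for a.e.\ $\theta$'' from \cite[Lemma 1.1]{aaronson_visitors_1982}. It then gets the $-1$ half from the symmetry $\tau(n,\theta+1/2)=-\tau(n,\theta)$, since $\theta\mapsto\theta+1/2$ carries the full-measure set for the value $1$ onto one for the value $-1$.

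You instead give a self-contained argument. Step 1 sharpens the Denjoy--Koksma bound of \Cref{rem:denjoy-koksma} from $\{-3,-1,1,3\}$ to $\{-1,1\}$, using the stronger hypothesis $|\alpha-p/q|\le 1/(2q^2)$. Your case check is right. For $\delta>0$, a crossing of $1/2$ needs $u\in(0,1/2)$ and a crossing of $0$ needs $u\in(1/2,1)$. For $\delta<0$, a crossing of $1/2$ needs $u\ge 1/2$ and a crossing of $0$ needs $u<1/2$. So at most one crossing occurs, and it always sends the unperturbed sum $\pm1$ to $\mp1$. The zero mean then gives $m_{\T}(A_n)=1/2$ exactly. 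Step 3 is the Borel--Cantelli computation of \Cref{prop:cocycle-invariant-along-q-ns}, combined with ergodicity and $m_{\T}(\limsup A_n)\ge\limsup m_{\T}(A_n)$; this is the same pattern the paper uses in \Cref{prop:ell-possible-values}. Your passage to a subsequence is unnecessary, since $\sum 1/q_n<\infty$ holds automatically for convergent denominators, but it is harmless.

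The paper's route is one line given the citation. Yours is elementary and yields more: $\tau(q_n,\cdot)$ takes only the values $\pm1$ everywhere. Consequently the constant $\ell$ produced in \Cref{prop:ell-possible-values} lies in $\{-1,1\}$ a priori, and the $\kappa$-argument showing $\ell\kappa=1$ in \Cref{prop:ell-existence} would no longer be needed.
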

The original formulation \cite[Lemma 1.1]{aaronson_visitors_1982} only states that for almost every $\theta$, we have $\tau(q_n,\theta)=1$ for infinitely many values of $n$. This directly implies the  statement in \Cref{prop:infinitely-many-1s} because
\begin{equation}\label{eq:symmetry}
    \tau(n,\theta+1/2)=-\tau(n,\theta) \ \ n\in\Z, \ \theta\in\T
\end{equation}
In turn, this is a consequence of the equality 
\begin{equation}\tau(\theta+1/2)=-\tau(\theta), \ \ \theta\in\T
\end{equation}   
\begin{remark} If $(q_n)_{n\in\N}$ satisfies \Cref{prop:nice-sequence-of-odd-numbers}, then the same is true for any of its subsequences.  Thus the statements we can prove for a sequence as in  \Cref{prop:nice-sequence-of-odd-numbers} are also valid for any of its subsequences. 
\end{remark}
\begin{proposition}\label{prop:cocycle-invariant-along-q-ns}
Let $(q_n)_{n\in\N}$ be a sequence as in  \Cref{prop:nice-sequence-of-odd-numbers}. Then for almost all $\theta\in\T$ we have $\tau(q_n,\theta)=\tau(q_n, R_{\alpha}(\theta))$ for all but finitely many values of $n$. 
\end{proposition}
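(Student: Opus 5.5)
The cocycle identity gives
\[
\tau(q_n,R_\alpha(\theta))-\tau(q_n,\theta)=\tau(R_\alpha^{q_n}(\theta))-\tau(\theta)=\tau(\theta+q_n\alpha)-\tau(\theta).
\]
So the claim is that for almost every $\theta$, $\tau(\theta+q_n\alpha)=\tau(\theta)$ for all but finitely many $n$. Write $s_n=q_n\alpha \bmod 1$. By \Cref{eq:nice-sequence-of-odd-numbers}, $\|s_n\|=\|q_n\alpha-p_n\|\le 1/(2q_n)$. Since $\tau$ is constant on $[0,1/2)$ and on $[1/2,1)$, the values $\tau(\theta+s_n)$ and $\tau(\theta)$ can differ only if the arc between $\theta$ and $\theta+s_n$ contains a discontinuity $0$ or $1/2$. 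Hence
\[
\{\theta : \tau(\theta+s_n)\neq\tau(\theta)\}\subset\{\theta : d_{\T}(\theta,0)\le \|s_n\|\}\cup\{\theta : d_{\T}(\theta,1/2)\le \|s_n\|\},
\]
and this set has measure at most $4\|s_n\|\le 2/q_n$.

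The plan is then Borel--Cantelli. If $\sum_n 1/q_n<\infty$, then almost every $\theta$ lies in only finitely many of the exceptional sets, which is the statement. Since $(q_n)$ is only assumed to be an increasing sequence of odd integers, summability of $1/q_n$ is the main obstacle. Increasing alone gives only $q_n\ge n$, and $\sum 1/n$ diverges.

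To handle this I would use the structure of good rational approximations. Every fraction with $|\alpha-p/q|\le 1/(2q^2)$ is a convergent $p_k/q_k$ of $\alpha$ (Legendre's theorem). The denominators of convergents satisfy $q_{k+2}\ge 2q_k$, so they grow at least like $2^{k/2}$. Because $(q_n)$ is strictly increasing and each $q_n$ is a distinct convergent denominator, at most two terms of the sequence can lie in any dyadic block $[2^j,2^{j+1})$. Therefore $\sum_n 1/q_n<\infty$. An alternative that avoids this is to pass to a subsequence with $\sum 1/q_n<\infty$, using the Remark after \Cref{prop:infinitely-many-1s}. However, the statement is for an arbitrary sequence, so I would rely on the Legendre argument.

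With summability in hand, Borel--Cantelli finishes the proof: for almost every $\theta$ and all large $n$, $\tau(\theta+q_n\alpha)=\tau(\theta)$, and hence $\tau(q_n,R_\alpha\theta)=\tau(q_n,\theta)$.
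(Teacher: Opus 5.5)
Your proposal is correct and follows essentially the same route as the paper: you use the cocycle identity to reduce the claim to $\tau(\theta+q_n\alpha)=\tau(\theta)$ eventually, bound the exceptional sets by $O(1/q_n)$ because $\tau$ only jumps at $0$ and $1/2$, use Legendre's theorem to make $(q_n)$ a subsequence of the convergent denominators so that $\sum_n 1/q_n<\infty$, and conclude with Borel--Cantelli. The only differences are cosmetic: you justify the summability directly via $q_{k+2}\ge 2q_k$ where the paper cites a reference, and your measure bound $2/q_n$ is slightly sharper than the paper's $4/q_n$.
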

\begin{proof}
Observe that $\sum_{n\in\N} 1/q_n<\infty$. Indeed, by Legendre's Theorem, $(q_n)_{n\in\N}$ is a subsequence of the sequence of denominators of the convergents in the continued fraction expansion of $\alpha$, which has summable inverses (see for instance page 71 in \cite{einsiedler_ergodic_2011}). 

    It follows from the definition that 
    \[
    \tau(q_n,R_{\alpha}(\theta)) = \tau(q_n,\theta)-\tau(\theta)+\tau(R_{\alpha}^{q_n}(\theta)), \ \ \theta\in\T, n\in\N
    \]
    We claim that 
    \[
    B=\{\theta\in\T : \tau(\theta)\ne \tau(R_{\alpha}^{q_n}(\theta)) \text{ for infinitely many $n$}\}
    \]
    has null measure. Indeed, for each $n\in\N$ let 
    \[B_n=\{\theta\in\T : \tau(\theta)\ne \tau(R_{\alpha}^{q_n}(\theta))\}\]
    Thus $B=\cap_{m\in\N}\cup_{n\geq m}B_n$ is the set of elements that belong to infinitely many $B_n$'s. 
    
    If $||q\alpha||<1/q$, then $d_{\T}(0,R_{\alpha}^q(0))<1/q$. Since $R_{\alpha}$ is an isometry of $(\T,d_{\T})$, this implies that $d_{\T}(\theta,R_{\alpha}^q(\theta))<1/q$ for all $\theta\in \T$. This shows that whenever $\theta$ is at distance at least $1/{q_n}$ from $0$ and $1/2$, then $\theta\not\in B_n$. Hence  $m_{\T}(B_n)<4/q_n$. Since $\sum_{n\in\N}1/q_n<\infty$, we obtain that $\sum_{n\in\N}m_{\T}(B_n)<\infty$. Then the Borel-Cantelli Lemma shows that $B$ has null measure.
\end{proof}

\begin{remark}\label{rem:denjoy-koksma}
By the Denjoy-Koksma inequality, if $|\alpha-p/q|<1/q^2$ for $p,q\in\N$ coprime, then
    \[|\tau(q,\theta)|\leq 4, \ \theta\in\T\]
If $q$ is odd, then $\tau(q,\theta)$ is an odd number for any $\theta$, as it is a sum of an odd number of $1$'s and $-1$'s. Thus in this case we can upgrade the conclusion to \[\tau(q,\theta)\in\{-3,-1,1,3\}, \ \theta\in\T\]
\end{remark}
\begin{proposition}\label{prop:ell-possible-values}
    Let $(q_n)_{n\in\N}$ be a sequence as in \Cref{prop:nice-sequence-of-odd-numbers}. Let $c\in\T$ (not necessarily irrational). Then there exists $\ell\in\{-3,-1,1,3\}$ such that for almost every $\theta\in\T$, we have 
    \begin{equation}\label{eq:two-dimensional-AK}
    (\tau(q_n,\theta),\tau(q_n,\theta+c))=(1,\ell) \ \ \text {for infinitely many $n$}\end{equation} 
\end{proposition}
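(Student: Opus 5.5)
Fix $c\in\T$ and the sequence $(q_n)_{n\in\N}$. For each $\ell\in\{-3,-1,1,3\}$ we define
\[
A_\ell=\{\theta\in\T : (\tau(q_n,\theta),\tau(q_n,\theta+c))=(1,\ell)\text{ for infinitely many } n\}.
\]
By \Cref{prop:infinitely-many-1s}, almost every $\theta$ has $\tau(q_n,\theta)=1$ for infinitely many $n$. By \Cref{rem:denjoy-koksma}, along those $n$ the second coordinate $\tau(q_n,\theta+c)$ takes values in the finite set $\{-3,-1,1,3\}$, since $q_n$ is odd and \eqref{eq:nice-sequence-of-odd-numbers} gives $|\alpha-p_n/q_n|<1/q_n^2$. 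By the pigeonhole principle, one value occurs infinitely often. Hence $\bigcup_\ell A_\ell$ has full measure, and so some $A_\ell$ has positive measure. The plan is then to show that each $A_\ell$ is $R_\alpha$-invariant modulo null sets. Ergodicity of the irrational rotation then forces $m_{\T}(A_\ell)\in\{0,1\}$, so some $\ell$ gives $m_{\T}(A_\ell)=1$, which is exactly \eqref{eq:two-dimensional-AK}.

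For invariance we use \Cref{prop:cocycle-invariant-along-q-ns} twice. First, for almost every $\theta$ we have $\tau(q_n,\theta)=\tau(q_n,R_\alpha\theta)$ for all but finitely many $n$. Second, the full-measure set provided by that proposition is shifted by $c$; by translation invariance of $m_{\T}$, its preimage under $\theta\mapsto\theta+c$ still has full measure. So for almost every $\theta$ we also have $\tau(q_n,\theta+c)=\tau(q_n,R_\alpha(\theta+c))=\tau(q_n,R_\alpha(\theta)+c)$ for all but finitely many $n$. Intersecting these two full-measure sets, for almost every $\theta$ the pair sequences $n\mapsto(\tau(q_n,\theta),\tau(q_n,\theta+c))$ and $n\mapsto(\tau(q_n,R_\alpha\theta),\tau(q_n,R_\alpha\theta+c))$ agree eventually. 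Therefore $\theta\in A_\ell$ if and only if $R_\alpha\theta\in A_\ell$, up to a null set, i.e.\ $m_{\T}(A_\ell\,\triangle\, R_\alpha^{-1}A_\ell)=0$.

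It remains to note that $A_\ell$ is measurable, since it is a $\limsup$ of measurable sets. The rotation $R_\alpha$ is ergodic because $\alpha$ is irrational, so an invariant-mod-null set has measure $0$ or $1$. This completes the argument.

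The main point needing care is the second use of \Cref{prop:cocycle-invariant-along-q-ns}: the almost-everywhere statement must be transported through the translation by $c$, and then commuted with $R_\alpha$ using that $R_\alpha$ and $R_c$ commute. Nothing about $c$ being irrational is needed. Beyond that the argument is routine, and the existence of at least one $\ell$ with positive measure comes entirely from \Cref{prop:infinitely-many-1s} together with the Denjoy–Koksma bound.
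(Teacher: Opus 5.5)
Your proposal is correct and follows essentially the same argument as the paper. You get full measure of the union via \Cref{prop:infinitely-many-1s}, Denjoy--Koksma and pigeonhole, then $R_\alpha$-invariance of each set from \Cref{prop:cocycle-invariant-along-q-ns}, then ergodicity. Your explicit treatment of the translate by $c$ is a point the paper passes over quickly.
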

\begin{proof}
For each $i\in\{-3,-1,1,3\}$ we define $D_i$ as the set of elements $\theta\in\T$ satisfying \Cref{eq:two-dimensional-AK} above with $(1,i)$ in place of $(1,\ell)$.

We claim that $m_{\T}(D_{-3}\cup D_{-1}\cup D_{1}\cup D_3)=1$. Let $\theta$ have the property that we can find a subsequence $(q_{n_k})_{k\in\N}$ with $\tau(q_{n_k},\theta)=1$ for all $k$. This is a full measure condition by \Cref{prop:infinitely-many-1s}. Then by \Cref{rem:denjoy-koksma}, $(\tau(q_{n_k},\theta+c))_{k\in\N}$ is a sequence of elements in $\{-3,-1,1,3\}$. Since there are finitely many options, some of them must appear infinitely often. If $i$ appears infinitely often, then $\theta\in D_{i}$. Thus almost every $\theta$ belongs to some $D_i$.

Since $m_{\T}(D_{-3}\cup D_{-1}\cup D_{1}\cup D_{3})=1$, we can pick $\ell$ with $m_{\T}(D_\ell)>0$. Let us observe that $D_{\ell}$ is $R_{\alpha}$ invariant almost everywhere. Indeed, by \Cref{prop:cocycle-invariant-along-q-ns}, for almost all $\theta$ we have 
\[
(\tau(q_n,\theta),\tau(q_n,\theta+c))=(\tau(q_n,\theta+\alpha),\tau(q_n,\theta+\alpha+c)) \ \ \text{for all but finitely many $n$}
\]
Since $R_{\alpha}$ is ergodic, it follows that $D_{\ell}$ has full measure. 
\end{proof}
In the main application of \Cref{prop:ell-possible-values} (which is \Cref{prop:ell-existence}), we will discard the possibility that $\ell\in\{-3,3\}$, and thus we will be mostly interested in the case $\ell\in\{-1,1\}$.

For $\ell\in\{-1,1\}$ and $c\in\T$ we define 
\[
\zeta(\theta)=\tau(\theta+c)-\ell\tau(\theta)
\]
\begin{figure}
	\centering
	\begin{tikzpicture}[x=5cm,y=0.6cm]
		\def\c{0.05} 
		
		\draw[gray] (0,0) -- (1,0);
		
		\draw[very thick] (0,0) -- (0.5-\c,0);
		\draw[very thick] (0.5-\c,-2) -- (0.5,-2);
		\draw[very thick] (0.5,0) -- (1-\c,0);
		\draw[very thick] (1-\c,2) -- (1,2);
	\end{tikzpicture}
	\caption{The step function   $\zeta\colon\T\to\Z$ in the case $\ell=1$ and $c=0.05$. Note that $\zeta(\theta)$ is either $-2$, $0$, or $2$. }
	\label{fig:zeta}
\end{figure}
See  \Cref{fig:zeta}. This cocycle will appear naturally in the computations of \Cref{sec:isomorphisms}. For reasons that will become clear in \Cref{sec:isomorphisms}, we will be interested in understanding when $\zeta$ is a coboundary, and when it has nonzero finite essential values. 

A first observation is that $\zeta$ is identically null if and only if $\ell=1$ and $c=0$, or $\ell=-1$ and $c=1/2$. Furthermore, we have the following natural sufficient condition for $\zeta$ to be a coboundary. 
\begin{proposition}\label{prop:sufficient-conditions-zeta-coboundary}
If $\ell=1$ and $c=k\alpha$ for $k\in\Z$, then $\zeta$ is a coboundary. In fact, we can write $\zeta=f\circ R_{\alpha}-f$ for 
\[
f(\theta)=\tau(k,\theta)
\]
Similarly, if $\ell=-1$ and $c=k\alpha+1/2$ for $k\in\Z$, then $\zeta$ is a coboundary. In this case we can write $\zeta=f\circ R_{\alpha}-f$ for 
\[
f(\theta)=-\tau(k,\theta)
\]
\end{proposition}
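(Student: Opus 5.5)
The plan is to verify the coboundary identity directly from the cocycle equation, treating the cases $k\geq 0$ and $k<0$ uniformly. By the cocycle equation with $n=1$,
\[
\tau(k+1,\theta)=\tau(\theta)+\tau(k,R_{\alpha}(\theta)),
\]
and with the roles of the two summands swapped,
\[
\tau(k+1,\theta)=\tau(k,\theta)+\tau(R_{\alpha}^k(\theta)).
\]
Comparing these two expressions for $\tau(k+1,\theta)$, we will obtain, for every $k\in\Z$ and every $\theta\in\T$,
\[
\tau(k,R_{\alpha}(\theta))-\tau(k,\theta)=\tau(\theta+k\alpha)-\tau(\theta).
\]
For negative $k$ this still holds, since the cocycle equation is valid for all integers under the ergodic-sum convention of \Cref{sec:cocycles}. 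If one prefers not to lean on that, the identity for $k<0$ can be checked directly from the formula $-\sum_{i=k}^{-1}$, as a telescoping computation.

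In the first case, $\ell=1$ and $c=k\alpha$, the right-hand side above is exactly $\tau(\theta+c)-\tau(\theta)=\zeta(\theta)$. So $f(\theta)=\tau(k,\theta)$ satisfies $\zeta=f\circ R_{\alpha}-f$ everywhere. Moreover $f$ is measurable and $\Z$-valued, being a finite sum of compositions of the step function $\tau$ with rotations.

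In the second case, $\ell=-1$ and $c=k\alpha+1/2$, I will use the symmetry $\tau(\theta+1/2)=-\tau(\theta)$ recorded before \Cref{rem:denjoy-koksma}. This gives
\[
\zeta(\theta)=\tau(\theta+k\alpha+1/2)+\tau(\theta)=-\tau(\theta+k\alpha)+\tau(\theta)=-\bigl(\tau(k,R_{\alpha}(\theta))-\tau(k,\theta)\bigr),
\]
which is $f\circ R_{\alpha}-f$ for $f=-\tau(k,\cdot)$.

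There is no real obstacle here. The only point needing care is the bookkeeping for $k\leq 0$: for $k=0$ we have $f=0$, and $\zeta=0$ in both cases, consistent with the remark preceding the proposition. For negative $k$ the cocycle identity must be applied with the convention for negative times.
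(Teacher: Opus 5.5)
Your proof is correct and follows the paper's argument: you obtain $\tau(k,\theta+\alpha)-\tau(k,\theta)=\tau(\theta+k\alpha)-\tau(\theta)$ from the cocycle equation to handle $\ell=1$, and you use the symmetry $\tau(\theta+1/2)=-\tau(\theta)$ to reduce the case $\ell=-1$ to the first. Your explicit treatment of $k\leq 0$ (valid because the ergodic sums satisfy the cocycle equation for all integers) is a small addition that the paper leaves implicit.
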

\begin{proof}
We prove the first item. Let $f(\theta)=\tau(k,\theta)$. Expanding $\tau(k,\theta)$ as a sum, or alternatively, using the cocycle equation, we find 
\[
f(\theta+\alpha)-f(\theta)=\tau(\theta+k\alpha)-\tau(\theta)
\]
Thus $\zeta(\theta)=f(\theta+\alpha)-f(\theta)$ as claimed. Next, let us observe that \[
\tau(\theta+k\alpha+1/2)+\tau(\theta)=-\tau(\theta+k\alpha)+\tau(\theta)=-(\tau(\theta+k\alpha)-\tau(\theta))
\]
Thus the second case follows from the first.
\end{proof}
We will now make some comments about $\Z^2$-cocycles. We omit the corresponding definitions (which are analogous), as we will consider $\Z^2$-cocycles very briefly and only as a means to obtain further information about $\zeta$. 

Consider the $\Z^2$-cocycle for $R_{\alpha}$ given by
\[\omega\colon\T\to\Z^2, \ \theta\mapsto (\ell\tau(\theta),\tau(\theta+c))\]
Then $\zeta$ is naturally related to $\omega$, in the sense that $\zeta$ equals the difference of the coordinates of $\omega$. 
The following is proved\footnote{The original statement is with $\ell=1$, but the case $\ell=-1$ follows by exchanging $c$ and $c+1/2$.} by Yuqing Zhang. 
\begin{theorem}[Theorem 1.5 in \cite{zhang_ergodicity_2011}]\label{thm:zhang}
    Let $\alpha\in\R\smallsetminus\Q$. For almost every $c\in \T$, the group of essential values of $\omega$ is equal to \[\mathcal E(\omega)=\{(n,m)\in\Z^2 : n\equiv m\mod 2\}\]
    If $\alpha$ has bounded type, then this conclusion holds for every $c$ different from $k\alpha$ or $k\alpha+1/2$, $k\in\Z$.
\end{theorem}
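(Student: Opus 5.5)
Since \Cref{thm:zhang} is quoted from \cite{zhang_ergodicity_2011}, here is how I would reprove it. The plan is to prove two inclusions for $\mathcal E(\omega)$, working with $\ell=1$ only; the case $\ell=-1$ then follows by replacing $c$ with $c+1/2$, using \Cref{eq:symmetry}.

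\textbf{Upper inclusion.} The inclusion $\mathcal E(\omega)\subset\{(n,m): n\equiv m\mod 2\}$ is a parity argument. Both coordinates of $\omega(k,\theta)$ are sums of $|k|$ terms equal to $\pm1$, so each has the parity of $k$. Hence $\omega(k,\theta)$ always lies in the index-two lattice $L=\{(n,m):n\equiv m \mod 2\}$. A pair outside $L$ is then never attained by the cocycle, so it cannot satisfy the analogue of \Cref{eq:essential-value} and is not an essential value.

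\textbf{Lower inclusion.} The lattice $L$ is generated by $(1,1)$ and $(1,-1)$, and $\mathcal E(\omega)$ is a group, so it suffices to show that these two vectors are essential values. I would use the standard rigidity criterion (as in \cite{schmidt_cocycles_1977,conze_remarks_2013}). If $R_\alpha^{q_n}\to\id$ and there is $\delta>0$ with
\[
m_{\T}(\{\theta:\omega(q_n,\theta)=v\})\geq\delta \quad\text{for infinitely many } n,
\]
then $v\in\mathcal E(\omega)$. Indeed, for a positive-measure set $B$ we have $m_{\T}(B\cap R_\alpha^{-q_n}B)\to m_{\T}(B)$. A density-point or Lebesgue-differentiation argument, applied on the short arcs where $\omega(q_n,\cdot)$ is constant, then upgrades this to the required positive intersection.

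For this I take odd $q_n$ as in \Cref{prop:nice-sequence-of-odd-numbers}. By \Cref{rem:denjoy-koksma}, $\tau(q_n,\cdot)$ takes values in $\{-3,-1,1,3\}$. Its $2q_n$ discontinuities lie at the points $-j\alpha$ and $1/2-j\alpha$ for $0\le j<q_n$, and these form an almost regular grid of mesh about $1/(2q_n)$. Because $q_n$ is odd, the function alternates between $1$ and $-1$ on consecutive arcs of that grid, apart from a set of measure $O(1/q_n)$ where it equals $\pm3$. So up to small error,
\[
\tau(q_n,\theta)\approx g(q_n\theta+\text{phase}_n), \qquad g=\tau \text{ (a square wave of period } 1/q_n \text{ in } \theta).
\]
Consequently the joint distribution of $(\tau(q_n,\theta),\tau(q_n,\theta+c))$ is governed by the fractional part $\{q_nc\}$. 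If $\{q_nc\}$ lies in a compact subinterval of $(0,1/2)$ (or of $(1/2,1)$), then both coincidences $(1,1)$ and disagreements $(1,-1)$ occur with measure bounded below. For almost every $c$, the sequence $(q_nc)$ of distinct integers times $c$ is equidistributed, or at least dense, modulo $1$ by Weyl's metric theorem. So along a subsequence, $\{q_nc\}$ stays near $1/4$, which gives the positive lower bound $\delta$ and finishes the almost-every-$c$ statement.

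\textbf{Bounded type (main obstacle).} For every $c\notin \Z\alpha\cup(\Z\alpha+1/2)$ I must show that $\|2q_nc\|$ does not tend to $0$ along the whole sequence of odd denominators. Then some subsequence keeps $\{q_nc\}$ in a compact part of $(0,1/2)\cup(1/2,1)$ and the previous step applies. I would attempt this through the Ostrowski expansion of $c$ in base $\alpha$. With bounded partial quotients, the condition $\|q_n\cdot 2c\|\to0$ forces the Ostrowski digits of $2c$ to be eventually zero, so $2c\in\Z\alpha$ modulo $1$, i.e. $c\in\tfrac12\Z\alpha$ modulo $1/2$. One then still has to exclude the half-integer multiples $c=(k+\tfrac12)\alpha$ and $c=(k+\tfrac12)\alpha+1/2$ by a direct parity check on odd $q_n$. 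I also need the alternating-sign approximation to hold for all odd denominators, which requires care when consecutive denominators are both odd, and bounded type should control the $\pm3$ error there.
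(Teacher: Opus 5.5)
The paper gives no proof of this statement; it quotes it from Zhang. So I judge your argument on its own. The parity inclusion and the reduction to $(1,1)$ and $(1,-1)$ are fine. Your rigidity criterion is also true, though the reason is not a density-point argument. The set $A_n=\{\omega(q_n,\cdot)=v\}$ satisfies $m_{\T}(A_n\triangle R_\alpha^{-1}A_n)\le 4\|q_n\alpha\|$. Hence weak limits of $\mathbf 1_{A_n}$ are constant, and $m_{\T}(B\cap A_n)\approx m_{\T}(B)\,m_{\T}(A_n)$.

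\textbf{The gap: the square-wave model.} You use $\tau(q_n,\theta)\approx g(q_n\theta+\text{phase}_n)$ up to small error, and this is false. Put $\varepsilon_n=\alpha-p_n/q_n$.
\begin{itemize}
\item The jumps of $\tau(q_n,\cdot)$ are at $-j\alpha$ and $\tfrac12-j\alpha$. These are the points of the exact grid $\tfrac1{2q_n}\Z$ (this is where oddness of $q_n$ enters), displaced by $-j\varepsilon_n$, $0\le j<q_n$.
\item These displacements are spread over an interval of length $(q_n-1)|\varepsilon_n|$, while the mesh is $\tfrac1{2q_n}$.
\item We have $q_n^2|\varepsilon_n|>q_n/(q_n+q_{n+1})\ge 1/(K+2)$ when the partial quotients are at most $K$. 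So the jitter is a fixed fraction of the mesh, and no phase absorbs it.
\item Since $(q_n-1)|\varepsilon_n|<\tfrac1{2q_n}$, the jumps keep their cyclic order. So $\tau(q_n,\cdot)$ is in fact exactly $\pm1$-valued and $\pm3$ never occurs. What fails is the near-equality of the arc lengths.
\end{itemize}
Consequently the joint law of $(\tau(q_n,\theta),\tau(q_n,\theta+c))$ is not a function of $\{q_nc\}$. For $c=k/q_n$, the two functions disagree on a set of measure exactly $4s(q_n-s)|\varepsilon_n|$, where $0\le s<q_n$ and $s\equiv -kp_n^{-1}\pmod{q_n}$. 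Your model predicts $0$ here.

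A concrete example: take $\alpha=\sqrt2-1=[0;2,2,\dots]$, where every odd $q_n$ has even $p_n$, and $c=\alpha/2=\tfrac{p_n/2}{q_n}+\tfrac{\varepsilon_n}2$.
\begin{itemize}
\item Your model predicts disagreement of measure $2\|q_nc\|=q_n|\varepsilon_n|\to0$.
\item In fact, translation by $p_n/2$ grid steps shifts the index $j$ by $\tfrac{q_n-1}2$ modulo $q_n$. Every jump is therefore offset by exactly $\pm\tfrac{q_n}2\varepsilon_n$.
\item The two functions disagree on a set of measure exactly $q_n^2|\varepsilon_n|\to 1/(2\sqrt2)$.
\end{itemize}
So your key inference, that $\{q_nc\}$ in a compact part of $(0,\tfrac12)$ forces both patterns to have measure $\ge\delta$, is unsupported. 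The almost-every statement covers bounded-type $\alpha$, so this is not a small-error technicality.

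\textbf{Repairing the almost-every part.} This part can be fixed without any model. Let $C_n(c)=\int\tau(q_n,\theta)\tau(q_n,\theta+c)\,d\theta$; the two patterns have measures $(1\pm C_n(c))/4$. Then
\[
\int_{\T}C_n^2\,dc=\sum_k|\widehat{\tau(q_n,\cdot)}(k)|^4\le 4/\pi^2
\quad\text{and}\quad
\sup_c|C_n(c+\alpha)-C_n(c)|\le 4\|q_n\alpha\|.
\]
Hence $\{c:\liminf_n|C_n(c)|<1\}$ is an $R_\alpha$-invariant Borel set of positive measure, so it is conull, and your criterion applies.

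\textbf{The bounded-type part.} It rests on the same model, and its arithmetic step is false as stated.
\begin{itemize}
\item Any Ostrowski argument giving $\|q_nx\|\to0\Rightarrow x\in\Z\alpha+\Z$ needs the full sequence of denominators, but you only control the odd ones. With $\alpha=\sqrt2-1$ and $c=\alpha/4$, you get $\|2q_nc\|=q_n|\varepsilon_n|/2\to0$ along every odd $q_n$. Yet $2c\notin\Z\alpha+\Z$, and $c$ is neither an exception of the theorem nor one of your half-integer multiples. Your required claim fails outright for this $c$.
\item The values $c\in(\Z+\tfrac12)\alpha$ and $c\in(\Z+\tfrac12)\alpha+\tfrac12$ are \emph{not} exceptions of the theorem. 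You must show $\mathcal E(\omega)=\{(n,m):n\equiv m\bmod 2\}$ for them.
\item For these values, $\{q_nc\}$ accumulates only at $0$ and $\tfrac12$ along odd denominators. So your criterion gives nothing, and ``a direct parity check'' supplies no mechanism.
\item In the $c=\alpha/2$ example, every odd scale is degenerate in your model. The pattern $(1,-1)$ is produced precisely by the jitter you discard.
\end{itemize}
What is missing is an analysis, over $n$ and $c$, of the jitter-aware joint law (for example via the $4s(q_n-s)|\varepsilon_n|$ formula), using bounded partial quotients. That is the substance of Zhang's theorem.
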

We say that $\alpha$ has bounded type if the sequence $(a_n)_{n\in\N}$ in its continued fraction expansion  $\alpha=[a_0;a_1,\dots]$ is bounded. 

The key relation between essential values of $\omega$ and $\zeta$ is as follows: if $\omega$ admits a finite essential value $(n,m)$, then $m-n$ is an essential value for $\zeta$. This follows directly from the definitions. This allows us to prove the following statement about $\zeta$. 
\begin{proposition}\label{prop:bounded-type-coboundary-essential-values}
Let $\alpha\in\R\smallsetminus\Q$ have bounded type. Then the sufficient condition for $\zeta$ to be a coboundary stated in \Cref{prop:sufficient-conditions-zeta-coboundary} is also necessary. Furthermore, if $\zeta$ is not a coboundary, then $\mathcal E(\zeta)=2\Z$. 
\end{proposition}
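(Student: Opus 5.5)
The plan is to derive both claims from Zhang's theorem (\Cref{thm:zhang}), using the observation that a finite essential value $(n,m)$ of $\omega$ yields the essential value $m-n$ of $\zeta$. Fix $\alpha$ of bounded type, $\ell\in\{-1,1\}$ and $c\in\T$.

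Suppose $c\notin\{k\alpha,\,k\alpha+1/2 : k\in\Z\}$. By \Cref{thm:zhang}, $\mathcal E(\omega)=\{(n,m): n\equiv m \mod 2\}$. The case $\ell=-1$ is covered by the footnote, which replaces $c$ by $c+1/2$; the exceptional set is invariant under this replacement. In particular $(0,2)\in\mathcal E(\omega)$, so $2\in\mathcal E(\zeta)$ and hence $2\Z\subseteq\mathcal E(\zeta)$. Since $\zeta$ takes only the values $-2,0,2$, every ergodic sum $\zeta(n,\theta)$ is even. Consequently no odd integer can satisfy \Cref{eq:essential-value}, and $\mathcal E(\zeta)\subseteq 2\Z$. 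Thus $\mathcal E(\zeta)=2\Z\neq\{0\}$, and by Schmidt's theorem $\zeta$ is not a coboundary.

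Now suppose $c=k\alpha$ or $c=k\alpha+1/2$. Here we must check which combinations with $\ell$ give a coboundary. By \Cref{prop:sufficient-conditions-zeta-coboundary}, the pairs $(\ell=1, c=k\alpha)$ and $(\ell=-1, c=k\alpha+1/2)$ give coboundaries. The mismatched pairs remain: $\ell=1$ with $c=k\alpha+1/2$, and $\ell=-1$ with $c=k\alpha$. In the first of these,
\[
\zeta(\theta)=\tau(\theta+k\alpha+1/2)-\tau(\theta)=-\tau(\theta+k\alpha)-\tau(\theta)=-\bigl(\tau(\theta+k\alpha)-\tau(\theta)\bigr)-2\tau(\theta),
\]
so $\zeta$ is cohomologous to $-2\tau$; the second case gives $2\tau$ in the same way. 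Since $\tau$ is ergodic, meaning $\mathcal E(\tau)=\Z$ (as recalled in the introduction from \cite{aaronson_visitors_1982}), we get $\mathcal E(\pm2\tau)=2\Z$. Essential values are invariant under cohomology, so $\mathcal E(\zeta)=2\Z$, and again $\zeta$ is not a coboundary.

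Combining the two cases, $\zeta$ is a coboundary exactly under the conditions of \Cref{prop:sufficient-conditions-zeta-coboundary}, and otherwise $\mathcal E(\zeta)=2\Z$. The only step needing care is the mismatched exceptional case, which is not covered by Zhang's theorem. It is handled by the cohomology reduction to $\pm2\tau$ together with the ergodicity of $\tau$ and the standard invariance of essential values under cohomologous changes, which follows from \cite{schmidt_cocycles_1977}.
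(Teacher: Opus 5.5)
Your proposal is correct and follows the paper's proof essentially verbatim: Zhang's theorem handles $c\notin\{k\alpha,k\alpha+1/2 : k\in\Z\}$, and the two mismatched exceptional cases are reduced to $\pm 2\tau$ using the coboundary from \Cref{prop:sufficient-conditions-zeta-coboundary} and the invariance of essential values under cohomology. Your remark that $\zeta$ is even-valued, which gives $\mathcal E(\zeta)\subseteq 2\Z$, makes explicit an inclusion that the paper leaves implicit.
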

\begin{proof}
Suppose that $c$ is not of the form $k\alpha$ or $k\alpha+1/2$, $k\in\Z$. As observed before, if $(n,m)$ is an essential value for $\omega$, then $m-n$ is an essential value for $\zeta$. By \Cref{thm:zhang}, every even number can be obtained in this manner. 

If $c$ is of the form $k\alpha$ or $k\alpha+1/2$, $k\in\Z$, then there are two cases where neither \Cref{prop:sufficient-conditions-zeta-coboundary} nor \Cref{thm:zhang} can be applied.  That is, the case 
 $c=k\alpha$ and $\ell=-1$, 
and the case $c=k\alpha+1/2$ and $\ell=1$. We show that in these cases $\mathcal E(\zeta)=2\Z$. 

For $c=k\alpha$ and $\ell=-1$ we can write $\zeta$ as  
\[\zeta(\theta)=\tau(\theta+k\alpha)+\tau(\theta)
\]
It follows that
\[
\zeta(\theta)-2\tau(\theta)=\tau(\theta+k\alpha)-\tau(\theta)
\]
\Cref{prop:sufficient-conditions-zeta-coboundary} shows that the term at the right is a coboundary. Thus we have proved that the difference $\zeta-2\tau$ is a coboundary. This implies that $\mathcal E(\zeta)$ is equal to $\mathcal E(2\tau)$ (see \cite{schmidt_cocycles_1977}). Since $\mathcal E(\tau)=\Z$, it is clear that then $\mathcal E(2\tau)=2\Z$. Thus $\mathcal E(\zeta)=2\Z$ as claimed. 

A similar argument shows that for $c=k\alpha+1/2$ and $\ell=1$, we have $\mathcal E(\zeta)=2\Z$.
\end{proof}
We note that the nontrivial direction in the previous result can be alternatively obtained from results of Conze and Piękniewska for $\Z^d$-valued cocycles \cite{conze_multiple_2014} (see the explanation in the proof of \cite[Theorem 3.4]{conze_remarks_2013}, which deals with the same cocycle, denoted $u_{\gamma}$). 
\section{Easy isomorphisms}\label{sec:easy-isomorphisms}
The purpose of this section is to point out elementary isomorphisms and automorphisms for the skew products we study.
\begin{proposition}\label{prop:easy-automorphisms}
    Let $\alpha\in\R\smallsetminus\Q$ and let $(X,\mathcal B_X,\mu,T)$ be an invertible measure-preserving system. Then $R_{\alpha}\rtimes_{\tau} T$, $R_{\alpha}\rtimes_{\tau} T^{-1}$, $R_{-\alpha}\rtimes_{\tau} T$, and $R_{-\alpha}\rtimes_{\tau} T^{-1}$ are all  isomorphic to each other. Indeed:
    \begin{enumerate}
    \item $R_{\alpha}\rtimes_{\tau} T$ is isomorphic to $R_{\alpha}\rtimes_{\tau} T^{-1}$ via $(\theta,x)\mapsto(\theta+1/2,x)$. 
        \item $R_{\alpha}\rtimes_{\tau} T$ is isomorphic to $R_{-\alpha}\rtimes_{\tau} T$ via $(\theta,x)\mapsto(-\theta+1/2,x)$. 
\end{enumerate}\end{proposition}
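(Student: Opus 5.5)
The plan is to verify each of the two explicit maps directly: check that it is a measure-space automorphism of $(\T\times X,\mathcal B_{\T}\otimes\mathcal B_X,m_{\T}\times\mu)$ and that it intertwines the two skew products. The remaining isomorphisms then follow by composing these two. Both maps are products of a measure-preserving bijection of $\T$ (a translation, or the reflection $\theta\mapsto-\theta+1/2$, which preserves Haar measure) with the identity on $X$. So bimeasurability and preservation of $m_{\T}\times\mu$ are immediate, and the content lies entirely in the conjugacy equation. The needed inputs are the identity $\tau(\theta+1/2)=-\tau(\theta)$ already recorded in the preliminaries, and an analogous reflection identity for (2).

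For (1), let $\Phi(\theta,x)=(\theta+1/2,x)$. We compute
\[(R_{\alpha}\rtimes_\tau T^{-1})(\Phi(\theta,x))=(\theta+1/2+\alpha,T^{-\tau(\theta+1/2)}x)=(\theta+\alpha+1/2,T^{\tau(\theta)}x),\]
and this equals $\Phi((R_{\alpha}\rtimes_\tau T)(\theta,x))$. So (1) is a one-line check using $\tau(\theta+1/2)=-\tau(\theta)$.

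For (2), let $\Psi(\theta,x)=(-\theta+1/2,x)$. Then
\[\Psi((R_\alpha\rtimes_\tau T)(\theta,x))=(-\theta-\alpha+1/2,T^{\tau(\theta)}x),\]
while
\[(R_{-\alpha}\rtimes_\tau T)(\Psi(\theta,x))=(-\theta+1/2-\alpha,T^{\tau(-\theta+1/2)}x).\]
The first coordinates agree, so we need $\tau(-\theta+1/2)=\tau(\theta)$ for almost every $\theta$. For $\theta\in(0,1/2)$ we have $-\theta+1/2\in(0,1/2)$, so both values equal $1$. For $\theta\in(1/2,1)$ we have $-\theta+1/2\in(-1/2,0)$, which is $(1/2,1)$ modulo $1$, so both values equal $-1$. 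The identity therefore fails only on the null set $\{0,1/2\}$. This is the only step needing care: the reflection must be centred at $1/4$ so that it maps each half-interval of $\tau$ to itself. The endpoint discrepancy is harmless because isomorphisms are only required to intertwine almost everywhere. To be fully precise, one can restrict to the invariant full-measure set of $\theta$ whose $R_\alpha$-orbit avoids $\{0,1/2\}$, which is a countable union of null sets.

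Finally, composing (1) and (2) (and (1) again with $-\alpha$ in place of $\alpha$) gives isomorphisms among all four systems, since isomorphism is an equivalence relation. I do not expect any real obstacle here; the key is choosing the reflection centred at $1/4$ in (2).
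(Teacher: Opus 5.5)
Your proposal is correct and follows the paper's proof: the same two explicit maps, a direct check of the conjugacy using $\tau(\theta+1/2)=-\tau(\theta)$ for (1) and $\tau(-\theta+1/2)=\tau(\theta)$ off the null set $\{0,1/2\}$ for (2), and composition to get the remaining isomorphisms. The only cosmetic difference is that the paper obtains the reflection identity from the oddness $\tau(-\theta)=-\tau(\theta)$ combined with the half-shift antisymmetry, while you verify it by direct case analysis on the two half-intervals.
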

\begin{proof}
Item (1) follows directly from the symmetry $\tau(\theta+1/2)=-\tau(\theta)$, $\theta\in\T$. The complete argument is left to the reader. Alternatively, (1) is a particular case of the more general  \Cref{prop:easy-isomorphisms-cohomology-section}. We now prove (2). 
Let us denote by $\Upsilon\colon\T\times X\to \T\times X$ the map 
\[
(\theta,x)\mapsto (-\theta+1/2,x)
\]
Then $\Upsilon$ is bijective and preserves $m_{\T}\times\mu$. Furthermore, observe that we have the identity \[\tau(-\theta)=-\tau(\theta), \ \ \theta\in\T\smallsetminus\{0,1/2\}\]
Since $\tau(\theta+1/2)=-\tau(\theta)$ for all $\theta$, we obtain
\[\tau(-\theta+1/2)=\tau(\theta), \ \ \theta\in\T\smallsetminus\{0,1/2\}\]
We now verify that  $\Upsilon\circ(R_{\alpha}\rtimes_{\tau} T) = (R_{-\alpha}\rtimes_{\tau} T)\circ\Upsilon$ almost everywhere. For $(\theta,x)$ with $\theta\not\in\{0,1/2\}$ we have:
\begin{align*}
    (R_{-\alpha}\rtimes_{\tau} T)(\Upsilon(\theta,x))&=(R_{-\alpha}\rtimes_{\tau} T)(-\theta+1/2,x)\\
    &=(-\theta+1/2-\alpha,T^{\tau(-\theta+1/2)}(x))\\
    &=(-\theta+1/2-\alpha,T^{\tau(\theta)}(x)) &\text{(because $\tau(-\theta+1/2)=\tau(\theta)$)}\\
    &=\Upsilon(\theta+\alpha,T^{\tau(\theta)}(x))\\
    &=\Upsilon((R_{\alpha}\rtimes_{\tau}T)(\theta,x))
\end{align*}
\end{proof}
We can now prove the easy direction of \Cref{thm:isomorphisms}.
\begin{proposition}\label{prop:isomorphism-theorem-easy-direction}
Let $\alpha,\beta\in\R\smallsetminus \Q$ and let $(Y,\mathcal B_Y,\nu,S)$ and $(X,\mathcal B_X,\mu,T)$ be invertible measure-preserving systems. A sufficient condition for  $R_{\alpha}\rtimes_{\tau} S$ to be isomorphic to $R_{\beta}\rtimes_{\tau} T$ is that $\alpha=\pm\beta\mod 1$, and furthermore $S$ is flip isomorphic to $T$.
\end{proposition}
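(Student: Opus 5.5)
The plan is to reduce everything to \Cref{prop:easy-automorphisms} together with one elementary observation: an isomorphism of the fiber systems lifts to an isomorphism of the skew products.

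\emph{Step 1 (the lifting).} Suppose $\upsilon\colon Y\to X$ is an isomorphism from $S$ to $T$, so $\upsilon\circ S=T\circ\upsilon$ almost everywhere. Then also $\upsilon\circ S^{k}=T^{k}\circ\upsilon$ for every $k\in\Z$, in particular for $k=\pm1$. Define $\Phi(\theta,y)=(\theta,\upsilon(y))$. This is a bimeasurable bijection, defined on a full-measure set, and it sends $m_{\T}\times\nu$ to $m_{\T}\times\mu$. For almost every $(\theta,y)$ we compute
\[
\Phi\bigl((R_{\alpha}\rtimes_\tau S)(\theta,y)\bigr)=(\theta+\alpha,\upsilon(S^{\tau(\theta)}y))=(\theta+\alpha,T^{\tau(\theta)}\upsilon(y))=(R_{\alpha}\rtimes_\tau T)(\Phi(\theta,y)).
\]
Hence $R_{\alpha}\rtimes_\tau S\cong R_{\alpha}\rtimes_\tau T$. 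The only care needed is with null sets: the identity $\upsilon\circ S^{\pm1}=T^{\pm1}\circ\upsilon$ holds off a null set $N\subset Y$, so the computation holds off $\T\times N$, which is null for the product measure.

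\emph{Step 2 (combining).} Assume $S$ is flip isomorphic to $T$, so $S\cong T$ or $S\cong T^{-1}$.
\begin{itemize}
\item If $S\cong T$, Step 1 gives $R_{\alpha}\rtimes_\tau S\cong R_{\alpha}\rtimes_\tau T$.
\item If $S\cong T^{-1}$, Step 1 gives $R_{\alpha}\rtimes_\tau S\cong R_{\alpha}\rtimes_\tau T^{-1}$, and \Cref{prop:easy-automorphisms}(1) gives $R_{\alpha}\rtimes_\tau T^{-1}\cong R_{\alpha}\rtimes_\tau T$.
\end{itemize}
In either case $R_{\alpha}\rtimes_\tau S\cong R_{\alpha}\rtimes_\tau T$.

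Now suppose $\beta=\pm\alpha \bmod 1$. Since $R_\beta$ depends only on $\beta \bmod 1$, if $\beta=\alpha \bmod 1$ there is nothing more to do. If $\beta=-\alpha \bmod 1$, then $R_\beta=R_{-\alpha}$, and \Cref{prop:easy-automorphisms}(2) gives $R_{\alpha}\rtimes_\tau T\cong R_{-\alpha}\rtimes_\tau T$. Composing isomorphisms, which is legitimate since isomorphism is an equivalence relation, yields $R_{\alpha}\rtimes_\tau S\cong R_\beta\rtimes_\tau T$.

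There is no real obstacle here. The only point needing attention is the measure-theoretic bookkeeping in Step 1, namely that a.e.-defined isomorphisms compose to a.e.-defined isomorphisms on the product space. Alternatively, the flip case of Step 2 could be handled directly through \Cref{prop:easy-isomorphisms-cohomology-section} with $\ell=-1$ and $c=1/2$, where $\zeta\equiv 0$ and so one may take $f=0$.
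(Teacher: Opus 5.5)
Your proposal is correct and follows essentially the same route as the paper: you lift a fiber isomorphism $\upsilon$ to $(\theta,x)\mapsto(\theta,\upsilon(x))$ and then invoke the two items of \Cref{prop:easy-automorphisms}. You handle the case $S\cong T^{-1}$ and the null-set bookkeeping more explicitly than the paper does, but the argument is the same.
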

\begin{proof}
It is clear that if $S$ is isomorphic to $T$, then $R_{\alpha}\rtimes_{\tau} S$ is isomorphic to $R_{\alpha}\rtimes_{\tau} T$. Indeed, if $\upsilon$ is an isomorphism from $S$ to $T$, then $(\theta,x)\mapsto (\theta,\upsilon(x))$ is an isomorphism from $R_{\alpha}\rtimes_{\tau} S$ to $R_{\alpha}\rtimes_{\tau} T$. By \Cref{prop:easy-automorphisms}, it follows that $R_{\alpha}\rtimes_{\tau} S$ is also isomorphic to  $R_{\alpha}\rtimes_{\tau} T^{-1}$, $R_{-\alpha}\rtimes_{\tau} T$, and $R_{-\alpha}\rtimes_{\tau} T^{-1}$.\end{proof} 

Provided $S$ and $T$ are flip isomorphic,  we have a natural collection of isomorphisms from $R_{\alpha}\rtimes_{\tau} S$ to $R_{\alpha}\rtimes_{\tau} T$ given by the following proposition. 
\begin{proposition}\label{prop:easy-isomorphisms-cohomology-section}
    Let $\alpha\in\R\smallsetminus\Q$. Let $c\in\T$, let $\ell\in\{-1,1\}$, and consider the step function
    \[
\zeta(\theta)=\tau(\theta+c)-\ell\tau(\theta)
    \]
    Suppose that $\zeta$ is a coboundary, and let $f\colon\T\to\Z$ be a measurable solution to $\zeta=f\circ R_{\alpha}-f$. 

    Let $(Y,\mathcal B_Y,\nu,S)$ and $(X,\mathcal B_X,\mu,T)$ be  invertible measure-preserving systems such that $S$ is isomorphic to $T^{\ell}$, and let $\upsilon$ be such an isomorphism. 
    Then  
    \[(\theta,x)\mapsto (\theta+c,T^{f(\theta)}(\upsilon(x)))\]
    defines an isomorphism from $R_{\alpha}\rtimes_{\tau} S$ to $R_{\alpha}\rtimes_{\tau} T$. 
\end{proposition}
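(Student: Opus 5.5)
The plan is to verify directly that the map $\Phi(\theta,x)=(\theta+c,T^{f(\theta)}(\upsilon(x)))$ is a measure-preserving bijection (mod null sets) and that it intertwines $R_{\alpha}\rtimes_{\tau} S$ with $R_{\alpha}\rtimes_{\tau} T$. The approach is a fiberwise computation, using only the coboundary equation for $\zeta$ and the relation $\upsilon\circ S=T^{\ell}\circ\upsilon$.

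First I check that $\Phi$ is an automorphism of the measure spaces. It is measurable because $f$ is measurable with values in $\Z$: on each level set $\{f=k\}$, $\Phi$ is the measurable map $(\theta,x)\mapsto(\theta+c,T^{k}\upsilon(x))$. Its inverse is $(\theta',x')\mapsto(\theta'-c,\upsilon^{-1}(T^{-f(\theta'-c)}x'))$, which is measurable for the same reason. Measure preservation follows from Fubini. For fixed $\theta$, the fiber map $x\mapsto T^{f(\theta)}\upsilon(x)$ pushes $\nu$ to $\mu$, and the rotation by $c$ preserves $m_{\T}$. Hence $\Phi_*(m_{\T}\times\nu)=m_{\T}\times\mu$.

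The main step is the conjugacy identity. I compute both compositions at $(\theta,x)$. For the first,
\[
(R_{\alpha}\rtimes_{\tau} T)(\Phi(\theta,x))=\bigl(\theta+c+\alpha,\;T^{\tau(\theta+c)+f(\theta)}\upsilon(x)\bigr).
\]
For the second,
\[
\Phi((R_{\alpha}\rtimes_{\tau} S)(\theta,x))=\bigl(\theta+\alpha+c,\;T^{f(\theta+\alpha)}\upsilon(S^{\tau(\theta)}x)\bigr).
\]
Since $\upsilon\circ S=T^{\ell}\circ\upsilon$, we get $\upsilon\circ S^{m}=T^{\ell m}\circ\upsilon$ for every $m\in\Z$, so the second fiber coordinate equals $T^{f(\theta+\alpha)+\ell\tau(\theta)}\upsilon(x)$. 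The two sides therefore agree exactly when
\[
\tau(\theta+c)+f(\theta)=f(\theta+\alpha)+\ell\tau(\theta),
\]
that is, when $\zeta(\theta)=f(\theta+\alpha)-f(\theta)$. This is the coboundary hypothesis, valid for almost every $\theta$.

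The only mild obstacle is bookkeeping with null sets. The coboundary equation and the intertwining $\upsilon\circ S=T^{\ell}\circ\upsilon$ hold only almost everywhere, so I restrict to the set $\Theta\times Y_0$. Here $\Theta$ is the full-measure set where the coboundary identity holds, and $Y_0$ is a full-measure $S$-invariant set on which $\upsilon\circ S^{\pm1}=T^{\pm\ell}\circ\upsilon$. By Fubini this set has full measure, and on it the identity holds pointwise, so the identity holds almost everywhere. This gives the required isomorphism.
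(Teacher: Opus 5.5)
Your proposal is correct and follows essentially the same route as the paper: it checks measure preservation, writes down an explicit inverse, and does the same fiberwise computation. That computation uses $\upsilon\circ S^{m}=T^{\ell m}\circ\upsilon$ to reduce the conjugacy to the coboundary equation $\zeta=f\circ R_{\alpha}-f$. Your inverse $(\theta',x')\mapsto(\theta'-c,\upsilon^{-1}(T^{-f(\theta'-c)}x'))$ is the same map as the paper's $(\theta',x')\mapsto(\theta'-c,S^{-\ell f(\theta'-c)}(\upsilon^{-1}(x')))$, and your null-set bookkeeping is slightly more explicit than the paper's.
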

\begin{proof}
    Let us denote by $\Upsilon$ the map 
    \[
(\theta,x)\mapsto (\theta+c,T^{f(\theta)}(\upsilon(x)))
    \]
 It is easy to see that since $\upsilon$ sends $\nu$ to $\mu$, $\Upsilon$ sends $m_{\T}\times\nu$ to $m_{\T}\times\mu$.  Let  $g(\theta)=-\ell f(\theta-c)$, and let $\Upsilon'$ be the map given by
\[
(\theta,x)\mapsto (\theta-c,S^{g(\theta)}(\upsilon^{-1}(x))) 
\]
Observe that $\Upsilon'$  sends $m_{\T}\times\mu$ to $m_{\T}\times \nu$. Furthermore, a direct computation shows that $\Upsilon\circ\Upsilon'=\id_{\T\times X}$ and $\Upsilon'\circ\Upsilon=\id_{\T\times Y}$ almost everywhere.  

It remains to verify that $\Upsilon\circ( R_{\alpha}\rtimes_{\tau} S)=(R_\alpha\rtimes_{\tau} T)\circ \Upsilon$. Note that since $\upsilon\circ S=T^{\ell}\circ\upsilon$, we also have $\upsilon\circ S^{\tau(\theta)}=T^{\ell\tau(\theta)}\circ\upsilon$ for all $\theta$. Next, for almost every $(\theta,x)$ we have 
\begin{align*}
    \Upsilon((R_{\alpha}\rtimes_{\tau} S)(\theta,x))&=\Upsilon(\theta+\alpha,S^{\tau(\theta)}(x))\\
    &=(\theta+\alpha+c,T^{f(\theta+\alpha)}(\upsilon(S^{\tau(\theta)}(x))))\\
    &= (\theta+\alpha+c,T^{f(\theta+\alpha)}T^{\ell\tau(\theta)}(\upsilon(x)))\\
    &=(\theta+\alpha+c,T^{f(\theta+\alpha)+\ell\tau(\theta)}(\upsilon(x)))
\end{align*}
On the other hand, 
\begin{align*}
    (R_{\alpha}\rtimes_{\tau} T)(\Upsilon(\theta,x))&=(R_{\alpha}\rtimes_{\tau} T)(\theta+c, T^{f(\theta)}(\upsilon(x)))\\
    &=(\theta+\alpha+c,T^{\tau(\theta+c)}(T^{f(\theta)}(\upsilon(x))))\\
    &=(\theta+\alpha+c,T^{\tau(\theta+c)+f(\theta)}(\upsilon(x)))
\end{align*}
But $f(\theta+\alpha)+\ell\tau(\theta)=\tau(\theta+c)+f(\theta)$ almost everywhere by our assumption on $f$.  
\end{proof}
\section{Eigenvalues and the Kronecker factor of $R_{\alpha}\rtimes_{\tau} T$}\label{sec:kronecker}
The purpose of this section is to determine the eigenvalues of the Koopman operator for $R_{\alpha}\rtimes_{\tau} T$, and to determine the Kronecker factor of $R_{\alpha}\rtimes_{\tau} T$ for $T$ ergodic.

These results have two important applications for us. In the ergodic case, it will follow that any possible factor map, and in particular isomorphism, from $R_{\alpha}\rtimes_{\tau} S$ to $R_{\alpha}\rtimes_{\tau} T$, must be a rotation on $\T$ in its first coordinate (\Cref{prop:isomorphism-nice-first-coordinate}). Furthermore, we will use these results in \Cref{sec:different-angles} to show that a necessary condition for $R_{\alpha}\rtimes_{\tau} S$ and $R_{\beta}\rtimes_{\tau} T$ to be isomorphic is that $\alpha=\pm \beta\mod 1$.

The Koopman operator $U_T$ of a measure-preserving system  $(X,\mathcal B_X,\mu,T)$ is defined as the linear operator  $F\mapsto F\circ T$ acting on the space $L^2_{\C}(X,\mathcal B_X,\mu)$  of complex valued $L^2$-integrable functions over $(X,\mathcal B_X,\mu)$. The following result is well known.
\begin{proposition}\label{prop:eigenvalues-rotation}
    Let $\alpha\in\R\smallsetminus\Q$. The set of eigenvalues of the Koopman operator of $R_{\alpha}$ is $\{e^{2\pi i k \alpha} : k\in\Z\}$. 
\end{proposition}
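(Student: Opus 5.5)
We prove both inclusions using Fourier analysis on $L^2_{\C}(\T,\mathcal B_{\T},m_{\T})$, with the characters $e_k(\theta)=e^{2\pi i k\theta}$, $k\in\Z$, as an orthonormal basis.

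\emph{Every $e^{2\pi i k\alpha}$ is an eigenvalue.} We have
\[
e_k(\theta+\alpha)=e^{2\pi i k\alpha}e_k(\theta),
\]
so $U_{R_{\alpha}}e_k=e^{2\pi i k\alpha}e_k$. Since $e_k\neq 0$, each $e^{2\pi i k\alpha}$ is an eigenvalue.

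\emph{There are no other eigenvalues.} Suppose $F\in L^2_{\C}$ is nonzero and satisfies $F\circ R_{\alpha}=\lambda F$. Write $F=\sum_k \hat F(k)e_k$. The Fourier coefficients of $F\circ R_{\alpha}$ are $e^{2\pi i k\alpha}\hat F(k)$. Comparing coefficients in $F\circ R_{\alpha}=\lambda F$, which is legitimate by uniqueness of Fourier expansions in $L^2$ (Parseval), gives
\[
\bigl(e^{2\pi i k\alpha}-\lambda\bigr)\hat F(k)=0 \quad\text{for all } k\in\Z .
\]
Since $F\neq 0$, some coefficient $\hat F(k_0)$ is nonzero, and then $\lambda=e^{2\pi i k_0\alpha}$.

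This also shows that each eigenspace is one-dimensional. Because $\alpha$ is irrational, the numbers $e^{2\pi i k\alpha}$ are pairwise distinct, so $\hat F(k)\neq 0$ can hold for only one $k$. This is not needed for the statement itself.

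The only step requiring any care is justifying that Fourier coefficients can be compared in $L^2$. This is standard, since $\{e_k\}_{k\in\Z}$ is a complete orthonormal system, so there is no real obstacle.
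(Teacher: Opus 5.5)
Your proof is correct. It is the standard Fourier-coefficient argument; the paper gives no proof of its own and simply cites the result as well known.

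One small precision on where the care is actually needed. The identity $(e^{2\pi i k\alpha}-\lambda)\hat F(k)=0$ uses only two facts: $\hat F(k)=\langle F,e_k\rangle$ is linear in $F$, and the change of variables $\theta\mapsto\theta-\alpha$. It does not need uniqueness of expansions. Completeness of $\{e_k\}_{k\in\Z}$ is used in the step ``$F\neq 0$ implies $\hat F(k_0)\neq 0$ for some $k_0$.''
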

We will now compute the set of eigenvalues of the Koopman operator for $R_{\alpha}\rtimes_{\tau} T$. 
\begin{theorem}\label{thm:eigenvalues}
Let $\alpha\in\R\smallsetminus\Q$ and let  $(X,\mathcal B_X,\mu,T)$ be an invertible measure-preserving system. The set of eigenvalues for the Koopman operator of $R_{\alpha}\rtimes_{\tau} T$ is $\{e^{2\pi i k\alpha} : k\in\Z\}\cup \{-e^{2\pi i k\alpha} : k\in\Z\}$, provided $-1$ is an eigenvalue of the Koopman operator of $T$, and $\{e^{2\pi i k\alpha} : k\in\Z\}$ otherwise. 
\end{theorem}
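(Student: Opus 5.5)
Throughout, write $U$ for the Koopman operator of $R_{\alpha}\rtimes_{\tau} T$. The plan is to treat the easy inclusions first and then classify all eigenfunctions by a fiberwise argument.

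\emph{Easy inclusions.} Functions depending only on $\theta$ give the eigenvalues $e^{2\pi i k\alpha}$, by \Cref{prop:eigenvalues-rotation}. If $-1$ is an eigenvalue of $U_T$ with eigenfunction $h$, set $F(\theta,x)=h(x)$. Since $\tau(\theta)=\pm1$, we have $h(T^{\tau(\theta)}x)=h(T^{\pm1}x)=-h(x)$. Hence $F$ is an eigenfunction with eigenvalue $-1$, and multiplying by $e^{2\pi i k\theta}$ gives all the eigenvalues $-e^{2\pi i k\alpha}$.

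\emph{Reverse inclusion: fiberwise setup.} Let $F\in L^2(\T\times X)$ be nonzero with $F(\theta+\alpha,T^{\tau(\theta)}x)=\lambda F(\theta,x)$. Regard $F$ as a measurable map $\theta\mapsto F_\theta\in L^2(X)$ into the Polish space $L^2(X)$, where $F_\theta=F(\theta,\cdot)$. The equation then reads
\[
U_T^{\tau(\theta)}F_{\theta+\alpha}=\lambda F_\theta .
\]
Because $U_T$ is unitary, $\|F_\theta\|$ is $R_\alpha$-invariant, hence constant, and we normalize it to $1$. Iterating along the sequence $(q_n)$ of \Cref{prop:nice-sequence-of-odd-numbers} gives
\[
U_T^{\tau(q_n,\theta)}F_{\theta+q_n\alpha}=\lambda^{q_n}F_\theta .
\]
Since $q_n\alpha\to0$ in $\T$, \Cref{prop:subsequences} lets us pass to a subsequence along which $F_{\theta+q_n\alpha}\to F_\theta$ for a.e.\ $\theta$. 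After a further subsequence we may also assume $\lambda^{q_n}\to\kappa$.

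\emph{Extracting $-1$ as an eigenvalue of $T$.} By \Cref{prop:infinitely-many-1s} (which holds for subsequences as well), for a.e.\ $\theta$ we can choose $n$ along the subsequence with $\tau(q_n,\theta)=1$, and also $n$ with $\tau(q_n,\theta)=-1$. Taking limits gives
\[
U_T F_\theta=\kappa F_\theta \quad\text{and}\quad U_T^{-1}F_\theta=\kappa' F_\theta .
\]
The main obstacle is that the limits $\kappa$ along these two $\theta$-dependent subsequences may differ. To handle it, I would refine the subsequence in a $\theta$-independent way, using a ``two-dimensional'' version of \Cref{prop:infinitely-many-1s} in the style of \Cref{prop:ell-possible-values}. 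Alternatively one can avoid this issue: from $U_T F_\theta=\kappa F_\theta$ alone, $F_\theta$ is an eigenvector of $U_T$, so $U_T^{\tau(\theta)}F_{\theta+\alpha}=\kappa(\theta+\alpha)^{\tau(\theta)}F_{\theta+\alpha}$. Applying this observation at both $\theta$ and $\theta+\alpha$, together with the relation $F_{\theta+\alpha}\propto F_\theta$, shows that $\kappa(\theta)$ is $R_\alpha$-invariant, hence a.e.\ constant, equal to some $\kappa$. Thus $F(\theta,x)=g(\theta)h(x)$ with $U_Th=\kappa h$. Substituting into the eigenvalue equation gives
\[
\kappa^{\tau(\theta)}g(\theta+\alpha)=\lambda g(\theta).
\]

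\emph{Conclusion.} If $\kappa=\pm1$, then $\kappa^{\tau}=\kappa$ is constant, so $\lambda\kappa^{-1}$ is an eigenvalue of $R_\alpha$. This gives $\lambda=\pm e^{2\pi i k\alpha}$, and the minus sign occurs only when $\kappa=-1$, i.e.\ only when $-1$ is an eigenvalue of $U_T$. If $\kappa\neq\pm1$, write $\kappa=e^{2\pi i s}$. Then $\kappa^{\tau}$ is a cocycle $e^{2\pi i s\tau}$, and an eigenfunction $g$ would make $e^{2\pi i s\tau}\lambda^{-1}$ a multiplicative coboundary. Ergodicity of $\tau$ ($\mathcal E(\tau)=\Z$) means every character of $\Z$ composed with $\tau$ is non-cohomologous to a constant unless it is trivial on $\Z$. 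So $e^{2\pi i s\tau}$ is not cohomologous to a constant, a contradiction unless $\kappa=1$. Equivalently, iterate along $q_n$ with $\tau(q_n,\theta)$ taking both values $\pm1$ and compare to force $\kappa^2=1$. This completes the classification.
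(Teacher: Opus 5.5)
Your skeleton is the paper's proof, but the ``main obstacle'' you identify does not exist, and the detour you take around it contains a false step.

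\textbf{The obstacle is not there.} You already passed to a $\theta$-independent subsequence along which $\lambda^{q_n}\to\kappa$. The $\theta$-dependent sub-subsequences with $\tau(q_n,\theta)=1$ or $\tau(q_n,\theta)=-1$ are subsequences of it, so $\lambda^{q_n}\to\kappa$ along both, and $\kappa'=\kappa$ automatically. Hence $U_TF_\theta=\kappa F_\theta=U_T^{-1}F_\theta$, so $F_\theta=\kappa^2F_\theta$ and $\kappa^2=1$, since some $F_\theta\neq 0$. Applying this at $\theta+\alpha$ gives $U_T^{\tau(\theta)}F_{\theta+\alpha}=\kappa F_{\theta+\alpha}$. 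The slice equation then yields $F_{\theta+\alpha}=\kappa\lambda F_\theta$ for a.e.\ $\theta$. So $\kappa\lambda$ is an eigenvalue of $R_\alpha$, using $\theta\mapsto F(\theta,x)$ for a positive-measure set of $x$, while $\kappa\in\{\pm1\}$ is an eigenvalue of $U_T$.

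This is exactly the paper's argument; it fixes the accumulation point $\lambda_*$ of $\lambda^{q_n}$ first for precisely this reason. The two-dimensional refinement and the $\theta$-dependence of $\kappa$ are unnecessary. So is the factorization $F=g\otimes h$, which would in any case need an argument, e.g.\ $R_\alpha$-invariance of the rank-one projection onto $F_\theta$ plus ergodicity.

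\textbf{The justification for excluding $\kappa\neq\pm1$ is wrong.} Ergodicity of $\tau$ only says that $\chi\circ\tau$ is not a multiplicative \emph{coboundary} for a nontrivial character $\chi$ of $\Z$. Indeed, if $\chi\circ\tau=g/(g\circ R_\alpha)$, then $(\theta,m)\mapsto g(\theta)\chi(m)$ is a nonconstant invariant function of the cylinder map $(\theta,m)\mapsto(\theta+\alpha,m+\tau(\theta))$. Being cohomologous to a constant $\lambda$ only produces an eigenfunction of that infinite-measure system, and ergodicity does not forbid this.

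In fact your claim as stated is false. For $\chi(m)=(-1)^m$ one has $\chi\circ\tau\equiv-1$, because $\tau$ takes only odd values. This is exactly where the eigenvalues $-e^{2\pi i k\alpha}$ come from, and $(\theta,m)\mapsto(-1)^m$ is an eigenfunction with eigenvalue $-1$ of the ergodic cylinder map.

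It is true here that $e^{2\pi i s\tau}$ is cohomologous to a constant only when $e^{2\pi i s}=\pm1$. But this uses the specific structure of $\tau$: the odd $q_n$ along which $\tau(q_n,\theta)$ takes both values $\pm1$. That is your closing ``equivalently'' sentence, compare the paper's remark on the sets $A_t$. It is the same argument that already finishes the proof at the earlier step, so the whole detour can be deleted.
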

\begin{proof}
The easy part of the proof is verifying that $U_{R_{\alpha}\rtimes_{\tau} T}$ has at least as many eigenvalues as claimed in the statement (for inclusion). Let $f\colon\T\to\C$ be an eigenfunction for $R_{\alpha}$ with eigenvalue $\lambda$. Then clearly $(\theta,x)\mapsto f(\theta)$ is an eigenfunction for $R_{\alpha}\rtimes_{\tau} T$ with eigenvalue $\lambda$. By \Cref{prop:eigenvalues-rotation}, it follows that the set of eigenvalues for $U_{R_{\alpha}\rtimes_{\tau} T}$ contains $\{e^{2\pi i k\alpha} : k\in\Z\}$.

Now suppose that $-1$ is an eigenvalue for $U_T$, and let $g$ be an eigenfunction for $U_T$ with eigenvalue $-1$. Thus $g\circ T=-g$. Since $g\circ T\circ T^{-1}=-g\circ T^{-1}$, it follows that $g\circ T^{-1}=-g$. We claim that then  $(\theta,x)\mapsto F(\theta,x)=f(\theta)\cdot g(x)$ is an eigenfunction for $U_{R_{\alpha}\rtimes_{\tau} T}$ with eigenvalue $-\lambda$. Indeed, for almost every $(\theta,x)$ we have 
\[
F(\theta+\alpha,T^{\tau(\theta)}(x))=f(\theta+\alpha)\cdot g(T^{\tau(\theta)}(x))=-\lambda f(\theta)g(x)
\]
Thus, if $-1$ is an eigenvalue for $U_T$, the set of eigenvalues for $U_{R_{\alpha}\rtimes_{\tau} T}$ also contains $\{-e^{2\pi i k\alpha} : k\in\Z\}$.

Next we will prove that $U_{R_{\alpha}\rtimes_{\tau} T}$ has at most as many eigenvalues as claimed in the statement (for inclusion). Let $\lambda\in \mathbb S=\{z\in\C : |z|=1\}$ be an arbitrary eigenvalue for $U_{R_{\alpha}\rtimes_\tau T}$. Let $F\in L^2_{\C}(\T\times X,\mathcal B_{\T}\otimes \mathcal B_X,m_{\T}\times\mu)$ be an eigenfunction for $\lambda$, so
    \begin{equation}\label{eq:eigenfunction}F\circ (R_{\alpha}\rtimes_\tau T) =\lambda\cdot F\end{equation}
    We write  $F_\theta(x)=F(\theta,x)$. Since $F$ is $L^2$ integrable over  $(\T\times X,\mathcal B_{\T}\otimes \mathcal B_X,m_{\T}\times\mu)$, it follows from Fubini's Theorem that $F_\theta $ is $L^2$ integrable over $(X,\mathcal B_X,\mu)$ for almost every $\theta\in \T$.  We have an almost everywhere defined measurable function 
    \[
    \T\to L^2_{\C}(X,\mathcal B_X,\mu), \ \theta\mapsto F_{\theta}
    \]
    Let  $(q_n)_{n\in\N}$ be a sequence as in  \Cref{prop:nice-sequence-of-odd-numbers}.   Since $\mathbb S$ is compact, the sequence $(\lambda^{q_n})_{n\in\N}$ has an accumulation point $\lambda_*\in\mathbb S$. We will show that 
    \begin{equation}\label{eq:accumulation-point}
        F_{\theta}\circ T=\lambda_* F_{\theta} \ \text{for almost every $\theta$}
    \end{equation}
    Replacing $(q_n)_{n\in\N}$ by a subsequence, we may assume that  $\lim_{n\to\infty}\lambda^{q_n}=\lambda_*$. Furthermore, replacing $(q_n)_{n\in\N}$ by a further subsequence, we can assume that for almost every $\theta\in \T$ we have $||F_{\theta +q_{n}\alpha}- F_{\theta}||_{L^2}\to 0$ as $n\to\infty$. This is possible thanks to \Cref{prop:subsequences}, the measurability of $\theta\to F_{\theta}$, and the fact that $q_n\alpha\to0$ in $d_{\T}$ as $n\to\infty$.
    
    Since $F\circ (R_{\alpha}\rtimes_\tau T) =\lambda\cdot F$, we have $F\circ (R_{\alpha}\rtimes_\tau T)^n =\lambda^n\cdot F$ for $n\geq 1$, and then  \[F(\theta+n\alpha,T^{\tau(n,\theta)}(x))=\lambda^n F(\theta,x)\]
    for almost every $(\theta,x)$. Hence for almost every $\theta\in\T$ we can write \begin{equation}\label{eq:compositions} 
    F_{\theta+n\alpha}\circ T^{\tau(n,\theta)} = \lambda^n F_{\theta}
    \end{equation}
For almost every $\theta\in\T$, we can argue as follows. By \Cref{prop:infinitely-many-1s}, we can choose a subsequence $(q_{n_k})_{k\in\N}$ of $(q_{n})_{n\in\N}$ so that $\tau(q_{n_k},\theta)=1$ for all $k$. Hence 
    \begin{equation}\label{eq:compositions-q-n-k-ones}
    F_{\theta+q_{n_{k}}\alpha}\circ T = \lambda^{q_{n_{k}}} F_{\theta}
    \end{equation}
    Recall that a subsequence of a convergent sequence converges to the same limit. Since $\lim_{n\to\infty}||F_{\theta +q_{n}\alpha}- F_{\theta}||_{L^2}=0$,  we also have $\lim_{k\to\infty}||F_{\theta +q_{n_{k}}\alpha}- F_{\theta}||_{L^2}= 0$. Since $U_T$ is an isometry for the $L^2$ norm, we also have $\lim_{k\to\infty}||F_{\theta +q_{n_{k}}\alpha}\circ T - F_{\theta}\circ T||_{L^2}=0$. By \Cref{eq:compositions-q-n-k-ones} it follows that  $\lim_{k
    \to\infty}||\lambda^{q_{n_{k}}} F_{\theta}-F_{\theta}\circ T||_{L^2}= 0 $. But $\lambda^{q_{n_{k}}}$ converges to $\lambda_*$ as $k\to\infty$, so we obtain that $||\lambda_{*} F_{\theta}-F_{\theta}\circ T ||_{L^2}=0$. Hence $F_{\theta}\circ T = \lambda_* F_{\theta}$ for almost every $\theta$ as claimed.

    The same argument shows that 
    \begin{equation}\label{eq:accumulation-point-inverse}
        F_{\theta}\circ T^{-1}=\lambda_* F_{\theta} \ \text{for almost every $\theta$}
    \end{equation}The only modification required is that when we apply \Cref{prop:infinitely-many-1s}, we need to choose a subsequence $(q_{n_{k}})_{k\in\N}$ such that $\tau(q_{n_{k}},\theta)=-1$ for all $k$. 
    
    For almost every $\theta\in\T$ we can write 
\[
        F_{\theta}= (F_{\theta}\circ T)\circ T^{-1}= \lambda_{\ast} \cdot F_{\theta} \circ T^{-1}= \lambda_* ^2 F_{\theta}
\]
     Since $F$ is not identically null, some $F_{\theta}$ is not identically null. Therefore $\lambda_{*}^2=1$, and $\lambda_*\in\{-1,1\}$.

     For almost every $\theta\in\T$, the following holds. By  \Cref{eq:accumulation-point,eq:accumulation-point-inverse} we have  $F_{\theta+\alpha}\circ T=\lambda_* F_{\theta+\alpha}$ and  $F_{\theta+\alpha}\circ T^{-1}=\lambda_* F_{\theta+\alpha}$. Thus $F_{\theta+\alpha}\circ T^{\tau(\theta)}=\lambda_*F_{\theta+\alpha}$. On the other hand, by  \Cref{eq:compositions} we have  $F_{\theta+\alpha}\circ T^{\tau(\theta)}=\lambda F_{\theta}$. Therefore $\lambda_*F_{\theta+\alpha}=\lambda F_{\theta}$. Since $\lambda_*\in\{-1,1\}$ is its own multiplicative inverse, we obtain \begin{equation}\label{eq:rotation-invariance-kronecker-slices}F_{\theta+\alpha}=\lambda_*\lambda F_{\theta} \ \text{ for almost every $\theta$}\end{equation}
    Thus for a positive measure set of $x\in X$ the function 
    \[
\T\to\C, \theta\to F(\theta,x)
    \]
     is an eigenfunction for $U_{R_{\alpha}}$, with eigenvalue $\lambda_*\lambda$.

     Thus we started with an arbitrary eigenvalue  $\lambda$  for $U_{R_{\alpha}\rtimes_{\tau} T}$, and we proved that $\lambda_*\lambda$ is an  eigenvalue of $U_{R_{\alpha}}$. Since $\lambda_*\in\{-1,1\}$, it follows from \Cref{prop:eigenvalues-rotation} that $\lambda$ belongs to $\{e^{2\pi i k\alpha} : k\in\Z\}\cup \{-e^{2\pi i k\alpha} : k\in\Z\}$. Since $\lambda_*$ is an eigenvalue for $U_T$ (\Cref{eq:accumulation-point}), if $-1$ is not an eigenvalue for $U_T$, the only possibility is $\lambda_*=1$. Thus in that case $\lambda$ belongs to $\{e^{2ki\pi\alpha} : k\in\Z\}$.
\end{proof}
\begin{remark}
In the ergodic case, one can alternatively compute the eigenvalues of $U_{R_{\alpha}\rtimes_{\tau} T}$ by combining the previous argument with a general result from \cite{lemanczyk_lifting_2012b}. For $t\in [0,1)$ define $A_t$ as the set of those $\chi\in \mathbb S$ such that for some $f\colon \T\to \mathbb S$ measurable we have 
\[
 \chi^{\tau(\theta)}=e^{2\pi i t} \frac{f(\theta)}{f(\theta+\alpha)} \ \text{for almost every $\theta$}
\]
This equation can be iterated to obtain 
\[
\chi^{\tau(n,\theta)}=(e^{2\pi i t})^n \frac{f(\theta)}{f(\theta+n\alpha)} \ \text{for almost every $\theta$, $n\geq 1$,}
\]
For this equation, one can repeat the subsequence argument from  \Cref{thm:eigenvalues} (for $\lambda=e^{2\pi i t}$), and derive that $A_t\subset\{-1,1\}$, $t\in[0,1)$. But $1\in A_t$ if and only if $e^{2\pi i t}$ is an eigenvalue for $R_{\alpha}$, and $-1\in A_t$ if and only if $-e^{2\pi i t}$ is an eigenvalue for $R_{\alpha}$, so
\[
A_t=\begin{cases}
	\{1\}, \  \ t=k\alpha \mod 1, k\in\Z \\
	\{-1\},  \ t=k\alpha +1/2 \mod 1, k\in\Z\\
	\emptyset, \ \text{otherwise} 
\end{cases}
\]
Proposition 5 in \cite{lemanczyk_lifting_2012b} states that $e^{2\pi i t}$ is an eigenvalue for $U_{R_{\alpha}\rtimes_{\tau} T}$ if and only if $A_t$ contains an eigenvalue of $U_{T}$. From this characterization we recover the fact that  $U_{R_{\alpha}\rtimes_{\tau} T}$ always has the eigenvalue $e^{2\pi i k \alpha}$, while $e^{2\pi i (k \alpha+1/2)}=-e^{2\pi i k \alpha}$ is present exactly when $-1$ is an eigenvalue for $U_T$ ($k\in\Z$). 
\end{remark}
Next we record the following fact. 
\begin{proposition}\label{prop:characterization-ergodicity}
Let $\alpha\in\R\smallsetminus\Q$ and let $(X,\mathcal B_{X},\mu,T)$ be an invertible measure-preserving system. Then $R_{\alpha}\rtimes_{\tau} T$ is ergodic if and only if $T$ is ergodic. 
\end{proposition}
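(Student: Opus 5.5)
The necessity direction is immediate. If $A\subset X$ is a $T$-invariant set with $0<\mu(A)<1$, then $\T\times A$ is invariant under $R_{\alpha}\rtimes_{\tau} T$, because $T^{\tau(\theta)}(A)=A$ whatever the value of $\tau(\theta)\in\{-1,1\}$. Moreover $m_{\T}\times\mu(\T\times A)=\mu(A)\in(0,1)$, so the skew product is not ergodic.

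For sufficiency, assume $T$ is ergodic and let $F\in L^2(\T\times X)$ be invariant, i.e.\ an eigenfunction with eigenvalue $\lambda=1$. I would rerun the argument from the proof of \Cref{thm:eigenvalues} verbatim. Since $\lambda^{q_n}=1$, we get $\lambda_*=1$. \Cref{eq:accumulation-point} then gives $F_\theta\circ T=F_\theta$ for almost every $\theta$. By ergodicity of $T$, each such $F_\theta$ is almost everywhere equal to a constant $h(\theta)$, so $F(\theta,x)=h(\theta)$ almost everywhere. The function $h$ is measurable because $h(\theta)=\int_X F_\theta\,d\mu$, by Fubini.

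Next, \Cref{eq:rotation-invariance-kronecker-slices} with $\lambda_*\lambda=1$ gives $h(\theta+\alpha)=h(\theta)$ almost everywhere. Since $R_{\alpha}$ is ergodic, $h$ is constant, hence $F$ is constant almost everywhere and $R_{\alpha}\rtimes_{\tau} T$ is ergodic.

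The key step is the one already carried out in \Cref{thm:eigenvalues}: using \Cref{prop:subsequences} together with \Cref{prop:infinitely-many-1s} to pass from invariance under the skew product to fiberwise $T$-invariance. Here it requires no new work, so I expect no real obstacle. The only point to check is that the argument there did not use any assumption on $T$ beyond invertibility, and indeed it did not.
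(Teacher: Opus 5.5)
Your proof is correct and is essentially the paper's own argument. The necessity direction is identical. For sufficiency, the paper first cites a general result on Rokhlin cocycle extensions, and then gives exactly your self-contained route: \Cref{thm:eigenvalues} with $\lambda=1$, hence $\lambda_*=1$, followed by \Cref{eq:accumulation-point,eq:rotation-invariance-kronecker-slices} and ergodicity of $T$ and $R_{\alpha}$. Your remark that $h(\theta)=\int_X F_\theta\,d\mu$ is measurable fills in a detail the paper leaves implicit.
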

\begin{proof}
It is easy to see directly that whenever $T$ is not ergodic, then $R_{\alpha}\rtimes_{\tau} T$ is not ergodic. Indeed, suppose that $T$ is not ergodic. Let $E\subset X$ be $T$-invariant with $0<\mu(E)<1$.   Then $\T\times E$ is $R_{\alpha}\rtimes_{\tau} T$-invariant, and $m_{\T}\times\mu(\T\times E)=\mu(E)$. 

Suppose now that $T$ is ergodic. By a general fact about Rokhlin cocycle extensions \cite[Corollary 5]{lemanczyk_ergodicity_2001}, the ergodicity of $\tau$ ensures that then  $R_{\alpha}\rtimes_{\tau} T$ is ergodic. Alternatively, a self-contained proof follows from the argument in \Cref{thm:eigenvalues}. Indeed, let $F\in L^2_{\C}(\T\times X,\mathcal B_{\T}\otimes \mathcal B_X,m_{\T}\times\mu)$ be an arbitrary invariant function for $R_{\alpha}\rtimes_{\tau} T$, so it is an eigenfunction for $U_{R_{\alpha}\rtimes_{\tau} T}$ with eigenvalue $\lambda=1$. Since $\lambda^n=1$ for all $n$, the value $\lambda_*$ defined in the proof of \Cref{thm:eigenvalues} is equal to $1$. Then \Cref{eq:accumulation-point,eq:rotation-invariance-kronecker-slices} show that  $F_\theta\circ T= F_{\theta}$  and  $F_{\theta+\alpha}=F_{\theta}$ for almost every $\theta$. By the ergodicity of $T$ and $R_{\alpha}$,  it follows that $F$ is constant almost everywhere.%
\end{proof}
We will now determine the Kronecker factor of $R_{\alpha}\rtimes_{\tau} T$ for $T$ ergodic. We start by recalling the relevant definitions. By a Kronecker system we mean an ergodic measure-preserving system which is isomorphic to a rotation in a compact abelian group, endowed with its Borel sigma algebra and Haar measure. Among the factors of an ergodic system that are Kronecker systems, there exists one which is maximal for the factor relation. We summarize the relevant information for us in the following proposition-definition (see \cite[\S 6.4]{einsiedler_ergodic_2011} and \cite[Proposition 14]{host_nilpotent_2018a}).
\begin{proposition}[and definition of Kronecker factor]\label{prop:characterization-kronecker-factor}
    Let $(X,\mathcal B_X,\mu,T)$ be an ergodic invertible measure-preserving system. There exists a  Kronecker system $(G,\mathcal B_{G},m_G,R)$ which is a factor of $(X,\mathcal B_X,\mu,T)$, and such that the following hold. \begin{itemize}
        \item Every Kronecker system which is a factor of $T$ is also a factor of $R$.
        \item  Let $\pi$ be a factor map from $T$ to $R$, and let $\pi'$ be a factor map from $T$ to some other Kronecker system $R'$. Then there exists a factor map $f$ from $R$ to $R'$ such that $\pi'=f\circ \pi$. This is illustrated in  \Cref{fig:kronecker-factor}. 
    \end{itemize}
    The system $(G,\mathcal B_{G},m_G,R)$ is unique up to isomorphism, so we refer to it as ``the''  Kronecker factor of $T$. 

    Furthermore, a necessary and sufficient condition for a Kronecker system to be the Kronecker factor of $T$ is that their Koopman operators have the same set of eigenvalues. 
\end{proposition}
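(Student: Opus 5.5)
This proposition-definition is the standard theory of the Kronecker factor, so the plan is to recall how it arises from the discrete spectrum of the Koopman operator rather than to reprove anything deep. Let $H_{d}\subset L^2_{\C}(X,\mathcal B_X,\mu)$ be the closed linear span of all eigenfunctions of $U_T$, and let $\Lambda\subset\mathbb S$ be the group of eigenvalues. Ergodicity gives three facts: each eigenvalue is simple, eigenfunctions have constant modulus, and products and conjugates of eigenfunctions are again eigenfunctions. Hence $\Lambda$ is a countable subgroup of $\mathbb S$, since $L^2$ is separable and eigenfunctions for distinct eigenvalues are orthogonal.

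Let $G=\widehat{\Lambda_d}$ be the Pontryagin dual of $\Lambda$ with the discrete topology. This is a compact metrizable abelian group, and the inclusion $\Lambda\hookrightarrow\mathbb S$ defines an element $g\in G$. The rotation $R\colon h\mapsto hg$ on $G$ has Koopman eigenvalues exactly $\Lambda$, because its characters are the evaluations at $\lambda\in\Lambda$. Ergodicity of $R$ follows since $\{g^n\}$ is dense, which holds because only the trivial character is trivial on it.

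To build the factor map, I would choose for each $\lambda\in\Lambda$ an eigenfunction $f_\lambda$ of modulus one, and normalize so that $f_{\lambda\lambda'}=f_\lambda f_{\lambda'}$ almost everywhere. Simplicity only gives $f_\lambda f_{\lambda'}=c(\lambda,\lambda')f_{\lambda\lambda'}$ with a constant $c$ valued in $\mathbb S$. This is the main obstacle, and it is resolved by divisibility of $\mathbb S$: the 2-cocycle $c$ on the discrete abelian group $\Lambda$ is symmetric, so it is a coboundary. Alternatively, one can avoid the normalization by defining $\pi(x)$ as the character $\lambda\mapsto f_\lambda(x)$ after a measurable selection, restricting to a countable set of $\lambda$ and a conull set of $x$.

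Then $\pi\colon X\to G$ is measurable, and $\pi\circ T=R\circ\pi$ holds because $f_\lambda\circ T=\lambda f_\lambda$. The measure $\pi_*\mu$ is $R$-invariant, so uniqueness of the Haar measure for the minimal rotation gives $\pi_*\mu=m_G$.

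For the universal property, let $\pi'\colon X\to G'$ be a factor map onto a Kronecker system $(G',R_{g'})$. Each character $\chi$ of $G'$ pulls back to an eigenfunction $\chi\circ\pi'$ of $U_T$, hence lies in $H_d=\pi^{-1}(L^2(G))$. So the $\sigma$-algebra $\pi'^{-1}\mathcal B_{G'}$ is contained in $\pi^{-1}\mathcal B_G$ modulo null sets, and by standard Borel space theory there is a measurable $f\colon G\to G'$ with $\pi'=f\circ\pi$. The map $f$ intertwines the rotations; it could be upgraded to an affine group homomorphism, but that is not needed. Uniqueness up to isomorphism follows by applying this property in both directions.

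For the final clause, Kronecker systems are determined up to isomorphism by their eigenvalue groups; this is the Halmos–von Neumann theorem. A Kronecker system with the same eigenvalues as $T$ is therefore isomorphic to $R$. Conversely, the Kronecker factor has exactly $\Lambda$ as its eigenvalues, by the construction above.
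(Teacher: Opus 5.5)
The paper gives no proof of this proposition; it quotes the standard references (Einsiedler--Ward, \S 6.4, and Host--Kra). Your sketch is the standard argument behind those references, and it is correct: the dual of the discrete eigenvalue group $\Lambda$, the map $x\mapsto(\lambda\mapsto f_\lambda(x))$, factorization of any Kronecker factor through $H_d=\pi^{-1}L^2(G)$, and Halmos--von Neumann for the last clause.

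Two small points.

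First, the second bullet concerns an \emph{arbitrary} factor map $\pi$ from $T$ to $R$, while you only treat the one you constructed. You need one more line. For any such $\pi$, each $\chi_\lambda\circ\pi$ is a nonzero $\lambda$-eigenfunction of $U_T$, so it spans the one-dimensional $\lambda$-eigenspace. Hence again $\pi^{-1}L^2(G)=H_d$, and your factorization argument applies verbatim. The paper's figure caption stresses exactly this independence from the choice of $\pi$.

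Second, the normalization is easier than you make it. Let $X_0$ be the conull set where all the countably many relations $f_\lambda f_{\lambda'}=c(\lambda,\lambda')f_{\lambda\lambda'}$ hold. Fix $x_0\in X_0$ and replace $f_\lambda$ by $f_\lambda/f_\lambda(x_0)$; this makes $\lambda\mapsto f_\lambda(x)$ multiplicative for every $x\in X_0$. Your divisibility/$\mathrm{Ext}$ argument is valid, but not needed. The ``alternative'' you mention, however, does not work as written: without some normalization, $\lambda\mapsto f_\lambda(x)$ is not a character of $\Lambda$.
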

\begin{figure}[h]
    \centering
    \begin{tikzcd}
X \arrow[d, "\pi"'] \arrow["T", loop, distance=2em, in=55, out=125] \arrow[rd, "\pi'"] &                                                       \\
G \arrow["R"', loop, distance=2em, in=305, out=235] \arrow[r, "f"', dashed]            & G' \arrow["R'"', loop, distance=2em, in=305, out=235]
\end{tikzcd}
\caption{The measure-preserving transformation $T$ and its Kronecker  factor $R$ via the factor map $\pi$. Any factor from $T$ to some Kronecker system $R'$ factorizes through $\pi$, meaning that $\pi'=f\circ \pi$ for a factor map $f$ from $R$ to $R'$. The choice of $\pi$ is not relevant in the sense that any factor map from $T$ to $R$ has the same property.}
    \label{fig:kronecker-factor}
\end{figure}
Let $\Z/2\Z$ be the group of integers modulo $2$, and let $R_1\colon \Z/2\Z\to\Z/2\Z$ be the transformation which adds $1$ modulo $2$. We endow $\Z/2\Z$ with its Haar measure $m_{\Z/2\Z}$ and Borel sigma-algebra $\mathcal B_{\Z/2\Z}$. Then $R_1$ is an ergodic measure-preserving transformation on the probability space $(\Z/2\Z,\mathcal B_{\Z/2\Z},m_{\Z/2\Z})$. We will denote by $R_{\alpha}\times R_1$ the transformation \begin{equation}\label{eq:def-R-1-times-R-alpha}(\theta,x)\mapsto (\theta+\alpha,x+1)\end{equation}
acting on  
$
       (\T\times \Z/2\Z, \mathcal B_{\T}\otimes \mathcal B_{\Z/2\Z},m_{\T}\times m_{\Z/2\Z})
       $.  In what follows we always regard $\T\times \Z/2\Z$ as an abelian group with componentwise addition.
\begin{proposition}\label{prop:eigenvalues-R-alpha-times-R-1}
    The set of eigenvalues of the Koopman operator for $R_{\alpha}\times R_1$ is  $\{e^{2\pi i k\alpha} : k\in\Z\}\cup \{-e^{2\pi i k\alpha} : k\in\Z\}$. 
\end{proposition}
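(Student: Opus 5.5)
We compute the eigenvalues of $R_{\alpha}\times R_1$ directly, by decomposing $L^2$ into the eigenfunctions coming from each factor. First, the easy inclusion: for $k\in\Z$ the function $(\theta,x)\mapsto e^{2\pi i k\theta}$ is an eigenfunction with eigenvalue $e^{2\pi i k\alpha}$. Let $\chi\colon\Z/2\Z\to\{-1,1\}$, $\chi(x)=(-1)^x$, so that $\chi(x+1)=-\chi(x)$. Then $(\theta,x)\mapsto e^{2\pi i k\theta}\chi(x)$ is an eigenfunction with eigenvalue $-e^{2\pi i k\alpha}$. This gives the inclusion of $\{e^{2\pi i k\alpha}\}\cup\{-e^{2\pi i k\alpha}\}$ into the set of eigenvalues.

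For the reverse inclusion we use that the characters $\{e^{2\pi i k\theta}\chi(x)^j : k\in\Z, j\in\{0,1\}\}$ form an orthonormal basis of $L^2_{\C}(\T\times\Z/2\Z, m_{\T}\times m_{\Z/2\Z})$. This holds because the characters of a compact abelian group form an orthonormal basis of its $L^2$ space, and the characters of the product group $\T\times\Z/2\Z$ are products of characters of the two factors. Each basis element is an eigenfunction with eigenvalue $(-1)^j e^{2\pi i k\alpha}$, so the Koopman operator is diagonal in this basis.

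Now let $F\ne 0$ satisfy $F\circ(R_\alpha\times R_1)=\lambda F$, and expand $F=\sum_{k,j} c_{k,j}\, e^{2\pi i k\theta}\chi^j$. Comparing coefficients gives $c_{k,j}\big((-1)^je^{2\pi i k\alpha}-\lambda\big)=0$ for all $k,j$. Since some $c_{k,j}\neq 0$, we get $\lambda=(-1)^je^{2\pi i k\alpha}$ for some $k\in\Z$ and $j\in\{0,1\}$, which is the claimed inclusion.

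There is no real obstacle here; the only point that needs justification is the completeness of the product character basis, which is standard (Peter–Weyl, or simply Fubini together with completeness of the basis in each factor). As an alternative to the Fourier argument, one could apply \Cref{thm:eigenvalues}'s style of reasoning: any eigenfunction splits as $F(\theta,0)$ and $F(\theta,1)$, and the relation $F(\theta+2\alpha,x)=\lambda^2F(\theta,x)$, combined with \Cref{prop:eigenvalues-rotation} applied to $R_{2\alpha}$, yields the same conclusion.
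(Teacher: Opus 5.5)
Your argument is correct, but it takes a different route from the paper. The paper derives this proposition from Theorem~\ref{thm:eigenvalues} with $T=R_1$. Since $R_1=R_1^{-1}$, we have $T^{\tau(\theta)}=R_1$ for every $\theta$, so the skew product $R_\alpha\rtimes_\tau R_1$ is literally the direct product $R_\alpha\times R_1$. Since $\chi(x)=(-1)^x$ is an eigenfunction of $U_{R_1}$ with eigenvalue $-1$, the theorem gives exactly $\{e^{2\pi i k\alpha}\}\cup\{-e^{2\pi i k\alpha}\}$.

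Your Fourier diagonalization is the ``elementary proof'' the paper mentions but does not write out. It is self-contained, needing only completeness of the characters of $\T\times\Z/2\Z$, and it gives slightly more: the eigenfunctions span $L^2$, i.e.\ the spectrum is pure point. The paper's route instead rests on the Aaronson--Keane property (Proposition~\ref{prop:infinitely-many-1s}) and the subsequence argument along odd denominators. That is much heavier machinery for this statement, but it is already in place at that point, so the reduction costs the paper one line.

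Your alternative via $F(\theta+2\alpha,x)=\lambda^2F(\theta,x)$ and the eigenvalues of $R_{2\alpha}$ also works. It uses that $2\alpha$ is irrational and that some slice $F(\cdot,x)$ is nonzero.
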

\begin{proof}
While this result admits an elementary proof, we can obtain it from \Cref{thm:eigenvalues} by taking $T=R_1$. Since $R_1$ equals its own inverse, the skew product $R_{\alpha}\rtimes_{\tau} R_1$ degenerates to a direct product $R_{\alpha}\times R_1$.  
\end{proof}
We can now determine the Kronecker factor of $R_{\alpha}\rtimes_{\tau} T$, for ergodic $T$. 
\begin{theorem}\label{thm:kronecker-factor}
   Let $\alpha\in\R\smallsetminus\Q$ and let $(X,\mathcal B_X,\mu,T)$ be an ergodic invertible measure-preserving system. 
    \begin{enumerate}
       \item If $-1$ is not an eigenvalue for $U_T$, then the Kronecker factor of  $R_{\alpha}\rtimes_{\tau} T$ is $R_{\alpha}$.
       \item If $-1$ is an eigenvalue for $U_T$, then the Kronecker factor of $R_{\alpha}\rtimes_{\tau} T$ is $R_{\alpha}\times R_1$ (defined in  \Cref{eq:def-R-1-times-R-alpha}). 
   \end{enumerate}
\end{theorem}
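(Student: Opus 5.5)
The plan is to use the last clause of \Cref{prop:characterization-kronecker-factor}: a Kronecker system is the Kronecker factor of an ergodic system exactly when it has the same set of eigenvalues. We also need that the candidate really is a factor, so in each case we will exhibit an explicit factor map and then compare eigenvalues. Note first that $R_{\alpha}\rtimes_{\tau} T$ is ergodic by \Cref{prop:characterization-ergodicity}, since $T$ is ergodic, so the Kronecker factor is defined.

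For item (1), the projection $(\theta,x)\mapsto\theta$ is a factor map onto $R_{\alpha}$, and $R_{\alpha}$ is a Kronecker system (ergodic rotation of $\T$). By \Cref{thm:eigenvalues}, when $-1$ is not an eigenvalue of $U_T$ the eigenvalues of $U_{R_{\alpha}\rtimes_{\tau}T}$ are $\{e^{2\pi i k\alpha}:k\in\Z\}$. By \Cref{prop:eigenvalues-rotation} these are exactly the eigenvalues of $R_{\alpha}$. The characterization then gives that $R_{\alpha}$ is the Kronecker factor.

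For item (2), let $g$ be an eigenfunction of $U_T$ with eigenvalue $-1$. By ergodicity $|g|$ is $T$-invariant, hence constant, and we normalize it to $1$. Since $g^2$ is $T$-invariant, $g^2$ is a constant $c$. Replacing $g$ by $g/\sqrt{c}$, we get $g\colon X\to\{-1,1\}$ with $g\circ T=-g$. Setting $E=g^{-1}(1)$ gives $T(E)=X\smallsetminus E$ (mod null sets) and $\mu(E)=1/2$. Define $\phi\colon X\to\Z/2\Z$ by $\phi=0$ on $E$ and $\phi=1$ off $E$. Then $\phi\circ T=\phi+1$, and also $\phi\circ T^{-1}=\phi+1$, so $\phi\circ T^{\pm1}=\phi+1$. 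Define $\pi(\theta,x)=(\theta,\phi(x))$. Then
\[
\pi\bigl(\theta+\alpha,T^{\tau(\theta)}x\bigr)=(\theta+\alpha,\phi(x)+1)=(R_{\alpha}\times R_1)(\pi(\theta,x)),
\]
and $\pi_*(m_{\T}\times\mu)=m_{\T}\times m_{\Z/2\Z}$ because $\mu(E)=1/2$. Hence $\pi$ is a factor map. We also need $R_{\alpha}\times R_1$ to be a Kronecker system, i.e.\ ergodic. This follows from \Cref{prop:characterization-ergodicity} with $T=R_1$ (which is ergodic), using that $R_{\alpha}\rtimes_{\tau}R_1=R_{\alpha}\times R_1$. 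It is a rotation of the compact abelian group $\T\times\Z/2\Z$. Finally, \Cref{thm:eigenvalues} and \Cref{prop:eigenvalues-R-alpha-times-R-1} show that both systems have eigenvalue set $\{\pm e^{2\pi i k\alpha}:k\in\Z\}$, so the characterization applies.

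The only mildly delicate point is the normalization of $g$ to a $\{\pm1\}$-valued function, which relies on ergodicity of $T$. Everything else is a direct application of the earlier results, so no serious obstacle is expected.
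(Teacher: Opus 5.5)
Your proposal is correct and follows the paper's proof: ergodicity via \Cref{prop:characterization-ergodicity}, then matching eigenvalue sets via \Cref{thm:eigenvalues}, \Cref{prop:eigenvalues-rotation} and \Cref{prop:eigenvalues-R-alpha-times-R-1}, fed into the characterization in \Cref{prop:characterization-kronecker-factor}. The explicit factor maps you build are not needed, since that characterization already makes equality of eigenvalue sets sufficient, but they are correct; your map $(\theta,x)\mapsto(\theta,\phi(x))$ is the same one the paper constructs later in the proof of \Cref{prop:isomorphism-nice-first-coordinate}.
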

\begin{proof}
	Since $T$ is ergodic, so is $R_{\alpha}\rtimes_{\tau} T$ (\Cref{prop:characterization-ergodicity}). Then the claim follows from \Cref{thm:eigenvalues}, \Cref{prop:eigenvalues-rotation} and \Cref{prop:eigenvalues-R-alpha-times-R-1}. 
\end{proof}
In the statement of the previous result, the hypothesis that $U_T$ does not have $-1$ as an eigenvalue can be replaced by the assertion that $T^2$ is ergodic, or that $T$ does not factor onto $R_1$. We record this elementary equivalence without proof.
\begin{proposition}\label{prop:two-point-system}
Let $(X,\mathcal B_X,\mu,T)$ be an ergodic invertible measure-preserving system. The following conditions are equivalent.
\begin{enumerate}
    \item $T$ factors onto $R_1$.
    \item $T^2$ is not ergodic. 
    \item $-1$ is an eigenvalue for $U_T$. 
\end{enumerate} \end{proposition}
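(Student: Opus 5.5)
We prove the cycle of implications (1)$\Rightarrow$(3)$\Rightarrow$(2)$\Rightarrow$(1), using only standard facts about ergodic systems.

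\emph{(1)$\Rightarrow$(3).} Let $\pi\colon X\to\Z/2\Z$ be a factor map onto $R_1$. Set $g(x)=(-1)^{\pi(x)}$. Since $\pi\circ T=\pi+1$, we get $g\circ T=-g$. Moreover $g$ is bounded and of modulus one, hence a nonzero element of $L^2_{\C}$. So $-1$ is an eigenvalue of $U_T$.

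\emph{(3)$\Rightarrow$(2).} Let $g\ne0$ satisfy $g\circ T=-g$. Then $g\circ T^2=g$, so $g$ is $T^2$-invariant. Suppose $T^2$ were ergodic. Then $g$ would be a.e.\ equal to a constant $c$, and $c\ne 0$ since $g\ne0$. The relation $g\circ T=-g$ would then give $c=-c$, a contradiction. Hence $T^2$ is not ergodic.

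\emph{(2)$\Rightarrow$(1).} This is the only step with some content. Let $E$ be $T^2$-invariant with $0<\mu(E)<1$, and put $F=T(E)$.
\begin{itemize}
\item $F$ is also $T^2$-invariant, so $E\cap F$ and $E\cup F$ are $T^2$-invariant.
\item $T(E\cap F)=F\cap T^2E=F\cap E$, so $E\cap F$ is $T$-invariant. The same computation shows $E\cup F$ is $T$-invariant.
\item By ergodicity of $T$, each of $E\cap F$ and $E\cup F$ has measure $0$ or $1$.
\end{itemize}
Since $\mu(F)=\mu(E)$, we cannot have $\mu(E\cap F)=1$, because that would force $\mu(E)=1$. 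So $\mu(E\cap F)=0$. Likewise $\mu(E\cup F)=0$ is impossible, so $\mu(E\cup F)=1$. Therefore $E$ and $F=TE$ partition $X$ up to null sets, and $\mu(E)=1/2$.

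Define $\pi=\mathbf 1_{F}$, viewed as a map to $\Z/2\Z$. It pushes $\mu$ to the Haar measure. It satisfies $\pi\circ T=\pi+1$ a.e., because $T$ maps $E$ onto $F$ and maps $F$ onto $T^2E=E$. After modifying $\pi$ on a null set (for instance on the $T$-invariant null set where the relation fails), $\pi$ is a genuine factor map onto $R_1$.

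The argument is routine. The only point needing care is showing that $E\cap TE$ and $E\cup TE$ are $T$-invariant, which uses $T^2E=E$.
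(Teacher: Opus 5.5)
The paper records this equivalence without proof, so there is no argument of the paper's to compare with. Your cycle (1)$\Rightarrow$(3)$\Rightarrow$(2)$\Rightarrow$(1) is correct and complete, and it is the standard argument. Ergodicity of $T$ enters only in (2)$\Rightarrow$(1), where it is genuinely needed.

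One point is worth tightening: the closing parenthetical in (2)$\Rightarrow$(1). You do not need to modify $\pi$ at all. The paper's convention is that equalities such as $\pi\circ T=R_1\circ\pi$ are only required to hold almost everywhere, and you have already verified that.

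Moreover, "modifying $\pi$ on a null set" cannot in general make the relation hold everywhere. For instance, an orbit of odd period lying inside the exceptional null set admits no alternating $0/1$ labelling. If you want an everywhere statement, take the set $N_0$ where the relation fails and form the $T$-invariant null set $N=\bigcup_{n\in\Z}T^n(N_0)$. Then discard $N$ and work on $X\smallsetminus N$, rather than redefining $\pi$ on it.
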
 



We will also need the following result about factors between Kronecker systems.
\begin{proposition}\label{prop:algebraic-factors}
Let $G$ and $H$ be compact abelian groups, and let $\rho\colon G\to H$ be a measurable group homomorphism. Let $a\in G$ and let $b=\rho(a)$.

Consider the measure-preserving systems $(G,\mathcal B_G,m_G,R_a)$ and $(H,\mathcal B_H,m_H,R_b)$. Here $m_G$ and $m_H$ are Haar measures, and $R_a$ and $R_b$ are defined by $R_a(g)=g+a$, $g\in G$, and   $R_b(h)=h+b$, $h\in H$. If $R_a$ is ergodic, then every measure-theoretic factor map from $R_a$ to $R_b$ can be written, up to a null set, as \[g\mapsto \rho(g)+c\]
for some $c\in H$. 
\end{proposition}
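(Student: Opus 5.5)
The plan is to use characters (Fourier analysis on compact abelian groups) and turn the factor relation into eigenfunction equations. Let $\phi\colon G\to H$ be a measurable factor map, so $\phi(g+a)=\phi(g)+b$ for $m_G$-almost every $g$, and $\phi_*m_G=m_H$. Consider the map $\psi(g)=\phi(g)-\rho(g)$, which is measurable $G\to H$ since $\rho$ is measurable. Because $\rho$ is a homomorphism with $\rho(a)=b$, we get $\psi(g+a)=\phi(g)+b-\rho(g)-b=\psi(g)$ almost everywhere. So $\psi$ is an $R_a$-invariant measurable map into $H$, and the goal reduces to showing that $\psi$ is almost everywhere constant, equal to some $c\in H$. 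Then $\phi=\rho+c$ up to a null set.

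To see that $\psi$ is constant, fix a continuous character $\chi\in\widehat H$. The function $\chi\circ\psi$ is bounded, measurable and $R_a$-invariant. By ergodicity of $R_a$ it equals a constant $z_\chi\in\mathbb S$ almost everywhere. Since $\widehat H$ is countable (the paper's standing setup has standard Borel spaces, so $H$ is a compact metrizable abelian group), we can discard one null set and have $\chi(\psi(g))=z_\chi$ for all $\chi$ and all $g$ in a conull set $G_0$. Characters separate points of $H$ (Pontryagin duality / Peter--Weyl). So for $g,g'\in G_0$ we get $\psi(g)=\psi(g')$, and $\psi$ is constant on $G_0$. Equivalently, the pushforward $\psi_*m_G$ is an $H$-valued distribution whose Fourier coefficients all have modulus one, which forces a Dirac mass $\delta_c$.

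The main care point is measurability and countability. We need $\rho$ to be Borel so that $\psi$ is measurable, which is given. We need $H$ to be second countable so that the countable intersection of conull sets is available. If one wants to avoid assuming metrizability, an alternative is to note that $\psi_*m_G$ is a probability measure on $H$ with $|\widehat{\psi_*m_G}(\chi)|=1$ for all $\chi$; this implies it is a point mass. Hence the plan does not depend on countability beyond using that the measure is Radon.

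Note that the hypothesis $\phi_*m_G=m_H$ is not needed for the form $\rho+c$; it only matters for consistency, since $\rho+c$ automatically pushes $m_G$ to Haar on its image. No obstacle beyond these routine points is expected.
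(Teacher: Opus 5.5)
Your proposal is correct and matches the paper's proof: the paper also forms the difference $\rho-f$ of the homomorphism and the factor map, checks that it is $R_a$-invariant, and concludes by ergodicity that it is constant almost everywhere. Your argument with countably many separating characters of $H$ simply spells out the routine step, which the paper leaves implicit, that an invariant measurable map into a compact metrizable group is almost everywhere constant.
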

\begin{proof}
    Let $f$ be a measure-theoretic factor map from $R_a$ to $R_b$, so $f\circ R_a=R_b\circ f$ almost everywhere, and define $f'\colon G\to H$ by $f'(g)=\rho(g)-f(g)$, $g\in G$. Then  $f'\circ R_a=f'$ almost everywhere. Indeed:
    \[
    f'(R_a(g))=\rho(R_a(g))-f(R_a(g))=\rho(R_a(g))-R_b(f(g))=(\rho(g)+b)-(f(g)+b)=\rho(g)-f(g)
    \]
    Since $R_a$ is ergodic, it follows that $f'$ is constant almost everywhere, and our claim follows.
\end{proof}
We can now prove the following result, which provides information about the first coordinate of an arbitrary factor map from one skew product to another, provided their bases have the same rotation angles. 
\begin{proposition}\label{prop:isomorphism-nice-first-coordinate}
    Let $\alpha\in\R\smallsetminus\Q$ and let $(X,\mathcal B_X,\mu,T)$ and $(Y,\mathcal B_Y,\nu,S)$ be invertible measure-preserving systems. We  assume that $S$ is ergodic. Suppose that $R_{\alpha}\rtimes_\tau S$ factors onto $R_{\alpha}\rtimes_\tau T$, and let $\Upsilon$ be an arbitrary factor map from $R_{\alpha}\rtimes_\tau S$ to $R_{\alpha}\rtimes_\tau T$. Then, up to a null set, $\Upsilon$ can be written as
    \[(\theta,x)\mapsto(\theta+c,\psi(\theta,x)).\]
    That is, the first coordinate of $\Upsilon$ depends only on the first coordinate, and it is necessarily a rotation by some $c\in\T$. 
\end{proposition}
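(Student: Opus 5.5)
We compose the first coordinate of $\Upsilon$ with the projection and compare the result with the Kronecker factor of $R_{\alpha}\rtimes_\tau S$. Let $p\colon\T\times X\to\T$ be the projection onto the first coordinate. Then $p$ is a factor map from $R_{\alpha}\rtimes_\tau T$ onto $R_{\alpha}$. Hence $\pi'=p\circ\Upsilon$ is a factor map from $R_{\alpha}\rtimes_\tau S$ onto the Kronecker system $R_{\alpha}$. Since $S$ is ergodic, $R_{\alpha}\rtimes_\tau S$ is ergodic by \Cref{prop:characterization-ergodicity}, so \Cref{prop:characterization-kronecker-factor} applies. By \Cref{thm:kronecker-factor}, the Kronecker factor of $R_{\alpha}\rtimes_\tau S$ is either $R_{\alpha}$ or $R_{\alpha}\times R_1$, depending on whether $-1$ is an eigenvalue of $U_S$.

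In the first case we take $\pi=p$, the projection $\T\times Y\to\T$, which is a factor map onto $R_\alpha$. In the second case we first fix an eigenfunction for $U_S$ with eigenvalue $-1$. By \Cref{prop:two-point-system} it gives a factor map $\sigma\colon Y\to\Z/2\Z$ with $\sigma\circ S=\sigma+1$. Consequently $\sigma\circ S^{-1}=\sigma-1=\sigma+1$, so $\sigma\circ S^{\tau(\theta)}=\sigma+1$ for every $\theta$. We then take $\pi(\theta,y)=(\theta,\sigma(y))$. The identity just noted shows that $\pi$ intertwines $R_{\alpha}\rtimes_\tau S$ with $R_\alpha\times R_1$, and $\pi$ pushes $m_\T\times\nu$ to $m_\T\times m_{\Z/2\Z}$. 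Either way, $\pi$ is a factor map onto a Kronecker system with the same eigenvalues as $R_{\alpha}\rtimes_\tau S$, so it realizes the Kronecker factor. \Cref{prop:characterization-kronecker-factor} then provides a factor map $f$ from this Kronecker factor to $R_\alpha$ with $\pi'=f\circ\pi$ almost everywhere.

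Next we identify $f$ using \Cref{prop:algebraic-factors}. In the first case $G=H=\T$, $a=b=\alpha$, and $\rho=\id$, so $f(\theta)=\theta+c$. In the second case $G=\T\times\Z/2\Z$, $a=(\alpha,1)$, $H=\T$, and $\rho(\theta,j)=\theta$, which satisfies $\rho(a)=\alpha$. Since $R_\alpha\times R_1$ is ergodic, we get $f(\theta,j)=\theta+c$. In both cases $p\circ\Upsilon(\theta,y)=\theta+c$ almost everywhere, and the statement follows with $\psi$ the second coordinate of $\Upsilon$.

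The only step needing some care is the second case: we must check that $\pi$ is genuinely a factor map, in particular that it is measure preserving and that $\sigma$ can be chosen so that it intertwines $S$ with $R_1$. This follows from the equivalence in \Cref{prop:two-point-system}, whose statement guarantees such a factor map. The rest is formal.
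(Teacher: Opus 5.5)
Your proposal is correct and follows the paper's proof essentially step by step. You compose $\Upsilon$ with the first-coordinate projection and realize the Kronecker factor of $R_{\alpha}\rtimes_\tau S$ (either $R_\alpha$ or $R_\alpha\times R_1$) via $(\theta,y)\mapsto\theta$ or $(\theta,y)\mapsto(\theta,\sigma(y))$; you then factor through it using \Cref{prop:characterization-kronecker-factor} and identify the resulting map with \Cref{prop:algebraic-factors}, exactly as the paper does, except that you spell out the check $\sigma\circ S^{\tau(\theta)}=\sigma+1$, which the paper calls an elementary verification.
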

\begin{proof}
We start by writing $\Upsilon$ as 
\[
\Upsilon(\theta,x)=(\phi(\theta,x),\psi(\theta,x))
\]
Here $\phi\colon\T\times Y\to\T$ and $\psi\colon\T\times Y\to X$ are measurable. 
By \Cref{thm:kronecker-factor}, the Kronecker factor of $R_{\alpha}\rtimes_{\tau}S$ is either $R_{\alpha}$ or $R_{\alpha}\times R_1$. Let us first consider the case of $R_{\alpha}\times R_1$. We define a factor map $\pi_S$ from  $R_{\alpha}\rtimes_{\tau} S$ to its Kronecker factor  by 
\[
\pi_S\colon\T\times Y\to\T\times \Z/2\Z, \ (\theta,x)\mapsto (\theta,\iota(x))
\] 
Here $\iota\colon Y\to\Z/2\Z$ is a factor map from $S$ to $R_1$, which exists by \Cref{prop:two-point-system}. An elementary verification shows that $\pi_S$ is indeed a factor map, and it intertwines the actions of $R_{\alpha}\rtimes_{\tau} S$ and $R_\alpha\times R_1$.

Furthermore, let us define the factor map from $R_{\alpha}\rtimes_{\tau} T$ to $R_{\alpha}$, given by the projection to the first coordinate 
\[
\pi_T\colon\T\times X\to \T, \ (\theta,x)\mapsto \theta
\]
Observe that the composition  $\pi_T\circ\Upsilon$ defines a factor map from $R_{\alpha}\rtimes_{\tau} S$ to $R_{\alpha}$.

\begin{figure}[h]
	\begin{center}
    \begin{tikzcd}
\mathbb{T}\times Y \arrow[d, "\pi_S"'] \arrow["R_{\alpha}\rtimes_{\tau}S", loop, distance=2em, in=55, out=125] \arrow[r, "\Upsilon"]   & \mathbb{T}\times X \arrow["R_{\alpha}\rtimes_{\tau} T", loop, distance=2em, in=55, out=125] \arrow[d, "\pi_T"] \\
\mathbb{T}\times \mathbb{Z}/2\mathbb{Z} \arrow["R_{\alpha}\times R_1"', loop, distance=2em, in=305, out=235] \arrow[r, "f"', dashed] & \mathbb{T} \arrow["R_{\alpha}"', loop, distance=2em, in=305, out=235]                                        
\end{tikzcd}
	\end{center}\caption{The system $R_{\alpha}\rtimes_{\tau} S$, the factor maps $\pi_S$ and $\pi_T\circ\Upsilon$, and the factor map $f$ satisfying $\pi_T\circ\Upsilon=f\circ\pi_S$, which exists because $R_{\alpha}\times R_1$ is the Kronecker factor of $R_{\alpha}\rtimes_{\tau} S$. It follows from \Cref{prop:algebraic-factors} that $f(\theta,x)=\theta+c$ almost everywhere, for some $c\in\T$. }\label{fig:rotation-kronecker}\end{figure}
Thus we have two factor maps from  $R_{\alpha}\rtimes_{\tau} S$, $\pi_S$ goes to its Kronecker factor, while $\pi_T\circ\Upsilon$ goes to some other  Kronecker system. By item 2 in \Cref{prop:characterization-kronecker-factor}, there exists a factor map $f\colon\T\times \Z/2\Z\to\T$ from $R_{\alpha}\times R_1$ to $R_{\alpha}$ such that $ f\circ\pi_S = \pi_T\circ\Upsilon$. See \Cref{fig:rotation-kronecker}.  

We now apply \Cref{prop:algebraic-factors} to the group homomorphism  $\rho\colon\T\times \Z/2\Z\to\T$ defined by $(\theta,x)\mapsto \theta$. We take $a=(\alpha,1)$, $b=\alpha$, and the factor map $f$ from $R_a$ to $R_b$ defined in the previous paragraph. Also observe that $R_{\alpha}\times R_1$ coincides with the rotation by $a$ on $\T\times \Z/2\Z$. It follows that there exists $c\in \T$ such that $f(\theta,x)=\theta+c$ for almost every $(\theta,x)\in\T\times\Z/2\Z$. Evaluating this new information about $f$ in the relation $f\circ \pi_S= \pi_T\circ\Upsilon$, we find that $\phi(\theta,x)=\theta+c$ for almost every $(\theta,x)$. Thus our claim holds. 

The case in which the Kronecker factor of $R_{\alpha}\rtimes_{\tau} S$ is $R_{\alpha}$ follows by similar reasoning. In that case one takes $\pi_S\colon\T\times Y\to\T$ as the projection to the first coordinate,  and applies \Cref{prop:algebraic-factors} taking $\rho\colon\T\to\T$ as the identity map. 
\end{proof}
\begin{proposition}\label{prop:kronecker-characterization}
Let $\alpha\in\R\smallsetminus\Q$ and let  $(X,\mathcal B_X,\mu,T)$ be an ergodic invertible measure-preserving system. Then $R_{\alpha}\rtimes_{\tau} T$ is a Kronecker system if and only if $(X,\mathcal B_X,\mu,T)$ is the system with one point, or the system which exchanges two points (up to isomorphism).
\end{proposition}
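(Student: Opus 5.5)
The plan is to handle each direction separately, reducing the nontrivial one to the computation of the Kronecker factor in \Cref{thm:kronecker-factor}.

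\emph{Sufficiency.} If $X$ is a single point, then $R_{\alpha}\rtimes_{\tau} T$ is just $R_{\alpha}$, which is Kronecker. If $T$ exchanges two points, then $T=T^{-1}$, so $T^{\tau(\theta)}=T$ for every $\theta$. Hence the skew product is literally the direct product $R_{\alpha}\times R_1$ on $\T\times\Z/2\Z$. This is a rotation of a compact abelian group, and it is ergodic by \Cref{prop:eigenvalues-R-alpha-times-R-1}, or directly from \Cref{prop:characterization-ergodicity}.

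\emph{Necessity.} Assume $R_{\alpha}\rtimes_{\tau} T$ is Kronecker. It is ergodic because $T$ is (\Cref{prop:characterization-ergodicity}), so it coincides with its own Kronecker factor. By \Cref{thm:kronecker-factor}, it is therefore isomorphic to either $R_{\alpha}$ or $R_{\alpha}\times R_1$. I would then compare these with $R_{\alpha}\rtimes_{\tau} T$ in one of two ways.

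\begin{enumerate}
\item \emph{Measure-algebra comparison.} The projection $\pi_T(\theta,x)=\theta$ is a factor map onto $R_{\alpha}$. In the first case the Kronecker factor map can be taken to be this projection. Then $\pi_T$ is an isomorphism mod null sets, and the full $\sigma$-algebra $\mathcal B_{\T}\otimes\mathcal B_X$ equals $\mathcal B_{\T}\otimes\{\emptyset,X\}$ mod null sets. That forces $\mathcal B_X$ to be trivial mod $\mu$, so $\mu$ is a point mass and $X$ is the one-point system. In the second case, $\pi_S(\theta,x)=(\theta,\iota(x))$ from the proof of \Cref{prop:isomorphism-nice-first-coordinate} is the Kronecker factor map. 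Being an isomorphism, it forces $\mathcal B_X=\iota^{-1}(\mathcal B_{\Z/2\Z})$ mod $\mu$. Thus $\iota$ is an isomorphism from $T$ to $R_1$, and $T$ is the two-point exchange.
\item \emph{Spectral comparison.} Alternatively, Kronecker systems have discrete spectrum. One shows that any $F\in L^2(X)$ orthogonal to the span of the eigenfunctions of $U_T$ generates, via $(\theta,x)\mapsto F(x)$, a function orthogonal to all eigenfunctions of the skew product. The eigenfunctions found in \Cref{thm:eigenvalues} are of the form $f(\theta)$ or $f(\theta)g(x)$ with $g\circ T=-g$. So discreteness forces $L^2(X)$ to be spanned by constants and the $(-1)$-eigenfunction, and $X$ has at most two atoms.
\end{enumerate}

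\emph{Main obstacle.} The step I expect to be hardest is justifying in option (1) that an isomorphism onto the Kronecker factor can be taken to be the specific map $\pi_T$ or $\pi_S$. The argument runs as follows. Because the system is Kronecker, the canonical factor map onto its maximal Kronecker factor is an isomorphism. Every factor map onto the Kronecker factor differs from this one by an isomorphism of the factor, by the universal property in \Cref{prop:characterization-kronecker-factor} applied in both directions together with \Cref{prop:algebraic-factors}. Hence $\pi_T$ (respectively $\pi_S$) is itself injective mod null sets. Making this precise, and checking it against the second bullet of \Cref{prop:characterization-kronecker-factor}, is the delicate point.
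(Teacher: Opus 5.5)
Your proposal is correct, but it takes a different route from the paper. The paper never examines the natural maps $\pi_T$ or $\pi_S$. Instead it takes an arbitrary isomorphism $\Upsilon$ from $R_{\alpha}\rtimes_{\tau} T$ onto its Kronecker factor and views $R_{\alpha}\times R_1$ as the skew product $R_{\alpha}\rtimes_{\tau} R_1$. It then applies \Cref{prop:isomorphism-nice-first-coordinate} to both $\Upsilon$ and $\Upsilon^{-1}$, so their first coordinates are $\theta\mapsto\theta+c$ and $\theta\mapsto\theta-c$. Composing, the fiber maps $\psi'_{\theta}\colon\Z/2\Z\to X$ have two-sided measurable inverses for a.e.\ $\theta$, so $X$ has two points mod null sets (one point in the other case).

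You instead prove that $\pi_T$ (resp.\ $\pi_S$) is itself an isomorphism. You then read off that $\mathcal B_X$ is trivial (resp.\ $\mathcal B_X=\iota^{-1}(\mathcal B_{\Z/2\Z})$) mod $\mu$ from the resulting equality of $\sigma$-algebras on $\T\times X$. The step you flag as delicate is actually immediate. Apply the second item of \Cref{prop:characterization-kronecker-factor} with $\pi=\pi_T$; any factor map onto the Kronecker factor is allowed there. Take $\pi'$ to be an isomorphism onto a group rotation, which exists because the system is Kronecker. Then $\pi'=f\circ\pi_T$ forces $\pi_T$ to be injective mod null sets, and \Cref{prop:algebraic-factors} is not even needed.

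Your version avoids the slice-by-slice analysis and the first-coordinate rigidity for the inverse map. The paper's version reuses exactly the fiberwise machinery it needs anyway for the isomorphism theorems.

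Option (2) needs two adjustments. First, take $F$ orthogonal only to the constants and to $g$. Orthogonality to all eigenfunctions of $U_T$ would only show that $T$ has discrete spectrum. Second, note that simplicity of eigenvalues for the ergodic skew product is what guarantees that the eigenfunctions exhibited in \Cref{thm:eigenvalues} are all of them. With these fixes, option (2) is the most elementary of the arguments, since it bypasses the Kronecker factor altogether.
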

\begin{proof}
Suppose that $R_{\alpha}\rtimes_{\tau} T$ is a Kronecker system, so it is isomorphic to its Kronecker factor. According to \Cref{thm:kronecker-factor}, there are two possibilities. Let us suppose first that the Kronecker factor is $R_{\alpha}\times R_1$. Let 
\[\Upsilon\colon\T\times X\to\T\times \Z/2\Z, \ (\theta,x)\mapsto (\phi(\theta,x),\psi(\theta,x))\]
be an isomorphism from $R_{\alpha}\rtimes_{\tau} T$ to its Kronecker factor $R_{\alpha}\times R_1$. Furthermore, let 
\[\Upsilon'\colon\T\times \Z/2\Z\to \T\times  X, \ (\theta,x)\mapsto (\phi'(\theta,x),\psi'(\theta,x))\]
be the inverse of $\Upsilon$, so $\Upsilon\circ\Upsilon'=\id_{\T\times \Z/2\Z}$ and  $\Upsilon'\circ\Upsilon=\id_{\T\times X}$. 

We apply \Cref{prop:isomorphism-nice-first-coordinate} to $\Upsilon$ and $\Upsilon'$. It follows that there is some $c\in\T$ such that $\phi(\theta,x)=\theta+c$ almost everywhere. Similarly, there is some $c'\in\T$ such that $\phi'(\theta,x)=\theta+c'$ almost everywhere. Since  $\Upsilon$ and $\Upsilon'$ are inverses of each other,  we must have $c'=-c$. 

Given $\theta\in\T$ let us write $\psi_{\theta}(x)=\psi(\theta,x)$ and $\psi'_\theta(x)=\psi'(\theta,x)$. Therefore $\psi_\theta$ is a function from $X$ to $\Z/2\Z$, and $\psi'_\theta$ is a function from $\Z/2\Z$ to $X$. 
 
For almost all  $(\theta,x)\in\T\times X$ we have  $(\theta,x)=\Upsilon'(\Upsilon(\theta,x))=\Upsilon'(\theta+c,\psi_\theta(x))=(\theta,\psi'_{\theta+c}(\psi_\theta(x)))$. Similarly, for almost all  $(\theta,x)\in\T\times \Z/2\Z$ we have $(\theta,x)=\Upsilon(\Upsilon'(\theta,x))=\Upsilon(\theta-c,\psi'_\theta(x))=(\theta,\psi_{\theta-c}(\psi'_\theta(x)))$. It follows that for almost all $\theta$, we have $\psi_{\theta}'\circ \psi_{\theta-c} = \id_X$, and $\psi_{\theta-c}\circ\psi'_{\theta}=\id_{\Z/2\Z}$. That is, the function $\psi_{\theta}'\colon\Z/2\Z\to X$ has a left and right measurable inverse, and therefore it is bijective (up to a null set). Since $\Z/2\Z$ has only two elements, it follows that $X$ has only two elements (up to a null set). The fact that $T$ is ergodic and measure-preserving implies then that the action of $T$ must exchange these two elements, and hence both have measure 1/2. 

A similar argument can be applied in the remaining case in which the Kronecker factor of $R_{\alpha}\rtimes_{\tau} T$ is $R_{\alpha}$. 
\end{proof}
\section{Isomorphisms between skew products over the same rotation}\label{sec:isomorphisms}
In this section we study possible isomorphisms between two skew products $R_{\alpha}\rtimes_{\tau} S$ and $R_{\alpha}\rtimes_{\tau} T$ with the same angle in the base. Given an arbitrary isomorphism $\Upsilon$ between $R_{\alpha}\rtimes_{\tau} S$ and $R_{\alpha}\rtimes_{\tau} T$ satisfying the conclusion of \Cref{prop:isomorphism-nice-first-coordinate}, we will prove that $S$ and $T$ are flip isomorphic. In fact, we prove that the slices of $\Upsilon$ provide flip isomorphisms between $S$ and $T$ (in the sense of \Cref{eq:flip-isomorphisms}). 

It will be convenient to assume that $S$ and $T$ act on the same probability space. This allows us to work on the topological space $\aut(X,\mathcal B_X,\mu)$ of all automorphisms of $(X,\mathcal B_X,\mu)$ (definitions below). 
\begin{theorem}\label{thm:isomorphisms-long-statement}
    Let $\alpha\in\R\smallsetminus\Q$. Let $(X,\mathcal B_X,\mu,S)$ and $(X,\mathcal B_X,\mu,T)$ be invertible measure-preserving systems over the same probability space $(X,\mathcal B_X,\mu)$. We assume that $S^n\ne\id_X$ for all $n\ne 0$. Suppose that $R_{\alpha}\rtimes_{\tau} S$ is isomorphic to $R_{\alpha}\rtimes_{\tau} T$, and let $\Upsilon$ be an isomorphism from $R_{\alpha}\rtimes_{\tau} S$ to $R_{\alpha}\rtimes_{\tau} T$, so 
    \[
    \Upsilon\circ (R_{\alpha}\rtimes_{\tau} S)=(R_{\alpha}\rtimes_{\tau} T)\circ\Upsilon
    \]
    Suppose further that $\Upsilon$ has the following special form for some $c\in\T$
    \[
(\theta,x)\mapsto (\theta+c,\psi(\theta,x))
    \]
    Then $S$ is isomorphic to $T^{\ell}$ for some $\ell\in\{-1,1\}$.
    
    More precisely, consider the family of functions parametrized by  $\theta\in\T$
    \[
    \psi_\theta\colon X\to X, \  x\mapsto \psi(\theta,x)\] 
    Then for almost every $\theta$, the map $\psi_{\theta}$ is an automorphism of $(X,\mathcal B_X,\mu)$, and 
\begin{equation}\label{eq:flip-isomorphisms}
    \psi_{\theta}\circ S= T^{\ell}\circ \psi_{\theta}
    \end{equation}
    Moreover, consider the step function
    \[
\zeta\colon\T\to\Z, \ \zeta(\theta)=\tau(\theta+c)-\ell\tau(\theta)
    \]
    If $\zeta$ is a coboundary, then we can describe $\{\psi_{\theta}:\theta\in\T\}$ almost everywhere as 
    \[
\psi_{\theta}=T^{f(\theta)}\circ\upsilon
    \]
     Here $f\colon\T\to \Z$ is a measurable solution to $\zeta=f\circ R_{\alpha}-f$, and $\upsilon$ is an automorphism of $(X,\mathcal B_X,\mu)$ satisfying $\upsilon\circ S=T^{\ell}\circ \upsilon$.  
     
     If $S$ is not rigid, or if $\alpha$ has bounded type, then $\zeta$ is necessarily a coboundary, and thus $\Upsilon$ must have the form described in the previous paragraph.
\end{theorem}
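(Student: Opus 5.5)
The plan is to turn the isomorphism identity into a functional equation in $\aut(X,\mathcal B_X,\mu)$ parametrized by $\theta$, and then run the odd-denominator fixed point method of \Cref{thm:eigenvalues}. Writing out $\Upsilon\circ(R_{\alpha}\rtimes_{\tau} S)=(R_{\alpha}\rtimes_{\tau} T)\circ\Upsilon$ slicewise gives, for almost every $\theta$,
\[
\psi_{\theta+\alpha}\circ S^{\tau(\theta)}=T^{\tau(\theta+c)}\circ\psi_\theta ,
\]
and after iterating,
\[
\psi_{\theta+n\alpha}\circ S^{\tau(n,\theta)}=T^{\tau(n,\theta+c)}\circ\psi_\theta .
\]
First I will show that $\psi_\theta$ is an automorphism for almost every $\theta$. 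Applying the same slicing to $\Upsilon^{-1}$, which by \Cref{prop:isomorphism-nice-first-coordinate} (or directly) has first coordinate $\theta-c$, gives measurable inverses $\psi'_{\theta}$ with $\psi'_{\theta+c}\circ\psi_\theta=\id$ and $\psi_{\theta}\circ\psi'_{\theta+c}=\id$, exactly as in \Cref{prop:kronecker-characterization}. Measure preservation of $\psi_\theta$ follows from Fubini together with the fact that $\Upsilon$ maps $m_{\T}\times\mu$ to itself and acts on $\T$ as a rotation. So $\theta\mapsto\psi_\theta$ is a measurable map into the Polish group $\aut(X,\mathcal B_X,\mu)$ with the weak topology.

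Next I will take a sequence $(q_n)$ of odd denominators as in \Cref{prop:nice-sequence-of-odd-numbers}, pass to a subsequence along which $\psi_{\theta+q_n\alpha}\to\psi_\theta$ for almost every $\theta$ (\Cref{prop:subsequences}), and apply \Cref{prop:ell-possible-values} to obtain $\ell\in\{-3,-1,1,3\}$. For almost every $\theta$ this gives infinitely many $n$ with $\tau(q_n,\theta)=1$ and $\tau(q_n,\theta+c)=\ell$. Passing to the limit in the weak topology, using continuity of composition, yields $\psi_\theta\circ S=T^{\ell}\circ\psi_\theta$.

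The symmetry \eqref{eq:symmetry} applied to the pair $(\theta+1/2,\theta+c+1/2)$ should similarly give $\psi_\theta\circ S^{-1}=T^{-\ell}\circ\psi_\theta$, which is consistent. To rule out $\ell=\pm3$, I will substitute the conjugacy back into the one-step equation. This gives
\[
\psi_{\theta+\alpha}\circ S^{\tau(\theta)}=T^{\ell\tau(\theta)+\zeta_\ell(\theta)}\circ\psi_{\theta}=\psi_{\theta}\circ S^{\tau(\theta)}\ \text{composed with } T^{\zeta_\ell(\theta)}\text{ in the appropriate place.}
\]
With $\ell=3$ we have $T^3=\psi S\psi^{-1}$, and I expect to show that $\psi_\theta^{-1}\circ\psi_{\theta+\alpha}$ is a power of $S^{1/3}$-type element. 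The plan is to derive a contradiction with $S^n\ne\id$ by counting parity/divisibility: $\zeta_\ell$ takes values in an odd coset while the relevant quantities must be multiples of $\ell$.

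This is the step I am least sure of. The alternative I would try is to use that $\tau(q_n,\theta)=-1$ also occurs infinitely often together with Denjoy–Koksma bounds, forcing $\ell^2=1$ from $S=T^{\ell}$-conjugacy combined with $S^{-1}$-conjugacy at $\theta$ and at $\theta+1/2$.

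With $\ell\in\{\pm1\}$ fixed, define $\upsilon_\theta=T^{-\ldots}$, that is, set $g(\theta)=\psi_\theta$. The one-step equation becomes
\[
\psi_{\theta+\alpha}=T^{\zeta(\theta)}\circ\psi_\theta .
\]
If $\zeta=f\circ R_\alpha-f$, then $T^{-f(\theta)}\psi_\theta$ is $R_\alpha$-invariant, hence a constant $\upsilon$ by ergodicity, which gives the stated form.

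For the last paragraph of the statement, the equation $\psi_{\theta+\alpha}=T^{\zeta(\theta)}\psi_\theta$ shows that $T^{\zeta(n,\theta)}=\psi_{\theta+n\alpha}\psi_\theta^{-1}$. If $\zeta$ has a nonzero finite essential value $k$ (or $\infty$), then by Lusin and essential value returns, $T^{k_j}\to\id$ along times with $k_j\neq0$, so $T$, and hence $S$ (which is conjugate to $T^{\pm1}$), is rigid. Therefore, if $S$ is not rigid, Schmidt's theorem forces $\zeta$ to be a coboundary. If instead $\alpha$ has bounded type, \Cref{prop:bounded-type-coboundary-essential-values} gives $\mathcal E(\zeta)=2\Z$ when $\zeta$ is not a coboundary. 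Then $T^2$ is a limit of $\psi_{\theta+n\alpha}\psi_\theta^{-1}$ with $\theta+n\alpha$ close to $\theta$, so $T^2=\id$, contradicting $S^n\ne\id$.
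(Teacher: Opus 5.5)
\textbf{The gap.} You leave the exclusion of $\ell=\pm3$ open, and everything after it depends on it: the theorem asserts $\ell\in\{-1,1\}$, and \Cref{prop:bounded-type-coboundary-essential-values} only applies to $\zeta$ with $\ell\in\{-1,1\}$. Neither route you suggest can close it.
\begin{itemize}
\item Conditioning on $\tau(q_n,\theta)=-1$, or passing to $\theta+1/2$ via $\tau(n,\theta+1/2)=-\tau(n,\theta)$, only yields $\psi_\theta\circ S^{-1}=T^{-\ell}\circ\psi_\theta$. That already follows from $\psi_\theta\circ S=T^{\ell}\circ\psi_\theta$. As long as you select times by the value of the $S$-side exponent $\tau(q_n,\theta)$, you only learn what $S^{\pm1}$ is conjugate to.
\item The parity idea starts from a false premise. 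For $\ell=\pm3$ the function $\zeta=\tau\circ R_c-\ell\tau$ takes values in $\{\pm2,\pm4\}$, not in an odd coset.
\item The relation $\psi_{\theta+\alpha}\circ\psi_\theta^{-1}=T^{\zeta(\theta)}$ holds for whatever $\ell$ comes out of the first step, so it constrains nothing.
\item Information at the level of conjugacy classes is not enough in any case. The odometer $x\mapsto x+1$ on the $2$-adic integers is aperiodic and conjugate to its own cube via $x\mapsto 3x$. So ``$S$ conjugate to $T^{3}$ and $T$ conjugate to $S^{3}$'' is perfectly consistent.
\end{itemize}

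\textbf{The missing idea.} Select times by the $T$-side exponent instead. Apply \Cref{prop:ell-possible-values} with $-c$ in place of $c$, that is, at the base point $\theta+c$. This gives $\kappa\in\{-3,-1,1,3\}$ such that, for almost every $\theta$, one has $\tau(q_n,\theta+c)=1$ and $\tau(q_n,\theta)=\kappa$ for infinitely many $n$. Along those times the iterated equation reads $\psi_{\theta+q_n\alpha}\circ S^{\kappa}=T\circ\psi_\theta$, so in the limit $\psi_\theta\circ S^{\kappa}=T\circ\psi_\theta$. Equivalently, run your own first step on $\Upsilon^{-1}$, whose slices are $\psi_{\theta-c}^{-1}$ almost everywhere.

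The point is that the same $\psi_\theta$ now conjugates $S$ to $T^{\ell}$ and $S^{\kappa}$ to $T$. Hence $\psi_\theta\circ S\circ\psi_\theta^{-1}=T^{\ell}=\psi_\theta\circ S^{\kappa\ell}\circ\psi_\theta^{-1}$, so $S^{\kappa\ell-1}=\id_X$. Aperiodicity of $S$ then gives $\kappa\ell=1$, which forces $\ell\in\{-1,1\}$.

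With this step supplied, the rest of your plan agrees with the paper's proof: the one-step relation $\psi_{\theta+\alpha}\circ\psi_\theta^{-1}=T^{\zeta(\theta)}$, the ergodicity argument in the coboundary case, and the Lusin and essential-value arguments in the non-rigid and bounded type cases.
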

We fix for the rest of this section $S$, $T$, $\Upsilon$, and $\{\psi_{\theta} : \theta\in\T\}$ as in the statement of \Cref{thm:isomorphisms-long-statement}. Let us remark that under the further assumption that $S$ is ergodic, our assumption on $\Upsilon$ is automatically true by \Cref{prop:isomorphism-nice-first-coordinate}. However, for the proof of \Cref{thm:isomorphisms-long-statement} we instead place the assumption on $\Upsilon$.

Before we continue, we need to recall some facts about $\aut(X,\mathcal B_X,\mu)$, the collection of automorphisms of the probability space $(X,\mathcal B_X,\mu)$, where two elements are identified if they coincide almost everywhere. $\aut(X,\mathcal B_X,\mu)$ is a group with the composition operation. We commit the usual notational imprecision of identifying an automorphism of $(X,\mathcal B_X,\mu)$ with its equivalence class in $\aut(X,\mathcal B_X,\mu)$.

$\aut(X,\mathcal B_X,\mu)$ is endowed with the weak topology, and the corresponding Borel sigma-algebra. The weak topology on $\aut(X,\mathcal B_X,\mu)$ is Polish and metrizable. We note that $\lim_{n\to\infty} T_n = T$ if and only if for every $B\in\mathcal B_X$, we have $\lim_{n\to\infty}\mu(T^{-1}(B)\triangle T_n^{-1}(B))=0$. Equivalently, $T_n\to T$ as $n\to\infty$ if and only if we have convergence of the corresponding Koopman operators  $U_{T_n}\to U_{T}$ in the strong operator topology of $L^2_{\C}(X,\mathcal B_X,\mu)$. The reader is referred to \cite[\S 1]{kechris_global_2010} for further details. 

The next result is a standard fact, but we provide a complete argument for self-containment  (see for instance \cite[Lemma 5]{lemanczyk_ergodicity_2001}). 
\begin{proposition}\label{prop:theta-to-psi-theta-is-measurable}
For almost every $\theta\in\T$ the function $\psi_{\theta}$ is an automorphism of $(X,\mathcal B_X,\mu)$. Furthermore, the almost everywhere defined function 
\[
\T\to\aut(X,\mathcal B_X,\mu), \ \theta\mapsto\psi_\theta
\]
is measurable.
\end{proposition}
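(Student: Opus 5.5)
We need to show two things: for almost every $\theta$ the slice $\psi_\theta$ is an automorphism of $(X,\mathcal B_X,\mu)$, and $\theta\mapsto\psi_\theta$ is measurable into $\aut(X,\mathcal B_X,\mu)$. We use only that $\Upsilon$ is a measure-preserving bijection (mod null sets) of the special form $(\theta,x)\mapsto(\theta+c,\psi(\theta,x))$; the equivariance is not needed here.

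\emph{Slices are automorphisms.} Let $\Upsilon'$ be the inverse isomorphism, with measurable second coordinate $\psi'$. Since $\Upsilon$ shifts the first coordinate by $c$, so does $\Upsilon'$ by $-c$: from $\Upsilon'\circ\Upsilon=\id$ the first coordinate of $\Upsilon'(\theta+c,\cdot)$ must equal $\theta$ almost everywhere. Hence $\Upsilon'(\theta,x)=(\theta-c,\psi'(\theta,x))$ almost everywhere. The identities $\Upsilon'\circ\Upsilon=\id$ and $\Upsilon\circ\Upsilon'=\id$ hold on full measure subsets of $\T\times X$. By Fubini, for almost every $\theta$ we get $\psi'_{\theta+c}\circ\psi_\theta=\id_X$ and $\psi_\theta\circ\psi'_{\theta+c}=\id_X$ $\mu$-almost everywhere. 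This is the same argument as in \Cref{prop:kronecker-characterization}.

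Measure preservation follows from Fubini as well. For $B\in\mathcal B_X$ and $A\subset\T$ Borel,
\[
(m_{\T}\times\mu)\bigl(\Upsilon^{-1}((A+c)\times B)\bigr)=\int_A\mu(\psi_\theta^{-1}B)\,d\theta ,
\]
and this must equal $m_{\T}(A)\mu(B)$. So $\mu(\psi_\theta^{-1}B)=\mu(B)$ for almost every $\theta$, where the null set depends on $B$.

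To make the null set uniform, take a countable algebra $\{B_j\}$ generating $\mathcal B_X$, which exists because the space is standard. Then there is a single full measure set of $\theta$ on which $(\psi_\theta)_*\mu$ agrees with $\mu$ on every $B_j$. By the $\pi$–$\lambda$ theorem, $(\psi_\theta)_*\mu=\mu$ for these $\theta$. Together with the two-sided measurable inverse, each such $\psi_\theta$ is an automorphism, after discarding null sets in the standard way.

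\emph{Measurability.} The Borel structure of the weak topology on $\aut(X,\mathcal B_X,\mu)$ is generated by the maps $V\mapsto\mu(V^{-1}B_i\cap B_j)$; equivalently, by the maps $V\mapsto\langle U_V\mathbf 1_{B_i},\mathbf 1_{B_j}\rangle$, since these determine the strong operator topology on the unitary group. So it suffices to check that $\theta\mapsto\mu(\psi_\theta^{-1}B_i\cap B_j)=\int_X\mathbf 1_{B_i}(\psi(\theta,x))\mathbf 1_{B_j}(x)\,d\mu(x)$ is measurable. This holds by Fubini–Tonelli, because the integrand is a jointly measurable function of $(\theta,x)$.

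The main delicacy is justifying that these countably many evaluation maps generate the Borel $\sigma$-algebra of the Polish space $\aut$. Their joint map into $[0,1]^{\N\times\N}$ is continuous and injective, since a measure automorphism is determined by these values modulo null sets. A continuous injection of a Polish space has Borel range and is a Borel isomorphism onto it (Lusin–Souslin), which gives the claim.
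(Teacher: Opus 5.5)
Your proposal is correct and follows the paper's proof. You use the same inverse-map argument for bijectivity of the slices, the same Fubini computation with a countable generating algebra for measure preservation, and the same Tonelli argument for measurability of $\theta\mapsto\mu(\psi_\theta^{-1}(A)\cap B)$. The differences are only technical. Testing on $(A+c)\times B$ spares you the paper's change of variables. For measurability, the paper checks preimages of the full sub-basis $\{R : a<\mu(R(A)\cap B)<b\}$ after passing to $\theta\mapsto\psi_\theta^{-1}$ via continuity of inversion, whereas you use countably many coefficients and justify via Lusin--Souslin that they generate the Borel $\sigma$-algebra.
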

\begin{proof}
We first prove that for almost every $\theta\in\T$ the function $\psi_{\theta}\colon X\to X$ is bijective up to a null set. Since $\Upsilon$ is an isomorphism, it admits an inverse  $\Upsilon'\colon \T\times X\to \T\times X$. Thus  $\Upsilon\circ\Upsilon'=\Upsilon'\circ\Upsilon$ is the identity map $\id_{\T\times X}$. Let us write $\Upsilon'(\theta,x)=(\phi'(\theta,x),\psi'(\theta,x))$. Looking at the first coordinate in the relation $\Upsilon\circ\Upsilon'=\Upsilon'\circ\Upsilon$, we find $\phi'(\theta,x)+c=\theta$ for almost every $(\theta,x)$, and thus $\phi'$ coincides almost everywhere with the map $(\theta,x)\mapsto \theta-c$. Thus we can write 
$\Upsilon'(\theta,x)=(\theta-c,\psi'(\theta,x))$. We also write $\psi'(\theta,x)=\psi'_{\theta}(x)$. 

For almost every $(\theta,x)$ we have $(\theta,x)=\Upsilon'(\Upsilon(\theta,x))=\Upsilon'(\theta+c,\psi_{\theta}(x))=(\theta,\psi'_{\theta+c}(\psi_{\theta}(x)))$. It follows that for almost every $\theta$, $\psi'_{\theta+c}\circ\psi_{\theta}=\id_{X}$. A similar argument shows that for almost every $\theta$, $\psi_{\theta}\circ \psi'_{\theta+c}=\id_{X}$. Since $\psi_{\theta}$ admits a measurable right and left inverse, it follows that it is bijective almost everywhere.

Next we prove that for almost every $\theta\in \T$ the map $\psi_{\theta}$ preserves $\mu$ in the sense that $\mu(\psi_{\theta}^{-1}(B))=\mu(B)$ for all $B\in\mathcal B_X$.  

Let $P\in\mathcal B_\T$ and $Q\in\mathcal B_X$. Observe that $\Upsilon^{-1}(P
\times Q)=\{(\theta,x) : \theta+c\in P,\psi_{\theta}(x)\in Q\}$. Therefore 
\[
m_{\T}\times\mu(\Upsilon^{-1}(P\times Q))=\int_{\T} \mathbf{1}_{P}(\theta+c)\mu(\psi_{\theta}^{-1}(Q))dm_{\T}(\theta)
\]
Applying the change of variables $\theta\to \theta-c$ which preserves $m_{\T}$ we find
\begin{equation}\label{eq:change-of-variables-and-integral}
m_{\T}\times\mu(\Upsilon^{-1}(P\times Q))=\int_{\T} \mathbf{1}_{P}(\theta)\mu(\psi_{\theta-c}^{-1}(Q))dm_{\T}(\theta)
\end{equation}
On the other hand, 
\[
m_{\T}\times \mu(\Upsilon^{-1}(P\times Q))=m_{\T}\times \mu(P\times Q)=\int_{\T}\mathbf{1}_P(\theta)\mu(Q)dm_{\T}(\theta)
\]
We conclude that  
\[
\int_{\T}\mathbf{1}_P(\theta)\mu(Q)dm_{\T}(\theta)=\int_{\T} \mathbf{1}_{P}(\theta)\mu(\psi_{\theta-c}^{-1}(Q))dm_{\T}(\theta), \ \ \ P\in\mathcal B_\T, Q\in\mathcal B_X.\]
Fix $Q\in\mathcal B_X$. Since the previous equality holds for every $P\in \mathcal B_{\T}$, it follows that $\mu(Q)=\mu(\psi_{\theta-c}^{-1}(Q))$ for almost every $\theta$ in 
$\T$.

Let $\{Q_i : i\in\N\}$ be a countable algebra that generates $\mathcal B_X$. Since a countable intersection of subsets of $\T$ with full measure still has full measure, we can find a measurable  $K\subset \T$ with $m_{\T}(K)=1$ and such that every $\theta\in K$ satisfies $\mu(Q_i)=\mu(\psi_{\theta-c}^{-1}(Q_i))$ for all $i\in\N$. Since $\{Q_i : i\in\N\}$ generates $\mathcal B_X$, it follows that $\mu(B)=\mu(\psi_{\theta-c}^{-1}(B))$, for all $B\in\mathcal B_X$. The set $K-c$ also has full measure, and every $\theta\in K-c$ verifies $\mu(B)=\mu(\psi_{\theta}^{-1}(B))$, for all $B\in\mathcal B_X$. That is, $\psi_{\theta}$ preserves $\mu$.

We now prove the measurability of $\theta\to\psi_{\theta}$. Since $\aut(X,\mathcal B_X,\mu)$ is a topological group, the operation of taking inverses is continuous, and in particular measurable. Thus it is sufficient to prove that $\theta\to\psi_\theta^{-1}$ is measurable. A sub-basis for the weak topology on $\aut(X,\mathcal B_X,\mu)$ is given by sets of the form
\[
U_{a,b,A,B}=\{R\in \aut(X,\mathcal B_X,\mu) : a<\mu(R(A)\cap B)<b\}, 
\]
where $a,b\geq 0$ and $A,B\in\mathcal B_X$ (this follows from the definition of $W_{S,A_1,\dots,A_n,\epsilon}$ given on page 2 of \cite{kechris_global_2010}). Fix $a,b>0$ and $A,B$ in $\mathcal B_X$. To prove that $\{\theta\in\T : a<\mu(\psi_\theta^{-1}(A)\cap B)<b\}$ is measurable, it is sufficient to show the measurability of the map
\[f\colon \T\to [0,1], \theta\mapsto f(\theta)=\mu(\psi_\theta^{-1}(A)\cap B).\]
Observe that we can write
\[
f(\theta)=\int_{X} \mathbf{1}_{\psi^{-1}_\theta(A)}(x)\cdot \mathbf{1}_B(x)d\mu(x).
\]
Furthermore we have  
\[\mathbf{1}_{\psi^{-1}_\theta(A)}(x)=1\iff \psi(\theta,x)\in A\iff \mathbf{1}_{\psi^{-1}(A)}(\theta,x)=1\]
Thus we can re-write \[
f(\theta)=\int_X \mathbf{1}_{\psi^{-1}(A)}(\theta,x)\cdot \mathbf{1}_B(x)d\mu(x)
\]
    Since $\psi\colon \T\times X\to X$ is measurable and $A\in\mathcal B_X$, we have that $\psi^{-1}(A)$ is a measurable set. It follows that $(\theta,x)\to \mathbf{1}_{\psi^{-1}(A)}(\theta,x)\cdot \mathbf{1}_B(x)$ is measurable as a function of both $\theta$ and $x$. Then Tonelli's theorem shows that $\theta\to f(\theta)$ is measurable. 
\end{proof}
We shall now observe a relation which follows directly from the fact that $\Upsilon$ intertwines $R_{\alpha}\rtimes_{\tau} S$ and $R_{\alpha}\rtimes_{\tau} T$.
\begin{proposition}\label{prop:T-and-S-powers}
For almost every $\theta\in \T$ we have 
\begin{equation}\label{eq:T-and-S-powers}
\psi_{\theta+n\alpha}\circ S^{\tau(n,\theta)} = T^{\tau(n,\theta+c)}\circ \psi_{\theta}, \ \ \ n\in\Z
\end{equation}
\end{proposition}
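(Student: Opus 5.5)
We first iterate the conjugacy relation and then read off the slices. Since $\Upsilon\circ(R_{\alpha}\rtimes_{\tau}S)=(R_{\alpha}\rtimes_{\tau}T)\circ\Upsilon$ holds almost everywhere, induction gives $\Upsilon\circ(R_{\alpha}\rtimes_{\tau}S)^n=(R_{\alpha}\rtimes_{\tau}T)^n\circ\Upsilon$ for all $n\in\Z$. Negative $n$ come from applying the relation to the inverses, and only countably many null sets are discarded in the process.

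By the cocycle identity for the ergodic sums (checked by induction from $\tau(n+1,\theta)=\tau(n,\theta)+\tau(\theta+n\alpha)$, and similarly for negative $n$), we have
\[
(R_{\alpha}\rtimes_{\tau}S)^n(\theta,x)=(\theta+n\alpha,S^{\tau(n,\theta)}(x)),\qquad
(R_{\alpha}\rtimes_{\tau}T)^n(\theta,x)=(\theta+n\alpha,T^{\tau(n,\theta)}(x)).
\]
Apply both sides of the iterated relation to $(\theta,x)$. The left side equals $\Upsilon(\theta+n\alpha,S^{\tau(n,\theta)}x)=(\theta+n\alpha+c,\psi_{\theta+n\alpha}(S^{\tau(n,\theta)}x))$. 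The right side equals $(R_{\alpha}\rtimes_{\tau}T)^n(\theta+c,\psi_\theta(x))=(\theta+c+n\alpha,T^{\tau(n,\theta+c)}\psi_\theta(x))$. Comparing second coordinates gives \Cref{eq:T-and-S-powers} for almost every $(\theta,x)$, for each fixed $n$.

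Next we pass from "almost every $(\theta,x)$" to "almost every $\theta$, as an identity of maps on $X$". By Fubini, for each $n$ there is a full-measure set of $\theta$ on which the identity holds for $\mu$-almost every $x$. Intersecting over the countably many $n\in\Z$ gives a single full-measure set of $\theta$ valid for all $n$ simultaneously. For $\theta$ in this set, both sides are automorphisms by \Cref{prop:theta-to-psi-theta-is-measurable}, so the equality holds in $\aut(X,\mathcal B_X,\mu)$.

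The only delicate point is bookkeeping of null sets. The composition $\psi_{\theta+n\alpha}\circ S^{\tau(n,\theta)}$ is evaluated at points where the original a.e. relation must hold. This is handled because $R_{\alpha}\rtimes_{\tau}S$ preserves $m_{\T}\times\mu$, so preimages of null sets under its iterates are null. No step is a genuine obstacle.
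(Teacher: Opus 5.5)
Your proposal is correct and follows essentially the same route as the paper: iterate the intertwining relation to $\Upsilon\circ(R_{\alpha}\rtimes_{\tau}S)^n=(R_{\alpha}\rtimes_{\tau}T)^n\circ\Upsilon$, compare second coordinates using the form $\Upsilon(\theta,x)=(\theta+c,\psi_\theta(x))$, and then apply Fubini. Your explicit intersection over the countably many $n\in\Z$, giving one full-measure set of $\theta$ for all $n$, and your handling of the null sets are details the paper leaves implicit.
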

\begin{proof}
Let $n\in\Z$. Since $\Upsilon \circ (R_{\alpha}\rtimes_{\tau} S)=(R_{\alpha}\rtimes_{\tau} T)\circ \Upsilon$, we also have $\Upsilon \circ (R_{\alpha}\rtimes_{\tau} S)^n=(R_{\alpha}\rtimes_{\tau} T)^n\circ \Upsilon$. Thus for almost every $(\theta,x)\in\T\times X$ we have 
\[(\theta+n\alpha+c,\psi_{\theta+n\alpha}(S^{\tau(n,\theta)}(x))) =  (\theta+n\alpha+c,T^{\tau(n,\theta+c)}(\psi_{\theta}(x)))\] 
In particular, for almost all $(\theta,x)$ we have $\psi_{\theta+n\alpha}(S^{\tau(n,\theta)}(x))=T^{\tau(n,\theta+c)}(\psi_{\theta}(x))$. By Fubini's or Tonelli's theorem, for almost every $\theta\in\T$, the same equation holds for a full measure set of $x\in X$. That is, $\psi_{\theta+n\alpha}\circ S^{\tau(n,\theta)} = T^{\tau(n,\theta+c)}\circ \psi_{\theta}$. 
\end{proof}
In the next result we show that $S$ and $T$ are flip isomorphic. The value $\ell$ obtained will be fixed for the rest of this section.
\begin{proposition}\label{prop:ell-existence}
There is $\ell\in\{-1,1\}$ such that for almost every $\theta$ we have 
	\begin{equation}\label{eq:ell}
		\psi_{\theta}\circ S = T^{\ell}\circ \psi_{\theta}
	\end{equation}
\end{proposition}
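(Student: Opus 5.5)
The plan is to run the fixed point argument of \Cref{thm:eigenvalues} in $\aut(X,\mathcal B_X,\mu)$ instead of $L^2$. Fix a sequence $(q_n)_{n\in\N}$ as in \Cref{prop:nice-sequence-of-odd-numbers}. By \Cref{prop:theta-to-psi-theta-is-measurable} and \Cref{prop:subsequences} (applied with $\mathcal P=\aut(X,\mathcal B_X,\mu)$, which is Polish, and $s_n=q_n\alpha\to 0$), we pass to a subsequence along which $\psi_{\theta+q_n\alpha}\to\psi_\theta$ in the weak topology for almost every $\theta$. By \Cref{prop:ell-possible-values} applied to this subsequence, there is $\ell\in\{-3,-1,1,3\}$ such that for almost every $\theta$ we have $(\tau(q_n,\theta),\tau(q_n,\theta+c))=(1,\ell)$ for infinitely many $n$.

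For such $\theta$, we choose $n_k$ realizing $(1,\ell)$. \Cref{prop:T-and-S-powers} with $n=q_{n_k}$ gives $\psi_{\theta+q_{n_k}\alpha}\circ S=T^{\ell}\circ\psi_\theta$. We then let $k\to\infty$. Right composition by the fixed automorphism $S$ is continuous in the weak topology, since $\aut$ is a topological group. Hence the left-hand side converges to $\psi_\theta\circ S$, while the right-hand side is constant. This yields $\psi_\theta\circ S=T^{\ell}\circ\psi_\theta$ for almost every $\theta$, with a single $\ell\in\{-3,-1,1,3\}$ independent of $\theta$.

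The remaining and main step is to rule out $\ell=\pm3$; here the hypothesis $S^n\neq\id_X$ enters. Suppose $\ell=3$ (the case $\ell=-3$ is symmetric). I would use the symmetry \Cref{eq:symmetry}. Since $\tau(q_n,\theta+1/2)=-\tau(q_n,\theta)$, the same argument applied to the pair $(\theta+1/2,\theta+1/2+c)$ shows that the set where $(\tau(q_n,\theta),\tau(q_n,\theta+c))=(-1,-3)$ infinitely often is the translate by $1/2$ of the full-measure set $D_3$, so it also has full measure. This gives nothing new, namely $\psi_\theta\circ S^{-1}=T^{-3}\circ\psi_\theta$. Instead I would exploit the reverse isomorphism. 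By the proof of \Cref{prop:theta-to-psi-theta-is-measurable}, $\Upsilon^{-1}$ has the form $(\theta,x)\mapsto(\theta-c,\psi'_\theta(x))$ with $\psi'_{\theta+c}=\psi_\theta^{-1}$. Applying the entire argument with the roles of $S$ and $T$ swapped (and $c$ replaced by $-c$) gives $\ell'\in\{-3,-1,1,3\}$ with $\psi_\theta^{-1}\circ T=S^{\ell'}\circ\psi_\theta^{-1}$, that is, $T=\psi_\theta S^{\ell'}\psi_\theta^{-1}$. Combining this with $T^{\ell}=\psi_\theta S\psi_\theta^{-1}$ yields $S=S^{\ell\ell'}$, so $S^{\ell\ell'-1}=\id_X$. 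Since $S$ has no nontrivial periodic power, $\ell\ell'=1$, which forces $\ell\in\{-1,1\}$. (The reverse step requires $T^n\neq\id_X$ as well. This should follow because $T^{\ell}$ is conjugate to $S$, so if $T^n=\id_X$ then $S^n=\id_X$, which is excluded.) I expect the care to lie in checking that \Cref{prop:T-and-S-powers} and \Cref{prop:ell-possible-values} apply verbatim to $\Upsilon^{-1}$. No other obstacle is apparent.
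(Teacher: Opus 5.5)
Your proposal is correct and follows the paper's proof: the paper also obtains $\ell\in\{-3,-1,1,3\}$ from \Cref{prop:ell-possible-values} and the fixed point argument in $\aut(X,\mathcal B_X,\mu)$, and rules out $\pm 3$ by deriving a second relation $\psi_\theta\circ S^{\kappa}=T\circ\psi_\theta$, from which $S^{\kappa\ell-1}=\id_X$. The one difference is that the paper gets this second relation directly from \Cref{eq:T-and-S-powers} for $\Upsilon$ itself, by applying \Cref{prop:ell-possible-values} with $-c$ in place of $c$; this is exactly what your pass through $\Upsilon^{-1}$ amounts to, and it makes both the re-verification for $\Upsilon^{-1}$ and the remark about $T^n\neq\id_X$ unnecessary (only the first step, which uses no aperiodicity hypothesis, is ever run on $\Upsilon^{-1}$).
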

\begin{proof}
In order to prove our claim, we first show that there are $\ell,\kappa\in\{-3,-1,1,3\}$ such that for almost every $\theta$ we have 
	\begin{equation}\label{eq:ell-kappa}
		\psi_{\theta}\circ S = T^{\ell}\circ \psi_{\theta} \ \ \text{and} \ \ 	\psi_{\theta}\circ S^{\kappa} = T\circ \psi_{\theta} 
	\end{equation}
We start with $\ell$. Let $(q_n)_{n\in\N}$ be a sequence as in \Cref{prop:infinitely-many-1s}. Replacing $(q_n)_{n\in\N}$ by a subsequence if needed, we can assume that for almost every $\theta$ we have 
\[
\lim_{n\to\infty} \psi_{\theta+q_{n}\alpha} = \psi_{\theta}
\]
in $\aut(X,\mathcal B_X,\mu)$. This is possible thanks to \Cref{prop:subsequences} and the measurability of $\theta\mapsto\psi_{\theta}$ (\Cref{prop:theta-to-psi-theta-is-measurable}). 

Having chosen $(q_n)_{n\in\N}$, we apply \Cref{prop:ell-possible-values}. We obtain some $\ell\in \{-3,-1,1,3\}$ such that for almost all $\theta\in\T$ we have \[(\tau(q_{n},\theta),\tau(q_{n},\theta+c))=(1,\ell)\text{ for infinitely many values of $n$}
\]
For $\theta$ and $q_n$ satisfying $(\tau(q_{n},\theta),\tau(q_{n},\theta+c))=(1,\ell)$, observe that \Cref{eq:T-and-S-powers} simplifies to
\[
\psi_{\theta+q_{n}\alpha} \circ S = T^{\ell}\circ \psi_{\theta}
\]
For almost all $\theta$, we can argue as follows. Pick a subsequence $(q_{n_k})_{k\in\N}$, depending on $\theta$, such that $(\tau(q_{n_k},\theta),\tau(q_{n_k},\theta+c))=(1,\ell)$ for all $k$. Since a subsequence of a convergent subsequence converges to the same limit, we have that $\psi_{\theta+q_{n_{k}}\alpha}$ converges to $\psi_{\theta}$ as $k\to\infty$. Since composition is a continuous operation on $\aut(X,\mathcal B_X,\mu)$, it follows that $\psi_{\theta+q_{n_{k}}\alpha}\circ S $ converges to $\psi_{\theta}\circ S$. On the other hand, for each $k$ we have $\psi_{\theta+q_{n_k}\alpha} \circ S = T^{\ell}\circ \psi_{\theta}$. Therefore $\psi_{\theta}\circ S = T^{\ell}\circ \psi_{\theta}$ as claimed. 

The proof of the existence of  $\kappa\in\{-3,-1,1,3\}$ satisfying \Cref{eq:ell-kappa} follows by a similar reasoning. The sequence $(q_{n})_{n\in\N}$ is the same as before. The only difference is that one instead applies \Cref{prop:ell-possible-values} with $-c$ instead of $c$. This result provides a $\kappa\in\{-3,-1,1,3\}$ with the following property: for almost every $\theta$, we can find a subsequence $(q_{n_{k}})_{k\in\N}$ such that $\tau(q_{n_{k}},\theta+c)=1$ and $\tau(q_{n_{k}},\theta)=\kappa$ for all $k$. For such $\theta$, we can consider the relation $
\psi_{\theta+n\alpha}\circ S^{\tau(n,\theta)} = T^{\tau(n,\theta+c)}\circ \psi_{\theta}$ along the subsequence $(q_{n_{k}})_{k\in\N}$, and find $\psi_{\theta}\circ S^{\kappa}=T\circ\psi_{\theta}$ as desired. 

Thus we have proved \Cref{eq:ell-kappa}. We will derive from these two relations that  $\ell\in\{-1,1\}$.  In fact, we show
\[\ell\kappa=1.\]
The only solutions with $\ell,\kappa\in\Z$ are $\ell=\kappa=1$ and $\ell=\kappa=-1$.

Fix $\theta\in\T$ satisfying both \[T=\psi_{\theta}\circ S^{\kappa}\circ \psi_{\theta}^{-1}\ \ \text{ and }\ \ T^{\ell}=\psi_{\theta}\circ S\circ \psi_{\theta}^{-1}\]
It follows from the first of these equations that   $T^{\ell}=\psi_{\theta}\circ S^{\kappa\ell}\circ \psi_{\theta}^{-1}$. Comparing this with the second equation, we obtain $\psi_{\theta}\circ S\circ \psi_{\theta}^{-1}=\psi_{\theta}\circ S^{\kappa\ell}\circ \psi_{\theta}^{-1}$. But now we can cancel out the $\psi_{\theta}$ terms, and find $S=S^{\kappa\ell}$. Thus  $S^{\kappa\ell-1}=\id_{X}$. Since we assume that $S^n\ne\id_X$ for all $n\ne 0$, the only possibility is that $\kappa\ell=1$ as claimed.

Let us note that in this last argument, we used in $\aut(X,\mathcal B_X,\mu)$ an elementary fact about periods and conjugacies which is valid in any abstract group $(G,\cdot)$: if $a,b,c\in G$ satisfy conjugacy relations $aba^{-1}=c^\ell$ and $ab^{\kappa}a^{-1}=c$, then necessarily $b^{\kappa\ell-1}$ is the identity element of $G$.  
\end{proof}

At this point we have already proved the first part of \Cref{thm:isomorphisms-long-statement}. It remains to prove the ``moreover'' clause, which describes the family $\{\psi_{\theta} :\theta\in\T\}$ of flip isomorphisms between $S$ and $T$. In order to do this, we will use   \Cref{prop:ell-existence} to derive another interesting equation in $\aut(X,\mathcal B_X,\mu)$. We will now consider the step function $\zeta\colon\T\to\Z$ given by 
\begin{equation}\label{eq:zeta}
	\zeta(\theta)=\tau(\theta+c)-\ell\tau(\theta)
\end{equation}
We denote by the same symbol the associated  cocycle $\zeta\colon\Z\times\T\to\Z$ (see \Cref{sec:cocycles}). This cocycle determines the relation between $\psi_{\theta}$ and $\psi_{\theta+\alpha}$ in the sense of the following proposition. 
\begin{proposition}\label{prop:fancier-equation}
For almost every $\theta$ we have 
\begin{equation}\label{eq:fancier-equation}
	\psi_{\theta+n\alpha}\circ\psi_{\theta}^{-1}=T^{\zeta(n,\theta)}, \ \ n\in\Z
\end{equation}
\end{proposition}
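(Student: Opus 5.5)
We combine \Cref{prop:T-and-S-powers} with \Cref{prop:ell-existence}. Fix a full-measure set of $\theta$ such that, for every $n\in\Z$, the point $\theta+n\alpha$ satisfies \Cref{eq:ell}, i.e. $\psi_{\theta+n\alpha}\circ S=T^{\ell}\circ\psi_{\theta+n\alpha}$, and such that \Cref{eq:T-and-S-powers} holds at $\theta$ for all $n$. This set has full measure: the orbit condition is a countable intersection of translates of full-measure sets, since $R_\alpha$ preserves $m_{\T}$. We also intersect with the full-measure set where each $\psi_{\theta+n\alpha}$ is an automorphism (\Cref{prop:theta-to-psi-theta-is-measurable}), so that inverses make sense.

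For such $\theta$ and any $n\in\Z$, iterate the intertwining relation. From $\psi_{\theta+n\alpha}\circ S=T^{\ell}\circ\psi_{\theta+n\alpha}$ we get $\psi_{\theta+n\alpha}\circ S^{m}=T^{\ell m}\circ\psi_{\theta+n\alpha}$ for every $m\in\Z$. Negative $m$ is handled by taking inverses, using that $\psi_{\theta+n\alpha}$ is invertible in $\aut(X,\mathcal B_X,\mu)$. Applying this with $m=\tau(n,\theta)$ turns the left side of \Cref{eq:T-and-S-powers} into $T^{\ell\tau(n,\theta)}\circ\psi_{\theta+n\alpha}$. Hence
\[
T^{\ell\tau(n,\theta)}\circ\psi_{\theta+n\alpha}=T^{\tau(n,\theta+c)}\circ\psi_\theta .
\]
Composing on the left with $T^{-\ell\tau(n,\theta)}$ and on the right with $\psi_\theta^{-1}$ gives
\[
\psi_{\theta+n\alpha}\circ\psi_\theta^{-1}=T^{\tau(n,\theta+c)-\ell\tau(n,\theta)} .
\]

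It remains to identify the exponent with $\zeta(n,\theta)$. This follows from linearity of ergodic sums: the cocycle generated by $\theta\mapsto\tau(\theta+c)-\ell\tau(\theta)$ is $\tau(n,\theta+c)-\ell\tau(n,\theta)$, since $\tau(n,\cdot+c)$ is the cocycle of $\tau(\cdot+c)$ by the definition of ergodic sums, for positive, zero and negative $n$ alike.

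The only delicate point is the bookkeeping of null sets. We need \Cref{eq:ell} simultaneously along the whole orbit $\theta+n\alpha$, and \Cref{eq:T-and-S-powers} for all $n$ at once. Both are handled by the countable intersection above, so no real obstacle is expected.
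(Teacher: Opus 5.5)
Your proof is correct and is essentially the paper's argument: combine the intertwining relation from \Cref{prop:ell-existence} with \Cref{eq:T-and-S-powers} and cancel. The only difference is that the paper conjugates at the base point, writing $\psi_{\theta}\circ S^{\tau(n,\theta)}\circ\psi_{\theta}^{-1}=T^{\ell\tau(n,\theta)}$, so it needs \Cref{eq:ell} only at $\theta$ itself and not along the whole orbit $\theta+n\alpha$; your countable-intersection bookkeeping handles the orbit version correctly anyway.
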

\begin{proof}
Consider the relation $\psi_{\theta} \circ S \circ \psi_{\theta}^{-1}=T^\ell$ from \Cref{prop:ell-existence}. It follows that 
\begin{equation}\label{eq:post-isomorphism-equation}
\psi_{\theta} \circ S^{\tau(n,\theta)}  \circ\psi_{\theta}^{-1}=T^{\ell\tau(n,\theta)}
\end{equation}
On the other hand, recall from \Cref{eq:T-and-S-powers} that $
\psi_{\theta+n\alpha} \circ S^{\tau(n,\theta)} \circ \psi_{\theta}^{-1} = T^{\tau(n,\theta+c)}$. From these two relations we find 
\begin{equation}\label{eq:fancy-equation}
    \psi_{\theta+n\alpha} \circ \psi_{\theta}^{-1} = T^{\tau(n,\theta+c)-\ell\tau(n,\theta)}
\end{equation}
But since $\zeta(\theta)=\tau(\theta+c)-\ell\tau(\theta)$, it is clear that $
\zeta(n,\theta)=\tau(n,\theta+c)-\ell\tau(n,\theta)$. 
\end{proof}
If $\zeta$ is a coboundary, then $\Upsilon$ comes from a single flip isomorphism of $S$ and $T$, in the sense of the next result. 
\begin{proposition}\label{prop:rigid-isomorphisms-second}
Suppose that $\zeta$ is a coboundary, and let $f\colon\T\to \Z$ be a measurable function satisfying 
\[
\zeta(\theta)=f(\theta+\alpha)-f(\theta)
\]
Then there exists an automorphism $\upsilon$ of $(X,\mathcal B_X,\mu)$ such that $\upsilon\circ S=T^{\ell}\circ \upsilon$, and for almost all $\theta$ we have
\[
\psi_{\theta}=T^{f(\theta)}\circ\upsilon
\]
\end{proposition}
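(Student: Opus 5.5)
The plan is to define $\upsilon_\theta=T^{-f(\theta)}\circ\psi_\theta$ and show that this measurable family $\theta\mapsto\upsilon_\theta\in\aut(X,\mathcal B_X,\mu)$ is $R_\alpha$-invariant. Ergodicity of $R_\alpha$ then makes it constant almost everywhere, and we call the constant $\upsilon$.

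\textbf{Measurability.} The map $\theta\mapsto\psi_\theta$ is measurable by \Cref{prop:theta-to-psi-theta-is-measurable}. The function $f$ is measurable and $\Z$-valued, so $\theta\mapsto T^{-f(\theta)}$ is measurable: it is constant on each measurable set $\{f=k\}$. Composition in $\aut(X,\mathcal B_X,\mu)$ is continuous, so $\theta\mapsto\upsilon_\theta$ is measurable as a map into the Polish space $\aut(X,\mathcal B_X,\mu)$.

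\textbf{Invariance.} We use \Cref{prop:fancier-equation} with $n=1$, which gives $\psi_{\theta+\alpha}=T^{\zeta(\theta)}\circ\psi_\theta$ for almost every $\theta$. Since $\zeta(\theta)=f(\theta+\alpha)-f(\theta)$, we get
\[
\upsilon_{\theta+\alpha}=T^{-f(\theta+\alpha)}T^{f(\theta+\alpha)-f(\theta)}\psi_\theta=\upsilon_\theta
\]
for almost every $\theta$. To conclude that a measurable $R_\alpha$-invariant map into a Polish space is a.e. constant, fix a countable base $\{U_i\}$ of the topology. Each preimage $\{\theta:\upsilon_\theta\in U_i\}$ is invariant mod null sets, hence has measure $0$ or $1$. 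Intersecting the full-measure preimages and removing the null ones leaves a set of full measure on which $\upsilon_\theta$ is a single point $\upsilon$, because the space is Hausdorff. So $\psi_\theta=T^{f(\theta)}\circ\upsilon$ for almost every $\theta$.

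\textbf{Intertwining.} Pick a $\theta$ that is good both for this identity and for \Cref{prop:ell-existence}, so that $\psi_\theta\circ S=T^\ell\circ\psi_\theta$. Then
\[
T^{f(\theta)}\upsilon S=T^{\ell}T^{f(\theta)}\upsilon .
\]
Because powers of $T$ commute, cancelling $T^{f(\theta)}$ gives $\upsilon\circ S=T^\ell\circ\upsilon$. Finally, $\upsilon$ is an automorphism: it equals $T^{-f(\theta)}\psi_\theta$ for a good $\theta$, and $\psi_\theta$ is an automorphism by \Cref{prop:theta-to-psi-theta-is-measurable}.

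The only mildly delicate step is the constancy argument for invariant maps into $\aut(X,\mathcal B_X,\mu)$. It is handled by the countable-base argument above. The other steps are direct group manipulations on full-measure sets of $\theta$, where we intersect countably many almost-everywhere conditions.
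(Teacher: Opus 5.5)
Your proof is correct and follows the paper's argument essentially verbatim. The paper also sets $F(\theta)=T^{-f(\theta)}\circ\psi_\theta$, uses \Cref{prop:fancier-equation} with $n=1$ to get $F(\theta+\alpha)=F(\theta)$ almost everywhere, invokes ergodicity of $R_\alpha$ to make $F$ a.e.\ constant, and checks the intertwining $\upsilon\circ S\circ\upsilon^{-1}=T^{\ell}$ via \Cref{prop:ell-existence}; your countable-base constancy argument and the measurability check for $\theta\mapsto T^{-f(\theta)}$ just fill in details the paper leaves implicit.
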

\begin{proof}
Let $f$ be as in the statement. Then by  \Cref{eq:fancier-equation} with $n=1$ we have 
\[
\psi_{\theta+\alpha}\circ\psi_{\theta}^{-1}=T^{f(\theta+\alpha)-f(\theta)}
\]
That is
\[
T^{-f(\theta+\alpha)}\circ \psi_{\theta+\alpha}=T^{-f(\theta)}\circ\psi_{\theta}
\]
Consider the measurable map $F\colon\T\to\aut(X,\mathcal B_X,\mu)$ given by  
\[
F(\theta)=T^{-f(\theta)}\circ\psi_{\theta}
\]
It follows that $F(\theta)=F(\theta+\alpha)$ for almost all $\theta$. Since $R_{\alpha}$ is ergodic, $F$ must be constant almost everywhere. Let $\upsilon$ be an automorphism of $(X,\mathcal B_X,\mu)$ such that $F(\theta)=\upsilon$ for almost every $\theta$. Thus $\psi_{\theta}=T^{f(\theta)}\circ\upsilon$ for almost all $\theta$. Finally, let us verify that $\upsilon$ intertwines $S$ and $T^{\ell}$:
\[
\upsilon \circ S \circ \upsilon^{-1}=(T^{-f(\theta)} \circ\psi_{\theta} ) \circ S  \circ (\psi_{\theta}^{-1} \circ T^{f(\theta)})=T^{-f(\theta)+\ell+f(\theta)}=T^{\ell}
\]
This finishes the proof.
\end{proof}
In what follows we shall be interested in understanding whether $\zeta$ is a coboundary. This is equivalent to the statement that $\overline{\mathcal E}(\zeta)=\{0\}$ (see \Cref{sec:cocycles}). We now verify that $\zeta$ does not have nonzero finite essential values, in the sense that $\mathcal E(\zeta)=\{0\}$. The proof is based on our assumption that $S^n\ne\id_X$ for all $n\ne 0$.
\begin{proposition}\label{prop:zeta-has-no-nonzero-essential-values}
    $\mathcal E(\zeta)=\{0\}$.
\end{proposition}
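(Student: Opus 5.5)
Suppose $k\in\mathcal E(\zeta)$. The plan is to deduce from the fixed-point equation \Cref{eq:fancier-equation} that $T^{k}=\id_X$, and then that $S^{\ell k}=\id_X$. Our hypothesis on $S$ then forces $k=0$. The basic tool is the continuity of $\theta\mapsto\psi_\theta$ on large sets, obtained from Lusin's theorem as in \Cref{prop:subsequences}.

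The first step is to fix a compatible metric $d$ on the Polish group $\aut(X,\mathcal B_X,\mu)$ and let $\epsilon>0$. By \Cref{prop:theta-to-psi-theta-is-measurable} and Lusin's theorem, there is a compact $K\subset\T$ of positive measure on which $\theta\mapsto\psi_\theta$ is uniformly continuous. Shrinking $K$ to a positive-measure piece of small diameter, we may assume $d(\psi_\theta,\psi_{\theta'})<\epsilon$ for all $\theta,\theta'\in K$. We also intersect $K$ with the full-measure set where \Cref{eq:fancier-equation} holds for all $n$.

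Next we apply the definition of essential value \Cref{eq:essential-value} with $B=K$. This gives some $n$ and a $\theta$ with $\theta\in K$, $\theta+n\alpha\in K$ and $\zeta(n,\theta)=k$. For this $\theta$, \Cref{eq:fancier-equation} gives $T^{k}=\psi_{\theta+n\alpha}\circ\psi_\theta^{-1}$, where both $\psi_{\theta+n\alpha}$ and $\psi_\theta$ lie within $\epsilon$ of each other. To conclude that $T^k$ is close to $\id_X$, I would work with a right-invariant metric, or argue directly via the group topology. Since right multiplication is continuous, $\psi_{\theta'}\circ\psi_\theta^{-1}$ is close to $\id$ when $\psi_{\theta'}$ is close to $\psi_\theta$. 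Uniformity of this over the compact set $\{\psi_\theta:\theta\in K\}$ follows from continuity of $(a,b)\mapsto ab^{-1}$ on a compact set. Letting $\epsilon\to0$ shows that $T^k$ lies in every neighborhood of $\id_X$, so $T^k=\id_X$.

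Finally, \Cref{prop:ell-existence} gives $T^{\ell}=\psi_\theta\circ S\circ\psi_\theta^{-1}$. Hence $T=\psi_\theta S^{\ell}\psi_\theta^{-1}$ (using $\ell^2=1$), and so $S^{\ell k}=\psi_\theta^{-1}T^{k}\psi_\theta=\id_X$. Since $S^n\neq\id_X$ for $n\neq0$, we get $k=0$. The main obstacle is the uniformity issue in the previous step: making precise that closeness of $\psi_{\theta+n\alpha}$ and $\psi_\theta$ forces $\psi_{\theta+n\alpha}\psi_\theta^{-1}$ to be close to the identity. The compactness of the image of $K$ handles this. Alternatively, one can use the sub-basic sets $U_{a,b,A,B}$ and the inequality $\mu(\psi'\psi^{-1}(A)\triangle A)=\mu(\psi'(C)\triangle\psi(C))$ with $C=\psi^{-1}A$, but this still needs uniformity in $C$.
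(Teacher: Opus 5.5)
Your proof is correct and follows the paper's argument: Lusin's theorem gives a compact set on which $\theta\mapsto\psi_\theta$ is continuous, the essential-value definition applied to a small-diameter positive-measure piece of it yields $T^k=\psi_{\theta+n\alpha}\circ\psi_\theta^{-1}$ with $\theta$ and $\theta+n\alpha$ close, continuity of the group operations forces $T^k=\id_X$, and flip isomorphism with $S$ then gives $k=0$. The only cosmetic difference is that the paper sidesteps your uniformity concern by taking pairs $\theta_j,\theta'_j$ in the compact set with $d_{\T}(\theta_j,\theta'_j)<1/j$, passing to a subsequence converging to some $\theta_*$, and using continuity at $\theta_*$ alone.
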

\begin{proof}
    Suppose that $\zeta$ admits an essential value $e\in\Z\smallsetminus\{0\}$. We will prove that this implies $T^e=\id_X$, which is a contradiction (recall that $S^n\ne\id_X$ for all $n\ne0$ by assumption in \Cref{thm:isomorphisms-long-statement}, and that we already proved that $S$ and $T$ are flip isomorphic).

    Let $K\subset \T$ be an arbitrary measurable set with positive measure. We claim that for every $\epsilon>0$ we can find $\theta_\epsilon,\theta_\epsilon'\in K$ such that $d_{\T}(\theta_\epsilon,\theta_\epsilon')<\epsilon$ and with 
    \[
\psi_{\theta_\epsilon'}\circ\psi_{\theta_\epsilon}^{-1}=T^e
    \]
    Indeed, since $K$ has positive measure, we can find an interval $I$ in $\T$ with diameter smaller than $\epsilon$, and such that $K\cap I$ has positive measure. Let $K'=K\cap I$. Next, we apply the definition of essential value for $e$ and $K'$. We find $n\in\Z$ such that 
    \[
m_{\T}(K'\cap R_{\alpha}^{-n}(K')\cap\{\theta\in\T : \zeta(n,\theta)=e\})>0
    \]
    Pick $\theta$ in this intersection  satisfying $\psi_{\theta+n\alpha}\circ\psi_{\theta}^{-1}=T^{\zeta(n,\theta)}$ (\Cref{prop:fancier-equation}). Let $\theta_\epsilon$ be this element, so $\psi_{\theta_\epsilon+n\alpha}\circ\psi_{\theta_\epsilon}^{-1}=T^e$. Since $R_{\alpha}$ is an isometry, $K'$ has diameter smaller than $\epsilon$, and $K'\cap R_{\alpha}^{-n}(K')\ne\emptyset$, we necessarily have $||n\alpha||<\epsilon$. Thus $\theta_\epsilon'=\theta_\epsilon+n\alpha$ is $\epsilon$-close to $\theta_\epsilon$, and $\psi_{\theta_\epsilon'}\circ\psi_{\theta_\epsilon}^{-1}=T^e$ as desired. Furthermore, observe that since $\theta_{\epsilon}\in K'\cap R_{\alpha}^{-n}(K')$, the element $\theta_\epsilon+n\alpha$ belongs to $K'$. Thus both elements $\theta_\epsilon$ and $\theta_\epsilon'$ belong to $K'$. 

    We have proved our first claim. Next, since $\theta\to\psi_{\theta}$ is measurable (\Cref{prop:theta-to-psi-theta-is-measurable}) and $\aut(X,\mathcal B_X,\mu)$ is Polish, we can apply Lusin's Theorem (Theorem 17.12 in \cite{kechris_classical_1995}). It follows that there exists a compact set $K\subset \T$ with positive measure and such that $\theta\to\psi_{\theta}$ is continuous over $K$. We apply the previous claim to the compact set $K$. For each $\epsilon=1/n$, we find two elements $\theta_n$ and $\theta_n'$ with $d_{\T}(\theta_n,\theta_n')<1/n$ and with 
    \begin{equation}\label{eq:fancier-equation-applied}
    \psi_{\theta_n'}\circ\psi_{\theta_n}^{-1}=T^e
    \end{equation}
    Since $K$ is compact, the sequence $(\theta_{n})_{n\in\N}$ has a subsequence $(\theta_{n_k})_{k\in\N}$ which converges to some $\theta_*\in K$. Let us observe that $(\theta_{n_k}')_{k\in\N}$ also converges to $\theta_*$. Indeed, since $\theta_{n_k}$ and $\theta_{n_k}'$ are $1/n_k$-close, by the triangle inequality we have 
    \[
d_{\T}(\theta_{n_k}',\theta_*)\leq d_{\T}(\theta_{n_k}',\theta_{n_k})+d_{\T}(\theta_{n_k},\theta_*)\leq 1/{n_k}+d_{\T}(\theta_{n_k},\theta_*)\to 0 \text{ as $k\to\infty$}
    \]
    Since $\theta\to\psi_{\theta}$ is continuous over $K$, we have
    \[
\lim_{k\to\infty}\psi_{\theta_{n_k}}=\lim_{k\to\infty}\psi_{\theta_{n_k}'}=\psi_{\theta_*}
    \]
    As inversion and composition are continuous operations on $\aut(X,\mathcal B_X,\mu)$, it follows that 
    \[
    \lim_{k\to\infty} \psi_{\theta_{n_k}'}\circ\psi_{\theta_{n_k}}^{-1}=\psi_{\theta_*}\circ\psi_{\theta_*}^{-1}=\id_X
    \]
    On the other hand, it follows from \Cref{eq:fancier-equation-applied} that for every $k$ we have $\psi_{\theta_{n_k}'}\circ\psi_{\theta_{n_k}}^{-1}=T^e$. Thus  $T^e=\id_X$, which is the desired contradiction. 
\end{proof}
If $\alpha$ has bounded type, it follows from \Cref{prop:zeta-has-no-nonzero-essential-values} and \Cref{prop:bounded-type-coboundary-essential-values} that $\zeta$ is necessarily a coboundary, and in fact the possible values of $c$ are characterized by \Cref{prop:sufficient-conditions-zeta-coboundary}.

Without assumptions on $\alpha$, we are still able to show that $\zeta$ must be a coboundary if we suppose that $S$ is not rigid. In this case we can show that $\infty$ is not an essential value for $\zeta$. Thus $\overline{\mathcal E}(\zeta)=\{0\}$, and this implies that $\zeta$ is a coboundary (see \Cref{sec:cocycles}).

An automorphism $R$ of $(X,\mathcal B_X,\mu)$ is rigid if there is an increasing sequence of natural numbers $(n_k)_{k\in\N}$ such that $(R^{n_k})_{k\in\N}$ converges to the trivial transformation $\id_X$ in $\aut(X,\mathcal B_X,\mu)$. One can equivalently take $n_k\in\Z$, and require $k\mapsto |n_k|$ to be increasing. This is simply because the inversion map $R\to R^{-1}$ is continuous on $\aut(X,\mathcal B_X,\mu)$, and it fixes $\id_X$. 
\begin{proposition}\label{prop:rigid-isomorphisms}
If $S$ (equivalently $T$) is not rigid, then $\infty$ is not an essential value for $\zeta$. Hence $\overline{\mathcal E}(\zeta)=\{0\}$, and $\zeta$ is a coboundary. 
\end{proposition}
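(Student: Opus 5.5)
We argue by contraposition: assuming $\infty\in\overline{\mathcal E}(\zeta)$, we show that $T$ is rigid, and hence so is $S$, since $S$ is conjugate to $T^{\ell}$ by \Cref{prop:ell-existence} and rigidity is preserved both by conjugacy and by passing to the inverse. The proof will mimic that of \Cref{prop:zeta-has-no-nonzero-essential-values}, replacing a fixed nonzero essential value $e$ by values of $\zeta(n,\theta)$ that are larger and larger in absolute value.

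First, by Lusin's Theorem applied to the measurable map $\theta\mapsto\psi_\theta$ (\Cref{prop:theta-to-psi-theta-is-measurable}), we fix a compact set $K\subset\T$ of positive measure on which $\theta\mapsto\psi_\theta$ is continuous. Removing a null set from $K$ if needed, we may assume that \Cref{eq:fancier-equation} holds for every $\theta\in K$. Next, for each $j\in\N$ we pick an interval $I_j$ of length less than $1/j$ with $m_{\T}(K\cap I_j)>0$, and set $K_j=K\cap I_j$. We apply \Cref{eq:essential-value-infty} to $K_j$ with threshold $j$. This gives $n_j\in\Z$ and a positive-measure set of $\theta\in K_j\cap R_\alpha^{-n_j}(K_j)$ with $|\zeta(n_j,\theta)|>j$. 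We choose $\theta_j$ in this set. Then $\theta_j'=\theta_j+n_j\alpha$ lies in $K_j$, so $d_{\T}(\theta_j,\theta_j')<1/j$. Moreover, by \Cref{prop:fancier-equation}, setting $e_j=\zeta(n_j,\theta_j)$,
\[
\psi_{\theta_j'}\circ\psi_{\theta_j}^{-1}=T^{e_j},\qquad |e_j|>j .
\]

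We then conclude using compactness. Passing to a subsequence, $\theta_j\to\theta_*\in K$, and therefore $\theta_j'\to\theta_*$ as well. Continuity of $\psi$ on $K$, together with continuity of composition and inversion in $\aut(X,\mathcal B_X,\mu)$, gives $T^{e_j}\to\psi_{\theta_*}\circ\psi_{\theta_*}^{-1}=\id_X$. Since $|e_j|\to\infty$, a further subsequence has $|e_j|$ strictly increasing. By the remark preceding the statement, this means $T$ is rigid, which is the desired contradiction.

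For the final part of the statement, $\infty\notin\overline{\mathcal E}(\zeta)$ combined with \Cref{prop:zeta-has-no-nonzero-essential-values} gives $\overline{\mathcal E}(\zeta)=\{0\}$, and Schmidt's theorem (\Cref{sec:cocycles}) then shows that $\zeta$ is a coboundary. The only delicate point is making sure that the chosen $\theta_j$ satisfies \Cref{eq:fancier-equation} for the specific $n_j$ and lies in the continuity set $K$. Both hold because these are full-measure conditions intersected with a positive-measure set, so no real obstacle arises.
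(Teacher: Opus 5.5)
Your proof is correct and follows the paper's argument essentially step by step: Lusin's theorem for $\theta\mapsto\psi_\theta$, small intervals meeting $K$ in positive measure, the definition of $\infty$ as an essential value, and compactness plus continuity to get $T^{e_j}\to\id_X$ with $|e_j|\to\infty$. The differences are cosmetic: you pick $\theta_j$ directly from the set where $|\zeta(n_j,\theta)|>j$ rather than first isolating a single value $r$, and you make $|e_j|$ increasing after passing to a subsequence. One small point: once you discard a null set from $K$ it is no longer compact, so take $\theta_*$ in the original compact set, where continuity of $\psi$ still applies.
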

\begin{proof}
We argue that if $\zeta$ has $\infty$ as an essential value, then $T$ is rigid. The argument is very similar to that of  \Cref{prop:zeta-has-no-nonzero-essential-values}, so we will omit some parts. The only difference is that instead of a fixed power of $T$, we find a sequence of powers of $T$. 

Let $K\subset \T$ be an arbitrary measurable set with positive measure. We claim that for all $\epsilon>0$ and all $j\in\N$, we can find $r\in\Z$ with $|r|>j$ and two elements $\theta_{\epsilon}$ and $\theta_{\epsilon}'$ in $K$ with $d_{\T}(\theta_\epsilon,\theta_\epsilon')<\epsilon$ such that
\[
\psi_{\theta_{\epsilon}'}\circ\psi_{\theta_{\epsilon}}^{-1}=T^{r}
\]
Indeed, since $K$ has positive measure, we can find an interval $I$ in $\T$ with diameter smaller than $\epsilon$, and such that $K\cap I$ has positive measure. Let $K'=K\cap I$. Next, we apply the definition of essential value for $\infty$ and $K'$. We find $n\in\Z$ such that 
    \[
m_{\T}(K'\cap R_{\alpha}^{-n}(K')\cap\{\theta\in\T : |\zeta(n,\theta)|>j\})>0
    \]
Since $\{\theta\in\T : |\zeta(n,\theta)|>j\}=\cup_{i\in\Z,|i|>j} \{\theta\in\T : \zeta(n,\theta)=i\}$, it follows that there exists $r\in\Z$ with $|r|>j$ and such that 
    \[
m_{\T}(K'\cap R_{\alpha}^{-n}(K')\cap\{\theta\in\T : \zeta(n,\theta)=r\})>0
    \]
Then $\theta_{\epsilon}$ can be chosen as an element in this intersection, and then we set $\theta_{\epsilon}'=\theta_{\epsilon}+n\alpha$. Then $\theta_{\epsilon}$ and $\theta_{\epsilon}'$ verify the desired properties (see the argument in  \Cref{prop:zeta-has-no-nonzero-essential-values}).

We have proved our first claim. Since $\theta\to\psi_{\theta}$ is measurable (\Cref{prop:theta-to-psi-theta-is-measurable}) and $\aut(X,\mathcal B_X,\mu)$ is Polish, we can apply Lusin's Theorem (Theorem 17.12 in \cite{kechris_classical_1995}). It follows that there exists a compact set $K\subset \T$ with positive measure and such that $\theta\to\psi_{\theta}$ is continuous over $K$. We apply the previous claim to the compact set $K$. We pick $(\theta_n)_{n\in\N}$ and $(\theta_n')_{n\in\N}$ as sequences of elements in $K$ with $d_{\T}(\theta_n,\theta_n')<1/n$, and satisfying 
\begin{equation}\label{eq:fancier-equation-applied-rigidity}
    \psi_{\theta_n'}\circ\psi_{\theta_n}^{-1}=T^{r_n}
\end{equation}
where $(r_n)_{n\in\N}$ is a sequence of integers such that $n\mapsto |r_n|$ is increasing. This can be ensured by picking the elements  in order, so that at step $n$ one takes $r_n$ with $|r_n|$ bigger than $j=|r_{n-1}|$. 

The rest of the argument is as in \Cref{prop:zeta-has-no-nonzero-essential-values}: since $K$ is compact, we can find a subsequence $(n_k)_{k\in\N}$ and a limit point $\theta_*\in K$ so that $(\theta_{n_k})_{k\in\N}$ and $(\theta'_{n_k})_{k\in\N}$ converge to $\theta_*$. Then by continuity of $\theta\to\psi_{\theta}$ on $K$, we have 
\[
\lim_{k\to\infty}\psi_{\theta_{n_k}'}\circ\psi_{\theta_{n_k}}^{-1}=\psi_{\theta_*}\circ\psi_{\theta_*}^{-1}=\id_{X}
\]
By \Cref{eq:fancier-equation-applied-rigidity}, for all $k$ we have $\psi_{\theta_{n_k}'}\circ \psi_{\theta_{n_k}}^{-1}=T^{r_{n_k}}$. Thus we have proved that
\[
\lim_{k\to\infty} T^{r_{n_k}}=\id_X.
\]
Thus $T$ is rigid as claimed. 
\end{proof}
This finishes our proof of  \Cref{thm:isomorphisms-long-statement}. 
%
\section{Isomorphisms between skew products over different  rotations}\label{sec:different-angles}
In this section we prove that an isomorphism between $R_{\alpha}\rtimes_{\tau} S$ and $R_{\beta}\rtimes_{\tau} T$ is only possible for $\alpha=\pm\beta\mod 1$, provided $S$ is ergodic and $S^2\ne\id_X$. This is the last nontrivial ingredient needed to prove \Cref{thm:isomorphisms}, which we finally prove in the next section. 

As in the previous section, we will assume that $S$ and $T$ act on the same probability space $(X,\mathcal B_X,\mu)$. This assumption allows us to work in the space $\aut(X,\mathcal B_X,\mu)$ of automorphisms of $(X,\mathcal B_X,\mu)$. 
\begin{theorem}\label{thm:different-angles}
    Let $\alpha,\beta\in\R\smallsetminus\Q$, and let $(X,\mathcal B_X,\mu,S)$ and $(X,\mathcal B_X,\mu,T)$ be invertible measure-preserving systems over the same probability space $(X,\mathcal B_X,\mu)$. We assume that $S$ and $T$ are ergodic, and that $S^2\ne\id_X$. Then a necessary condition for $R_{\alpha}\rtimes_{\tau} S$ to be isomorphic to $R_{\beta}\rtimes_{\tau} T$ is that $\alpha=\pm\beta\mod 1$.
\end{theorem}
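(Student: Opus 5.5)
The plan is to compare the eigenvalue groups of the two Koopman operators, which are isomorphism invariants. Since $S$ and $T$ are ergodic, both skew products are ergodic by \Cref{prop:characterization-ergodicity}. An isomorphism between $R_{\alpha}\rtimes_{\tau} S$ and $R_{\beta}\rtimes_{\tau} T$ therefore forces equality of their point spectra. By \Cref{thm:eigenvalues}, the spectra are
\[
\Lambda_\alpha=\{e^{2\pi i k\alpha}\}\ \text{or}\ \{\pm e^{2\pi i k\alpha}\},
\qquad
\Lambda_\beta=\{e^{2\pi i k\beta}\}\ \text{or}\ \{\pm e^{2\pi i k\beta}\},
\]
where in each case the choice is made according to whether $-1$ is an eigenvalue of $U_S$ (respectively $U_T$).

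First I will reduce to the case where both fibers agree on the presence of $-1$. The isomorphism identifies the Kronecker factors, which by \Cref{thm:kronecker-factor} are $R_\alpha$ or $R_\alpha\times R_1$, and similarly for $\beta$. A cleaner invariant is the following. The element $-1$ lies in $\{e^{2\pi i k\alpha}:k\in\Z\}$ only if $k\alpha\in 1/2+\Z$, which is impossible since $\alpha$ is irrational. Hence $-1\in\Lambda_\alpha$ exactly when $-1$ is an eigenvalue of $U_S$, and likewise for $\beta$ and $T$. Equality $\Lambda_\alpha=\Lambda_\beta$ then shows that both fibers have $-1$ as an eigenvalue, or neither does.

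\textbf{Case 1: neither fiber has $-1$ as an eigenvalue.} Then $\{e^{2\pi i k\alpha}\}=\{e^{2\pi i k\beta}\}$. So $\beta=k\alpha \bmod 1$ and $\alpha=j\beta \bmod 1$ for some integers $j,k$. This gives $(jk-1)\alpha\in\Z$, hence $jk=1$ and $\beta=\pm\alpha$.

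\textbf{Case 2: both fibers have $-1$ as an eigenvalue.} Then $\{\pm e^{2\pi i k\alpha}\}=\{\pm e^{2\pi i k\beta}\}$, which only yields $\beta=\pm\alpha$ or $\beta=\pm\alpha+1/2 \bmod 1$. This is where spectrum alone fails: the group generated by $\alpha$ and $1/2$ also equals the group generated by $\alpha+1/2$ and $1/2$.

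The main obstacle is ruling out $\beta=\alpha+1/2$ in Case 2 (the case $-\alpha+1/2$ then follows via \Cref{prop:easy-automorphisms}). Here I will use $S^2\ne\id_X$ and the structure of $\aut$-valued equations, much as in \Cref{sec:isomorphisms}. I will proceed as follows.

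First, by Kronecker factor reasoning analogous to \Cref{prop:isomorphism-nice-first-coordinate}, the first coordinate of an isomorphism $\Upsilon$ is a measurable map $\phi$ factoring through $\T\times\Z/2\Z$. Applying \Cref{prop:algebraic-factors} to the homomorphism
\[
\rho(\theta,\epsilon)=2\theta\ \text{(or a suitable variant sending }(\alpha,1)\mapsto\alpha+1/2),
\]
the first coordinate is $\theta+\epsilon/2+c$, where $\epsilon=\iota(x)\in\{0,1\}$ is the $R_1$-factor of $S$. Note that $\epsilon$ flips at each step, because $S^{\pm1}$ moves $\iota$ by $1$.

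Second, I will write the second coordinate as $\psi_{\theta}(x)$ and derive, as in \Cref{prop:T-and-S-powers}, the relation
\[
\psi_{\theta+n\alpha}\circ S^{\tau(n,\theta)}=T^{\tau(n,\theta+c+\epsilon\text{-dependent shift})}\circ\psi_\theta
\]
on each half $\{\iota=\epsilon\}$. Along odd denominators $q_n$ as in \Cref{prop:nice-sequence-of-odd-numbers}, the shift in the base changes by $q_n/2\equiv 1/2$, so the $T$-exponent flips sign relative to the even case.

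Third, I will take limits as in \Cref{prop:ell-existence}. Because the two halves are exchanged, the argument should yield relations of the form
\[
\psi\circ S=T^{\ell}\circ\psi'\quad\text{and}\quad \psi'\circ S=T^{-\ell}\circ\psi .
\]
Combining these should give a relation forcing $S^2=\id$ or a similar contradiction on the component maps. I expect the bookkeeping of how $\epsilon$ interacts with the base translation to be the delicate part of the argument. If this direct approach becomes unwieldy, my fallback is to pass to the squared systems and restrict to the ergodic components of $S^2$ and $T^2$. There the base becomes $R_{2\alpha}$ and the cocycle is $\tau+\tau\circ R_\alpha$, and I would look for a spectral or rigidity contradiction in that setting.
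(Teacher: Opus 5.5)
Your spectral reduction is correct and is the paper's own. That includes your remark that $-1$ lies in the point spectrum exactly when it is an eigenvalue of the fiber, which rules out the mixed case. The gap is in ruling out $\beta=\alpha+1/2$.

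Your third step takes limits ``as in \Cref{prop:ell-existence}'' to produce a \emph{fixed} exponent $\ell$ on the $T$-side. There, this worked because $\tau(q_n,\theta+c)\in\{-3,-1,1,3\}$ by Denjoy--Koksma for $R_\alpha$. Here the $T$-side exponent is a Birkhoff sum for the rotation by $\alpha+1/2$, which your notation $\tau(n,\cdot)$ hides:
\[
k_n=\sum_{i=0}^{q_n-1}\tau(\theta'+i(\alpha+1/2))=\sum_{i=0}^{q_n-1}(-1)^i\tau(\theta'+i\alpha),\qquad \theta'=\theta+c+\iota_S(x)/2 .
\]
Since $q_n(\alpha+1/2)\to 1/2$ in $\T$, $q_n$ is not a good denominator for $\alpha+1/2$. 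The points $\theta'+i(\alpha+1/2)$, $i<q_n$, form only half of a nearly $2q_n$-periodic orbit. So no Denjoy--Koksma bound applies, and these sums are unbounded in general (they can be of the order of the partial quotients of $\alpha$). Nothing ensures that some $\ell$ recurs, so the relations $\psi\circ S=T^{\ell}\circ\psi'$ and $\psi'\circ S=T^{-\ell}\circ\psi$ are not established. The squaring fallback is too vague to assess.

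The missing idea is that you do not need $k_n$ to stabilize.

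\textbf{The second coordinate relation.} Apply \Cref{prop:algebraic-factors} to the full Kronecker factor map into $\T\times\Z/2\Z$, using the group automorphism $\rho(\theta,\epsilon)=(\theta+\epsilon/2,\epsilon)$. Your $\rho(\theta,\epsilon)=2\theta$ does not send $(\alpha,1)$ to $\alpha+1/2$. Besides $\phi(\theta,x)=\theta+\iota_S(x)/2+c$, this gives $\iota_T\circ\psi_\theta=\iota_S+c'$. You need this anyway: injectivity of $\Upsilon$ alone only makes $\psi_\theta$ injective on each $\iota_S$-half, since the two halves land over different base points. So without it you do not know that $\psi_\theta\in\aut(X,\mathcal B_X,\mu)$.

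\textbf{The involution trick.} The relation above lets you express the $x$-dependent sign through $\psi_\theta(x)$. For odd $n$, with $k=\sum_{i=0}^{n-1}\tau(\theta+c+i(\alpha+1/2))$,
\[
\psi_{\theta+n\alpha}\circ S^{\tau(n,\theta)}=W_k\circ\psi_\theta,\qquad W_k=T^{k}\ \text{on } \{\iota_T=c'\},\quad W_k=T^{-k}\ \text{on } \{\iota_T\neq c'\}.
\]
Since $k$ is odd, $T^{\pm k}$ swaps the two $\iota_T$-halves, so $W_k^2=\id_X$ whatever the size of $k$. Hence whenever $\tau(n,\theta)=1$ you get $(\psi_\theta^{-1}\circ\psi_{\theta+n\alpha}\circ S)^2=\id_X$.

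\textbf{Passing to the limit.} This is a closed condition in $\aut(X,\mathcal B_X,\mu)$. Take odd $q_{n_j}$ with $\tau(q_{n_j},\theta)=1$ and $\psi_{\theta+q_{n_j}\alpha}\to\psi_\theta$. Then $\psi_\theta^{-1}\circ\psi_{\theta+q_{n_j}\alpha}\circ S\to S$, which gives $S^2=\id_X$ with no control on $k$ needed.
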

\begin{proof}
    Let $S$ and $T$ be as in the statement, and assume  that $R_{\alpha}\rtimes_{\tau} S$ is isomorphic to $R_{\beta}\rtimes_{\tau} T$, $\alpha,\beta\in\R\smallsetminus\Q$. Our goal is showing $\alpha=\pm\beta\mod 1$. 
    
    Since $R_{\alpha}\rtimes_{\tau} S$ is isomorphic to $R_{\beta}\rtimes_{\tau} T$, their  Koopman operators have the same sets of eigenvalues. We computed them in  \Cref{thm:eigenvalues}. We find that there are two cases:
\begin{enumerate}
    \item $\{e^{2\pi i k \alpha} : k\in\Z\}=\{e^{2\pi i k \beta} : k\in\Z\}$, or 
    \item $
\{e^{2\pi i k \alpha} : k\in\Z\}\cup \{-e^{2\pi i k \alpha} : k\in\Z\}=\{e^{2\pi i k \beta} : k\in\Z\}\cup \{-e^{2\pi i k \beta} : k\in\Z\}$
\end{enumerate}
Since $-1=e^{\pi i} =e^{2 \pi i 1/2}$, we can rewrite these cases as 
\begin{enumerate}
    \item $\{e^{2\pi i x} : x\in\alpha\Z\}=\{e^{2\pi i x} : x\in\beta\Z\}$, or 
    \item $\{e^{2\pi i x } : x\in\alpha\Z+1/2\Z\}=\{e^{2\pi i x} : x\in\beta\Z+1/2\Z\}$
\end{enumerate}
For $x,y\in\R$, we have $e^{2\pi i x}=e^{2\pi i y}$ if and only if $x,y$ differ by an integer. Thus we can rewrite our cases as 
\begin{enumerate}
    \item $\{x \mod 1 : x\in\alpha\Z\}=\{x\mod 1: x\in\beta\Z\}$, or 
    \item $\{x \mod 1 : x\in\alpha\Z+1/2\Z\}=\{x \mod 1 : x\in\beta\Z+1/2\Z\}$
\end{enumerate}
If we are in case (1), then the standard argument shows  that $\{x \mod 1 : x\in\alpha\Z\}=\{x\mod 1: x\in\beta\Z\}$ implies $\alpha=\pm \beta\mod 1$. That is, if $\alpha=n\beta\mod 1$ and $\beta=k\alpha\mod 1$, then $\alpha=nk\alpha\mod 1$, $n,k\in\Z$. But then in $\R$ we have the equality $\alpha=nk\alpha+r$, for some $r\in\Z$. Unless $nk=1$, we have $\alpha=r/(1-nk)$, contradicting that $\alpha$ is irrational.

The same argument can be applied in case (2). However, the conclusion one obtains is that either $\alpha=\pm\beta\mod 1$, or $\alpha=\pm\beta+1/2\mod 1$. The nontrivial part of this proof is discarding the possibility $\beta=\pm\alpha+1/2\mod 1$. Since $R_{\beta+1/2}\rtimes_{\tau} T$ is isomorphic to $R_{-\beta+1/2}\rtimes_{\tau} T$ (\Cref{prop:easy-automorphisms}), it is sufficient to discard  $\beta=\alpha+1/2$. We remark that this cannot be done by looking at the sets of eigenvalues (or Kronecker factors) because $\alpha\Z+1/2\Z=(\alpha+1/2)\Z+1/2\Z$. 

Let us assume for the rest of this argument that we are in case (2) above, and that $\beta$ is equal to $\alpha+1/2$ to reach a contradiction. Thus the standing assumption is that $R_{\alpha}\rtimes_{\tau} S$ is isomorphic to $R_{\alpha+1/2}\rtimes_{\tau} T$. We will repeat the arguments used in the proof of \Cref{prop:isomorphism-nice-first-coordinate} and in  \Cref{sec:isomorphisms}, and the contradiction obtained will be $S^2=\id_X$. Since we will repeat a number of arguments used before, we will be slightly less detailed.

We start by invoking  \Cref{thm:kronecker-factor}, which states that the Kronecker factors of $R_{\alpha}\rtimes_{\tau} S$ and $R_{\alpha+1/2}\rtimes_{\tau} T$ are $R_{\alpha}\times R_1$ and $R_{\alpha+1/2}\times R_1$. Recall that $R_1$ denotes addition of $1$ mod 2 on $\Z/2\Z$. See also  \Cref{eq:def-R-1-times-R-alpha}. Next, consider the map
\[
\rho\colon\T\times\Z/2\Z\to\T\times\Z/2\Z, \ (\theta,x)\mapsto (\theta+x/2,x)
\]
Recall that $\T\times\Z/2\Z$ is endowed with the group operation of componentwise addition. It is clear that $\rho$ is a group homomorphism. Since $\rho^2=\id_{\T\times\Z/2\Z}$, it follows that $\rho$ is a group isomorphism. 

On the other hand, it is clear that $\rho$ preserves the Haar measure $m_{\T}\times m_{\Z/2\Z}$. Since $\rho(\alpha,1)=(\alpha+1/2,1)$, it follows that $\rho$ defines an isomorphism between $R_{\alpha}\times R_1$ and $R_{\alpha+1/2}\times R_1$ as measure-preserving systems\footnote{This observation shows that the hypothesis $S^2\ne\id_X$ in the statement is necessary. Note that since $R_1$ is equal to its inverse, the skew product $R_{\alpha}\rtimes_{\tau} R_1$ equals the direct product $R_{\alpha}\times R_1$. Since we just noted that $R_{\alpha}\times R_1$ is isomorphic to $R_{\alpha+1/2}\times R_1$, it follows that  $R_{\alpha}\rtimes_{\tau} R_1$ is isomorphic to $R_{\alpha+1/2}\rtimes_{\tau} R_1$.}.

It is now convenient to fix factor maps from $R_{\alpha}\rtimes_{\tau} S$ and $R_{\alpha+1/2}\rtimes_{\tau} T$ to their Kronecker factors. Let $\iota_S\colon X\to\Z/2\Z$ and $\iota_T\colon X\to\Z/2\Z$ be factor maps from $S$ and $T$ to $R_1$, respectively (\Cref{prop:two-point-system}). We now define $\pi_S$ and $\pi_T$ by 
\begin{align*}
&\pi_S\colon \T\times X\to\T\times\Z/2\Z, \ (\theta,x)\mapsto (\theta,\iota_S(x))\\
&\pi_T\colon \T\times X\to\T\times\Z/2\Z, \ (\theta,x)\mapsto (\theta,\iota_T(x))
\end{align*}
Thus $\pi_S$ and $\pi_T$ define measure-theoretic factors from $R_{\alpha}\rtimes_{\tau} S$ and $R_{\alpha+1/2}\rtimes_{\tau} T$ to their Kronecker factors $R_{\alpha}\times R_1$ and $R_{\alpha+1/2}\times R_1$, respectively.

Since we are assuming that $R_{\alpha}\rtimes_{\tau} S$ is isomorphic to $R_{\alpha+1/2}\rtimes_{\tau} T$, let us fix such an isomorphism $\Upsilon$. Thus $\Upsilon$ is an automorphism of $(\T\times X,\mathcal B_{\T}\otimes \mathcal B_X,m_{\T}\times\mu)$, and $\Upsilon\circ (R_{\alpha}\rtimes_{\tau} S)=(R_{\alpha+1/2}\rtimes_{\tau} T)\circ\Upsilon$. We write $\Upsilon$ as 
\[
\Upsilon(\theta,x)= (\phi(\theta,x),\psi(\theta,x))
\]

\begin{figure}[h]
    \centering
    \begin{tikzcd}
\mathbb{T}\times X \arrow["R_{\alpha}\rtimes_{\tau} S", loop, distance=2em, in=55, out=125] \arrow[r, "\Upsilon"] \arrow[d, "\pi_{S}"'] & \mathbb{T}\times X \arrow["R_{\alpha+1/2}\rtimes_{\tau} T", loop, distance=2em, in=55, out=125] \arrow[d, "\pi_T"] \\
\mathbb{T}\times \mathbb{Z}/2\mathbb{Z} \arrow["R_{\alpha}\times R_1"', loop, distance=2em, in=305, out=235] \arrow[r, "f"', dashed]    & \mathbb{T}\times \mathbb{Z}/2\mathbb{Z} \arrow["R_{\alpha+1/2}\times R_1"', loop, distance=2em, in=305, out=235]  
\end{tikzcd}
    \caption{The systems $R_{\alpha}\rtimes_{\tau} S$ and $R_{\alpha+1/2}\rtimes_{\tau} T$, factor maps to their Kronecker factors, and the factor map $f$ between their Kronecker factors.}\label{fig:bad-kronecker-factors}
\end{figure}

We will now obtain a more explicit description of $\phi$. Observe that $\pi_T\circ \Upsilon$ is a factor map from $R_{\alpha}\rtimes_{\tau} S$ to the Kronecker system $R_{\alpha+1/2}\times R_1$. On the other hand, the Kronecker factor of $R_{\alpha}\rtimes_{\tau} S$ is $R_{\alpha}\times R_1$ via the factor map $\pi_S$. By item 2 in \Cref{prop:characterization-kronecker-factor}, it follows that there exists a factor map 
\[
f\colon \T\times \Z/2\Z\to\T\times\Z/2\Z
\]
from $R_{\alpha}\times R_1$ to $ R_{\alpha+1/2}\times R_1$ such that $\pi_T\circ\Upsilon=f\circ \pi_S$ (see \Cref{fig:bad-kronecker-factors}). By \Cref{prop:algebraic-factors}, there exists $(c,c')\in\T\times\Z/2\Z$ such that for almost all $(\theta,x)$ we have 
\[
f(\theta,x)=\rho(\theta,x)+(c,c')
\]
Evaluating this in the relation $\pi_T\circ\Upsilon=f\circ\pi_S$ we find 
\[(\phi(\theta,x),\iota_T(\psi(\theta,x)))=f(\theta,\iota_S(x))=(\theta+\iota_S(x)/2+c,\iota_S(x)+c')\]
Therefore 
\begin{align}
    \phi(\theta,x)=\theta+\iota_S(x)/2+c\\
    \iota_T(\psi(\theta,x))=\iota_S(x)+c'\label{eq:iota-S}
\end{align}
For $\theta\in\T$ let us denote by $\psi_{\theta}$ the map $X\to X$, $x\mapsto \psi(\theta,x)$. With minor\footnote{The only part that needs a nontrivial modification is the argument that $\psi_{\theta}$ preserves $\mu$ for almost all $\theta$. The method used in \Cref{prop:theta-to-psi-theta-is-measurable} shows the following for almost all $\theta$. For $Q\in\mathcal B_X$ fully contained in $\{x\in X : \iota_T(x)=0\}$ (respectively, $\{x\in X : \iota_T(x)=1\}$), we have $\mu(Q)=\mu(\psi_{\theta}^{-1}(Q))$. From this one obtains that $\psi_{\theta}$ preserves $\mu$.} modifications, the argument used in \Cref{prop:theta-to-psi-theta-is-measurable} shows that $\psi_{\theta}$ is an automorphism of $(X,\mathcal B_X,\mu)$ for almost all $\theta$, and that the almost everywhere defined map $\T\to\aut(X,\mathcal B_X,\mu)$ given by $\theta\to \psi_{\theta}$ is measurable.

Since $\Upsilon\circ (R_{\alpha}\rtimes_{\tau} S)=(R_{\alpha+1/2}\rtimes_{\tau} T)\circ\Upsilon$, we also have $\Upsilon\circ (R_{\alpha}\rtimes_{\tau} S)^n=(R_{\alpha+1/2}\rtimes_{\tau} T)^n\circ\Upsilon$ for all $n\in\Z$. Since we will need to simultaneously consider the rotations by $\alpha$ and $\alpha+1/2$, the notation $\tau(n,\theta)$ becomes ambiguous, so we explicitly write $\sum_{i=0}^{n-1}\tau(\theta+i\alpha)$ and $\sum_{i=0}^{n-1}\tau(\theta+i(\alpha+1/2))$ for $n\geq 1$. We have
\begin{align*}\Upsilon( (R_{\alpha}\rtimes_{\tau} S)^n(\theta,x))&=\Upsilon(\theta+n\alpha, S^{\sum_{i=0}^{n-1}\tau(\theta+i\alpha)}(x))\\
&=(\theta+n\alpha+\iota_S(S^{\sum_{i=0}^{n-1}\tau(\theta+i\alpha)}(x))/2+c,\psi_{\theta+n\alpha}( S^{\sum_{i=0}^{n-1}\tau(\theta+i\alpha)}(x)))
\end{align*}
On the other hand, 
\begin{align*}
(R_{\alpha+1/2}\rtimes_{\tau} T)^n(\Upsilon(\theta,x))&=(R_{\alpha+1/2}\rtimes_{\tau} T)^n(\theta+\iota_S(x)/2+c,\psi_{\theta}(x))\\
&=(\theta+\iota_S(x)/2+c+n(\alpha+1/2),T^{\sum_{i=0}^{n-1}\tau(\theta+c+i(\alpha+1/2)+\iota_S(x)/2
)}(\psi_{\theta}(x)))
\end{align*}
Looking at the second coordinates, we see that for almost every $(\theta,x)$ we have 
\begin{equation}\label{eq:satanic-flip-equation}\psi_{\theta+n\alpha}( S^{\sum_{i=0}^{n-1}\tau(\theta+i\alpha)}(x))=T^{\sum_{i=0}^{n-1}\tau(\theta+c+i(\alpha+1/2)+\iota_S(x)/2
)}(\psi_{\theta}(x))
\end{equation}
Since $\iota_S(x)=\iota_T(\psi_\theta(x))-c'$ (\Cref{eq:iota-S}), we can equivalently write 

\begin{equation}\label{eq:satanic-flip-equation-enhanced}
\psi_{\theta+n\alpha}( S^{\sum_{i=0}^{n-1}\tau(\theta+i\alpha)}(x))=T^{\sum_{i=0}^{n-1}\tau(\theta+c+i(\alpha+1/2)
+(\iota_T(\psi_\theta(x))-c')/2)}(\psi_{\theta}(x))
\end{equation}
We would like to derive a relation between automorphisms of $(X,\mathcal B_X,\mu)$. In other words, remove the variable $x$ and instead have an equation between functions of $x$. A technical difficulty is that we cannot do this directly, because the term $(\iota_T(\psi_\theta(x))-c')/2$ in the exponent of $T$ depends on $x$. We now solve this difficulty by means of an elementary trick, which works thanks to the symmetry $\tau\circ R_{1/2}=-\tau$. 

We first describe this trick in general. Namely, the following is an abstract construction which works for an arbitrary automorphism which factors onto the two point system. Let $Z$ be an arbitrary automorphism of $(X,\mathcal B_X,\mu)$, and suppose that $Z$ factors onto $R_1$ via $\iota_Z\colon X\to\Z/2\Z$. Thus $Z$ exchanges two halves of the space $\{x : \iota_Z(x)=0\}$ and $\{x : \iota_Z(x)=1\}$, both of which must have measure $1/2$ because $\iota_Z$ is a factor to $R_1$. We then define $W$ by 
\[
W(x)=\begin{cases}
  Z(x), \ \iota_Z(x)=0\\
  Z^{-1}(x), \ \iota_Z(x)=1\\  
\end{cases}
\]
It is clear that then $W$ preserves $\mu$, and it is an automorphism of $(X,\mathcal B_X,\mu)$. Also note that $W\circ W=W^2=\id_X$ regardless of $Z$. 

For an odd number $k\in\Z$, we apply the previous construction to $T^k$, and define 
\[
W_k(x)=\begin{cases}
    T^{k}(x), \ \iota_T(x)-c'=0\\
    T^{-k}(x), \ \iota_T(x)-c'=1
\end{cases}
\]
We remark that because $k$ is odd, $T^k$ factors onto $R_1$ via $\iota_T$. It is clear that then $\iota_T-c'$ is also a factor map from $T^k$ to $R_1$. Thus we have defined a collection $\{W_k: k\in\Z \text{ odd}\}$ of automorphisms of $(X,\mathcal B_X,\mu)$. We remark that $W_k^2=\id_X$ for all $k$.

We claim that for $\theta\in\T$ and $n\geq 1$ odd, the rightmost term in \Cref{eq:satanic-flip-equation-enhanced} simplifies to
\begin{equation}\label{eq:satanic-flip-equation-enhanced-2}T^{\sum_{i=0}^{n-1}\tau(\theta+c+i(\alpha+1/2)
+(\iota_T(\psi_\theta(x))-c')/2)}(\psi_{\theta}(x))=W_{\sum_{i=0}^{n-1}\tau(\theta+c+i(\alpha+1/2))}(\psi_{\theta}(x))
\end{equation}
Indeed, fix $k=\sum_{i=0}^{n-1}\tau(\theta+c+i(\alpha+1/2))$. A first observation is that $k$ is an odd number, as it is a sum of an odd number of $1$'s and $-1$'s. Thus $W_k$ is well-defined. Thanks to the symmetry $\tau\circ R_{1/2}=-\tau$, we have 
\[
\sum_{i=0}^{n-1}\tau(\theta+c+i(\alpha+1/2)+(\iota_T(\psi_\theta(x))-c')/2
)=\begin{cases}
    k, \ \ \iota_T(\psi_\theta(x))-c'=0\\
    -k, \ \ \iota_T(\psi_\theta(x))-c'=1
\end{cases}
\]
It follows that the leftmost term in \Cref{eq:satanic-flip-equation-enhanced-2} is equal to $W_k(\psi_{\theta}(x))$. This proves \Cref{eq:satanic-flip-equation-enhanced-2}.

For almost all $\theta\in\T$ and $n\geq 1$ odd, we have from \Cref{eq:satanic-flip-equation-enhanced,eq:satanic-flip-equation-enhanced-2} that 
\[
\psi_{\theta+n\alpha}\circ S^{\sum_{i=0}^{n-1}\tau(\theta+i\alpha)} = W_{\sum_{i=0}^{n-1}\tau(\theta+c+i(\alpha+1/2))}\circ\psi_{\theta}
\]
Thus we have removed the variable $x$, as desired. 

If $n$ is odd and $\theta$ satisfies $\sum_{i=0}^{n-1}\tau(\theta+i\alpha)=1$, the previous equation simplifies to 
\[
\psi_{\theta+n\alpha}\circ S= W_{\sum_{i=0}^{n-1}\tau(\theta+c+i(\alpha+1/2))}\circ\psi_{\theta}
\]
We obtain
\[
\psi_{\theta}^{-1}\circ \psi_{\theta+n\alpha}\circ S=\psi_{\theta}^{-1} \circ W_{\sum_{i=0}^{n-1}\tau(\theta+c+i(\alpha+1/2))}\circ\psi_{\theta}
\]
Recall that $(W_k)^2=\id_X$ for all $k\in\Z$ odd. Therefore, the same is true for any conjugate of the form $\psi^{-1}\circ W_k\circ\psi$, for $\psi\in\aut(X,\mathcal B_X,\mu)$. Thus we have proved that for almost all $\theta$ 
\begin{equation}\label{eq:satanic-flip-equation-enhanced-3}
(\psi_{\theta}^{-1}\circ\psi_{\theta+n\alpha}\circ S)^2=\id_X
\end{equation}
provided $n$ is odd and $\sum_{i=0}^{n-1}\tau(\theta+i\alpha)=1$.

We will now do a fixed point argument to obtain $S^2=\id_X$. Let $(q_n)_{n\in\N}$ be a sequence of odd numbers as in \Cref{prop:infinitely-many-1s}. Replacing it by a subsequence if needed, we can assume that for almost all $\theta$ we have 
\[
\lim_{n\to\infty}\psi_{\theta+q_{n}\alpha}=\psi_{\theta}
\]
in $\aut(X,\mathcal B_X,\mu)$. This is possible by \Cref{prop:subsequences}, the measurability of $\theta\mapsto\psi_{\theta}$, and the fact that $(q_n\alpha)_{n\in\N}$ converges to $0$ in $\T$. 

The following argument is valid for almost all $\theta$. By \Cref{prop:infinitely-many-1s}, we can pick a subsequence $(q_{n_k})_{k\in\N}$ such that $\sum_{i=0}^{q_{n_k}-1}\tau(\theta+i\alpha)=1$ for all $k$. Then by \Cref{eq:satanic-flip-equation-enhanced-3} we have 
\[
(\psi_{\theta}^{-1}\circ\psi_{\theta+q_{n_k}\alpha} \circ S)^2=\id_X, \ \ k\in\N
\]
Since $\psi_{\theta+q_{n_k}\alpha}\to\psi_{\theta}$ as  $k\to\infty$, and composition and inversion are continuous operations in $\aut(X,\mathcal B_X,\mu)$, we have 
\begin{align*}
\id_X=\lim_{k\to\infty}(\psi_{\theta}^{-1}\circ\psi_{\theta+q_{n_k}\alpha} \circ S)^2&=\lim_{k\to\infty} \psi_{\theta}^{-1}\circ \psi_{\theta+q_{n_k}\alpha} \circ S\circ \psi_{\theta}^{-1}\circ \psi_{\theta+q_{n_k}\alpha} \circ S\\
&=\psi_{\theta}^{-1}\circ\psi_{\theta}\circ S\circ\psi_{\theta}^{-1}\circ\psi_{\theta}\circ S\\
&=S^2
\end{align*}
Thus we have proved the desired contradiction $S^2=\id_X$. 
\end{proof}
\section{Proof of the isomorphism theorem}\label{sec:proof-of-isomorphism-theorem}
We are finally ready to prove \Cref{thm:isomorphisms}. We repeat the statement for convenience of the reader.
\begin{theorem}[\Cref{thm:isomorphisms}]\label{thm:isomorphisms-section}
    Let $\alpha,\beta\in \R\smallsetminus \Q$, and let $(X,\mathcal B_X,\mu,T)$ and $(Y,\mathcal B_Y,\nu,S)$ be invertible measure-preserving systems. Assume that both transformations are ergodic, and that the measures are nonatomic. Then $R_{\alpha}\rtimes_\tau S$ is isomorphic to $R_{\beta}\rtimes_\tau T$ if and only if $S$ is flip isomorphic to $T$ and $\beta=\pm \alpha\mod 1$. 
\end{theorem}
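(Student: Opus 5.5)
The easy direction is \Cref{prop:isomorphism-theorem-easy-direction}, so only the forward implication needs work. Assume $R_{\alpha}\rtimes_\tau S$ is isomorphic to $R_{\beta}\rtimes_\tau T$. The first step is to put both systems on the same probability space. Since $\nu$ and $\mu$ are nonatomic and the underlying spaces are standard Borel, there is a measure-space isomorphism $\sigma\colon (Y,\mathcal B_Y,\nu)\to(X,\mathcal B_X,\mu)$. Replacing $S$ by $\sigma\circ S\circ\sigma^{-1}$ changes neither the isomorphism class of $S$ nor that of $R_{\alpha}\rtimes_\tau S$. We may therefore assume $S$ and $T$ both act on $(X,\mathcal B_X,\mu)$, as required in \Cref{thm:isomorphisms-long-statement} and \Cref{thm:different-angles}.

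The second step is to check the non-periodicity hypotheses. We need $S^2\ne\id_X$ for \Cref{thm:different-angles}, and $S^n\ne\id_X$ for all $n\ne0$ for \Cref{thm:isomorphisms-long-statement}. Both follow from ergodicity together with nonatomicity. Suppose $S^n=\id_X$ for some $n\ne 0$. Then every orbit is finite, and an ergodic measure-preserving transformation with all orbits of size at most $|n|$ is supported on a single orbit up to a null set. To see this, choose a measurable selector or use the fact that $\mu$ is then concentrated on the atoms of the invariant partition. This would make $\mu$ atomic, which is a contradiction.

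Now apply \Cref{thm:different-angles} to get $\beta=\pm\alpha\mod 1$. If $\beta=-\alpha$, then by \Cref{prop:easy-automorphisms} we have $R_{\beta}\rtimes_\tau T\cong R_{\alpha}\rtimes_\tau T$. Composing isomorphisms, we may assume $\beta=\alpha$. Since $S$ is ergodic, \Cref{prop:isomorphism-nice-first-coordinate} shows that any isomorphism $\Upsilon$ from $R_{\alpha}\rtimes_\tau S$ to $R_{\alpha}\rtimes_\tau T$ has the form $(\theta,x)\mapsto(\theta+c,\psi(\theta,x))$. \Cref{thm:isomorphisms-long-statement} then gives $S\cong T^{\ell}$ for some $\ell\in\{-1,1\}$, so $S$ and $T$ are flip isomorphic.

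The only step that is not a direct citation is the aperiodicity argument. Even this is routine: if $S^n=\id$, the sets $\{x: \text{orbit size}=k\}$ are invariant, so by ergodicity one of them has full measure. On it, take a fundamental domain $D$ with measure $1/k$. Any invariant subset of the form $\bigcup_{j} S^j(E)$ with $E\subset D$ must have measure $0$ or $1$, so every measurable subset of $D$ has measure $0$ or $1/k$. In a standard space this forces $D$ to be an atom, contradicting nonatomicity.
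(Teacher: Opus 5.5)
Your proof is correct and follows the paper's argument step for step: reduce to a common nonatomic standard space, note that ergodicity plus nonatomicity rules out $S^n=\id_X$ for $n\neq 0$, apply \Cref{thm:different-angles} to get $\beta=\pm\alpha \bmod 1$, use \Cref{prop:easy-automorphisms} to reduce to $\beta=\alpha$, and conclude with \Cref{prop:isomorphism-nice-first-coordinate} and \Cref{thm:isomorphisms-long-statement}. The differences are cosmetic: you transport $S$ to $(X,\mathcal B_X,\mu)$ at the outset rather than treating the same-space case first, and you spell out the fundamental-domain argument for aperiodicity, which the paper simply asserts.
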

\begin{proof}
We proved in \Cref{prop:isomorphism-theorem-easy-direction} that if $\beta=\pm \alpha\mod 1$ and $S$ is flip isomorphic to $T$, then $R_{\alpha}\rtimes_\tau S$ is isomorphic to $R_{\beta}\rtimes_\tau T$. 

Conversely, suppose that $R_{\alpha}\rtimes_\tau S$ is isomorphic to $R_{\beta}\rtimes_\tau T$. We must show that $\beta=\pm\alpha\mod 1$, and that $S$ is flip isomorphic to $T$. Let us first show this in the special case in which $S$ and $T$ are automorphisms of the same probability space $(X,\mathcal B_X,\mu)$. 
Since $\mu$ is assumed to be nonatomic and $S$ is ergodic, we must have $S^n\ne\id_X$ for all $n\ne 0$. Then it follows from \Cref{thm:different-angles} that $\beta=\pm\alpha\mod 1$. Thus  $R_{\alpha}\rtimes_\tau S$ is isomorphic to $R_{\alpha}\rtimes_\tau T$ or $R_{-\alpha}\rtimes_\tau T$. But these two are isomorphic to each other (\Cref{prop:easy-automorphisms}). Therefore $R_{\alpha}\rtimes_\tau S$ is isomorphic to $R_{\alpha}\rtimes_\tau T$. Finally, it follows from \Cref{prop:isomorphism-nice-first-coordinate} and \Cref{thm:isomorphisms-long-statement} that $S$ and $T$ are flip isomorphic. 

The general case follows from the previous case, as all nonatomic standard Borel probability spaces are isomorphic to each other \cite[Theorem 17.41]{kechris_classical_1995}. By this we mean the following. Let $(X,\mathcal B_X,\mu,T)$ and $(Y,\mathcal B_Y,\nu,S)$ be as in the statement.  We claim that $S$ has an isomorphic copy $S'$ that acts on $(X,\mathcal B_X,\mu)$. That is, if $\pi\colon Y\to X$ is an isomorphism of the probability spaces $(Y,\mathcal B_Y,\nu)$ and $(X,\mathcal B_X,\mu)$, then we can define an automorphism $S'$ of $(X,\mathcal B_X,\mu)$ by $S'=\pi\circ S\circ\pi^{-1}$. Then $(Y,\mathcal B_Y,\nu,S)$ and $(X,\mathcal B_X,\mu,S')$ are isomorphic as measure-preserving systems, via $\pi$. By \Cref{prop:isomorphism-theorem-easy-direction}, we have that $R_{\alpha}\rtimes_{\tau} S$ and  $R_{\alpha}\rtimes_{\tau} S'$ are isomorphic. As a consequence, if we assume that $R_{\alpha}\rtimes_{\tau} S$ and $R_{\beta}\rtimes_{\tau} T$ are isomorphic, it then follows that $R_{\alpha}\rtimes_{\tau} S'$ and $R_{\beta}\rtimes_{\tau} T$ are isomorphic. By the argument in the previous paragraph we have that $\beta=\pm\alpha\mod 1$, and $S'$ is flip isomorphic to $T$. But since $S$ is isomorphic to $S'$, it follows that $S$ is flip isomorphic to $T$.
\end{proof}
In the case where we take two skew products over the same rotation, we have the following result characterizing the isomorphism maps. 
\begin{theorem}\label{thm:isomorphisms-characterization}
Let $\alpha\in \R\smallsetminus \Q$. Let $(Y,\mathcal B_Y,\nu,S)$ and $(X,\mathcal B_X,\mu,T)$ be ergodic invertible measure-preserving systems with  nonatomic measures. Furthermore, suppose either that $\alpha$ has bounded type, or that $S$ is not rigid. If $R_{\alpha}\rtimes_\tau S$ and $R_{\alpha}\rtimes_\tau T$ are isomorphic, then every possible isomorphism from $R_{\alpha}\rtimes_\tau S$ to $R_{\alpha}\rtimes_\tau T$ can be written as
\[
(\theta,x)\mapsto (\theta+c,T^{f(\theta)}(\upsilon(x)))
\] 
Here $\upsilon$ is an isomorphism from $S$ to $T^{\ell}$ for some  $\ell\in\{-1,1\}$, $c\in\T$, and $f\colon\T\to\Z$ is a measurable solution of 
\[
\zeta(\theta)=f(\theta+\alpha)-f(\theta)
\] 
for $\zeta(\theta)=\tau(\theta+c)-\ell\tau(\theta)$.
\end{theorem}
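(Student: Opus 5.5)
The plan is to reduce to the setting of \Cref{thm:isomorphisms-long-statement}, where $S$ and $T$ act on the same probability space, and then transport the conclusion back. Fix an isomorphism $\pi\colon Y\to X$ of the nonatomic standard probability spaces, which exists by \cite[Theorem 17.41]{kechris_classical_1995}. Set $S'=\pi\circ S\circ\pi^{-1}$, an automorphism of $(X,\mathcal B_X,\mu)$. It is ergodic, and it is rigid exactly when $S$ is. Since $\mu$ is nonatomic and $S'$ is ergodic, we have $S'^n\ne\id_X$ for all $n\ne 0$.

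Let $\Upsilon$ be an arbitrary isomorphism from $R_{\alpha}\rtimes_\tau S$ to $R_{\alpha}\rtimes_\tau T$. Then $\Upsilon'=\Upsilon\circ(\id_{\T}\times\pi^{-1})$ is an isomorphism from $R_{\alpha}\rtimes_\tau S'$ to $R_{\alpha}\rtimes_\tau T$. Because $S'$ is ergodic, \Cref{prop:isomorphism-nice-first-coordinate} shows that $\Upsilon'(\theta,x)=(\theta+c,\psi(\theta,x))$ almost everywhere, for some $c\in\T$. All hypotheses of \Cref{thm:isomorphisms-long-statement} now hold. It produces $\ell\in\{-1,1\}$ with $\psi_\theta\circ S'=T^{\ell}\circ\psi_\theta$ for almost every $\theta$, and it defines $\zeta(\theta)=\tau(\theta+c)-\ell\tau(\theta)$.

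The key step is showing that $\zeta$ is a coboundary, and we split into the two hypotheses. If $S$ is not rigid, then $S'$ is not rigid, and \Cref{prop:rigid-isomorphisms} gives that $\zeta$ is a coboundary. If $\alpha$ has bounded type, \Cref{prop:zeta-has-no-nonzero-essential-values} gives $\mathcal E(\zeta)=\{0\}$. By \Cref{prop:bounded-type-coboundary-essential-values}, a non-coboundary would instead have $\mathcal E(\zeta)=2\Z$, so $\zeta$ must be a coboundary. In both cases \Cref{prop:rigid-isomorphisms-second} yields a measurable $f$ with $\zeta=f\circ R_\alpha-f$ and an automorphism $\upsilon'$ with $\upsilon'\circ S'=T^\ell\circ\upsilon'$, such that $\psi_\theta=T^{f(\theta)}\circ\upsilon'$ for almost every $\theta$.

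Finally we undo the conjugation. We have $\Upsilon=\Upsilon'\circ(\id_{\T}\times\pi)$, so $\Upsilon(\theta,y)=(\theta+c,T^{f(\theta)}(\upsilon(y)))$ with $\upsilon=\upsilon'\circ\pi$. This $\upsilon$ satisfies $\upsilon\circ S=\upsilon'\circ S'\circ\pi=T^\ell\circ\upsilon$, so it is an isomorphism from $S$ to $T^\ell$. The only care needed is to track the null sets through the conjugation. There is no substantive obstacle, since the real work is already contained in \Cref{thm:isomorphisms-long-statement} together with the bounded-type input, \Cref{thm:zhang}, which enters through \Cref{prop:bounded-type-coboundary-essential-values}.
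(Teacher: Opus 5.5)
Your proposal is correct and follows the paper's proof: you transfer $S$ to the space of $T$ as in the proof of \Cref{thm:isomorphisms-section}, use \Cref{prop:isomorphism-nice-first-coordinate} to put the isomorphism into the form required by \Cref{thm:isomorphisms-long-statement}, and get the coboundary conclusion from \Cref{prop:rigid-isomorphisms} or from \Cref{prop:zeta-has-no-nonzero-essential-values} combined with \Cref{prop:bounded-type-coboundary-essential-values}, before undoing the conjugation. You also spell out details the paper leaves implicit, namely $\upsilon=\upsilon'\circ\pi$ and the fact that rigidity is preserved under isomorphism.
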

\begin{proof}
We proved in \Cref{prop:easy-isomorphisms-cohomology-section} that the maps in the statement are isomorphisms. We prove the remaining direction.  The same argument used in \Cref{thm:isomorphisms-section} shows that it is sufficient to prove the statement in the special case in which $S$ and $T$ act on the same probability space. Then the desired conclusion follows from \Cref{prop:isomorphism-nice-first-coordinate} and \Cref{thm:isomorphisms-long-statement}.
\end{proof}
For a measure-preserving system  $(X,\mathcal B_X,\mu,T)$, the centralizer $C(T)$ is defined as
\[C(T)=\{S\in\aut(X,\mathcal B_X,\mu) : S\circ T=T\circ S\}\]
Observe that taking $S=T$, the previous result reduces to a description of the centralizer group of $R_{\alpha}\rtimes_{\tau} T$. In the case of a non-rigid fiber, a similar description of the centralizer group is known for general ergodic Rokhlin extensions  \cite[Proposition 5]{lemanczyk_ergodicity_2001} for which the conclusion of \Cref{prop:isomorphism-nice-first-coordinate} holds.We also note that analogous results are known for cylinder flows, also called group extensions \cite{conze_remarks_2013}. 

We finish this section with the following question, which we are unable to solve. 
\begin{question}
For an irrational $\alpha$, for which values of $c$ and $\ell$ is $\zeta$ a coboundary?
\end{question}
It turns out that this depends on $\alpha$. We observed in \Cref{prop:sufficient-conditions-zeta-coboundary} that this class always includes 
\begin{align*}
&\ell=1, \ \ c=k\alpha, \ k \in\Z\\ 
&\ell=-1, \ \ c=k\alpha+1/2, \ k\in\Z
\end{align*}
For $\alpha$ of bounded type, it follows from  Zhang's result on multidimensional cocycles  (\Cref{thm:zhang}) that this is a characterization. It is pointed out in \cite[p 245]{zhang_ergodicity_2011} that for $\alpha$ not of bounded type, the class of possible $c$'s is necessarily larger, and indeed uncountable, by a result of Merrill \cite[Theorem 2.5]{merrill_cohomology_1985}. However, we do not know if such result provides an equivalence.
\section{Irrationals with arbitrarily slow walks}\label{sec:ranges}
In this section we prove ingredients that will be used to prove \Cref{thm:zero-slow-entropy} in the next section. Given $\theta\in\T$ we will consider the number of places visited by the ``walker'' $i\to\tau(i,\theta)$ in the first $n$ steps:
\begin{equation}\label{eq:r_n-theta}
r_n(\theta)=|\{\tau(i,\theta) : i=0,\dots,n-1\}|
\end{equation}
We will be interested in the maximal value of $r_{n}(\theta)$ with $\theta$ ranging in $\T$, and the analogous quantity but where we allow the removal of a set with small measure $\delta\in (0,1)$. That is:
\begin{equation}\label{eq:r_n}
r_n=\sup_{\theta\in\T} r_n(\theta) \text{ \ \ \ \ and \ \ \ \  }r_{n}^{\delta} = \min\{k\in \N : m_{\T}(\{\theta\in \T : r_n(\theta)\leq k\})>1-\delta\}
\end{equation}
In \Cref{sec:frequency-trick} we will observe a natural Diophantine approximation condition on an abstract irrational $\alpha$, which provides times $n$ for which $r_n^\delta$ is small in relation to $n$. This idea, which we call frequency trick, is  briefly explained in \Cref{fig:frequency-trick-1}, and the main statement is \Cref{prop:tau-qn-is-mostly-zero-for-even-iterates-iterated}. 

In \Cref{sec:evil-irrational} we will see how to \textit{define} an irrational so that we can take advantage of the same trick. It follows from this construction that the sequence  $(r_n^\delta)_{n\in\N}$ can grow slower than any prescribed rate along subsequences (\Cref{thm:r-n-delta-upper-bound}), choosing $\alpha$ suitably. 
\subsection{The frequency trick}\label{sec:frequency-trick}
\begin{figure}
\begin{center}\begin{tikzpicture}
  \draw (0,0) circle (2cm);
  \foreach \i in {30,90,...,330} {
    \draw[thick] (\i:1.9cm) -- (\i:2.1cm);
    \draw[thick] (\i+4:1.9cm) -- (\i+4:2.1cm);
  }
  \fill (-10:2cm) circle (2pt);
  \fill (-5:2cm) circle (1.8pt);
  \fill (0:2cm) circle (1.6pt);
  \fill (5:2cm) circle (1.4pt);
  \fill (10:2cm) circle (1.2pt);
  
  \draw[->, thick, shorten >=4pt, shorten <=4pt] (-10:2.25cm) .. controls (3.8, -0.7) and (3.8, 0.3) .. (-5:2.25cm);
\end{tikzpicture}\end{center}
\begin{center}
\begin{tikzpicture}[scale=0.25]
  \draw (-1,0) -- (31,0);
  
  \foreach \x/\y in {
    0/0, 1/1, 2/0, 3/1, 4/2, 5/1, 
    6/0, 7/1, 8/0, 9/1, 10/2, 11/1,
    12/0, 13/1, 14/0, 15/1, 16/2, 17/1,
    18/0, 19/1, 20/0, 21/1, 22/2, 23/1,
    24/0, 25/1, 26/0, 27/1, 28/2, 29/1
  } {
    \fill (\x,\y) circle (5pt);
  }
\end{tikzpicture}
\end{center}
\caption{Illustration of the frequency trick for $q=6$. The partition $\vee_{i=0}^{q-1}R_{\alpha}^{-i}(\xi)$ has 12 atoms. 6 of them have length approximately 1/6, and the rest are very short. The map $\theta\to\tau(q,\theta)$ equals zero on the big atoms.  In the circle we see an element $\theta\in\T$ and some of its iterates by $R_{\alpha}^q$. Below the circle we see a representation of $i\to\tau(i,\theta)$ for $0\leq i<5q$ for the same $\theta$. The key is that $(\tau(i,\theta))_{i=0}^{q-1}$ is fully determined by the atom of $\vee_{i=0}^{q-1}R_{\alpha}^{-i}(\xi)$ containing $\theta$. Since $\theta,R_{\alpha}^q(\theta),R_{\alpha}^{2q}(\theta),R_{\alpha}^{3q}(\theta),R_{\alpha}^{4q}(\theta)$ are all in the same atom, it follows that $(\tau(i,\theta))_{i=0}^{5q-1}$ is just  $(\tau(i,\theta))_{i=0}^{q-1}$ repeated $5$ times. Thus in this example we have $r_{5q}(\theta)=3$. 
}\label{fig:frequency-trick-1}
\end{figure}
\begin{proposition}\label{prop:rational-approximations}
Suppose that 
\[|\alpha-p/q|<1/(sq^2)\]
where $p,q\in\N$ are coprime and $s\geq 1$. Let $\theta\in\T$. Then every element in $\{R_{\alpha}^i(\theta) : i=0,\dots,q-1\}$ is at distance smaller than $1/(sq)$ from some element in $\{\theta+i/q :  i=0,\dots,q-1\}$. If $s\geq 2$, then such an element is necessarily unique. The same holds for the backward orbit $\{R_{\alpha}^{-i}(\theta) : i=0,\dots,q-1\}$. 
\end{proposition}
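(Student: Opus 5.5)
Write $\alpha = p/q + \eta$ with $|\eta| < 1/(sq^2)$. For $0\le i\le q-1$ we have
\[
R_\alpha^i(\theta)=\theta+ip/q+i\eta \mod 1 .
\]
We compare each iterate with the point $\theta+ip/q$ of the arithmetic progression $\{\theta+j/q : j=0,\dots,q-1\}$.

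\emph{Existence.} Since $\gcd(p,q)=1$, the residues $ip \bmod q$ for $i=0,\dots,q-1$ run through all of $\{0,\dots,q-1\}$. So $\theta+ip/q=\theta+j_i/q \bmod 1$ with $j_i\equiv ip \pmod q$, and this is an element of the progression. The displacement is
\[
d_{\T}\bigl(R_\alpha^i(\theta),\theta+j_i/q\bigr)\le \|i\eta\|\le i|\eta|<(q-1)/(sq^2)<1/(sq).
\]
This gives the existence claim.

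\emph{Uniqueness for $s\ge 2$.} Distinct points of the progression $\{\theta+j/q\}$ are at mutual $d_{\T}$-distance at least $1/q$. Suppose a point were within distance $<1/(sq)\le 1/(2q)$ of two of them. The triangle inequality would then put those two at distance $<1/q$, which is impossible. Hence the nearby element is unique.

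\emph{Backward orbit.} Here $R_\alpha^{-i}(\theta)=\theta-ip/q-i\eta$. The residues $-ip \bmod q$ also cover everything, and the error is again $\|i\eta\|<1/(sq)$, so the same argument applies verbatim. Equivalently, one can apply the forward case to $-\alpha$, approximated by $(q-p)/q$, which has the same error.

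None of these steps is a real obstacle. The only point needing care is stating the bound $\|i\eta\|\le i|\eta|$ with strict inequality, which uses $i\le q-1<q$.
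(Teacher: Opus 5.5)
Your proof is correct and follows the paper's argument: multiply the approximation by $i$, use $\gcd(p,q)=1$ to identify $ip/q$ with a grid point $j/q$ modulo $1$, and get uniqueness from the $1/q$-spacing of the grid via the triangle inequality; the only difference is that the paper first reduces to $\theta=0$. One cosmetic fix: the intermediate strict inequality $i|\eta|<(q-1)/(sq^2)$ fails for $i=0$ when $q=1$, so write $i|\eta|\le i/(sq^2)<1/(sq)$ instead, as the paper does.
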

\begin{proof}
    It is sufficient to prove the claim for $\theta=0$. This is because $R_{\theta}$ is an isometry which commutes with $R_{\alpha}$. We multiply the relation $|\alpha-p/q|<1/(sq^2)$ by $i\in\{0,\dots,q-1\}$ to find 
	\[
	|i\alpha-ip/q|\leq i/(sq^2)<1/(sq)	
	\]
	Since $p$ and $q$ are coprime, we can find $j\in\{0,\dots,q-1\}$ so that $ip\equiv j \mod q$. Then in $\T$ we have $ip/q=j/q\mod 1$. Therefore the distance $d_{\T}$ between $R_{\alpha}^{i}(0)$ and $j/q$ is strictly smaller than $1/(sq)$. 
    
    Assuming $s\geq 2$ let us verify the uniqueness of $j$. Let $j'\in\{0,\dots,q-1\}$ so that $d_\T(R_{\alpha}^{i}(0),j'/q)<1/(sq)$. Then by triangle inequality $d_\T(j/q,j'/q)<1/(sq)+1/(sq)\leq 1/q$. Since any two different elements in $\{i/q :  i=0,\dots,q-1\}$ are at distance $d_\T$ at least $1/q$, it follows that $j=j'$. 

    This proves our claim regarding $\{R_{\alpha}^i(0) : i=0,\dots,q-1\}$ and $\{i/q :  i=0,\dots,q-1\}$. The argument for the backwards orbit is similar.
\end{proof}
We will consider the partition $\xi=\{[0,1/2),[1/2,1)\}$.  
\begin{proposition}\label{prop:tau-constant}
    Let $n\geq 1$. Given $\theta\in\T$, the sequence $(\tau(R_{\alpha}^i(\theta)))_{i=0}^{n-1}$ of $1$'s and $-1$'s is completely determined by the atom of $\vee_{i=0}^{n-1}R_{\alpha}^{-i}(\xi)$ containing $\theta$. Furthermore, the map 
    \[\theta\to\tau(n,\theta)\]
    is constant on atoms of  $\vee_{i=0}^{n-1}R_{\alpha}^{-i}(\xi)$.
\end{proposition}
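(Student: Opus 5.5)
This is essentially an unwinding of the definition of the refined partition. Write $\xi=\{P_1,P_{-1}\}$ with $P_1=[0,1/2)$ and $P_{-1}=[1/2,1)$, so that $\tau=\mathbf 1_{P_1}-\mathbf 1_{P_{-1}}$. Equivalently, $\tau(\eta)=j$ if and only if $\eta\in P_j$.

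An atom of $\vee_{i=0}^{n-1}R_{\alpha}^{-i}(\xi)$ is a nonempty set of the form
\[
A=\bigcap_{i=0}^{n-1}R_{\alpha}^{-i}(P_{j_i}),\qquad (j_0,\dots,j_{n-1})\in\{-1,1\}^n .
\]
If $\theta\in A$, then for each $0\le i\le n-1$ we have $R_{\alpha}^{i}(\theta)\in P_{j_i}$, and hence $\tau(R_{\alpha}^{i}(\theta))=j_i$.

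The atom is uniquely indexed by its word, because the sets $P_1$ and $P_{-1}$ are disjoint. Two different words therefore give disjoint intersections. It follows that every $\theta$ in a given atom $A$ has the same sequence $(\tau(R_{\alpha}^i(\theta)))_{i=0}^{n-1}=(j_0,\dots,j_{n-1})$. This proves the first assertion.

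For the second assertion, recall from \Cref{sec:cocycles} that for $n\ge1$ we have $\tau(n,\theta)=\sum_{i=0}^{n-1}\tau(R_{\alpha}^i(\theta))$. This is a function of the sequence just shown to be constant on atoms; concretely, it equals $\sum_{i=0}^{n-1} j_i$ on the atom $A$. Hence $\theta\mapsto\tau(n,\theta)$ is constant on atoms of $\vee_{i=0}^{n-1}R_{\alpha}^{-i}(\xi)$.

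The argument has no real obstacle. The only point needing care is to match conventions: the ergodic sum for $n\ge1$ runs over exactly the indices $i=0,\dots,n-1$ that define the join. We also use that $\tau$ is defined everywhere, not merely almost everywhere, so the statement holds pointwise with no null sets removed.
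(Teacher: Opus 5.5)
Your proof is correct and follows the same route as the paper: $\tau$ is constant on the atoms of $\xi$, so membership of $\theta$ in $\bigcap_{i=0}^{n-1}R_{\alpha}^{-i}(P_{j_i})$ forces $\tau(R_{\alpha}^i(\theta))=j_i$ for $0\le i\le n-1$. The second claim then follows from $\tau(n,\theta)=\sum_{i=0}^{n-1}\tau(R_{\alpha}^i(\theta))$, and your labeling of atoms by words just spells out what the paper summarizes as ``follows from the definitions.''
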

\begin{proof}
    The first part of the statement follows from the definitions and the fact that $\tau$ is constant on the atoms of $\xi$. The second part follows from the first, and the identity $\tau(n,\theta)=\sum_{i=0}^{n-1}\tau(R_{\alpha}^i(\theta))$.  
\end{proof}
\begin{proposition}\label{prop:tau-is-mostly-zero-for-even-iterates}
Suppose that 
\[|\alpha-p/q|<1/(sq^2)\]
where $p,q\in\N$ are coprime and $s\geq 2$. Assume further that $q$ is even. Then for every $I=[i/q,(i+1)/q)$, $i\in\{0,\dots,q-1\}$, we can find an atom $J\in\vee_{i=0}^{q-1}R_{\alpha}^{-i}(\xi)$ such that $\tau(q,\theta)=0$ for every $\theta\in J$, and
\[
\length(I\cap J)>1/q-2/(sq)
\]
\end{proposition}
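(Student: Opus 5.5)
Throughout, write $x_j=j\alpha \bmod 1$ for $j=0,\dots,q-1$. The approach rests on \Cref{prop:rational-approximations}: these $q$ points form a perturbed copy of the grid $\{j/q\}$, each within distance $1/(sq)$ of a distinct grid point. The atom of $\vee_{j=0}^{q-1}R_{\alpha}^{-j}(\xi)$ containing $\theta$ is determined by which half of $\T$ each $\theta+x_j$ lies in. Equivalently, the boundaries of this partition are the $2q$ points $-x_j$ and $1/2-x_j$, that is, the preimages of the endpoints $0$ and $1/2$ of $\xi$. So the plan is to exhibit, inside a given $I=[i/q,(i+1)/q)$, a long interval free of these cut points and to check that $\tau(q,\cdot)=0$ there.

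Locating the cut points is direct. By \Cref{prop:rational-approximations} applied to the backward orbit, each $-x_j$ lies within $1/(sq)$ of a unique grid point $k/q$, and these grid points are distinct as $j$ varies, so they exhaust the grid. Since $q$ is even, $1/2=(q/2)/q$ is itself a grid point, so the cut points $1/2-x_j$ are also within $1/(sq)$ of grid points, one near each. Hence every cut point lies within $1/(sq)$ of $\{k/q\}$. It follows that the open interval $(i/q+1/(sq),(i+1)/q-1/(sq))$ contains no cut point. It is therefore contained in a single atom $J$, and $\length(I\cap J)\geq 1/q-2/(sq)$. For the strict inequality, use the strict bound $d<1/(sq)$ from the proposition together with the finiteness of the set of cut points: the endpoints of the cut-point-free interval can be pushed slightly outward.

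The remaining step is to show $\tau(q,\theta)=0$ for $\theta$ in this interval; I expect this to be the main point. For such $\theta$ and each $j$, the point $\theta+x_j$ lies within $1/(sq)$ of $\theta+k_j/q$, where $j\mapsto k_j$ is a bijection onto $\{0,\dots,q-1\}$. Because $\theta$ stays away from the grid by more than the perturbation, $\theta+x_j$ and $\theta+k_j/q$ lie in the same open grid cell. The grid cells are compatible with $\xi$, since $1/2$ is a grid point, so $\tau(\theta+x_j)=\tau(\theta+k_j/q)$. Summing over $j$ gives
\[
\tau(q,\theta)=\sum_{k=0}^{q-1}\tau(\theta+k/q).
\]
Finally, the map $k\mapsto k+q/2$ pairs each term with its negative, by the symmetry $\tau(\theta+1/2)=-\tau(\theta)$, so this sum vanishes. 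The only delicate bookkeeping is checking that ``same grid cell'' is really implied by the distance bounds, with $s\geq 2$ guaranteeing that the perturbation is at most half a cell.
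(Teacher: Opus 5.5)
Your proposal is correct and follows essentially the paper's argument. \Cref{prop:rational-approximations} places all cut points $-x_j$ and $1/2-x_j$ within $1/(sq)$ of the grid $\{k/q\}$ (using that $q$ is even), and comparing the forward orbit with the grid together with the symmetry $\tau(\theta+1/2)=-\tau(\theta)$ gives $\tau(q,\cdot)=0$. The only cosmetic difference is that the paper checks this at the midpoint $\theta_I$ of $I$ and then invokes \Cref{prop:tau-constant}.

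There are two small bookkeeping points. First, for $s=2$ your interval $(i/q+1/(sq),(i+1)/q-1/(sq))$ is empty, so the vanishing argument should be run on the pushed-out interval, or at $\theta_I$, which lies in it. Second, you should say explicitly that $\tau(q,\cdot)$ is constant on atoms (\Cref{prop:tau-constant}), since that is what extends the conclusion from the interval to all of $J$.
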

\begin{proof}
    The atoms in $\vee_{i=0}^{q-1}R_{\alpha}^{-i}(\xi)$ have endpoints in $\{R_{\alpha}^{-i}(0) : i=0,\dots,q-1\}\cup \{R_{\alpha}^{-i}(1/2) : i=0,\dots,q-1\}$. By \Cref{prop:rational-approximations}, every element in $\{R_{\alpha}^{-i}(0) : i=0,\dots,q-1\}$ is at distance strictly smaller than $1/(sq)$ from a unique element in $\{i/q : i=0,\dots,q-1\}$. Similarly, every element in $\{R_{\alpha}^{-i}(1/2) : i=0,\dots,q-1\}$ is at distance strictly smaller than $1/(sq)$ from a unique element in $\{i/q +1/2 : i=0,\dots,q-1\}$. Crucially, because $q$ is even, we have 
    \[
    \{i/q +1/2 : i=0,\dots,q-1\}=\{i/q  : i=0,\dots,q-1\}
    \]
    Let $I$ be as in the statement, let $\theta_I$ be the midpoint of $I$, and let $J$ be the atom from $\vee_{i=0}^{q-1}R_{\alpha}^{-i}(\xi)$ containing $\theta_I$. The observation in the previous paragraph shows that $\length(I\cap J)> \length(I)-2/(sq)\geq 0$. 
    
    We claim that $\tau(q,\theta)=0$ for all $\theta\in J$. Since this function is constant over $J$ (\Cref{prop:tau-constant}), it is sufficient to verify that  $\tau(q,\theta_I)=0$. By \Cref{prop:rational-approximations} again, we have that each element in $\{R_{\alpha}^i(\theta_I) : i=0,\dots,q-1\}$ is at distance smaller than $1/(sq)$ from a unique element in $\{i/q+\theta_I  : i=0,\dots,q-1\}$. Let us denote by 
    \[
    f\colon \{R_{\alpha}^i(\theta_I) : i=0,\dots,q-1\} \to \{i/q+\theta_I  : i=0,\dots,q-1\}
    \]
    the associated bijection. Thus $d_{\T}(\theta,f(\theta))<1/(sq)$ for all $\theta$ in the domain of $f$. On the other hand, the fact that $\theta_I$ is at distance exactly $1/(2q)$ from $i/q$ and $(i+1)/q$ implies that every element in $\{i/q+\theta_I  : i=0,\dots,q-1\}$ is at distance at least $1/(2q)$ from $0$ and $1/2$. It follows that for all $\theta$ in the domain of $f$, both $\theta$ and $f(\theta)$ are in the same atom of $\xi=\{[0,1/2),[1/2,1)\}$. Hence
    \[
    \sum_{i=0}^{q-1} \tau(R^i_{\alpha}(\theta_I))=
    \sum_{i=0}^{q-1} \tau(R^i_{1/q}(\theta_I))=0
    \]
\end{proof}
\begin{proposition}\label{prop:tau-qn-is-mostly-zero-for-even-iterates-iterated}    
Suppose that 
\[|\alpha-p/q|<1/(s^2q^2)\]
where $p,q\in\N$ are coprime, $q$ is even, and $s\geq 4$ is an integer. Then for every $I=[i/q,(i+1)/q)$, $i\in\{0,\dots,q-1\}$, we can find an interval $J\subset I$ with the following properties:
\begin{enumerate}
    \item $\length(J)\geq 1/q-4/(sq)$.
    \item There exists a single atom in $\vee_{i=0}^{q-1}R_{\alpha}^{-i}(\xi)$  containing the iterates $\cup_{i=0}^{s} R_{\alpha}^{iq}(J)$.
    \item For every $\theta\in J$, the sequence $(\tau(R_{\alpha}^i(\theta)))_{i=0}^{qs-1}$ equals $(\tau(R_{\alpha}^i(\theta)))_{i=0}^{q-1}$ repeated $s$ times. 
    \item For every $\theta\in J$ we have $\tau(qi,\theta)=0$ for every $i=0,\dots,s$. 
    \item For every $\theta\in J$ the sequence $(\tau(i,\theta))_{i=0}^{qs-1}$ equals $(\tau(i,\theta))_{i=0}^{q-1}$ repeated $s$ times. 
    \item For every $\theta\in J$ we have 
    $r_{sq}(\theta)=r_q(\theta)$. 
\end{enumerate}
Furthermore, we have the following upper bound for $r^\delta_{sq}$, provided $4/s<\delta$:
\[
r^\delta_{sq}\leq r_q
\]
\end{proposition}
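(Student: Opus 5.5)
The plan is to take $J$ to be the part of $I$ that stays uniformly far from all endpoints of the atoms of $\vee_{i=0}^{q-1}R_{\alpha}^{-i}(\xi)$, and then use that $R_{\alpha}^{q}$ is a tiny rotation. The hypothesis $|\alpha-p/q|<1/(s^2q^2)$ gives $\|q\alpha\|<1/(s^2q)$, so $R_\alpha^{iq}$ for $0\le i\le s$ moves every point by less than $s\cdot 1/(s^2q)=1/(sq)$. Also \Cref{prop:rational-approximations} applies with $s^2$ in place of $s$: every endpoint $R_\alpha^{-i}(0)$, $R_\alpha^{-i}(1/2)$ ($0\le i<q$) lies within $1/(s^2q)\le 1/(sq)$ of a grid point $j/q$. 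Here evenness of $q$ is used so that the translates of $1/2$ land on the same grid, exactly as in \Cref{prop:tau-is-mostly-zero-for-even-iterates}. I would therefore set $J=[i/q+2/(sq),\,(i+1)/q-2/(sq)]$, which has length $1/q-4/(sq)$. This gives (1).

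For (2), note first that $J$ contains no atom endpoint, since endpoints lie within $1/(sq)$ of grid points. Moreover, each translate $J+iq\alpha$ with $0\le i\le s$ lies inside $[i/q+1/(sq),(i+1)/q-1/(sq)]$, which is still an interval free of endpoints. The union of these translates is therefore contained in a single connected endpoint-free interval, hence in a single atom $J'$. This $J'$ contains the midpoint of $I$, so it is the atom from \Cref{prop:tau-is-mostly-zero-for-even-iterates}, and $\tau(q,\cdot)=0$ on $J'$.

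(3) then follows from \Cref{prop:tau-constant}. For $\theta\in J$ and $0\le k<s$, the point $R_\alpha^{kq}(\theta)$ lies in $J'$, so the block $(\tau(R_\alpha^{kq+j}\theta))_{j<q}$ equals the block of $\theta$. For (4), the cocycle identity gives $\tau(kq,\theta)=\sum_{m<k}\tau(q,R_\alpha^{mq}\theta)=0$, since every $R_\alpha^{mq}\theta$ is in $J'$. For (5), write $\tau(kq+j,\theta)=\tau(kq,\theta)+\tau(j,R_\alpha^{kq}\theta)=\tau(j,\theta)$ using (3) and (4). (6) is immediate from (5), since the set of visited values is unchanged.

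For the final bound, let $G=\bigcup_I J_I$. Its measure is at least $q(1/q-4/(sq))=1-4/s>1-\delta$. On $G$ we have $r_{sq}(\theta)=r_q(\theta)\le r_q$, so the set $\{r_{sq}\le r_q\}$ has measure $>1-\delta$ and hence $r^\delta_{sq}\le r_q$.

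The main obstacle is the bookkeeping in (2): one has to check that the margin $2/(sq)$ absorbs both the endpoint displacement ($<1/(sq)$) and the drift of the $s$ iterates ($<1/(sq)$), using $s\ge4$ so the interval is nonempty. Half-open versus closed endpoints are a measure-zero issue and can be ignored.
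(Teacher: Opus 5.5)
Your proposal is correct and is essentially the paper's argument. The only cosmetic difference is how you choose $J$: you take the middle subinterval $[i/q+2/(sq),(i+1)/q-2/(sq)]$, using the sharper $1/(s^2q)$ localization of atom endpoints, whereas the paper trims $1/(sq)$ from each end of $A\cap I$, with $A$ the atom from \Cref{prop:tau-is-mostly-zero-for-even-iterates}. The drift bound $\|kq\alpha\|<1/(sq)$ for $k\le s$, the derivation of (3)--(6), and the bound on $r^\delta_{sq}$ are the same as in the paper.
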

\begin{proof}
    Let $I$ be as in the statement. Let $A$ be the atom of $\vee_{i=0}^{q-1}R_{\alpha}^{-i}(\xi)$ associated to $I$ by \Cref{prop:tau-is-mostly-zero-for-even-iterates}, and let $J'=A\cap I$. It follows from \Cref{prop:tau-is-mostly-zero-for-even-iterates} that the length of $J'$ is at least $1/q-2/(sq)$.  Let $J\subset J'$ be   obtained by trimming each endpoint of $J'$ by $1/(sq)$. By \Cref{prop:tau-is-mostly-zero-for-even-iterates} we have $\length(J')>1/q-2/(sq)$, so 
    \[
        \length(J)>1/q-2/(sq)-2/(sq)=1/q-4/(sq)
    \]
    The set $J'$ is fully contained in some atom from $\vee_{i=0}^{q-1}R_{\alpha}^{-i}(\xi)$. In order to prove item (2), we show that $\cup_{i=0}^s R_{\alpha}^{iq}(J)$ is contained in $J'$. Let $\theta\in J$.  We verify that the iterates $R_{\alpha}^q(\theta)$, $\dots$, $R_{\alpha}^{sq}(\theta)$ all belong to $J'$. Fix $i\in\{1,\dots,s\}$. We multiply $|\alpha-p/q|<1/(s^2q^2)$ by $iq$. We see that $|iq\alpha-ip
|<  i/(s^2q)\leq 1/(sq)$. Since $ip$ is an integer, we have $d_{\T}(R^{iq}_{\alpha}(0),0)< 1/(sq)$. But since $R_{\alpha}$ is an isometry of $(\T,d_{\T})$, we also have $d_{\T}(R^{iq}_{\alpha}(\theta),\theta)< 1/(sq)$. Since $\theta$ is sufficiently far from the endpoints of $J'$, meaning at distance at least $1/(sq)$, it follows that $R^{iq}_{\alpha}(\theta)$ belongs to $J'$ as desired. Thus we have verified  item (2).

We now verify item (3). Let $\theta\in J$. By (2), the iterates $\{R_{\alpha}^{iq}(\theta) : i=0,\dots,s\}$ all fall into the same atom of $\vee_{k=0}^{q-1}R_{\alpha}^{-k}(\xi)$. Then it follows from \Cref{prop:tau-constant} that $(\tau(R_{\alpha}^{iq+j}(\theta)))_{j=0}^{q-1}$ is the same for $i=0,\dots,s$.

We now prove item (4). It follows from \Cref{prop:tau-is-mostly-zero-for-even-iterates} that $\tau(q,\theta)$ is zero for $\theta\in J'$. Now take $\theta\in J$. For $i\in\{0,\dots,s\}$ we can write 
    \[
    \tau(iq,\theta)=\sum_{j=0}^{i-1} \tau(q,R^{jq}_{\alpha}(\theta))
    \]
    Thus a sufficient condition for the desired property 
    \[\tau(iq,\theta)=0, \ i\in\{0,\dots,s\}\]
    is that $\theta$, $R_{\alpha}^q(\theta)$, $\dots$, $R_{\alpha}^{sq}(\theta)$ all belong to $J'$. This follows from item (2). Thus item (4) holds.

    We now prove item (5). Let $\theta\in J$, and let $i\in\{0,\dots,qs-1\}$. By Euclidean division by $q$ we can write $i=jq+i'$, where $i'\in\{0,\dots,q-1\}$ and $j\in\{0,\dots,s-1\}$. Thus $i\equiv i'\mod q$, and 
    \begin{align*}
    \tau(i,\theta)&=\tau(jq,\theta)+\tau(i',R^{jq}_{\alpha}(\theta)) &(\text{cocycle relation})\\
    & = \tau(i',R^{jq}_{\alpha}(\theta))  &(\text{item (4)})\\
    & = \tau(i',\theta)  
    \end{align*}
    In the last equality we used that $R^{jq}_{\alpha}(\theta)$ lives in the same atom of $\vee_{i=0}^{q-1}R_{\alpha}^{-i}(\xi)$ as $\theta$ by item (2), so $\tau(i',R^{jq}_{\alpha}(\theta))=\tau(i',\theta)$ by \Cref{prop:tau-constant}.
    
    Item (6) follows from item (5). For the ``furthermore'' claim, let $\mathcal J$ be the union of the $q$ intervals $J$ given by the statement.  For every $\theta$ in $\mathcal J$ we have $r_{sq}(\theta)\leq r_q$ by item (6). Furthermore, by item (1) the set $\mathcal J$ has measure at least $1-4/s$, which is larger than $1-\delta$ provided $4/s<\delta$. Thus $r^\delta_{sq}\leq r_q$ as claimed. 
\end{proof}
\begin{remark}
    The statement of \Cref{prop:tau-qn-is-mostly-zero-for-even-iterates-iterated} contains slightly more information than what we need to prove upper bounds for $r_n^\delta$. This extra information will be important to prove upper bounds for Hamming covering numbers in \Cref{prop:evil-upper-bound}.
\end{remark}
\subsection{Continued fractions}
Let us recall some basic results about continued fractions. Further details can be found, for instance, in  \cite[\S 3]{einsiedler_ergodic_2011}. If $a_1,\dots,a_n$ are positive integers then we use the following notation:
\begin{equation}
[0;a_1,\dots,a_n]=\frac{1}{a_1+\frac{1}{a_2+\dots +\frac{1}{a_{n-1}+\frac{1}{a_n}}}}\in\mathbb{Q}
\end{equation}
Similarly, if $a_1,a_2,\dots$ is an infinite sequence of positive integers, then we write
\[
[0;a_1,a_2,\dots]=\frac{1}{a_1+\frac{1}{a_2+\dots}}
\]
When $\alpha=[0;a_1,a_2,\dots]$ then the expression $[0;a_1,a_2,\dots]$ is called the continued fraction expansion for $\alpha$. We associate to $(a_n)_{n\geq 1}$ two sequences of integers $(p_n)_{n\geq -1}$ and $(q_n)_{n\geq -1}$, defined by the recursive formulas
\begin{equation}\label{eq:q-ns-recursive-expression}
p_{n+1}=a_{n+1}p_n+p_{n-1}, \ \ 
q_{n+1}=a_{n+1}q_n+q_{n-1}
\end{equation}
and the initial values $p_{-1}=1$, $q_{-1}=0$, $p_{0}=0$, $q_{0}=1$. For all $n\geq 0$ we have 
\begin{equation}\label{eq:approximation-alpha}
    |\alpha-\frac{p_n}{q_n}|<\frac{1}{a_{n+1}q_n^2}
\end{equation}
\subsection{On defining $\alpha$ to control $r_n^\delta$}\label{sec:evil-irrational} 
Let $(s_k)_{k\in\N}$ be an arbitrary sequence of even numbers with $s_k\geq 4$ for all $k$ and with $\lim_{k\to\infty}s_k=\infty$. We define an irrational by
\begin{equation}\label{eq:evil-irrational}
    \alpha=[0;1,1,s_1^2,s_1^2,s^2_2,s_2^2,s^2_3,s_3^2,s^2_4,s_4^2,s^2_5,s_5^2,s^2_6,\dots]
\end{equation}
That is, $a_1=1$, $a_2=1$, and for $n\geq 3$ 
    \[
a_n=s_k^2, n=2k+1 \text{ or } n=2k+2 
    \]
The idea is that one can control the growth of $(r_n^\delta)_{n\in\N}$ by choosing $(s_k)_{k\geq 1}$ suitably. The key is that we can combine \Cref{prop:tau-qn-is-mostly-zero-for-even-iterates-iterated} with the inequality
\[
|\alpha-\frac{p_{2k}}{q_{2k}}|<\frac{1}{s_k^2q_{2k}^2} \ \ \ k\geq 2.
\]
\begin{proposition}\label{prop:parity}Let $(p_n)_{n\geq -1}$ and $(q_n)_{n\geq -1}$ be associated to $\alpha$ as in \Cref{eq:q-ns-recursive-expression}. 
    For all $n\geq 1$ we have that $q_n$ is even exactly when $n$ is even. 
    \end{proposition}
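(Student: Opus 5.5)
The plan is to argue by induction on $n$ and work modulo $2$, using the recursion $q_{n+1}=a_{n+1}q_n+q_{n-1}$ from \Cref{eq:q-ns-recursive-expression} together with the parities of the partial quotients in \Cref{eq:evil-irrational}. Since each $s_k$ is even, every $a_n=s_k^2$ with $n\geq 3$ is even. Hence for $n\geq 2$ the recursion reduces modulo $2$ to
\[
q_{n+1}\equiv q_{n-1}\mod 2 .
\]

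The base cases are computed directly. From $q_{-1}=0$ and $q_0=1$, together with $a_1=a_2=1$, we get
\[
q_1=a_1q_0+q_{-1}=1, \qquad q_2=a_2q_1+q_0=2.
\]
So $q_1$ is odd and $q_2$ is even, which is the claim for $n=1,2$.

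For the induction step, take $n\geq 3$ and assume the claim for all smaller indices at least $1$. Then $n-2\geq 1$, and applying the reduced recursion at index $n-1\geq 2$ gives $q_n\equiv q_{n-2}\mod 2$. Since $n$ and $n-2$ have the same parity, $q_n$ is even exactly when $n$ is even. This completes the induction.

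There is no real obstacle. The only care needed is bookkeeping: the reduced congruence relies on $a_{n+1}$ being even, and this holds only for $n+1\geq 3$. That is why the cases $n=1,2$, where $a_1=a_2=1$, must be computed separately rather than folded into the induction.
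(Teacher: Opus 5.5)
Your proposal is correct and follows essentially the same argument as the paper: both compute $q_1=1$ and $q_2=2$ directly, then induct using $q_{n+1}=a_{n+1}q_n+q_{n-1}$ with $a_{n+1}=s_k^2$ even for $n+1\geq 3$, so $q_{n+1}\equiv q_{n-1}\mod 2$. You also correctly explain why $n=1,2$ must be handled separately, since $a_1=a_2=1$.
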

\begin{proof}To verify this, let us compute the first values using \Cref{eq:q-ns-recursive-expression}:
    \[
    q_{-1}=0, \ q_{0}=1, \ q_{1}=1, \ q_{2}=2, \ q_{3}=2s^2_1+1, \dots
    \]
    Thus our claim holds for $n=1$ and $n=2$. By induction, take $n\geq 2$, assume the claim holds for all $i\leq n$, and let us prove it holds for $n+1$. Since  $q_{n+1}=a_{n+1}q_{n}+q_{n-1}$ and $a_{n+1}$ is even, we see that $q_{n+1}$ has the same parity as $q_{n-1}$. But $q_{n-1}$ has the same parity as $n-1$ by inductive hypothesis. To finish the argument note that $n-1$ and $n+1$ have the same parity. Thus we have proved that $q_n$ is even exactly when $n$ is even.
    \end{proof}
\begin{proposition}\label{prop:short-range-sk-q2k}
Let $\delta\in(0,1)$. For all $k$ large enough (meaning $4/s_k<\delta$) we have 
\begin{equation}\label{eq:short-range-sk-q2k}
        r_{s_kq_{2k}}^{\delta}\leq q_{2k}     \end{equation}
    \end{proposition}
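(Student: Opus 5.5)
The plan is to apply \Cref{prop:tau-qn-is-mostly-zero-for-even-iterates-iterated} with $q=q_{2k}$, $p=p_{2k}$ and $s=s_k$, and then bound $r_q$ trivially by $q$.

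\textbf{Hypotheses.} We check each hypothesis of that proposition in turn.
\begin{itemize}
\item Consecutive convergents have coprime numerator and denominator, so $p_{2k}$ and $q_{2k}$ are coprime.
\item By \Cref{prop:parity}, $q_{2k}$ is even.
\item By construction, $s_k\geq 4$ is an integer.
\item For the Diophantine condition, use \Cref{eq:approximation-alpha} with $n=2k$:
\[
|\alpha-p_{2k}/q_{2k}|<\frac{1}{a_{2k+1}q_{2k}^2}.
\]
For $k\geq 1$ the index $2k+1$ has the form $2k+1$ in the definition of $\alpha$, so $a_{2k+1}=s_k^2$. This gives exactly
\[
|\alpha-p_{2k}/q_{2k}|<\frac{1}{s_k^2q_{2k}^2}.
\]
\end{itemize}
The only point needing care is this bookkeeping of indices in \Cref{eq:evil-irrational}: the $(2k+1)$-st partial quotient is $s_k^2$. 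This holds for every $k\geq 1$; the paper's remark says $k\geq 2$, which is harmless.

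\textbf{Conclusion.} Since $k$ is large enough that $4/s_k<\delta$, the ``furthermore'' clause of \Cref{prop:tau-qn-is-mostly-zero-for-even-iterates-iterated} yields
\[
r^\delta_{s_kq_{2k}}\leq r_{q_{2k}}.
\]
It remains to bound $r_{q_{2k}}$. By \Cref{eq:r_n-theta}, $r_n(\theta)$ is the cardinality of a set indexed by $i=0,\dots,n-1$, so $r_n(\theta)\leq n$ for every $\theta$. Hence $r_{q_{2k}}\leq q_{2k}$, which gives \Cref{eq:short-range-sk-q2k}.

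\textbf{Expected obstacle.} None of these steps is substantial. The only thing to get right is the index of the partial quotient, so that the approximation constant is $s_k^2$ rather than $s_{k-1}^2$.
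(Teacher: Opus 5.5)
Your proof is correct and follows the paper's argument. You verify the hypotheses of \Cref{prop:tau-qn-is-mostly-zero-for-even-iterates-iterated} for $q=q_{2k}$ and $s=s_k$ (using \Cref{eq:approximation-alpha} with $a_{2k+1}=s_k^2$ and \Cref{prop:parity}), apply its ``furthermore'' clause, and finish with the trivial bound $r_{q_{2k}}\leq q_{2k}$; your explicit checks of coprimality and of the partial-quotient index (which indeed works already for $k\geq 1$) only spell out what the paper leaves implicit.
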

    \begin{proof}
         Fix $\delta\in(0,1)$. Since $\lim_{k\to\infty}s_k=\infty$, for all $k$ large enough we have $4/s_k<\delta$. Fix $k$ with this property. By definition of $\alpha$ and \Cref{eq:approximation-alpha} we have
\[
|\alpha-\frac{p_{2k}}{q_{2k}}|<\frac{1}{s_k^2q_{2k}^2}
\]
Observe that $q_{2k}$ is even by \Cref{prop:parity}. Then $r^{\delta}_{s_kq_{2k}}\leq r_{q_{2k}}$ by \Cref{prop:tau-qn-is-mostly-zero-for-even-iterates-iterated}. But we always have the trivial bound $r_{q_{2k}}\leq q_{2k}$ (see \Cref{eq:r_n}). 
\end{proof}

\begin{proposition}\label{prop:dependence}
    Let $k\geq 2$. Then $(p_i)_{i=-1}^{2k}$ and $(q_i)_{i=-1}^{2k}$ are fully determined by $s_1,\dots,s_{k-1}$. 
\end{proposition}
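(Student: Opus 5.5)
The plan is a direct computation from the recursion \Cref{eq:q-ns-recursive-expression}: $(p_i)$ and $(q_i)$ up to index $2k$ depend only on the initial values $p_{-1},q_{-1},p_0,q_0$ and on the partial quotients $a_1,\dots,a_{2k}$. It therefore suffices to show that $a_1,\dots,a_{2k}$ are determined by $s_1,\dots,s_{k-1}$.

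By the definition \Cref{eq:evil-irrational}, $a_1=a_2=1$, and for $n\geq 3$ we have $a_n=s_j^2$ where $n\in\{2j+1,2j+2\}$. For $3\leq n\leq 2k$ the index $j$ satisfies $2j+1\leq n\leq 2k$, so $j\leq k-1/2$, that is, $j\leq k-1$. Hence every $a_n$ with $n\leq 2k$ is either $1$ or one of $s_1^2,\dots,s_{k-1}^2$. In particular, $s_k$ first appears in $a_{2k+1}$.

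With this in hand, I would run a short induction on $i$ from $-1$ to $2k$. The base cases $i=-1,0$ are fixed constants. For the inductive step, the value at index $i+1$ is obtained from $a_{i+1}$ and the values at indices $i$ and $i-1$, and all of these are determined by $s_1,\dots,s_{k-1}$ as long as $i+1\leq 2k$.

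There is no real obstacle here. The only care needed is the index bookkeeping, namely checking that $2j+1\leq 2k$ forces $j\leq k-1$. The role of this statement is to allow $s_k$ to be chosen later, after $q_{2k}$ is already fixed, in the construction that follows.
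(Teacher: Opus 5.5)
Your proposal is correct and matches the paper's argument, which simply says the claim follows from the recursion \eqref{eq:q-ns-recursive-expression}. You additionally spell out the index check (namely that $a_n$ for $n\leq 2k$ only involves $s_1,\dots,s_{k-1}$, with $s_k$ first appearing in $a_{2k+1}$), and that check is right.
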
   
\begin{proof}
    This follows from \Cref{eq:q-ns-recursive-expression}.
\end{proof}
In the next result we use our construction to prove that $(r_n^\delta)_{n\in\N}$ can grow slower than any prescribed rate, along subsequences. In \Cref{sec:slow-entropy} we will not use this exact statement, but rather we repeat the proof replacing $f$ by a slow entropy scale. 
\begin{theorem}\label{thm:r-n-delta-upper-bound}
    Let $f\colon\N\to\N_{\geq 1}$ be a function with $\lim_{n\to\infty} f(n)=\infty$. Then there exists an irrational number $\alpha$ with the following property. For every $\delta\in(0,1)$ we have 
    \[
    \liminf_{n\to\infty} r_{n}^{\delta}/f(n)=0
    \]
\end{theorem}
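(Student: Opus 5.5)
We will take $\alpha$ as in \Cref{eq:evil-irrational}, choosing the even numbers $s_k\geq 4$ recursively so that they grow fast enough relative to $f$. The conclusion will then follow from \Cref{prop:short-range-sk-q2k}, applied along the subsequence $n_k=s_kq_{2k}$.

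The key observation is \Cref{prop:dependence}: $q_{2k}$ is determined by $s_1,\dots,s_{k-1}$ alone. So at stage $k$ the number $q_{2k}$ is already fixed, and we are free to choose $s_k$. We choose $s_k$ even, with $s_k\geq\max(4,s_{k-1}+2,k)$, and additionally large enough that
\[
f(s_kq_{2k})\geq k\,q_{2k}.
\]
Such an $s_k$ exists because $f(n)\to\infty$ and $q_{2k}\geq 1$. Hence $f(m)\geq kq_{2k}$ holds for all sufficiently large $m$, and in particular for $m=s_kq_{2k}$ once $s_k$ is large. This gives a sequence of even integers $s_k\geq 4$ with $s_k\to\infty$. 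These are exactly the hypotheses of \Cref{sec:evil-irrational}, so $\alpha$ is a well-defined irrational with the properties established there.

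Now fix $\delta\in(0,1)$. For all $k$ with $4/s_k<\delta$, \Cref{prop:short-range-sk-q2k} gives $r^\delta_{s_kq_{2k}}\leq q_{2k}$. Therefore
\[
\frac{r^\delta_{s_kq_{2k}}}{f(s_kq_{2k})}\leq \frac{q_{2k}}{kq_{2k}}=\frac1k\to 0.
\]
The times $n_k=s_kq_{2k}$ tend to infinity, since $s_k\to\infty$ and $q_{2k}\geq 1$. Hence $\liminf_{n\to\infty} r^\delta_n/f(n)=0$.

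The only delicate point is the order of quantifiers. $\alpha$ must be chosen independently of $\delta$, and $q_{2k}$ must not depend on $s_k$. The first is handled because the construction uses no $\delta$: each fixed $\delta$ only requires discarding finitely many $k$. The second is exactly \Cref{prop:dependence}.
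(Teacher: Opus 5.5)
Your proposal is correct and follows the paper's proof essentially verbatim: you choose the even numbers $s_k$ recursively so that $f(s_kq_{2k})\geq kq_{2k}$, which is legitimate by \Cref{prop:dependence}, and then apply \Cref{prop:short-range-sk-q2k} along the times $s_kq_{2k}$ to bound the ratio by $1/k$. The extra requirement $s_k\geq s_{k-1}+2$ is harmless but unnecessary, since $s_k\geq k$ already forces $s_k\to\infty$.
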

\begin{proof}
The  definition of $(s_k)_{k\in\N}$ will be recursive or inductive, in the sense that we choose $s_k$ depending on $s_1,\dots,s_{k-1}$. Thanks to \Cref{prop:dependence}, we can also use  $(p_{i})_{i=-1}^{2k}$ and  $(q_{i})_{i=-1}^{2k}$ in the definition of $s_k$. 

Let $s_1=4$. Now let $k\geq 2$ and assume we have chosen $s_1,\dots,s_{k-1}$. Since $f(n)\to\infty$ as $n\to\infty$, it is also true that $f(s\cdot q_{2k})\to\infty$ as $s\to\infty$. Choose $s_k$ as an even number larger than 4, larger than $k$, and with the property \[f(s_kq_{2k})>k\cdot q_{2k}\]

Now let us verify that $\alpha$ satisfies the claim in the statement. By \Cref{prop:short-range-sk-q2k} we have for all $k$ large enough
    \[
    \frac{r_{s_kq_{2k}}^{\delta}}{f(s_kq_{2k})}\leq \frac{q_{2k}}{f(s_{k}q_{2k})}\leq \frac{1}{k}
    \]
Thus the liminf of $r_{n}^{\delta}/f(n)$ as $n\to\infty$ is zero, as witnessed by the subsequence $(s_{k}q_{2k})_{k\in\N}$.
\end{proof}
\section{Proof of \Cref{thm:zero-slow-entropy}}\label{sec:slow-entropy}
In this section we will prove \Cref{thm:zero-slow-entropy}, which states that for any slow entropy scale $\mathbf a$, we can find an irrational $\alpha$ so that $R_{\alpha}\rtimes_{\tau} T$ has zero slow entropy with respect to $\mathbf a$, for any choice of $T$. This will follow from the construction we did in \Cref{sec:evil-irrational}, combined with estimates for Hamming covering numbers. 

Before going into the proof, we need to review a number of definitions.
\subsection{Measure-theoretic slow entropy}
We will now review  necessary definitions about measure-theoretic slow entropy, see \cite[\S 4]{kanigowski_survey_2024} for further details.

Fix an invertible measure-preserving system $(X,\mathcal B_X,\mu,T)$ and a finite partition $\xi$ whose elements we call atoms.  We denote by $\xi(x)$ the atom in $\xi$ containing $x\in X$, and let
\[
\delta_\xi(x,y)=\begin{cases}
    1 &\xi(x)\ne\xi(y)\\
    0&\xi(x)=\xi(y)
\end{cases}
\]
Given a nonempty set $F\Subset \Z$, consider the dynamically generated pseudometric
\[
d^{H,T}_{\xi,F}(x,y)=\frac{1}{|F|}\sum_{i\in F}\delta_{\xi}(T^i(x),T^i(y))
\]
Then we define a corresponding pseudoball as  \[B^{H,T}_{\xi,F}(x,\epsilon)=\{y\in X : d^{H,T}_{\xi,F}(x,y)<\epsilon\}, \ \ \epsilon>0\]
This allows us to define the covering numbers
\[S^H_{\xi}(T,F,\epsilon,\delta)=\min\{k\in\N : \text{there are }x_1,\dots,x_k\in X, \mu(\cup_{i=1}^k B^{H,T}_{\xi,F}(x_i,\epsilon))>1-\delta\}.\]
These covering numbers are monotone in the sense of the following remark. We say that $\eta$ refines $\xi$, denoted $\xi\leq\eta$, if every atom in $\eta$ is contained in some atom from $\xi$.
\begin{remark}\label{remark:monotonicity}
 $S^H_{\xi}(T,F,\epsilon,\delta)\leq S^H_{\eta}(T,F,\epsilon',\delta')$ whenever $\epsilon'\leq \epsilon$, $\delta'\leq\delta$, and $\xi\leq\eta$. 
\end{remark}
In the next result we write $\xi\vee\eta=\{P\cap Q : P\in\xi,Q\in\eta\}$ for two partitions $\xi$ and $\eta$. 
\begin{proposition}\label{prop:S-H-submultiplicative}
$S^H_{\xi\vee\eta}(T,F,4\epsilon,\delta_1+\delta_2)\leq S^H_\xi(T,F,\epsilon,\delta_1)\cdot S^H_{\eta}(T,F,\epsilon,\delta_2)$, where $\epsilon,\delta_1,\delta_2>0$.
\end{proposition}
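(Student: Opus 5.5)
We need to show $S^H_{\xi\vee\eta}(T,F,4\epsilon,\delta_1+\delta_2)\leq S^H_\xi(T,F,\epsilon,\delta_1)\cdot S^H_{\eta}(T,F,\epsilon,\delta_2)$. The plan is to build a cover for $\xi\vee\eta$ by intersecting covers for $\xi$ and $\eta$, then re-centre each nonempty intersection at a single point.

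\textbf{Step 1: the metric for the join.} For any $x,y$ and $i\in F$, the join atoms differ exactly when the $\xi$-atoms or the $\eta$-atoms differ, so
\[\delta_{\xi\vee\eta}(T^ix,T^iy)\leq \delta_\xi(T^ix,T^iy)+\delta_\eta(T^ix,T^iy).\]
Averaging over $F$ gives
\[d^{H,T}_{\xi\vee\eta,F}\leq d^{H,T}_{\xi,F}+d^{H,T}_{\eta,F}.\]
Each $d^{H,T}_{\cdot,F}$ is a pseudometric, being an average of the pseudometrics $\delta$; in particular it satisfies the triangle inequality.

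\textbf{Step 2: intersecting the covers.} Take $x_1,\dots,x_a$ with $\mu(\bigcup_j B^{H,T}_{\xi,F}(x_j,\epsilon))>1-\delta_1$, where $a=S^H_\xi(T,F,\epsilon,\delta_1)$. Similarly take $y_1,\dots,y_b$ for $\eta$ with union of measure $>1-\delta_2$, where $b=S^H_\eta(T,F,\epsilon,\delta_2)$. The sets
\[C_{jk}=B^{H,T}_{\xi,F}(x_j,\epsilon)\cap B^{H,T}_{\eta,F}(y_k,\epsilon)\]
cover the intersection of the two unions. That intersection has measure $>1-\delta_1-\delta_2$, since its complement is the union of the two complements.

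\textbf{Step 3: re-centring.} For each nonempty $C_{jk}$, pick $z_{jk}\in C_{jk}$. For $w\in C_{jk}$, the triangle inequality gives $d_\xi(w,z_{jk})<2\epsilon$ and $d_\eta(w,z_{jk})<2\epsilon$. By Step 1, $d_{\xi\vee\eta}(w,z_{jk})<4\epsilon$, so $C_{jk}\subset B^{H,T}_{\xi\vee\eta,F}(z_{jk},4\epsilon)$. There are at most $ab$ such centres, and their $4\epsilon$-balls cover a set of measure $>1-(\delta_1+\delta_2)$. This gives the claimed inequality.

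\textbf{Expected obstacle.} The only mild subtlety is the re-centring in Step 3: the covering centres must be points whose balls contain the intersections. This is handled by choosing a point inside each nonempty intersection, which is exactly what costs the factor $4$ instead of $2$. The empty intersections are simply discarded, so the count stays at most $ab$.
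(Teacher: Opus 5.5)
Your proposal is correct and follows essentially the same route as the paper: you bound the join pseudometric by $d^{H,T}_{\xi,F}+d^{H,T}_{\eta,F}$, intersect the two covers, and re-centre each nonempty intersection at a chosen point, which gives the radius $4\epsilon$. Your version also makes explicit the triangle-inequality step that justifies the containment in the $4\epsilon$-ball, which the paper leaves implicit.
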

\begin{proof}
First observe that $d^{H,T}_{\xi\vee\eta, F}(x,y)\leq d^{H,T}_{\xi,F}(x,y)+d^{H,T}_{\eta,F}(x,y)$. This follows by counting coding disagreements in each partition. Next, let $x_1,\dots,x_n$ be a collection of points such that $\cup_{i=1}^{n}B^{H,T}_{\xi,F}(x_i,\epsilon)$ has measure larger than $1-\delta_1$, and let $y_1,\dots,y_m$ be a collection of points such that $\cup_{i=1}^{m}B^{H,T}_{\eta,F}(y_i,\epsilon)$ has measure larger than $1-\delta_2$. Then  \[\cup_{i=1}^n\cup_{j=1}^m B^{H,T}_{\xi,F}(x_i,\epsilon)\cap B^{H,T}_{\eta,F}(y_j,\epsilon)
\]
has measure larger than $1-\delta_1-\delta_2$. For each nonempty intersection $B^{H,T}_{\xi,F}(x_i,\epsilon)\cap B^{H,T}_{\eta,F}(y_j,\epsilon)$ choose an element $z_{i,j}$ in this intersection. By the observation at the beginning of this proof, $B^{H,T}_{\xi,F}(x_i,\epsilon)\cap B^{H,T}_{\eta,F}(y_j,\epsilon)$ is contained in $B^{H,T}_{\xi\vee\eta, F}(z_{i,j},4\epsilon)$. Thus the union of at most $n\cdot m$ pseudoballs $B^{H,T}_{\xi\vee\eta, F}(z_{i,j},4\epsilon)$ covers measure greater than $1-\delta_1-\delta_2$.
\end{proof}
By a \textit{scale} $\mathbf{a}=\{a_n(t)\}_{n\in\N,t>0}$  we mean a sequence of nondecreasing functions $a_n\colon (0,\infty)\to (0,\infty)$ such that $a_n(t)$ converges to infinity as $n$ tends to infinity, for every fixed $t$. For notational convenience, given a possibly empty set $A\subset [0,\infty)$ we define
\[
\Sup(A)=\sup(A\cup\{0\})=\begin{cases}
\sup(A) &A\ne\emptyset\\
0 &A=\emptyset
\end{cases}
\]
Given a finite measurable partition $\xi$ of $X$ and positive real numbers $\epsilon,\delta$ we define
\begin{equation}\label{eq:ent-F-partition-parameters}
\ent_{\mu}^{\mathbf{a}}(T,\xi,\epsilon,\delta)=\Sup\{t> 0 : \liminf_{n\to\infty}\frac{S_{\xi}^H(T,\{0,\dots,n-1\},\epsilon,\delta)}{a_n(t)}>0\}
\end{equation}
Next we define
\begin{equation}\label{eq:ent-F-partition}
\ent_{\mu}^{\mathbf{a}}(T,\xi)=\lim_{\epsilon,\delta\to 0} \ent_{\mu}^{\mathbf{a}}(T,\xi,\epsilon,\delta)=\sup_{\epsilon,\delta>0} \ent_{\mu}^{\mathbf{a}}(T,\xi,\epsilon,\delta)
\end{equation}
Finally, we take the supremum over all finite measurable partitions of $X$
\begin{equation}\label{eq:ent-F}
\ent_{\mu}^{\mathbf{a}}(T)=\sup_{\xi}\ent_{\mu}^{\mathbf{a}}(T,\xi)
\end{equation}
This is the lower slow entropy of $T$ with respect to $\mathbf a$. 

We finish this subsection by proving the following elementary and useful result. 
\begin{proposition}\label{prop:partition-vanishes}
	Let $(X,\mathcal B_X,\mu,T)$ be an invertible measure-preserving system. Let $\xi$ be a finite partition of $X$ such that $n\mapsto S^H_{\xi}(T,\{0,\dots,n-1\},\epsilon,\delta)$ is bounded for all $\epsilon,\delta>0$. Then for every finite partition $\eta$ we have  $\ent^{\mathbf a}_{\mu}(T,\xi\vee\eta)=\ent^{\mathbf a}_{\mu}(T,\eta)$. 
\end{proposition}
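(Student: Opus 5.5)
We prove the two inequalities $\ent^{\mathbf a}_{\mu}(T,\eta)\leq \ent^{\mathbf a}_{\mu}(T,\xi\vee\eta)$ and $\ent^{\mathbf a}_{\mu}(T,\xi\vee\eta)\leq \ent^{\mathbf a}_{\mu}(T,\eta)$ separately, working at the level of the parameters $\epsilon,\delta$ and only at the end passing to the supremum in \Cref{eq:ent-F-partition}. Throughout we write $F_n=\{0,\dots,n-1\}$.

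The first inequality is immediate from monotonicity. Since $\eta\leq \xi\vee\eta$, \Cref{remark:monotonicity} gives $S^H_{\eta}(T,F_n,\epsilon,\delta)\leq S^H_{\xi\vee\eta}(T,F_n,\epsilon,\delta)$ for all $n,\epsilon,\delta$. Hence every $t$ with $\liminf_n S^H_{\eta}(T,F_n,\epsilon,\delta)/a_n(t)>0$ also satisfies the corresponding condition for $\xi\vee\eta$. So $\ent^{\mathbf a}_\mu(T,\eta,\epsilon,\delta)\leq \ent^{\mathbf a}_\mu(T,\xi\vee\eta,\epsilon,\delta)$, and taking the supremum over $\epsilon,\delta$ gives the inequality.

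For the reverse inequality we use \Cref{prop:S-H-submultiplicative} with $\epsilon$ replaced by $\epsilon/4$ and $\delta_1=\delta_2=\delta/2$:
\[
S^H_{\xi\vee\eta}(T,F_n,\epsilon,\delta)\leq S^H_{\xi}(T,F_n,\epsilon/4,\delta/2)\cdot S^H_{\eta}(T,F_n,\epsilon/4,\delta/2)\leq C\, S^H_{\eta}(T,F_n,\epsilon/4,\delta/2),
\]
where $C=C(\epsilon,\delta)$ is the bound for $n\mapsto S^H_{\xi}(T,F_n,\epsilon/4,\delta/2)$ provided by the hypothesis. Now suppose $t$ satisfies $\liminf_n S^H_{\xi\vee\eta}(T,F_n,\epsilon,\delta)/a_n(t)>0$. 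Dividing the displayed bound by $a_n(t)$, the positive constant $C$ does not affect whether the liminf is positive, so $\liminf_n S^H_{\eta}(T,F_n,\epsilon/4,\delta/2)/a_n(t)>0$ as well. Therefore
\[
\ent^{\mathbf a}_\mu(T,\xi\vee\eta,\epsilon,\delta)\leq \ent^{\mathbf a}_\mu(T,\eta,\epsilon/4,\delta/2)\leq \ent^{\mathbf a}_\mu(T,\eta).
\]
The last step uses that $\ent^{\mathbf a}_\mu(T,\eta)$ is the supremum over all parameters. If the set of admissible $t$ is empty, both sides are handled by the convention $\Sup(\emptyset)=0$. Taking the supremum over $\epsilon,\delta$ finishes the proof.

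There is no real obstacle here. The only point requiring care is the bookkeeping of parameters: the boundedness hypothesis has to be applied at the shifted parameters $(\epsilon/4,\delta/2)$ rather than at $(\epsilon,\delta)$. This is exactly why the hypothesis is assumed for all $\epsilon,\delta>0$.
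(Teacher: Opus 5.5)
Your proof is correct and follows essentially the same route as the paper: monotonicity under refinement for one inequality, and \Cref{prop:S-H-submultiplicative} combined with the boundedness of the $\xi$-covering numbers for the other. The only difference is bookkeeping: you compare $\ent^{\mathbf a}_\mu(T,\xi\vee\eta,\epsilon,\delta)$ with $\ent^{\mathbf a}_\mu(T,\eta,\epsilon/4,\delta/2)$, whereas the paper compares $\ent^{\mathbf a}_\mu(T,\xi\vee\eta,4\epsilon,2\delta)$ with $\ent^{\mathbf a}_\mu(T,\eta,\epsilon,\delta)$, which is the same argument.
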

\begin{proof}
	Since $\eta\leq \xi\vee\eta$, we have $\ent^{\mathbf a}_{\mu}(T,\eta)\leq \ent^{\mathbf a}_{\mu}(T,\xi\vee\eta)$. For the other inequality we use \Cref{prop:S-H-submultiplicative}. From this result we have
\begin{align*}
	\ent_{\mu}^{\mathbf a}(T,\xi\vee\eta,4\epsilon,2\delta)&=\Sup\{t>0 : \liminf_{n\to\infty} \frac{S^H_{\xi\vee\eta}(T,\{0,\dots,n-1\},4\epsilon,2\delta)}{a_n(t)}>0\}\\
	&\leq \Sup\{t>0 :  \liminf_{n\to\infty} S^H_{\xi}(T,\{0,\dots,n-1\},\epsilon,\delta)\cdot \frac{S^H_{\eta}(T,\{0,\dots,n-1\},\epsilon,\delta)}{a_n(t)}>0\}
\end{align*}
Since $S^H_{\xi}(T,\{0,\dots,n-1\},\epsilon,\delta)$ is at least 1 and bounded above by assumption, this term has no effect on whether the liminf in the equation above is zero or positive. We obtain
\begin{align*}
	&= \Sup\{t>0 :  \liminf_{n\to\infty} \frac{S^H_{\eta}(T,\{0,\dots,n-1\},\epsilon,\delta)}{a_n(t)}>0\}\\
	&=\ent_{\mu}^{\mathbf a}(T,\eta,\epsilon,\delta)
\end{align*}
Thus we have proved $\ent_{\mu}^{\mathbf a}(T,\xi\vee\eta,4\epsilon,2\delta)\leq \ent_{\mu}^{\mathbf a}(T,\eta,\epsilon,\delta)$. Taking $\epsilon,\delta\to 0$ we obtain $\ent_{\mu}^{\mathbf a}(T,\xi\vee\eta)\leq \ent_{\mu}^{\mathbf a}(T,\eta)$ as claimed.
\end{proof}
\subsection{Proof of \Cref{thm:zero-slow-entropy}} The following important estimate is basically a continuation of \Cref{prop:tau-qn-is-mostly-zero-for-even-iterates-iterated}, where we obtain a nontrivial upper bound for Hamming covering numbers.  We employ the usual notation for product partitions $\xi\times\eta=\{P\times Q : P\in\xi,\ Q\in\eta\}$. Observe that the upper bound in the next result depends on $\alpha$ and the cardinality of $\eta$, but not on $(X,\mathcal B_X,\mu,T)$. 
\begin{proposition}[continuation of \Cref{prop:tau-qn-is-mostly-zero-for-even-iterates-iterated}]\label{prop:evil-upper-bound} Let $(X,\mathcal B_X,\mu,T)$ be an invertible measure-preserving system. Consider the partition $\xi=\{[0,1/2),[1/2,1)\}$ of $\T$, and let  $\eta$ be an arbitrary finite partition of $X$. 
	Let $\alpha$, $s$, $q$, and $\delta$ be as in the statement of  \Cref{prop:tau-qn-is-mostly-zero-for-even-iterates-iterated} (so $4/s<\delta$), and let $\epsilon>0$. Then we can bound the covering numbers for $R_{\alpha}\rtimes_\tau T$ at time $sq$ as 
\[
S^{H}_{\xi\times\eta}(R_{\alpha}\rtimes_\tau T,\{0,\dots,s q-1\},\epsilon,\delta)\leq q|\eta|^q
\]
\end{proposition}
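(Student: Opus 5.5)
The plan is to cover the set $\mathcal J\times X$ by at most $q|\eta|^q$ Hamming pseudoballs of radius $\epsilon$; in fact every point of such a ball will be at Hamming distance $0$ from its center. Here $\mathcal J=\bigcup_I J$ is the union of the $q$ intervals produced by \Cref{prop:tau-qn-is-mostly-zero-for-even-iterates-iterated}. That result gives $m_{\T}(\mathcal J)\geq 1-4/s>1-\delta$, so $\mathcal J\times X$ has $m_{\T}\times\mu$-measure larger than $1-\delta$, which is exactly what the definition of $S^H$ requires.

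Fix one of the intervals $J$ and a point $(\theta,x)$ with $\theta\in J$. Write $(R_\alpha\rtimes_\tau T)^i(\theta,x)=(R^i_\alpha\theta,T^{\tau(i,\theta)}x)$. The $(\xi\times\eta)$-name of $(\theta,x)$ along $\{0,\dots,sq-1\}$ is the sequence of pairs $\bigl(\xi(R_\alpha^i\theta),\eta(T^{\tau(i,\theta)}x)\bigr)$.

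For the first coordinate, item (3) of \Cref{prop:tau-qn-is-mostly-zero-for-even-iterates-iterated} says that the $\xi$-name of $\theta$ on $[0,sq)$ is its name on $[0,q)$ repeated $s$ times. By item (2) and \Cref{prop:tau-constant}, this name is the same for all $\theta\in J$, so it depends only on $J$. Items (2) and (5) then show that the sequence $(\tau(i,\theta))_{i<sq}$ is the $q$-periodic extension of $(\tau(i,\theta))_{i<q}$, and again this sequence depends only on $J$.

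For the second coordinate, $\eta(T^{\tau(i,\theta)}x)$ is determined by the values $\eta(T^{m}x)$ as $m$ ranges over the set $\{\tau(i,\theta):0\le i<q\}$. This set depends only on $J$ and has at most $r_q\le q$ elements. So on $J\times X$ the full name along $[0,sq)$ is determined by $J$ together with the $\eta$-names of $x$ at these at most $q$ integer times, which allows at most $|\eta|^q$ possibilities. Points with identical names are at $d^{H}$-distance $0$. Hence for each $J$ I choose one center per realized name, getting at most $|\eta|^q$ centers, and the corresponding balls of radius $\epsilon$ cover $J\times X$. Summing over the $q$ intervals gives the bound $q|\eta|^q$.

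The step I expect to need the most care is checking that the set of visited positions $\{\tau(i,\theta)\}$ and the $\xi$-name are genuinely constant on each $J$ and not merely for each individual $\theta$. This has to come from $J$ lying inside a single atom of $\vee_{i<q}R^{-i}_\alpha(\xi)$ together with the periodicity statements. Everything else is counting.
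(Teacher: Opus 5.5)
Your proposal is correct and follows essentially the same route as the paper: restrict to the $q$ intervals $J$ from \Cref{prop:tau-qn-is-mostly-zero-for-even-iterates-iterated}, use items (2), (3), (5) to see that the $\xi$-name and the walk $(\tau(i,\theta))_{i<sq}$ are constant on each $J$, and take one center per nonempty atom of $\vee_{m\in F}T^{-m}(\eta)$ with $|F|\le q$, so that every point is at Hamming distance $0$ from a center. The only difference is cosmetic: you count realized names directly, whereas the paper rewrites the distance as a weighted sum over $F$.
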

\begin{proof}
Let $J_1,\dots,J_q$ be the $q$ disjoint intervals given by \Cref{prop:tau-qn-is-mostly-zero-for-even-iterates-iterated}, so each one has measure at least $(1-4/s)/q>(1-\delta)/q$ and their union has measure at least $1-\delta$. Let us focus on one of these intervals, which we denote by $J$. In order to prove the statement, it is sufficient to show that $J\times X$ is contained in a union of at most $|\eta|^q$ pseudoballs of radius $\epsilon$ in the distance $d^{H,R_{\alpha}\rtimes_\tau T}_{\xi\times\eta,\{0,\dots,sq-1\}}$.

Take $\theta$ and $\theta'$ in $J$. Since $\theta,\theta'$ belong to the same atom of $\vee_{i=0}^{q-1}R_{\alpha}^{-i}(\xi)$, we have that $(\xi(R_{\alpha}^i(\theta)))_{i=0}^{q-1}$ is equal to $(\xi(R_{\alpha}^i(\theta')))_{i=0}^{q-1}$. Then item (3) in \Cref{prop:tau-qn-is-mostly-zero-for-even-iterates-iterated} implies that $(\xi(R_{\alpha}^i(\theta)))_{i=0}^{qs-1}$ is equal to $(\xi(R_{\alpha}^i(\theta')))_{i=0}^{qs-1}$. Thus \[d^{H,R_{\alpha}}_{\xi,\{0,\dots,sq-1\}}(\theta,\theta')=0.\]
Now let $x,x'\in X$ and consider the pairs $(\theta,x)$ and $(\theta',x')$. It follows that in order to  estimate $d^{H,R_{\alpha}\rtimes_\tau T}_{\xi\times\eta,\{0,\dots,sq-1\}}((\theta,x),(\theta',x'))$, it is sufficient to count discrepancies  associated to  $\eta$ in the second coordinate. The orbit of $(\theta,x)$ by $R_{\alpha}\rtimes_\tau T$ with times $0,\dots,sq-1$ can be written as 
\[
((R_{\alpha}^{i}(\theta),T^{\tau(i,\theta)}(x)))_{i=0}^{sq-1}
\]
Item (5) in \Cref{prop:tau-qn-is-mostly-zero-for-even-iterates-iterated} ensures that $(\tau(i,\theta))_{i=0}^{sq-1}$ equals $(\tau(i,\theta))_{i=0}^{q-1}$ repeated $s$ times. It follows that $(T^{\tau(i,\theta)}(x))_{i=0}^{sq-1}$ equals $(T^{\tau(i,\theta)}(x))_{i=0}^{q-1}$ repeated $s$ times. We also note that $(\tau(i,\theta))_{i=0}^{sq-1}$ is equal to $(\tau(i,\theta'))_{i=0}^{sq-1}$. Thus we can estimate the Hamming pseudodistance between $(\theta,x)$ and $(\theta',x')$ as 
\begin{align*}d^{H,R_{\alpha}\rtimes_\tau T}_{\xi\times\eta,\{0,\dots,sq-1\}}((\theta,x),(\theta',x'))&=\frac{1}{sq}\sum_{i=0}^{sq-1}\delta_{\eta}(T^{\tau(i,\theta)}(x),T^{\tau(i,\theta')}(x'))\\
&= \frac{1}{q}\sum_{i=0}^{q-1}\delta_{\eta}(T^{\tau(i,\theta)}(x),T^{\tau(i,\theta')}(x'))
\end{align*}
We can re-write the last term as 
\begin{equation}\label{eq:weighted}
=\sum_{i\in F}o(i)\delta_{\eta}(T^{i}(x),T^{i}(x'))
\end{equation}
Here $F=\{\tau(i,\theta) : i=0,\dots,q-1\}$, and $o(i)$ is the proportion of elements $j$ in $\{0,\dots,q-1\}$ such that $\tau(j,\theta)=i$. \Cref{eq:weighted} shows that 
\[
d^{H,T}_{\eta,F}(x,x')=0\Rightarrow d^{H,R_{\alpha}\rtimes_\tau T}_{\xi\times\eta,\{0,\dots,sq-1\}}((\theta,x),(\theta',x'))=0\]
Consider the partition $\eta^F=\vee_{i\in F}T^{-i}(\eta)$, which has at most $|\eta|^{|F|}$ atoms. We pick one element inside each nonempty atom. Let $x_1,\dots,x_k$ be the collection of points obtained in this manner. Thus every $x\in X$ satisfies $d^{H,T}_{\eta,F}(x,x_i)=0$ for some $i=1,\dots,k$, and for fixed $\theta\in J$ we have 
\[
J\times X\subset \bigcup_{i=1}^k B^{H,R_{\alpha}\rtimes_\tau T}_{\xi\times\eta,\{0,\dots,sq-1\}}((\theta,x_i),\epsilon)
\]
To finish the argument, note from the definition of $F$ that $|F|\leq q$. Thus $k\leq |\eta|^q$. 
\end{proof}
We are now ready to prove \Cref{thm:zero-slow-entropy}.
\begin{theorem}[\Cref{thm:zero-slow-entropy}]\label{thm:zero-slow-entropy-section}
	Let $\mathbf a$ be an arbitrary scale for slow entropy. Then there exists $\alpha\in\R\smallsetminus\Q$ with the following property. For every invertible measure-preserving system   $(X,\mathcal B_X,\mu,T)$ we have
	\[
	\ent_{m_{\T}\times\mu}^{\mathbf a}(R_{\alpha}\rtimes_\tau T)=0
	\]
\end{theorem}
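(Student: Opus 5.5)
We construct $\alpha$ as in \Cref{eq:evil-irrational}, choosing the even numbers $s_k\geq 4$ recursively, in the spirit of \Cref{thm:r-n-delta-upper-bound}, but with the scale $\mathbf a$ in place of $f$. Set $s_1=4$. Given $s_1,\dots,s_{k-1}$, \Cref{prop:dependence} tells us that $q_{2k}$ is already determined. Since $a_n(t)\to\infty$ as $n\to\infty$ for each fixed $t$, we choose $s_k$ even, larger than $\max(4,k)$, and such that
\[
a_{s_kq_{2k}}(1/k)> k\cdot q_{2k}\,k^{q_{2k}}.
\]
The factor $k^{q_{2k}}$ plays the role of $|\eta|^q$ for a partition $\eta$ with at most $k$ atoms. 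This is the only place where the scale enters. The construction does not depend on $T$, which is what gives the uniformity asserted in the theorem.

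Fix $T$. We first reduce to partitions of the form $\xi_0\times\eta$, where $\xi_0=\{[0,1/2),[1/2,1)\}$ and $\eta$ is a finite partition of $X$. Partitions of the form $\zeta\times\eta$, with $\zeta$ an interval partition of $\T$ and $\eta$ a finite partition of $X$, generate, so by the usual approximation argument for slow entropy it suffices to bound $\ent(\zeta\times\eta)$. This approximation step is the part I expect to be most delicate: one needs that the supremum over $\xi$ in \Cref{eq:ent-F} may be restricted to a generating family, using the fact that $S^H$ is stable under small perturbations in partition distance. To pass from $\zeta$ to $\xi_0$, the rotation coordinate has bounded covering numbers. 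For $\theta,\theta'$ in a common short interval, $d^H$ for $R_\alpha$ and any interval partition $\zeta$ is small uniformly in $n$, because rotation is an isometry and only a small-measure set of points ever crosses a boundary. Hence the partition $\zeta\times\{X\}$ satisfies the hypothesis of \Cref{prop:partition-vanishes}, and by that proposition
\[
\ent(\zeta\times\eta)=\ent((\zeta\times\{X\})\vee(\xi_0\times\eta))=\ent(\xi_0\times\eta).
\]
So it suffices to show $\ent(\xi_0\times\eta)=0$.

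For this final step, fix $\epsilon,\delta>0$ and $t>0$. Take $k$ large enough that $4/s_k<\delta$, $|\eta|\leq k$, and $1/k<t$. By \Cref{eq:approximation-alpha} we have $|\alpha-p_{2k}/q_{2k}|<1/(s_k^2q_{2k}^2)$, and $q_{2k}$ is even by \Cref{prop:parity}. Then \Cref{prop:evil-upper-bound} gives
\[
S^H_{\xi_0\times\eta}(R_\alpha\rtimes_\tau T,\{0,\dots,s_kq_{2k}-1\},\epsilon,\delta)\leq q_{2k}k^{q_{2k}}< a_{s_kq_{2k}}(1/k)/k\leq a_{s_kq_{2k}}(t)/k,
\]
where the last inequality uses that $a_n$ is nondecreasing. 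Therefore the $\liminf$ of $S^H/a_n(t)$ is $0$ along $n=s_kq_{2k}$. This holds for every $t>0$, so $\ent(\xi_0\times\eta,\epsilon,\delta)=0$. Taking suprema over $\epsilon$, $\delta$ and $\eta$ completes the proof.
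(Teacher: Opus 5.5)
Your proof is correct and follows the paper's argument. It uses the same recursive choice of $s_k$, where your condition $a_{s_kq_{2k}}(1/k)>kq_{2k}k^{q_{2k}}$ plays the role of the paper's $\log a_{s_kq_{2k}}(1/k)>kq_{2k}$ (both absorb $|\eta|^{q_{2k}}$ for every fixed $\eta$). It also uses the same bound of \Cref{prop:evil-upper-bound} along $n=s_kq_{2k}$, and the same reduction to $\xi_0\times\eta$ via bounded covering numbers of $R_{\alpha}$, \Cref{prop:partition-vanishes}, refinement monotonicity and the generator-like theorem.

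The only differences are cosmetic. You restrict to interval partitions of $\T$ and sketch why $R_{\alpha}$ has bounded covering numbers for them, whereas the paper takes arbitrary partitions $\xi'$ and cites Ferenczi. In your sketch, the correct reason is equidistribution of orbits. It makes $d^H$ small for all large $n$ uniformly in the center, not literally for every $n$.
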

\begin{proof}
    We define $\alpha$ using the construction from \Cref{sec:evil-irrational}. We define $\alpha$ by \Cref{eq:evil-irrational}, so we need to choose a sequence $(s_k)_{k\in\N}$. The basic idea is the same as that used in the proof of \Cref{thm:r-n-delta-upper-bound}. 
    
    Our definition of $(s_k)_{k\in\N}$ is recursive or inductive, in the sense that $s_k$ is chosen depending on $s_1,\dots,s_{k-1}$. We start by setting  $s_1=4$. Now let $k\geq 2$ and assume that $s_1,\dots,s_{k-1}$ have been defined. By \Cref{prop:dependence}, this also determines the finite sequences $(p_n)_{n=-1}^{2k}$ and $(q_n)_{n=-1}^{2k}$ associated to $\alpha$, and we can choose $s_k$ depending on them.

    Since $a_n(1/k)$ tends to infinity as $n\to\infty$, the same is true for $a_{s\cdot q_{2k}}(1/k)$ as  $s\to\infty$. It follows that $\log(a_{s\cdot q_{2k}}(1/k))$ tends to infinity as  $s\to\infty$. Thus for all $s$ large enough, $\log(a_{s\cdot q_{2k}}(1/k))$ is larger than $kq_{2k}$.  
    We choose $s_k$ as an even number larger than $k$ and such that 
    \begin{equation}\label{eq:defining-condition-s-k}
    \log(a_{s_kq_{2k}}(1/k))>kq_{2k}
    \end{equation}
    This finishes our definition of $(s_k)_{k\geq 1}$  and of $\alpha$. We shall now verify that $\alpha$ has the intended properties. 
    
    Let $(X,\mathcal B_X,\mu,T)$ be an invertible measure-preserving system. We start by proving  
    \begin{equation}\label{eq:zero-slow-entropy-nice-partition}
    \ent^{\mathbf a}_{m_{\T}\times\mu}(R_{\alpha}\rtimes_\tau T,\xi\times\eta)=0
    \end{equation}
    where $\xi=\{[0,1/2),[1/2,1)\}$ and $\eta$ is an arbitrary finite partition of $X$. According to the definition of slow entropy, it is sufficient to take $\epsilon,\delta\in(0,1)$, $t>0$, and prove that  
    \[
    \liminf_{n\to\infty}\frac{S^H_{\xi\times\eta}(R_{\alpha}\rtimes_\tau T,\{0,\dots,n-1\},\epsilon,\delta)}{a_n(t)}=0
    \]
    We will show that this is witnessed by the subsequence $(s_kq_{2k})_{k\geq 1}$, meaning that  
    \begin{equation}\label{eq:zero-limit-along-s-k-q-2-k}
\lim_{k\to\infty}\frac{S^H_{\xi\times\eta}(R_{\alpha}\rtimes_\tau T,\{0,\dots,s_kq_{2k}-1\},\epsilon,\delta)}{a_{s_kq_{2k}}(t)}=0
    \end{equation}
    For all $k$ large enough we have $4/s_k<\delta$. By \Cref{prop:evil-upper-bound} we have \[S^H_{\xi\times\eta}(R_{\alpha}\rtimes_\tau T,\{0,\dots,s_{k}q_{2k}-1\},\epsilon,\delta)\leq q_{2k} |\eta|^{q_{2k}}\]
    Recall that for every $n$, the function $a_n(t)$ is non-decreasing in $t$. If $k$ is large enough so that $1/k<t$, we have $a_{s_kq_{2k}}(1/k)\leq a_{s_kq_{2k}}(t)$. Thus 
\[
\frac{S^H_{\xi\times\eta}(R_{\alpha}\rtimes_\tau T,\{0,\dots,s_kq_{2k}-1\},\epsilon,\delta)}{a_{s_kq_{2k}}(t)}\leq \frac{q_{2k}|\eta|^{q_{2k}}}{a_{s_kq_{2k}}(1/k)}
\]
Let us rewrite the term at the right as
\[
\exp(\log(q_{2k})+\log(|\eta|)q_{2k}-\log(a_{s_kq_{2k}}(1/k)))
\]
In order to prove \Cref{eq:zero-limit-along-s-k-q-2-k}, it is sufficient to show that
\[
\lim_{k\to\infty}\log(q_{2k})+\log(|\eta|)q_{2k}-\log(a_{s_kq_{2k}}(1/k))=-\infty
\]
By \Cref{eq:defining-condition-s-k}, we can write 
\[
\log(q_{2k})+\log(|\eta|)q_{2k}-\log(a_{s_kq_{2k}}(1/k))\leq \log(q_{2k})+\log(|\eta|)q_{2k}-kq_{2k}
\]
But it is clear that
\[
\lim_{k\to\infty}\log(q_{2k})+\log(|\eta|)q_{2k}-kq_{2k}=-\infty
\]
Thus we have proved \Cref{eq:zero-slow-entropy-nice-partition}.

Now let $\xi'$ be an arbitrary finite partition of $\T$. Let us focus on the (seemingly uninteresting) partition $\xi'\times \{X\}$ of $\T\times X$. Let $(\theta,x)$ and $(\theta',x')$ be elements in $\T\times X$. It is clear that then for any $F\Subset \Z$ we have 
\[
d^{H,R_{\alpha}\rtimes_\tau T}_{\xi'\times\{X\},F}((\theta,x),(\theta',x'))=
d^{H,R_{\alpha}}_{\xi',F}(\theta,\theta')
\]
It follows that 
\[
S^{H}_{\xi'\times\{X\}}(R_{\alpha}\rtimes_\tau T,\{0,\dots,n-1\},\epsilon,\delta)=
S^{H}_{\xi'}(R_{\alpha},\{0,\dots,n-1\},\epsilon,\delta)
\]
For the irrational rotation $R_{\alpha}$, it is known that for any partition $\xi'$ and $\epsilon,\delta\in(0,1)$, the sequence $
S^{H}_{\xi'}(R_{\alpha},\{0,\dots,n-1\},\epsilon,\delta)$ is bounded in $n$. This is just a rephrasing of the  assertion that $R_{\alpha}$ has zero slow entropy at all scales. See \cite{ferenczi_measuretheoretic_1997} or \cite[\S 4.4]{kanigowski_survey_2024}.

Thus $
S^{H}_{\xi'\times\{X\}}(R_{\alpha}\rtimes_\tau T,\{0,\dots,n-1\},\epsilon,\delta)$ is bounded in $n$. Observe that 
\[
(\xi'\times \{X\})\vee(\xi\times\eta)=(\xi\vee\xi')\times\eta
\]
By \Cref{eq:zero-slow-entropy-nice-partition} and \Cref{prop:partition-vanishes} it follows that 
\[
\ent^{\mathbf a}_{m_{\T}\times\mu}(R_{\alpha}\rtimes_\tau T,(\xi\vee\xi')\times\eta)=0
\]
Since $(\xi\vee\xi')\times\eta$ refines $\xi'\times\eta$, it follows that $
\ent^{\mathbf a}_{m_{\T}\times\mu}(R_{\alpha}\rtimes_\tau T,\xi'\times\eta)=0$. Here $\xi'\times\eta$ is an arbitrary product partition of $\T\times X$. Finally, by the generator-like theorem for slow entropy, this implies $\ent^{\mathbf a}_{m_{\T}\times\mu}(R_{\alpha}\rtimes_\tau T)=0$ (see \cite{katok_slow_1997}, or Theorem 4.1.7 in \cite{kanigowski_survey_2024}). 
\end{proof}
\section{On the variational principle for slow entropy}\label{sec:variational}
The goal of this section is proving \Cref{thm:variational-0} and \Cref{thm:variational} on non-examples of the variational principle for slow entropy.  

An important first step for us will be to replace the irrational rotation $(\T,R_{\alpha})$ by a subshift  $(Y_{\alpha},S_{\alpha})$ which admits a continuous copy $\hat\tau$ of $\tau$. This allows us to define skew products that are topological dynamical systems. After this is done, \Cref{thm:variational-0} and \Cref{thm:variational} will easily follow from \Cref{thm:zero-slow-entropy}, combined with elementary or previously known results. 

Let us also review some definitions from topological dynamics that will be used in this section. By an invertible topological dynamical system we mean a pair $(X,T)$  of a compact metrizable topological space $X$, and a homeomorphism $T\colon X\to X$. The set of $T$-invariant Borel probability measures on $X$ will be denoted $\mathcal M(T)$. 

Among the topological dynamical systems that we consider, an important role will be played by subshifts. By a subshift $Y$ on a finite alphabet $\mathcal A$ we mean a topologically closed subset $Y\subset \mathcal A^\Z$ which is invariant by the shift transformation $S\colon \mathcal A^\Z\to \mathcal A^\Z$, $S((y_n)_{n\in\Z})=(y_{n+1})_{n\in\Z}$. Here $\mathcal A^{\Z}$ is given the prodiscrete topology. In order to regard a subshift as a topological dynamical system, we endow it with the homeomorphism $S|_Y\colon Y\to Y$. The collection of words of length $n$ in a subshift will be denoted by
\[
\mathcal L_n(Y)=\{y|_{\{0,\dots,n-1\}} : y\in Y\}
\]
\subsection{The subshift $(Y_{\alpha},S_{\alpha})$}
It is a standard procedure in dynamics to define a subshift as a set of codings of orbits by a partition. In order to obtain a continuous copy of $\tau$, we consider a partition such that $\tau$ is constant on the atoms. We consider the partition $\{[0,1/2),[1/2,1)\}$. It is notationally convenient to label these atoms according to the values of $\tau$:
\[
A_{1}=[0,1/2),  \ \ \ A_{-1}=[1/2,1). \] 
In this manner the sequence  $(\tau(R_{\alpha}^{n}(\theta)))_{n\in\Z}$ codes the orbit of $\theta$ by $R_{\alpha}$ and the partition $\{A_{-1},A_1\}$, in the sense that the $n$-th symbol in the sequence is the label of the atom containing $R^n_{\alpha}(\theta)$. 

The subshift $Y_{\alpha}$ associated to codings of $R_{\alpha}$ by the partition $\{A_{-1},A_1\}$ was already defined in the introduction, and the shift transformation on $Y_{\alpha}$ is denoted $S_{\alpha}$. We now state some equivalent characterizations of $Y_{\alpha}$. 
\begin{proposition}\label{prop:Y-alpha-characterizations}
The subshift $Y_{\alpha}\subset\{-1,1\}^{\Z}$ admits the following characterizations.  
\begin{enumerate}
\item It is the smallest subshift with alphabet $\{-1,1\}$ containing $(\tau(R_{\alpha}^{n}(0)))_{n\in\Z}$.
\item It is the smallest subshift with alphabet $\{-1,1\}$ containing $\{(\tau(R_{\alpha}^{n}(\theta)))_{n\in\Z} : \theta\in\T\}$.
\item It is the subshift obtained by forbidding all words $w=w_0\dots w_{n-1}\in\{-1,1\}^n$ that do not represent an itinerary by $R_{\alpha}$, meaning 
\[
\bigcap_{i=0}^{n-1}R_{\alpha}^{-i}(A_{w_i})=\emptyset.
\]
\item It is the subshift such that $w=w_0\dots w_{n-1}\in\{-1,1\}^n$ belongs to $\mathcal L_n(Y_{\alpha})$ if and only if 
\[
\bigcap_{i=0}^{n-1}R_{\alpha}^{-i}(A_{w_i})\ne\emptyset.
\]
\end{enumerate}  
\end{proposition}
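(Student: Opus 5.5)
The plan is to take characterization (1) as the definition of $Y_{\alpha}$ and show that the subshifts described in (2), (3), (4) all coincide with it. Write $Y_1$ for the smallest subshift containing $y_0=(\tau(R_{\alpha}^n(0)))_{n\in\Z}$, i.e. the orbit closure of $y_0$. Write $Y_2$ for the smallest subshift containing all codings $y_\theta=(\tau(R_\alpha^n(\theta)))_{n\in\Z}$, $\theta\in\T$. Write $Y_3$ for the subshift defined by forbidding the non-itinerary words. Characterization (4) is a statement about the language of $Y_3$, so it will come out of the same argument.

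First, $Y_1\subset Y_2$ is trivial, and $Y_2\subset Y_3$ holds because every finite window of $y_\theta$ is realized by $R_\alpha^{k}(\theta)$. Indeed, $w=y_\theta|_{\{k,\dots,k+n-1\}}$ satisfies $R_\alpha^k(\theta)\in\bigcap_i R_\alpha^{-i}(A_{w_i})$, so $y_\theta$ contains no forbidden word. Since $Y_3$ is closed and shift invariant, it contains the smallest subshift generated by these points.

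The key step is $Y_3\subset Y_1$: every word $w$ with nonempty cylinder $C_w=\bigcap_{i=0}^{n-1}R_\alpha^{-i}(A_{w_i})$ must occur in $y_0$. The atoms $A_{\pm1}$ are half-open intervals $[a,b)$, so each $R_\alpha^{-i}(A_{w_i})$ is a half-open arc $[a-i\alpha,b-i\alpha)$ in $\T$. A finite intersection of half-open arcs of this orientation is, when nonempty, a finite union of half-open arcs $[u,v)$ with $u<v$, so it has nonempty interior. By minimality of $R_\alpha$, i.e. density of $\{k\alpha\}$, there is $k$ with $R_\alpha^k(0)\in C_w$. Then $y_0|_{\{k,\dots,k+n-1\}}=w$. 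Hence the language of $Y_3$, which consists of words all of whose subwords are allowed, lies in the language of $Y_1$, and two subshifts with the same language are equal. This gives $Y_1=Y_2=Y_3$.

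For (4), the previous step shows that every word with $C_w\neq\emptyset$ appears in $y_0$, hence lies in $\mathcal L_n(Y_\alpha)$. Conversely, if $w\in\mathcal L_n(Y_\alpha)=\mathcal L_n(Y_1)$, then $w$ occurs in $y_0$, so $C_w\ni R_\alpha^k(0)$ for some $k$ and is nonempty.

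The only point needing care is the claim that a nonempty cylinder has nonempty interior, since otherwise a cylinder could be a single point missed by the orbit of $0$. This is where I expect the main obstacle. I will handle it using the half-open convention: a nonempty intersection $[u_1,v_1)\cap[u_2,v_2)$ of arcs of length $1/2$ is a union of at most two arcs of the form $[\max u,\min v)$ with $\max u<\min v$, hence of positive length. The same holds inductively for finite intersections.
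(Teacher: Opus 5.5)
The paper states this proposition without proof, treating it as a well-known fact, so there is no argument to compare against. Your proof is correct and is the standard one. The chain $Y_1\subset Y_2\subset Y_3\subset Y_1$ handles items (1)--(3). The identification of $\mathcal L_n(Y_\alpha)$ with the words whose cylinder $C_w$ is nonempty handles item (4). You also correctly single out the only nontrivial point: a nonempty cylinder has nonempty interior, so the dense orbit of $0$ must enter it.

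That step can be done more directly than by your induction on unions of arcs. In your induction, the qualifier ``of length $1/2$'' no longer applies after the first intersection, although the argument is unaffected.

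\begin{itemize}
\item Take $x\in C_w$.
\item Each arc $R_\alpha^{-i}(A_{w_i})=[a_i,b_i)$ contains $x$, and is closed on the left and open on the right.
\item Hence each arc contains a right neighbourhood $[x,x+\epsilon_i)$ with $\epsilon_i>0$.
\item Therefore $C_w\supset[x,x+\min_i\epsilon_i)$.
\end{itemize}

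This also shows exactly why the common left-closed, right-open orientation matters. Arcs of mixed orientation could intersect in a single point, for instance $[1/2,1)\cap(0,1/2]=\{1/2\}$, and the orbit of $0$ misses that point.
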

By the smallest subshift containing an element $y\in\{-1,1\}^\Z$, we mean the intersection of all subshifts containing $y$. This is equivalent to taking the topological closure of the orbit $\{S^n(y) : n\in\Z\}$. 

We will now review some elementary properties of $(Y_{\alpha},S_{\alpha})$ and clarify its relationship with the irrational rotation $(\T,R_{\alpha})$. These results are well-known, so we omit some proofs. 
\begin{proposition}\label{prop:cardinality-language-subshift}
    $|\mathcal L_n(Y_{\alpha})|=2n$.
\end{proposition}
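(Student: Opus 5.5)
By \Cref{prop:Y-alpha-characterizations}(4), $\mathcal L_n(Y_{\alpha})$ is in bijection with the set of nonempty atoms of the partition $\xi_n=\vee_{i=0}^{n-1}R_{\alpha}^{-i}(\{A_{-1},A_1\})$. Distinct words give disjoint sets, and each nonempty atom determines exactly one word. So the plan is to count the nonempty atoms of $\xi_n$.

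The atoms of $\xi_n$ are determined by the cut points of the circle. The partition $\{A_{-1},A_1\}$ consists of two half-open arcs $[0,1/2)$ and $[1/2,1)$, with cut points $0$ and $1/2$. Pulling back by $R_{\alpha}^{-i}$ gives the cut points $-i\alpha$ and $1/2-i\alpha$. Hence the atoms of $\xi_n$ are half-open arcs of the form $[a,b)$, whose endpoints are consecutive elements of
\[
E_n=\{-i\alpha : 0\le i\le n-1\}\cup\{1/2-i\alpha : 0\le i\le n-1\}.
\]
Each arc between two consecutive points of $E_n$ is a nonempty atom. Moreover, any two such arcs lie in different atoms, because a cut point separates them for some $i$. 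It follows that the number of nonempty atoms equals $|E_n|$, provided $|E_n|\ge 1$, which always holds.

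It remains to check that the $2n$ points of $E_n$ are pairwise distinct. The points $-i\alpha$ and $-j\alpha$ with $i\neq j$ are distinct since $\alpha$ is irrational, and the same holds for the points in the second family. Now suppose $-i\alpha=1/2-j\alpha \bmod 1$. Then $(j-i)\alpha\in 1/2+\Z$. If $i\ne j$ this contradicts irrationality, since it would force $\alpha\in\Q$. If $i=j$ it would give $0=1/2 \bmod 1$, which is false. Hence $|E_n|=2n$, and therefore $|\mathcal L_n(Y_{\alpha})|=2n$.

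The only delicate step is to justify rigorously that atoms correspond exactly to the gaps between consecutive cut points. Concretely, each gap $[e,e')$ lies inside a single element of $R_{\alpha}^{-i}(\{A_{-1},A_1\})$ for every $i$, because none of these arcs is cut inside the gap. Conversely, two points separated by a cut point $-i\alpha$ or $1/2-i\alpha$ lie in different atoms of $R_{\alpha}^{-i}\{A_{-1},A_1\}$. I expect this step to be routine, with some care needed over the half-open conventions at the endpoints.
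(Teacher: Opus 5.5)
Your proposal is correct and is essentially the paper's proof: words of length $n$ correspond to the nonempty atoms of $\vee_{i=0}^{n-1}R_{\alpha}^{-i}(\{A_{-1},A_1\})$, and these atoms correspond to the $2n$ cut points $R_{\alpha}^{-i}(0)$, $R_{\alpha}^{-i}(1/2)$; you also verify explicitly that these cut points are distinct, which the paper leaves implicit. For the step you flag as delicate, note that a single cut point does not disconnect the circle, so ``separated by a cut point'' needs one of two fixes: either use the shorter arc between the two points, which has length at most $1/2$ and so contains at most one of the antipodal pair $-i\alpha$, $1/2-i\alpha$ (and exactly antipodal points are already separated by $\{A_{-1},A_1\}$), or observe that each atom is an intersection of half-open half-circles, hence an arc with no cut point in its interior.
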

\begin{proof}
	By item 4 in \Cref{prop:Y-alpha-characterizations}, words in $\mathcal L_n(Y_{\alpha})$ are in correspondence with nonempty atoms of $\vee_{i=0}^{n-1}R_{\alpha}^{-i}(\{[0,1/2),[1/2,1)\})$. But these atoms are in visible correspondence with $\{R_{\alpha}^{-i}(0) : i=0,\dots,n-1\}\cup \{R_{\alpha}^{-i}(1/2) : i=0,\dots,n-1\}$. 
\end{proof}
In the next result, $\overline{B}$ denotes the topological closure of $B$, $B\subset\T$. 
\begin{proposition}\label{prop:symbolic-extension-of-rotation}
As topological systems, $(Y_{\alpha},S_{\alpha})$ factors onto  $(\T,R_{\alpha})$. We have the factor map $\pi\colon Y_{\alpha}\to \T$ defined as follows. Given $y\in Y_{\alpha}$, its image $\pi(y)$ is the unique point satisfying 
\begin{equation}\label{eq:def-f}
\bigcap_{i\in\Z} \overline{R_{\alpha}^{-i}(A_{y(i)})}=\{\pi(y)\}, \ \  y\in Y_{\alpha}
\end{equation}
Every $\theta$ outside the orbit by $R_{\alpha}$ of $\{0,1/2\}$ has a unique preimage by $\pi$. In fact, this preimage is given precisely by the coding map
\begin{equation}\label{eq:def-g}
    \varkappa\colon\T\smallsetminus \bigcup_{i\in\Z}R_{\alpha}^{-i}\{0,1/2\}\to Y_{\alpha}, \ \theta\mapsto \varkappa(\theta)=(\tau(R_{\alpha}^i(\theta)))_{i\in\Z}
\end{equation}
\end{proposition}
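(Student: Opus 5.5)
We need to prove \Cref{prop:symbolic-extension-of-rotation}: that $\pi$ is well defined on $Y_{\alpha}$, that it is a continuous surjective factor map, and that points off the orbits of $0$ and $1/2$ have $\varkappa(\theta)$ as their unique preimage.

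\textbf{Well-definedness.} Fix $y\in Y_{\alpha}$. By item 4 of \Cref{prop:Y-alpha-characterizations}, every finite window of $y$ is a word of $Y_\alpha$. Applied to $y|_{\{-n,\dots,n\}}$ after shifting, this gives that the set $\bigcap_{|i|\le n}R_{\alpha}^{-i}(A_{y(i)})$ is nonempty. Taking closures, we get a decreasing family of nonempty compact sets $C_n=\bigcap_{|i|\le n}\overline{R_{\alpha}^{-i}(A_{y(i)})}$, so the full intersection is nonempty by compactness.

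\textbf{Uniqueness of the point.} Suppose $\theta\neq\theta'$ both lie in the intersection. By minimality of $R_\alpha$ (density of $\{i\alpha\}$), some $i\in\Z$ makes $R_\alpha^i(\theta)$ and $R_\alpha^i(\theta')$ lie in the interiors of different half-circles: choose $i$ so that $0$ falls strictly between them on the shorter arc, away from the endpoints. Both points still belong to a single closed half-circle $\overline{A_{y(i)}}$, which is an arc of length $1/2$. The only way that can happen is if they are exactly at distance $1/2$ with $0$ or $1/2$ in between in the right way. Choosing $i$ generically (an open condition on $i\alpha$) avoids this, giving a contradiction. This is the fiddly step; I would phrase it as follows: for $\|\theta-\theta'\|=d\in(0,1/2]$, the set of rotations $r$ for which $\theta+r$ and $\theta'+r$ are not in a common closed half-circle has nonempty interior, and density of $\Z\alpha$ hits it.

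\textbf{Equivariance and continuity.} Equivariance $\pi\circ S_\alpha=R_\alpha\circ\pi$ follows by reindexing the intersection. For continuity, if $y_k\to y$, then $y_k$ agrees with $y$ on $[-n,n]$ for large $k$, so $\pi(y_k)\in C_n(y)$. The diameter of $C_n(y)$ tends to $0$, since the $C_n(y)$ are nested compact sets with singleton intersection; hence $\pi(y_k)\to\pi(y)$.

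\textbf{Surjectivity.} The image $\pi(Y_\alpha)$ is compact. It contains $\pi(\varkappa(\theta))=\theta$ for every generic $\theta$, because $\theta\in R_\alpha^{-i}(A_{\tau(R^i_\alpha\theta)})$ for all $i$ and $\varkappa(\theta)\in Y_\alpha$ by item 2. Generic points are dense, so the image is all of $\T$.

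\textbf{Uniqueness of preimages.} Let $\theta$ avoid the orbits of $0$ and $1/2$, and suppose $\pi(y)=\theta$. Then $R^i_\alpha(\theta)\in\overline{A_{y(i)}}$ for every $i$. Since $R^i_\alpha(\theta)\notin\{0,1/2\}$, the point lies in the interior of exactly one closed half-circle, which forces $y(i)=\tau(R_\alpha^i\theta)$. Hence $y=\varkappa(\theta)$.
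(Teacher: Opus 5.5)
The paper gives no proof of this proposition; it is listed among facts that are ``well-known, so we omit some proofs''. So there is no route to compare against. Your argument is a correct and complete version of the standard proof:
\begin{itemize}
\item nonemptiness via item 4 of \Cref{prop:Y-alpha-characterizations}, shift-invariance and compactness;
\item separation of two distinct points by a suitable rotate, using density of $\Z\alpha$;
\item equivariance by reindexing;
\item continuity from the shrinking diameters of the nested compact sets $C_n(y)$;
\item uniqueness of preimages off the orbits of $0$ and $1/2$, because such points lie in exactly one of $\overline{A_1}=[0,1/2]$ and $\overline{A_{-1}}=[1/2,1]$.
\end{itemize}

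There are two small points of exposition.

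\textbf{Separation step.} The hedge about points ``exactly at distance $1/2$'' is unnecessary and, as written, confusing. Once $R^i_\alpha(\theta)\in(1/2,1)$ and $R^i_\alpha(\theta')\in(0,1/2)$, neither $[0,1/2]$ nor $[1/2,1]$ contains both points, so the contradiction is immediate. You should say explicitly that ``closed half-circle'' means one of these two specific arcs $\overline{A_{\pm1}}$. Any two points at distance at most $1/2$ do lie in \emph{some} closed arc of length $1/2$, so the phrase alone does not carry the argument. After ordering so that $\theta'=\theta+d$ with $d\in(0,1/2]$, the condition ``$0$ lies strictly inside the shorter arc'' is exactly the nonempty open condition $\theta+i\alpha\in(1-d,1)$. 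That interval gives $\theta+i\alpha\in(1/2,1)$ and $\theta'+i\alpha\in(0,d)\subseteq(0,1/2)$. This covers $d=1/2$ as well, and it is all you need from density.

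\textbf{Surjectivity.} Density and compactness of the image are not needed. For \emph{every} $\theta\in\T$, the coding $(\tau(R^i_\alpha\theta))_{i\in\Z}$ lies in $Y_\alpha$ by item 2. Moreover $\theta\in R^{-i}_\alpha(A_{\tau(R^i_\alpha\theta)})$ for all $i$, so $\pi$ of that coding is $\theta$ directly. The same observation, applied to $\theta$ off the orbits, shows $\varkappa(\theta)$ is a preimage, which your last paragraph then shows is the only one.
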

\begin{proposition}
$(Y_{\alpha},S_{\alpha})$ admits a unique invariant measure, which we denote $\hat m$. 
\end{proposition}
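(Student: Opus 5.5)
The plan is to use the factor map $\pi\colon Y_{\alpha}\to\T$ from \Cref{prop:symbolic-extension-of-rotation} and transfer the unique ergodicity of $(\T,R_{\alpha})$ to $(Y_{\alpha},S_{\alpha})$. The key fact is that $\pi$ is injective off a countable set. Existence of an invariant measure is automatic, since $Y_{\alpha}$ is a compact metrizable space and $S_{\alpha}$ is a homeomorphism (Krylov--Bogolyubov). So the content is uniqueness.

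Let $\nu\in\mathcal M(S_{\alpha})$. Since $\pi\circ S_{\alpha}=R_{\alpha}\circ\pi$, the push-forward $\pi_*\nu$ is an $R_{\alpha}$-invariant Borel probability measure on $\T$. By unique ergodicity of the irrational rotation, $\pi_*\nu=m_{\T}$.

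Let $C=\bigcup_{i\in\Z}R_{\alpha}^{-i}\{0,1/2\}$. This set is countable, hence $m_{\T}(C)=0$. It follows that $\nu(\pi^{-1}(C))=0$, so $\nu$ is concentrated on $Y'=Y_{\alpha}\smallsetminus\pi^{-1}(C)$.

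On $Y'$, \Cref{prop:symbolic-extension-of-rotation} shows that $\pi$ is a bijection onto $\T\smallsetminus C$, with inverse the coding map $\varkappa$. I will check that $\varkappa$ is Borel: each coordinate $\theta\mapsto\tau(R^i_{\alpha}(\theta))$ is a step function, and the product $\sigma$-algebra on $\{-1,1\}^{\Z}$ is generated by the coordinates. Then for every Borel $B\subset Y_{\alpha}$,
\[
\nu(B)=\nu(B\cap Y')=\nu\bigl(\pi^{-1}(\varkappa^{-1}(B\cap Y'))\bigr)=m_{\T}\bigl(\varkappa^{-1}(B)\bigr).
\]
The middle equality uses that $\varkappa\circ\pi=\id$ on $Y'$. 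Hence $\nu=\varkappa_*m_{\T}$, which does not depend on $\nu$, and this gives uniqueness. Setting $\hat m=\varkappa_*m_{\T}$, the same computation also shows that $\pi$ is a measure-theoretic isomorphism between $(Y_{\alpha},\hat m,S_{\alpha})$ and $(\T,m_{\T},R_{\alpha})$, as claimed in the introduction.

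The only delicate point is the identity $\varkappa\circ\pi=\id$ on $Y'$ together with the measurability of $\varkappa$. Both rest on \Cref{prop:symbolic-extension-of-rotation}, which I take as given. Everything else is the routine push-forward argument above.
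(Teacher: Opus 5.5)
Your proof is correct. The paper gives no proof of this proposition: it is listed among well-known facts whose proofs are omitted, like \Cref{prop:symbolic-extension-of-rotation}, on which you rely. So there is no argument in the paper to compare against.

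What you wrote is the standard argument. Any $S_{\alpha}$-invariant $\nu$ pushes forward to $m_{\T}$ by unique ergodicity of $R_{\alpha}$. Hence $\nu$ gives full measure to the set $Y'$ on which $\pi$ is injective with Borel inverse $\varkappa$, which forces $\nu=\varkappa_*m_{\T}$.

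One step is left implicit. The last equality $m_{\T}(\varkappa^{-1}(B\cap Y'))=m_{\T}(\varkappa^{-1}(B))$ needs $\varkappa(\T\smallsetminus C)\subset Y'$. This follows from $\pi\circ\varkappa=\id$ on $\T\smallsetminus C$.

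Compare this with a direct proof via uniform convergence of word frequencies, that is, Weyl equidistribution applied to the intervals $\bigcap_{i}R_{\alpha}^{-i}(A_{w_i})$. That route must sandwich $\mathbf 1_{[w]}$ between indicators of the interior and the closure of these intervals to deal with orbits hitting their endpoints. Your route avoids this, because the countable exceptional set is automatically null for every invariant measure. It also yields the measure-theoretic isomorphism in the next proposition with no extra work.
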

\begin{proposition}
    The measure-preserving systems $(\T,\mathcal B_{\T},m_{\T},R_{\alpha})$ and $(Y_{\alpha},\mathcal B_{Y_{\alpha}},\hat m,S_{\alpha})$ are isomorphic. Indeed, both $\pi$ and $\varkappa$ define measure-theoretic isomorphisms (see the commutative diagram in  \Cref{fig:isomorphism-rotation-and-symbolic-copy}).
\begin{figure}[h]
\begin{center}
\begin{tikzcd}
{Y_{\alpha}\smallsetminus \pi^{-1}(\bigcup_{i\in\mathbb Z}R_{\alpha}^{-i}\{0,1/2\})} \arrow[rr, "S_{\alpha}"] \arrow[dd, "\pi"', bend right] &  & {Y_{\alpha}\smallsetminus \pi^{-1}(\bigcup_{i\in\mathbb Z}R_{\alpha}^{-i}\{0,1/2\})} \arrow[dd, "\pi"', bend right] \\
                                                                                                                                        &  &                                                                                                                \\
{\mathbb T \smallsetminus \bigcup_{i\in\mathbb Z}R_{\alpha}^{-i}\{0,1/2\}} \arrow[rr, "R_{\alpha}"'] \arrow[uu, "\varkappa"', bend right]       &  & {\mathbb T \smallsetminus \bigcup_{i\in\mathbb Z}R_{\alpha}^{-i}\{0,1/2\}} \arrow[uu, "\varkappa"', bend right]       
\end{tikzcd}
\end{center}
\caption{Relations between $S_{\alpha}$, $R_{\alpha}$, $\pi$, and $\varkappa$.}\label{fig:isomorphism-rotation-and-symbolic-copy}
\end{figure}
\end{proposition}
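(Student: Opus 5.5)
The plan is to prove both claims directly from the topological facts in \Cref{prop:symbolic-extension-of-rotation} together with unique ergodicity. Let $N=\bigcup_{i\in\Z}R_{\alpha}^{-i}\{0,1/2\}$, a countable and $R_{\alpha}$-invariant set, so $m_{\T}(N)=0$. I will handle $\pi$ first, then $\varkappa$, then check that the two maps are mutually inverse off null sets.

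For $\pi$:
\begin{enumerate}
\item $\pi$ is a continuous factor map, so $\pi_*\hat m$ is an $R_{\alpha}$-invariant Borel probability measure on $\T$.
\item Irrational rotations are uniquely ergodic, hence $\pi_*\hat m=m_{\T}$.
\item Consequently $\hat m(\pi^{-1}(N))=m_{\T}(N)=0$.
\item By \Cref{prop:symbolic-extension-of-rotation}, $\pi$ restricted to the invariant full-measure set $Y_{\alpha}\smallsetminus\pi^{-1}(N)$ is injective, with inverse $\varkappa$.
\end{enumerate}

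For $\varkappa$:
\begin{enumerate}
\item $\varkappa$ is Borel on $\T\smallsetminus N$, because each coordinate $\theta\mapsto\tau(R_{\alpha}^i(\theta))$ is a step function.
\item It intertwines $R_{\alpha}$ with $S_{\alpha}$: $\varkappa(\theta+\alpha)$ is the shift of $\varkappa(\theta)$.
\item Its image lies in $Y_{\alpha}$ by item (2) of \Cref{prop:Y-alpha-characterizations}.
\item $\pi\circ\varkappa=\id$ on $\T\smallsetminus N$. For $\theta\notin N$, each $R_{\alpha}^i(\theta)$ lies in the interior of $A_{\tau(R^i_\alpha\theta)}$, so $\theta\in\bigcap_i\overline{R_{\alpha}^{-i}(A_{y(i)})}$ with $y=\varkappa(\theta)$, and uniqueness of that point gives $\pi(y)=\theta$.
\item Then $\varkappa_*m_{\T}$ is an $S_{\alpha}$-invariant Borel probability measure on $Y_{\alpha}$, so it equals $\hat m$ by unique ergodicity of $(Y_{\alpha},S_{\alpha})$.
\end{enumerate}

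It remains to see that $\varkappa\circ\pi=\id$ on $Y_{\alpha}\smallsetminus\pi^{-1}(N)$. This is exactly the statement that points off $N$ have a unique $\pi$-preimage, namely $\varkappa(\theta)$. Hence $\pi$ and $\varkappa$ are mutually inverse bijections between full-measure invariant sets. Both are Borel, both preserve the measures, and both commute with the dynamics, so each is a measure-theoretic isomorphism; this is the diagram in \Cref{fig:isomorphism-rotation-and-symbolic-copy}.

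The only delicate point is justifying bi-measurability and measure preservation without assuming the answer. I handle it as above, using unique ergodicity on both sides, so I never compute $\hat m$ on cylinders directly. Alternatively, Borel injectivity of $\pi$ on a standard Borel set already gives a Borel inverse by the Lusin–Souslin theorem.
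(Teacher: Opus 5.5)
Your proof is correct, and it is the standard argument the paper has in mind. The paper omits the proof as well known, pointing only to \Cref{prop:symbolic-extension-of-rotation} and the diagram; your route is that argument: unique ergodicity of $R_{\alpha}$ gives $\pi_*\hat m=m_{\T}$, so $\pi^{-1}(N)$ is a null invariant set, and off it $\pi$ and $\varkappa$ are mutually inverse Borel conjugacies. As a minor remark, your appeal to unique ergodicity of $(Y_{\alpha},S_{\alpha})$ is dispensable, since $\varkappa_*m_{\T}=(\varkappa\circ\pi)_*\hat m=\hat m$ already follows from $\pi_*\hat m=m_{\T}$ and $\varkappa\circ\pi=\id$ off $\pi^{-1}(N)$.
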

\subsection{The map $\hat\tau$}
Consider the continuous function 
\[
\hat\tau\colon Y_{\alpha}\to\Z, y\mapsto y(0)
\]
This is a copy of $\tau$, via the isomorphisms $\varkappa$ and $\pi$ defined in \Cref{prop:symbolic-extension-of-rotation}, in the sense of the following result. 
\begin{proposition}\label{prop:tau-and-hat-tau}
    We have $\hat\tau=\tau\circ\pi$ and $\tau=\hat\tau\circ\varkappa$ almost everywhere. 
\end{proposition}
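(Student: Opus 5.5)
The plan is to prove both identities pointwise off the null set $N=\bigcup_{i\in\Z}R_{\alpha}^{-i}\{0,1/2\}$ and its preimage $\pi^{-1}(N)$. Here $N$ is countable, so $m_{\T}(N)=0$. Since $\pi$ is a measure-theoretic isomorphism sending $\hat m$ to $m_{\T}$, we also get $\hat m(\pi^{-1}(N))=0$.

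First take $\theta\in\T\smallsetminus N$. By definition of the coding map in \Cref{eq:def-g}, $\varkappa(\theta)=(\tau(R_{\alpha}^i(\theta)))_{i\in\Z}$. Evaluating at coordinate $0$ gives $\hat\tau(\varkappa(\theta))=\varkappa(\theta)(0)=\tau(\theta)$. Hence $\tau=\hat\tau\circ\varkappa$ on $\T\smallsetminus N$, that is, $m_{\T}$-almost everywhere.

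Next take $y\in Y_{\alpha}\smallsetminus\pi^{-1}(N)$, and set $\theta=\pi(y)\notin N$. By \Cref{prop:symbolic-extension-of-rotation}, $\theta$ has the unique preimage $\varkappa(\theta)$, so $y=\varkappa(\theta)$. The previous step then gives $\hat\tau(y)=\hat\tau(\varkappa(\theta))=\tau(\theta)=\tau(\pi(y))$. Alternatively, one can argue directly from \Cref{eq:def-f}: $\pi(y)\in\overline{A_{y(0)}}$, and since $\pi(y)\notin\{0,1/2\}$, it lies in the interior of $A_{y(0)}$, which gives $\tau(\pi(y))=y(0)$.

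The only step needing care is the null-set bookkeeping. This is the fact that $\pi^{-1}(N)$ is $\hat m$-null, and it follows from $\pi_*\hat m=m_{\T}$, which holds because $\hat m$ is the unique invariant measure and $\pi_*\hat m$ is an $R_{\alpha}$-invariant probability measure, hence equal to $m_{\T}$. Everything else is immediate from the definitions.
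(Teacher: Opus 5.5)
Your proof is correct and matches the paper's, which checks both identities pointwise off the countable set $\bigcup_{i\in\Z}R_{\alpha}^{-i}\{0,1/2\}$ and off its preimage under $\pi$. Your extra check that this preimage is $\hat m$-null is a useful detail the paper leaves implicit; note that $\pi_*\hat m=m_{\T}$ really rests on the unique ergodicity of $R_{\alpha}$, not on that of $S_{\alpha}$.
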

\begin{proof}
It is clear that $\hat\tau(y)=\tau(\pi(y))$ for all $y\in Y_{\alpha}$ outside the preimage by $\pi$ of $\bigcup_{i\in\Z}R_{\alpha}^{-i}\{0,1/2\}$, and that $\tau(\theta)=\hat\tau(\varkappa(\theta))$ for all $\theta$ outside $\bigcup_{i\in\Z}R_{\alpha}^{-i}\{0,1/2\}$. 
\end{proof}
If $(X,T)$ is an invertible topological system, then $S_{\alpha}\rtimes_{\hat \tau} T$ is the homeomorphism of the product space $Y_{\alpha}\times X$ defined by
\[
(y,x)\mapsto (S_{\alpha}(y),T^{\hat \tau(y)}(x))
\]
Thus we regard $(Y_{\alpha}\times X,S_{\alpha}\rtimes_{\hat \tau} T)$ as a topological system. When we endow this system with a product measure $\hat m\times\mu$, $\mu\in\mathcal M(T)$, we have the following result. 
\begin{proposition}\label{prop:isomorphism-symbolic-skew-product-and-the-other}
    Let $\mu\in\mathcal M(T)$. Then the measure-preserving systems $(\T\times X,\mathcal B_{\T}\otimes \mathcal B_X, m_{\T}\times\mu,R_{\alpha}\rtimes_{\tau}T)$ and  $(Y_{\alpha}\times X,\mathcal B_{Y_{\alpha}}\otimes\mathcal B_X, \hat m\times\mu,S_{\alpha}\rtimes_{\hat \tau}T)$ are isomorphic.
\end{proposition}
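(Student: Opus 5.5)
The natural candidate for the isomorphism is the map $\Phi=\varkappa\times\id_X$, that is $(\theta,x)\mapsto(\varkappa(\theta),x)$, defined on $(\T\smallsetminus N)\times X$ where $N=\bigcup_{i\in\Z}R_{\alpha}^{-i}\{0,1/2\}$. Its candidate inverse is $\pi\times\id_X$, that is $(y,x)\mapsto(\pi(y),x)$, restricted to $(Y_{\alpha}\smallsetminus\pi^{-1}(N))\times X$. The plan is to check, in order, that these maps are well defined on invariant full-measure sets, that they are measurable, mutually inverse and measure preserving, and that they intertwine the two skew products.

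\textbf{Full measure and invariance.} The set $N$ is countable, so $m_{\T}(N)=0$ and $(\T\smallsetminus N)\times X$ has full $m_{\T}\times\mu$ measure. Since $N$ is $R_{\alpha}$-invariant and the first coordinate of $R_{\alpha}\rtimes_\tau T$ is $R_{\alpha}$, this set is invariant under the skew product. On the symbolic side, $\pi$ pushes $\hat m$ to $m_{\T}$ by the previous proposition, so $\hat m(\pi^{-1}(N))=0$. Moreover $\pi^{-1}(N)$ is $S_{\alpha}$-invariant because $\pi$ is a factor map. Hence $(Y_{\alpha}\smallsetminus\pi^{-1}(N))\times X$ is a full-measure invariant set for $S_{\alpha}\rtimes_{\hat\tau}T$.

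\textbf{Bijection and measure.} By \Cref{prop:symbolic-extension-of-rotation}, $\varkappa$ and $\pi$ are mutually inverse bijections between $\T\smallsetminus N$ and $Y_{\alpha}\smallsetminus\pi^{-1}(N)$. Both are Borel: $\pi$ is continuous, and $\varkappa$ is Borel because each coordinate $\theta\mapsto\tau(R_{\alpha}^i\theta)$ is a step function. The previous proposition also gives $\varkappa_*m_{\T}=\hat m$. Therefore $\varkappa\times\id_X$ carries $m_{\T}\times\mu$ to $\hat m\times\mu$, which can be checked on measurable rectangles.

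\textbf{Intertwining.} For $\theta\notin N$ we compute
\[
\Phi\bigl((R_{\alpha}\rtimes_\tau T)(\theta,x)\bigr)=\bigl(\varkappa(\theta+\alpha),T^{\tau(\theta)}x\bigr)=\bigl(S_{\alpha}\varkappa(\theta),T^{\hat\tau(\varkappa(\theta))}x\bigr)=(S_{\alpha}\rtimes_{\hat\tau}T)\bigl(\Phi(\theta,x)\bigr).
\]
The second equality uses two facts. First, $\varkappa\circ R_{\alpha}=S_{\alpha}\circ\varkappa$, which holds by the definition of the coding map. Second, $\tau=\hat\tau\circ\varkappa$, which holds pointwise off $N$ (\Cref{prop:tau-and-hat-tau}): here $\hat\tau(\varkappa(\theta))$ is the zeroth coordinate of $\varkappa(\theta)$, namely $\tau(\theta)$.

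The only mild obstacle is the bookkeeping of null sets, since $\tau=\hat\tau\circ\varkappa$ and the inverse relations hold only off $N$. This is handled by the invariance of $N$ and of $\pi^{-1}(N)$ established in the first step, so that all relations hold everywhere on invariant full-measure sets.
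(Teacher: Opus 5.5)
Your proposal is correct and is essentially the paper's argument. The paper simply asserts that $(y,x)\mapsto(\pi(y),x)$ is an isomorphism, using $\hat\tau=\tau\circ\pi$ almost everywhere. You work with its inverse $\varkappa\times\id_X$ and supply the invariant null-set, measurability and intertwining details that the paper leaves implicit.
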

\begin{proof}
By \Cref{prop:tau-and-hat-tau} we have $\hat\tau(y)=\tau(\pi(y))$ for almost every $y\in Y_{\alpha}$. The map $Y_{\alpha}\times X\to\T\times X$ given by 
\[
(y,x)\mapsto (\pi(y),x)
\]
defines a measure-theoretic isomorphism. 
\end{proof} 
    \subsection{The slow entropy for $S_{\alpha}\rtimes_{\hat \tau} T$}
Recall that we defined measure-theoretic slow entropy in \Cref{sec:slow-entropy}, as well as the notion of scale. Let us now recall the definition of topological slow entropy. Let $(X,T)$ be an invertible topological dynamical system and  let $d$ be a compatible metric for the topology on $X$. Given $F\Subset\Z$ consider the dynamically generated Bowen metric
\[
d^{T}_{F}(x,y)=\max\{d(T^{i}(x),T^{i}(y)) : i\in F\}
\]
The corresponding open ball is denoted $B^T_F(x,\epsilon)=\{y\in X : d^T_F(x,y)<\epsilon\}$. This allows us to define covering numbers as 
\[
S(T,F,\epsilon)=\min\{k\in\N : \ \text{there are} \ x_1,\dots,x_k\in X \text{ such that } X\subset \cup_{i=1}^k B^{T}_F(x_i,\epsilon)\} 
\]
Given a scale $\mathbf a=\{a_n(t)\}_{n\in\N,t>0}$ we define 
\[
\ent^{\mathbf a}_{top}(T,
\epsilon)=\Sup\{t>0 : \liminf_{n\to\infty}\frac{S(T,\{0,\dots,n-1\},\epsilon)}{a_n(t)}>0\}
\]
Recall that for a possibly empty set $A\subset [0,\infty)$, we defined $\Sup(A)$ as $\sup(A\cup\{0\})$. Then the lower slow entropy of $(X,T)$ with respect to $\mathbf{a}$ is defined as
\[
\ent^{\mathbf a}_{top}(T)=\lim_{\epsilon\to 0}
\ent^{\mathbf a}_{top}(T,
\epsilon) = \sup_{\epsilon> 0}
\ent^{\mathbf a}_{top}(T,
\epsilon)
\]
We refer the reader to the survey  \cite{kanigowski_survey_2024} for further details. 

In the case of a subshift, we can ignore the $\epsilon$ in the definition of slow entropy, and the definition simplifies as follows (a similar statement holds for upper slow entropy).
\begin{proposition}\label{prop:topological-slow-entropy-subshifts}
    Let $Y\subset \mathcal A^\Z$ be a subshift endowed with the shift action $S_Y$. For an arbitrary scale $\mathbf a=\{a_n(t)\}_{n\in\N,t>0}$, we can compute the topological slow entropy of $(Y,S_Y)$ as 
    \[
\ent^{\mathbf a}_{top}(S_Y)=\Sup\{t>0 : \liminf_{n\to\infty}\frac{|\mathcal L_n(Y)|}{a_n(t)}>0\}
\]
\end{proposition}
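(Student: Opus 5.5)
The plan is to compare Bowen balls in the shift metric with cylinder sets, and to sandwich the covering numbers $S(S_Y,\{0,\dots,n-1\},\epsilon)$ between $|\mathcal L_{n}(Y)|$ and $|\mathcal L_{n+2k}(Y)|$ for a fixed shift $k$ depending on $\epsilon$. Then we check that a bounded shift of the index does not change the $\Sup$ in the definition of $\ent^{\mathbf a}_{top}$.

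First, I will fix the standard compatible metric $d(y,y')=2^{-\min\{|i| : y(i)\ne y'(i)\}}$ on $Y$. Everything is topological, so the value of $\ent^{\mathbf a}_{top}$ does not depend on the choice of compatible metric: on a compact space any two compatible metrics are uniformly equivalent, so for each $\epsilon$ there is $\epsilon'$ with Bowen balls nested accordingly. With this metric, $d^{S_Y}_{\{0,\dots,n-1\}}(y,y')<2^{-k}$ holds iff $y$ and $y'$ agree on $\{-k,\dots,n-1+k\}$. Hence Bowen balls of radius $2^{-k}$ are exactly cylinders over words of length $n+2k$, and they partition $Y$. This gives $S(S_Y,\{0,\dots,n-1\},2^{-k})=|\mathcal L_{n+2k}(Y)|$, using shift invariance to identify words on $\{-k,\dots,n-1+k\}$ with $\mathcal L_{n+2k}(Y)$. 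For $\epsilon\ge 1$ we get a trivial ball, and for $k=0$ we get exactly $|\mathcal L_n(Y)|$.

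The main obstacle is to show that
\[
\Sup\Bigl\{t>0 : \liminf_{n\to\infty}\frac{|\mathcal L_{n+2k}(Y)|}{a_n(t)}>0\Bigr\}
\]
equals the same expression with $k=0$. Monotonicity in $k$ gives one inequality immediately, since $|\mathcal L_{n+2k}|\ge|\mathcal L_n|$. For the other direction I will use submultiplicativity of the language, $|\mathcal L_{n+2k}(Y)|\le |\mathcal L_n(Y)|\cdot|\mathcal A|^{2k}$. Because $|\mathcal A|^{2k}$ is a constant independent of $n$, positivity of the liminf is unaffected. So the two $\Sup$ expressions coincide for every $k$, and hence $\ent^{\mathbf a}_{top}(S_Y,2^{-k})$ is independent of $k$ and equals the formula in the statement.

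Finally, taking $\sup$ over $\epsilon$ (equivalently over $k$, by monotonicity of covering numbers in $\epsilon$) gives the claimed formula for $\ent^{\mathbf a}_{top}(S_Y)$. The remark about upper slow entropy follows identically, with $\limsup$ in place of $\liminf$.
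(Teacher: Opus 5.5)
Your proof is correct and follows essentially the same route as the paper: fix a symbolic metric, identify the Bowen covering number at the coarsest scale with $|\mathcal L_n(Y)|$, and show that finer scales change the covering numbers only by a multiplicative constant independent of $n$, so the $\Sup$ is unaffected. You make explicit what the paper leaves implicit, namely that at scale $2^{-k}$ the covering number equals $|\mathcal L_{n+2k}(Y)|\le|\mathcal A|^{2k}|\mathcal L_n(Y)|$; the only slip is that with your metric the radius $\epsilon=1$ gives the cylinder case $k=0$ rather than a trivial ball, which is harmless.
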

\begin{proof}
    Endow $Y$ with the distance \[d(x,y)=\inf(\{2^{-n} : y(i)=x(i) \text{ for }i\in\{-n+1,\dots,n-1\}, n\geq 1\}\cup\{1\})\]
    Thus every pair of elements $x,y\in Y$ are at distance at most $1$, and they are at distance smaller than $1$ if they agree on the $0$-th coordinate. 

    We claim that for all $\epsilon,\epsilon'\in(0,1]$ we have $\ent^{\mathbf a}_{top}(S_Y,\epsilon)= \ent^{\mathbf a}_{top}(S_Y,\epsilon')$. Indeed, suppose $\epsilon\leq\epsilon'$. The  inequality $\ent^{\mathbf a}_{top}(S_Y,\epsilon)\geq \ent^{\mathbf a}_{top}(S_Y,\epsilon')$ is direct (if we take smaller balls, we need more of them to cover the space). For the other inequality, observe that there is a constant $C$ depending on $\epsilon$ and $\epsilon'$ but not on  $n$, such that $S(S_Y,\{0,\dots,n-1\},\epsilon)\leq C\cdot S(S_Y,\{0,\dots,n-1\},\epsilon')$ for all $n$. From this and the definitions we obtain $\ent^{\mathbf a}_{top}(S_Y,\epsilon)\leq \ent^{\mathbf a}_{top}(S_Y,\epsilon')$. 

    It follows that $\ent^{\mathbf a}_{top}(S_{Y})=\lim_{\epsilon\to 0}\ent^{\mathbf a}_{top}(S_{Y},\epsilon)=\ent^{\mathbf a}_{top}(S_{Y},1)$. Next, observe that with the Bowen distance associated to $d$, we have $d^{S_Y}_{\{0,\dots,n-1\}}(x,y)< 1$ if and only if $x,y$ agree when restricted to $\{0,\dots,n-1\}$. Thus  $S(S_Y,\{0,\dots,n-1\},1)$ equals the cardinality of $\mathcal L_n(Y)$, and  
    \[
\ent^{\mathbf a}_{top}(S_Y)=\ent^{\mathbf a}_{top}(S_Y,1)=\Sup\{t>0 : \liminf_{n\to\infty}\frac{|\mathcal L_n(Y)|}{a_n(t)}>0\}
    \]
    as claimed.
\end{proof}
Let us observe that the topological slow entropy of an infinite subshift admits a universal lower bound with the polynomial scale $\{n^t\}_{n\in\N,t>0}$.
\begin{proposition}\label{prop:polynomial-entropy-of-sturmian-shift}
Let $(Y,S)$ be an infinite subshift. The topological slow entropy of $(Y,S)$  with polynomial scale $\mathbf{a}=\{n^t\}_{n\in\N,t>0}$ satisfies
	\[
	\ent_{top}^{\mathbf a}(S)\geq 1
	\]
\end{proposition}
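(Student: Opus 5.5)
The plan is to reduce the claim, via \Cref{prop:topological-slow-entropy-subshifts}, to the classical Morse--Hedlund bound on the complexity function of an infinite subshift. By that proposition,
\[
\ent_{top}^{\mathbf a}(S)=\Sup\{t>0 : \liminf_{n\to\infty}|\mathcal L_n(Y)|/n^t>0\}.
\]
So it suffices to show that $|\mathcal L_n(Y)|\geq n+1$ for all $n$. Granting this, for every $t\leq 1$ we get $\liminf_n |\mathcal L_n(Y)|/n^t\geq \liminf_n (n+1)/n^t>0$. Hence every $t\in(0,1]$ belongs to the set, and the $\Sup$ is at least $1$.

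To prove $|\mathcal L_n(Y)|\geq n+1$, I will argue by contradiction in the standard way.

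First, note that $|\mathcal L_{n+1}(Y)|\geq|\mathcal L_n(Y)|$. This holds because every word of length $n$ in $Y$ extends to the right to a word of length $n+1$, since $Y$ is shift invariant and consists of bi-infinite sequences.

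Second, note that $|\mathcal L_1(Y)|\geq 2$. If only one symbol occurred, $Y$ would be a single point, contradicting that it is infinite.

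Now suppose $|\mathcal L_n(Y)|\leq n$ for some $n$. The sequence $|\mathcal L_k(Y)|$ for $k=1,\dots,n$ starts at least at $2$, is nondecreasing, and ends at most at $n$. It therefore cannot increase at every one of the $n-1$ steps, so there is some $k\leq n$ with $|\mathcal L_{k+1}(Y)|=|\mathcal L_k(Y)|$. (If $n=1$, the assumption $|\mathcal L_1(Y)|\leq 1$ already contradicts the second observation.)

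For this $k$, each word of length $k$ has exactly one right extension. Then every $y\in Y$ is determined by $y|_{\{m,\dots,m+k-1\}}$ for all coordinates to the right of $m$. Applying this with $m\to-\infty$ needs care. I will use the equality $|\mathcal L_{k+1}|=|\mathcal L_k|$ symmetrically, since it also forces unique left extensions. Concretely, the map $w\mapsto w$ with its first letter deleted is a surjection $\mathcal L_{k+1}\to\mathcal L_k$ between sets of equal size, hence a bijection. Thus every $y\in Y$ is determined by the single window $y|_{\{0,\dots,k-1\}}$, and $|Y|\leq|\mathcal L_k(Y)|<\infty$, a contradiction.

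The only step needing any attention is this last determination argument, namely getting both left and right uniqueness from the cardinality equality. Everything else is a direct unwinding of \Cref{prop:topological-slow-entropy-subshifts}.
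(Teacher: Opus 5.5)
Your proof is correct and follows the paper's route: you use \Cref{prop:topological-slow-entropy-subshifts} to reduce the claim to the Morse--Hedlund bound $|\mathcal L_n(Y)|\geq n+1$, which puts every $t\in(0,1]$ in the set whose $\Sup$ defines the entropy. The only difference is that the paper cites Morse--Hedlund while you reprove it, and that argument is sound: a repeated value $|\mathcal L_{k+1}(Y)|=|\mathcal L_k(Y)|$ makes both letter-deletion maps bijections, with surjectivity of first-letter deletion using $S(Y)=Y$, so each point of $Y$ is determined by a single $k$-window.
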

\begin{proof}
By the Morse-Hedlund Theorem \cite{MorseHedlund1938} we have $|\mathcal L_n(Y)|\geq n+1$ for all $n$, and hence 
\[\lim_{n\to\infty}\frac{|\mathcal L_n(Y)|}{n^t}=\infty, \ \ \ t<1.\]
By \Cref{prop:topological-slow-entropy-subshifts} it follows that $\ent^{\mathbf a}_{top}(S)\geq 1$.
\end{proof}
In fact, the topological slow entropy of the subshift $(Y_\alpha,S_{\alpha})$ with the polynomial scale $\{n^t\}_{n\in\N,t>0}$ is exactly 1, $\alpha\in\R\smallsetminus\Q$. This follows from the  argument used in \Cref{prop:polynomial-entropy-of-sturmian-shift}, and \Cref{prop:cardinality-language-subshift}. But for the next result we will only need the lower bound. 
\begin{theorem}[\Cref{thm:variational-0}]\label{thm:variational-0-section}
    Consider the polynomial slow entropy scale $\mathbf a = \{n^t\}_{n\in\N,t>0}$. There exists  $\alpha\in\R\smallsetminus\Q$ such that for every invertible topological system $(X,T)$ we have 
    \[\ent^{\mathbf a}_{top}(S_{\alpha}\rtimes_{\hat \tau} T)\geq 1 \text{ and }  \ \ 
\sup_{\nu\in\mathcal  M(S_{\alpha}\rtimes_{\hat \tau} T)} 
\ent_{\nu}^{\mathbf a}(S_{\alpha}\rtimes_{\hat \tau} T)=0\]
\end{theorem}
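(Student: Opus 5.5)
We take $\alpha$ to be the irrational given by \Cref{thm:zero-slow-entropy-section} applied to the polynomial scale $\mathbf a=\{n^t\}_{n\in\N,t>0}$. The two claims are then proved separately: the topological lower bound by passing to the base subshift, and the measure-theoretic statement by reducing every invariant measure to a product measure.

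For the topological inequality, we use that the projection $Y_{\alpha}\times X\to Y_{\alpha}$, $(y,x)\mapsto y$, is a topological factor map from $S_{\alpha}\rtimes_{\hat\tau}T$ onto $S_{\alpha}$. Topological lower slow entropy does not increase under factors. The reason is that Bowen balls upstairs project into Bowen balls downstairs with a comparable radius, by uniform continuity of the factor map. Hence
\[
\ent^{\mathbf a}_{top}(S_{\alpha}\rtimes_{\hat\tau}T)\geq \ent^{\mathbf a}_{top}(S_{\alpha}).
\]
Since $\alpha$ is irrational, $Y_{\alpha}$ is infinite: by \Cref{prop:cardinality-language-subshift}, $|\mathcal L_n(Y_\alpha)|=2n$ is unbounded. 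So \Cref{prop:polynomial-entropy-of-sturmian-shift} gives $\ent^{\mathbf a}_{top}(S_{\alpha})\geq 1$.

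For the variational statement, let $\nu\in\mathcal M(S_{\alpha}\rtimes_{\hat\tau}T)$.
\begin{enumerate}
\item By \Cref{thm:product-measures}, $\nu=\hat m\times\mu$ for some $\mu\in\mathcal M(T)$.
\item By \Cref{prop:isomorphism-symbolic-skew-product-and-the-other}, the system $(Y_\alpha\times X,\hat m\times\mu,S_{\alpha}\rtimes_{\hat\tau}T)$ is measure-theoretically isomorphic to $(\T\times X,m_{\T}\times\mu,R_{\alpha}\rtimes_{\tau}T)$.
\item Measure-theoretic slow entropy is an isomorphism invariant, so $\ent^{\mathbf a}_{\nu}(S_{\alpha}\rtimes_{\hat\tau}T)=\ent^{\mathbf a}_{m_{\T}\times\mu}(R_{\alpha}\rtimes_{\tau}T)$.
\item This last quantity is $0$ by the choice of $\alpha$. \Cref{thm:zero-slow-entropy-section} applies here because $(X,\mathcal B_X,\mu,T)$ is an invertible measure-preserving system on a standard Borel space; $X$ is compact metrizable and $\mu$ is a Borel probability measure.
\end{enumerate}
Taking the supremum over $\nu$ gives $0$. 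The set $\mathcal M(T)$ is nonempty by Krylov--Bogolyubov, so the supremum is not taken over an empty set, although the value would be $0$ in that case as well.

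There is no serious obstacle, since all the substance lies in \Cref{thm:zero-slow-entropy-section} and \Cref{thm:product-measures}. The points that need care are the monotonicity of topological slow entropy under factor maps and the isomorphism invariance of measure-theoretic slow entropy. Both are standard (see \cite{kanigowski_survey_2024}); I would verify the first directly. Given $\epsilon>0$, pick $\epsilon'$ by uniform continuity of the projection. Then a cover of $Y_\alpha\times X$ by $(n,\epsilon')$-Bowen balls projects onto a cover of $Y_\alpha$ by $(n,\epsilon)$-Bowen balls. This gives $S(S_{\alpha},\{0,\dots,n-1\},\epsilon)\leq S(S_{\alpha}\rtimes_{\hat\tau}T,\{0,\dots,n-1\},\epsilon')$, and the inequality passes through the definition of $\ent^{\mathbf a}_{top}$.
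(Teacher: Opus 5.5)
Your proposal is correct and follows the paper's proof: it takes $\alpha$ from \Cref{thm:zero-slow-entropy-section} for the polynomial scale and reduces every invariant measure to $\hat m\times\mu$ via \Cref{thm:product-measures} and \Cref{prop:isomorphism-symbolic-skew-product-and-the-other}. It then gets the topological bound from monotonicity under the factor map onto $S_\alpha$ together with \Cref{prop:polynomial-entropy-of-sturmian-shift}. Your explicit check of factor monotonicity via uniform continuity is a valid addition to a step the paper only asserts.
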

\begin{proof}
We apply  \Cref{thm:zero-slow-entropy} to the scale  $\mathbf a=\{n^t\}_{n\in\N,t>0}$. We obtain an irrational $\alpha$ such that for every invertible measure-preserving system $(X,\mathcal B_X,\mu,T)$ we have
    \[
    \ent^{\mathbf a}_{m_{\T}\times\mu}(R_{\alpha}\rtimes_{\tau} T)=0
    \]
Now fix an invertible topological system $(X,T)$. Consider the topological skew product $S_{\alpha}\rtimes_{\hat \tau} T$, and pick an arbitrary invariant measure $\nu\in\mathcal M(S_{\alpha}\rtimes_{\hat \tau} T)$. By \Cref{thm:product-measures}, $\nu$ is a product measure $\hat m\times \mu$, where $\mu\in\mathcal M(T)$. The measure-preserving systems $(\T\times X,\mathcal B_{\T}\otimes\mathcal B_X,m_{\T}\times\mu,R_{\alpha}\rtimes_\tau T)$ and $(Y_{\alpha}\times X,\mathcal B_{Y_{\alpha}}\otimes \mathcal B_X,\hat m\times\mu,S_{\alpha}\rtimes_{\hat \tau} T)$ are isomorphic by \Cref{prop:isomorphism-symbolic-skew-product-and-the-other}. Since measure-theoretic slow entropy is an invariant for isomorphism, we have 
    \[
    \ent^{\mathbf a}_{\hat m\times\mu}(S_{\alpha}\rtimes_{\hat \tau} T)=0
    \]
    Since $\nu=\hat m\times \mu$ was an arbitrary invariant measure for $S_{\alpha}\rtimes_{\hat \tau} T$, it follows that 
    \[
    \sup_{\nu\in\mathcal M(S_{\alpha}\rtimes_{\hat \tau} T)} 
    \ent^{\mathbf a}_{\nu}(S_{\alpha}\rtimes_{\hat \tau} T)=0
    \]
    To finish the argument, observe that topological slow entropy is monotone with respect to factor maps. Since $S_{\alpha}\rtimes_{\hat \tau} T$ factors onto $S_{\alpha}$ via the projection map to the first coordinate, we have $\ent^{\mathbf a}_{top}(S_{\alpha}\rtimes_{\hat \tau} T)\geq \ent^{\mathbf a}_{top}(S_{\alpha})$. But  $\ent^{\mathbf a}_{top}(S_{\alpha})\geq 1$ by \Cref{prop:polynomial-entropy-of-sturmian-shift}, so $\ent^{\mathbf a}_{top}(S_{\alpha}\rtimes_{\hat \tau} T)\geq 1$ as claimed. 
\end{proof}
Let us now turn to the scale $\mathbf b=\{b_n(t)\}_{n\in\N,t>0}$ mentioned in the introduction. 
Recall that given $\theta\in\T$ we defined the range $r_n(\theta)$ as the cardinality of $\{\tau(i,\theta) : i=0,\dots,n-1\}$ (\Cref{eq:r_n-theta}). For a word $w=w_0\dots w_{n-1}\in\mathcal L_n(Y_{\alpha})$ we define its range $r_n(w)$ by 
\begin{equation}\label{eq:def-r-n-symbolic}
    r_n(w)=r_n(\theta)
\end{equation}
where $\theta\in\T$ is any element satisfying $\tau(R_{\alpha}^i(\theta))=w_i$, $i=0,\dots,n-1$. This is well-defined thanks to \Cref{prop:tau-constant}. We now define $b_n(t)$ by 
\begin{equation}\label{eq:def-b-scale}
b_n(t)=\sum_{w\in \mathcal L_n(Y_{\alpha})}e^{r_n(w)t}
\end{equation}
We emphasize that this scale depends on the choice of $\alpha$. This scale was introduced in \cite{carrascovargas_topological_2025} to capture the topological entropy $h_{top}(T)$ of a topological system $(X,T)$ in a topological skew product $S_{\alpha}\rtimes_{\hat \tau} T$ in the sense of the following result.
\begin{theorem}[\cite{carrascovargas_topological_2025}]\label{thm:nice-scale-topological-slow-entropy}
Let $\alpha\in\R\smallsetminus\Q$ and let $\mathbf b$ be the scale defined by \Cref{eq:def-b-scale}. Then for every invertible topological system $(X,T)$ we have 
\[
\ent^{\mathbf b}_{top}(S_{\alpha}\rtimes_{\hat \tau} T)=h_{top}(T)
\]
\end{theorem}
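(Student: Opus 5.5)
The plan is to compute the Bowen covering numbers of $S_{\alpha}\rtimes_{\hat\tau}T$ almost exactly in terms of covering numbers of $T$, and then compare them with $b_n(t)$. Write $S_T(m,\epsilon)=S(T,\{0,\dots,m-1\},\epsilon)$. The $n$-orbit of $(y,x)$ is $(S_{\alpha}^i y, T^{\hat\tau(i,y)}x)_{i=0}^{n-1}$. The walk $i\mapsto\hat\tau(i,y)$ has steps $\pm1$, so its set of visited sites
\[
F_w=\{\hat\tau(i,y):0\le i<n\}
\]
is an integer interval containing $0$. This set depends only on the word $w=y|_{\{0,\dots,n-1\}}\in\mathcal L_n(Y_{\alpha})$, and $|F_w|=r_n(w)$. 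Since $T$ is a homeomorphism, covering $X$ by Bowen balls over an interval $F$ of length $m$ is equivalent, after applying $T^{-\min F}$, to covering by Bowen balls over $\{0,\dots,m-1\}$. Hence the covering number over $F_w$ equals $S_T(r_n(w),\epsilon)$.

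I will use the max metric on $Y_{\alpha}\times X$ and the symbolic metric on $Y_{\alpha}$ from \Cref{prop:topological-slow-entropy-subshifts}.

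\textbf{Upper bound.} Fix $\epsilon$ and a window $k=k(\epsilon)$ such that agreement on $[-k,k]$ forces $\epsilon$-closeness in $Y_{\alpha}$. Group the points $y$ by their word of length $n+2k$. For points in one group, it suffices to cover $X$ by Bowen balls of $T$ over $F_w$. This gives
\[
S(S_{\alpha}\rtimes_{\hat\tau}T,\{0,\dots,n-1\},\epsilon)\le \sum_{w\in\mathcal L_{n+2k}(Y_{\alpha})}S_T(r_{n+2k}(w),\epsilon).
\]
By \Cref{prop:cardinality-language-subshift}, and since a word has at most two extensions per side with range changing by at most $2k$, this is at most
\[
C_\epsilon\sum_{w\in\mathcal L_n(Y_{\alpha})}S_T(r_n(w),\epsilon)
\]
with $C_\epsilon$ independent of $n$.

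\textbf{Lower bound.} Two points whose words of length $n$ differ are at Bowen distance at least $1$. Within a fixed word, $\epsilon$-separated sets for $T$ over $F_w$ give separated sets for the skew product. Therefore
\[
S(S_{\alpha}\rtimes_{\hat\tau}T,\{0,\dots,n-1\},\epsilon/2)\ge\sum_{w\in\mathcal L_n(Y_{\alpha})}S_T(r_n(w),\epsilon).
\]

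\textbf{Comparison with the scale.} Let $h_\epsilon=\limsup_m\frac1m\log S_T(m,\epsilon)$, so $h_\epsilon\uparrow h_{top}(T)$ as $\epsilon\to0$. For each $\delta>0$ there are $m_0$ and $K$ such that $S_T(m,\epsilon)\le Ke^{m(h_\epsilon+\delta)}$ for all $m$, and $S_T(m,\epsilon)\ge e^{m(h_\epsilon-\delta)}$ along the relevant sizes. The lower bound should use the $\liminf$ version of the growth rate; equivalently, use separated-set numbers, whose $\liminf$ and $\limsup$ rates both tend to $h_{top}(T)$ as $\epsilon\to0$.

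The step I expect to be the main obstacle is making these estimates uniform over all words. For this I need
\[
\min_{w\in\mathcal L_n(Y_{\alpha})}r_n(w)\to\infty .
\]
I would prove it by contradiction. If the minimum were bounded by $R$ for all $n$, then compactness of $Y_{\alpha}$ gives a point $y$ whose forward walk has range at most $R$ forever. By minimality of $Y_{\alpha}$ and the Gottschalk--Hedlund theorem, $\hat\tau$ would then be a continuous coboundary. This contradicts the ergodicity, in particular the non-coboundary property, of $\tau$ from \Cref{sec:cocycles}.

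Granting this, the small-$m$ exceptions contribute only bounded factors. We obtain:
\begin{itemize}
\item for $t>h_{top}(T)+2\delta$,
\[
\frac{S}{b_n(t)}\le C K\max_w e^{-(t-h-\delta)r_n(w)}\to0 ;
\]
\item for $t<h_\epsilon-\delta$, the ratio $S/b_n(t)$ is bounded below, indeed it tends to $\infty$.
\end{itemize}
Taking $\epsilon\to0$ yields $\ent^{\mathbf b}_{top}=h_{top}(T)$, with the conventions for $h_{top}(T)\in\{0,\infty\}$ handled by the same two inequalities.
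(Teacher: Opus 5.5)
This paper does not prove the statement; it is imported from \cite{carrascovargas_topological_2025}. So there is no internal argument to compare yours with. Judged on its own, your proof is correct. You split the Bowen covering numbers of $S_{\alpha}\rtimes_{\hat\tau}T$ over the words $w\in\mathcal L_n(Y_{\alpha})$, with fibre contribution $S_T(r_n(w),\epsilon)$ coming from the interval $F_w$; this is exactly the structure the scale $\mathbf b$ is built to match. Your two-sided estimates, your treatment of $h_{top}(T)\in\{0,\infty\}$, and your use of $\min_w r_n(w)\to\infty$ in the upper bound are all sound.

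Three places should be tightened.

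\textbf{The reduction at a fixed scale.} The passage from $\mathcal L_{n+2k}(Y_{\alpha})$ back to $\mathcal L_n(Y_{\alpha})$ at the same $\epsilon$ is not justified. Bowen covering numbers are only submultiplicative after a change of scale, namely $S_T(m+j,2\epsilon)\le S_T(m,\epsilon)S_T(j,\epsilon)$. So there is no evident constant $C$ with $S_T(m+2k,\epsilon)\le C\,S_T(m,\epsilon)$. You do not need one. Plug $m\le r_n(w)+2k$ directly into $S_T(m,\epsilon)\le Ke^{m(h_{top}(T)+\delta)}$ and absorb $4^kKe^{2k(h_{top}(T)+\delta)}$ into the constant.

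\textbf{The lower bound.} Here you need an estimate valid for every $m$, which a $\limsup$ rate does not give. The cleanest fix uses open covers. Let $\mathcal V$ be an open cover with Lebesgue number larger than $2\epsilon$. Then every Bowen ball $B^T_{\{0,\dots,m-1\}}(x,\epsilon)$ lies in an element of $\bigvee_{i<m}T^{-i}\mathcal V$. Hence $S_T(m,\epsilon)\ge N(\bigvee_{i<m}T^{-i}\mathcal V)\ge e^{m\,h(T,\mathcal V)}$ for all $m$, because by subadditivity the limit defining $h(T,\mathcal V)$ is an infimum. Combined with your separated-set bound, this gives $S(S_{\alpha}\rtimes_{\hat\tau}T,\{0,\dots,n-1\},\epsilon/2)\ge b_n(t)$ for every $n$ whenever $t\le h(T,\mathcal V)$. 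In particular, the lower bound does not need the minimal-range lemma at all.

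\textbf{The minimal-range lemma.} Your Gottschalk--Hedlund proof of $\min_w r_n(w)\to\infty$ is valid: one-sided boundedness suffices for a minimal homeomorphism, and $Y_{\alpha}$ is minimal. There is a shorter route. Suppose the forward walk of some $y\in Y_{\alpha}$ has range at most $R$. Uniform recurrence of $Y_{\alpha}$ places every word of the language inside $y|_{[0,\infty)}$. By item (4) of \Cref{prop:Y-alpha-characterizations}, this gives $|\tau(n,\theta)|\le R$ for all $n\in\Z$ and all $\theta\in\T$ (negative times are negatives of forward sums). That is incompatible with $R+1\in\mathcal E(\tau)=\Z$.
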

We will now prove that for some irrationals, the same scale is dramatically far from capturing the measure-theoretic complexity of the same systems, and in fact we obtain non-examples of the variational principle by choosing $T$ with $h_{top}(T)>0$. 
\begin{theorem}[\Cref{thm:variational}]\label{thm:variational-section}
    For some $\alpha\in\R\smallsetminus\Q$ the following holds. For every invertible topological system $(X,T)$ we have 
    \[
\sup_{\nu\in\mathcal M(S_{\alpha}\rtimes_{\hat \tau} T)}\ent^{\mathbf b}_{\nu}(S_{\alpha}\rtimes_{\hat \tau} T)=0
    \]
\end{theorem}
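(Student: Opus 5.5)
The plan is to rerun the proof of \Cref{thm:zero-slow-entropy-section}, but to choose $s_k$ so that it dominates the scale $\mathbf b$, even though $\mathbf b$ depends on $\alpha$. The obstacle is circularity: $b_n(t)$ involves $\mathcal L_n(Y_\alpha)$ and $r_n(w)$, which depend on all of $\alpha$. The fix is that we only need a \emph{lower} bound for $b_n(1/k)$ that is independent of the unknown tail of $\alpha$. Since $|\mathcal L_n(Y_\alpha)|=2n$ (\Cref{prop:cardinality-language-subshift}) and $r_n(w)\geq 1$ for every word, we have
\[
b_n(t)\geq 2n\,e^{t}\geq 2n
\]
for every irrational $\alpha$ and every $t>0$. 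So it suffices to beat the linear scale $\{n\}$, which is independent of $\alpha$.

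Concretely, I define $\alpha$ by \Cref{eq:evil-irrational}. Set $s_1=4$. Given $s_1,\dots,s_{k-1}$, and hence $q_{2k}$ by \Cref{prop:dependence}, choose $s_k$ even, larger than $k$, with
\[
\log(2s_kq_{2k})>kq_{2k}.
\]
This is possible since the left side tends to infinity as $s_k\to\infty$.

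Now fix $(X,T)$ and $\nu\in\mathcal M(S_\alpha\rtimes_{\hat\tau}T)$. By \Cref{thm:product-measures}, $\nu=\hat m\times\mu$, and by \Cref{prop:isomorphism-symbolic-skew-product-and-the-other} it suffices to show $\ent^{\mathbf b}_{m_\T\times\mu}(R_\alpha\rtimes_\tau T)=0$, using that slow entropy is an isomorphism invariant. The scale $\mathbf b$ here is the fixed one attached to the chosen $\alpha$. For partitions $\xi\times\eta$ with $\xi=\{[0,1/2),[1/2,1)\}$, \Cref{prop:evil-upper-bound} gives, once $4/s_k<\delta$ and $1/k<t$,
\[
\frac{S^H_{\xi\times\eta}(R_\alpha\rtimes_\tau T,\{0,\dots,s_kq_{2k}-1\},\epsilon,\delta)}{b_{s_kq_{2k}}(t)}\leq\frac{q_{2k}|\eta|^{q_{2k}}}{2s_kq_{2k}}\leq \exp\bigl(\log q_{2k}+q_{2k}\log|\eta|-kq_{2k}\bigr)\to0.
\]
This uses $b_{s_kq_{2k}}(t)\geq 2s_kq_{2k}$; the condition $1/k<t$ is not even needed. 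Hence the liminf is $0$ for every $t>0$, so $\ent^{\mathbf b}(R_\alpha\rtimes_\tau T,\xi\times\eta)=0$.

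The remaining steps copy the end of the proof of \Cref{thm:zero-slow-entropy-section}. For an arbitrary partition $\xi'$ of $\T$, $\xi'\times\{X\}$ has bounded covering numbers because $R_\alpha$ does. \Cref{prop:partition-vanishes} then gives zero entropy for $(\xi\vee\xi')\times\eta$, hence for $\xi'\times\eta$ by monotonicity, and the generator-type theorem extends this to all partitions. The only point needing care is that \Cref{prop:partition-vanishes} and the generator theorem are valid for an arbitrary scale, so they apply to $\mathbf b$.
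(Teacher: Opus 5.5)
Your proof is correct and uses the same key observation as the paper: the $\alpha$-independent lower bound $b_n(t)\geq 2ne^t$, which follows from $|\mathcal L_n(Y_\alpha)|=2n$ and $r_n(w)\geq 1$. The only difference is packaging: the paper applies \Cref{thm:zero-slow-entropy} as a black box to the scale $a_n(t)=2ne^t$ and then uses $b_n\geq a_n$ to get $\ent^{\mathbf b}\leq\ent^{\mathbf a}$, whereas you rerun the choice of $s_k$ directly against the linear lower bound.
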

\begin{proof}
Since the scale $\mathbf b$ depends on $\alpha$, we cannot directly apply \Cref{thm:zero-slow-entropy}. However, $\mathbf b$ admits a natural lower bound independent of $\alpha$. That is, since $r_n(w)\geq 1$ and $|\mathcal L_n(Y_{\alpha})|=2n$ (\Cref{prop:cardinality-language-subshift}), we have 
\[
b_n(t)\geq \sum_{w\in\mathcal L_n(Y_{\alpha})}e^t\geq |\mathcal L_n(Y_{\alpha})|e^t=2ne^t
\]
Consider the scale $\mathbf a=\{a_n(t)\}_{n\in\N,t>0}$ defined by
\[
a_n(t)=2ne^{t}
\]
We invoke \Cref{thm:zero-slow-entropy} and apply it to the scale $\mathbf a$. We obtain an irrational $\alpha$ such that for every invertible measure-preserving system $(X,\mathcal B_X,\mu,T)$ we have
\[
\ent_{m_{\T}\times\mu}^{\mathbf a}(R_{\alpha}\rtimes_\tau T)=0
\]
The irrational $\alpha$ is fixed from now on, together with the associated scale $\mathbf b$.

It follows from the definitions of slow entropy that since $b_n(t)\geq a_n(t)$ for all $n\in\N$ and $t>0$, the slow entropy with respect to $\mathbf b$ of any system is at most its slow entropy with respect to $\mathbf a$. Thus for every invertible measure-preserving system $(X,\mathcal B_X,\mu,T)$ we have 
\begin{equation}\label{eq:skew-products-zero-entropy}
\ent_{m_{\T}\times\mu}^{\mathbf b}(R_{\alpha}\rtimes_\tau T)=0
\end{equation}
The rest of the argument is the same as in \Cref{thm:variational-0} (applying \Cref{thm:product-measures} and \Cref{prop:isomorphism-symbolic-skew-product-and-the-other}). 
\end{proof}
We finish this section with the observation that, provided $T$ has no fixed point and no periodic orbit of length two, the topological system $S_{\alpha}\rtimes_{\hat\tau} T$ has no ergodic invariant measure that makes it a Kronecker system. As mentioned in the introduction, this shows that the non-examples of the variational principle for slow entropy that we construct are qualitatively different from previously known non-examples (see \cite{kanigowski_survey_2024,cheng_slow_2025}). 
\begin{proposition}\label{prop:discrete-spectrum-measures}
    Let $\alpha\in\R\smallsetminus\Q$ and let $(X,T)$ be an invertible topological system. Let $\nu$ be an ergodic $S_{\alpha}\rtimes_{\hat \tau} T$-invariant measure. Suppose that, endowed with $\nu$, $S_{\alpha}\rtimes_{\hat \tau} T$ is a Kronecker system. Then we can write $\nu=\hat m\times \mu$, where $\mu$ is $T$-invariant and supported either on a fixed point for $T$, or a periodic orbit of length two. 
\end{proposition}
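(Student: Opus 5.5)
By \Cref{thm:product-measures}, every invariant measure of $S_{\alpha}\rtimes_{\hat\tau} T$ has the form $\nu=\hat m\times\mu$ with $\mu\in\mathcal M(T)$. This settles the first half of the claim, and what remains is to identify $\mu$.

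First, $\mu$ is ergodic for $T$. Indeed, a $T$-invariant set $E$ with $0<\mu(E)<1$ gives the invariant set $Y_{\alpha}\times E$ of intermediate $\nu$-measure, contradicting ergodicity of $\nu$. This is the trivial direction of \Cref{prop:characterization-ergodicity}, and it transfers through the isomorphism below.

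Next, by \Cref{prop:isomorphism-symbolic-skew-product-and-the-other} the system $(Y_{\alpha}\times X,\hat m\times\mu,S_{\alpha}\rtimes_{\hat\tau}T)$ is measure-theoretically isomorphic to $R_{\alpha}\rtimes_{\tau}T$ with $m_{\T}\times\mu$. Being Kronecker is an isomorphism invariant, so the latter system is Kronecker. Now $(X,\mathcal B_X,\mu,T)$ is an ergodic invertible system on a standard Borel space, so \Cref{prop:kronecker-characterization} applies. It shows that $(X,\mu,T)$ is measure-theoretically isomorphic to the one-point system or to the two-point exchange.

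The one step needing care is converting this measure-theoretic statement into the topological conclusion that $\mu$ is supported on a fixed point or on a periodic orbit of length two. If $(X,\mu,T)$ is isomorphic to the one-point system, the measure algebra is trivial, so $\mu$ takes only the values $0$ and $1$ on Borel sets. Since $X$ is compact metrizable, such a measure is a Dirac mass $\delta_x$: cover $X$ by countably many small balls and intersect the full-measure ones. Invariance, $T_*\delta_x=\delta_{Tx}=\delta_x$, then forces $Tx=x$.

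In the two-point case, $\mu$ has two atoms of measure $1/2$ up to null sets. Applying the same argument on each half of measure $1/2$ shows $\mu=\tfrac12(\delta_x+\delta_y)$ with $x\ne y$. Invariance and ergodicity (the exchange) give $Tx=y$ and $Ty=x$, so $\mu$ is supported on a periodic orbit of length two. This completes the plan. No serious obstacle is expected, since the essential work was already done in \Cref{prop:kronecker-characterization}.
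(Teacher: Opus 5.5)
Your proposal is correct and follows the paper's own argument: write $\nu=\hat m\times\mu$ via \Cref{thm:product-measures}, transfer the Kronecker property to $R_{\alpha}\rtimes_{\tau}T$ via \Cref{prop:isomorphism-symbolic-skew-product-and-the-other}, and then apply \Cref{prop:kronecker-characterization}. You also correctly fill in two points the paper leaves implicit. The first is the ergodicity of $\mu$, which is needed to invoke \Cref{prop:kronecker-characterization}. The second is the passage from a measure-theoretic isomorphism with the one- or two-point system to $\mu$ being a Dirac mass at a fixed point, or the average of two Dirac masses on a $2$-cycle.
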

\begin{proof}
    Suppose that $(Y_{\alpha}\times X,\mathcal B_{Y_{\alpha}}\otimes \mathcal B_X,\nu,S_{\alpha}\rtimes_{\hat\tau} T)$ is a Kronecker system, for $\nu\in\mathcal M(S_{\alpha}\rtimes_{\hat\tau} T)$. By \Cref{thm:product-measures} we can write $\nu=\hat m\times\mu$ for some $\mu\in \mathcal M(T)$. By \Cref{prop:isomorphism-symbolic-skew-product-and-the-other}, $(Y_{\alpha}\times X,\mathcal B_{Y_{\alpha}}\otimes \mathcal B_X,\hat m\times\mu,S_{\alpha}\rtimes_{\hat\tau} T)$ is isomorphic to $(\T\times X,\mathcal B_{\T}\otimes \mathcal B_X,m_{\T}\times\mu,R_{\alpha}\rtimes_{\tau} T)$. Thus the latter is a Kronecker system. By \Cref{prop:kronecker-characterization}, it follows that $(X,\mathcal B_X,\mu,T)$ is isomorphic to the one point system, or the system which exchanges two points with equal measures. It follows that $\mu$ is supported on a fixed point for $T$, or a periodic orbit of length two. 
\end{proof}
\bibliography{references}
\end{document}